\newcommand{\tLambda}{\widetilde{\Lambda}}
\newcommand{\vareps}{\varepsilon}
\newcommand{\defeq}{\stackrel{\mathrm{def}}{=}}
\newcommand{\prob}{\mathds{P}}
\newcommand{\erw}{\mathds{E}}
\newcommand{\e}{\mathrm{e}}
\newcommand{\cH}{\mathcal{H}}
\newcommand{\Hi}{\mathcal{H}}
\newcommand{\tr}{\mathrm{tr\,}}
\newcommand{\WF}{\mathrm{WF}}
\newcommand{\supp}{\mathrm{supp\,}}
\newcommand{\spec}{\mathrm{Spec}}
\newcommand{\Ima}{\mathrm{Im\,}}
\newcommand{\Rea}{\mathrm{Re\,}}
\newcommand{\dist}{\mathrm{dist\,}}
\newcommand{\mO}{\mathcal{O}}
\newcommand{\C}{\mathds{C}}
\newcommand{\R}{\mathds{R}}
\newcommand{\N}{\mathds{N}}
\newcommand{\Z}{\mathds{Z}}
\newcommand{\cN}{\mathcal{N}}
\newcommand{\cF}{\mathcal{F}}
\newcommand{\cZ}{\mathcal{Z}}
\newcommand{\tG}{\tilde G}
\newtheorem{thm}{Theorem}
\newtheorem{cor}[thm]{Corollary}
\newtheorem{con}[thm]{Conjecture}
\newtheorem{prop}[thm]{Proposition}
\newtheorem{lem}[thm]{Lemma}
\newtheorem{defn}[thm]{Definition}
\newtheorem{rem}[thm]{Remark}
\theoremstyle{remark}
\numberwithin{equation}{section}
\title[Local eigenvalue statistics]{Local eigenvalue statistics of one dimensional random nonselfadjoint pseudodifferential operators}
\author{St\'ephane Nonnenmacher}
\author{Martin Vogel} 
\address[St\'ephane Nonnenmacher]{Laboratoire de Math\'ematiques d'Orsay, Univ. Paris-Sud, CNRS, Universit\'e Paris-Saclay, 91405 Orsay, France}
\email{\href{stephane.nonnenmacher@u-psud.fr}{stephane.nonnenmacher@u-psud.fr}}
\address[Martin Vogel]{Mathematics Department, University of California, Berkeley, 
887 Evans Hall, Berkeley CA 94720, USA}
\email{\href{mailto:vogel@math.berkeley.edu}{vogel@math.berkeley.edu}}
\begin{document}
\maketitle
\begin{abstract}
We consider a class of one-dimensional nonselfadjoint semiclassical pseudo-differential 
operators, subject to small random perturbations, and study the statistical properties of their (discrete) spectra, in the semiclassical limit $h\to 0$. We compare two types of random perturbation: a random potential vs. a random matrix. 
Hager and Sj\"ostrand had shown that, with high probability, the local spectral density of the perturbed operator follows a semiclassical form of Weyl's law, depending on the value distribution of the principal symbol of our pseudodifferential operator.%
\par
Beyond the spectral density, we investigate the full local statistics of 
the perturbed spectrum, and show that it satisfies a form of universality:
the statistical only depends on the local spectral density, and of the type of random perturbation, but it is independent of the precise law of the perturbation. This local statistics can be described in terms of the Gaussian Analytic Function, a classical ensemble of random entire functions.
\end{abstract}
\tableofcontents
\section{Introduction}\label{sec:Intro}
The spectral analysis of linear operators defined on a Hilbert space is much more developed in the case of selfadjoint operators: one can then use powerful tools, like the spectral theorem, or variational methods. This fact has been very useful in mathematical physics, for example in quantum mechanics, where the "natural" operators are selfadjoint.
However, nonselfadjoint operators also appear in mathematical physics as well, and deserve to be investigated. For instance, in quantum mechanics, the study of scattering systems naturally leads to the concept of quantum resonances, which appear as the (complex valued) poles of the analytic continuation of the scattering matrix (or of the resolvent of the Hamiltonian) into the "nonphysical sheet" of the complex energy plane. These resonances may also be obtained as {\it bona fide} eigenvalues of a nonselfadjoint operator, obtained from the initial selfadjoint Hamiltonian through a complex deformation \cite{AgCo71}. Still in quantum mechanics, when considering the evolution of a "small system" in contact with a "large environment", one can be lead to express the effective dynamics of the small system through a nonselfadjoint {\it Lindblad operator} \cite{Lind76}. in classical statistical physics, the linear operator generating the evolution of the system are often nonselfadjoint: the Fokker-Planck, or the linearized Boltzmann equation typically contain convective as well as dissipative terms, leading to nonselfadjoint operators. In hydrodynamics, the operators appearing when linearizing the Navier-Stokes equation in the vicinity of some specific solution are generally not selfadjoint.
\par
When studying evolution problems generated by linear operators, one is naturally lead to analyze the spectrum of that operator, be it selfadjoint or not. Yet, in the nonselfadjoint case, establishing a connection between the long time evolution and a spectrum of complex eigenvalues is not so obvious as in the selfadjoint case, since eigenstates do not form an orthonormal family. This difficulty of connecting spectrum and dynamics is linked with a related characteristics of nonselfadjoint operators, namely the
possible strong instability of their spectrum with respect to small perturbations, a phenomenon nowadays commonly called "pseudospectral effect".
Traditionally this spectral instability was considered as a drawback, since it can be at 
the source of immense numerical errors, see \cite{TrEm05}. However, as we will see below,
analyzing this instability can also exhibit interesting phenomena.
Numerical analysis studies, e.g. by L.N.~Trefethen \cite{Tr97}, somewhat changed the perspective of this instability problem: they showed that
considering the {\it pseudospectrum} of the (nonselfadjoint) operator --- that is
the region where the resolvent operator is large --- is often more relevant than considering its spectrum, and can reveal important dynamical informations. 
As an example, when studying a certain class of nonlinear diffusion equations, Sandsteede-Scheel \cite{SaSc05}, Raphael-Zworski \cite{RaZw} 
and Galkowski \cite{Ga14} showed that the pseudospectrum of the (nonselfadjoint)
linearization of the equation can explain the finite time blow-up of the solutions to the full nonlinear equation. 
\par 
In physical situations, perturbation of the "pure" nonselfadjoint operator can originate from many different sources, most of them uncontrolled. Hence, it seems natural to set up a model of  perturbation by {\it random} operators, and to investigate how the spectrum of our operator reacts upon the addition of such perturbations. The spectrum of the full operator thereby becomes random; in case the spectrum is discrete, it forms a random point process on the complex plane, which can be investiaged by probabilistic methods.
This is what we will do in this article, for a particular class of "pure" nonselfadjoint operators. Not all nonselfadjoint operators enjoy the same level spectral instability: some operators are very sensitive, others are less so. The operators we will consider in this work are semiclassical pseudodifferential operators with complex valued symbols, and with some ellipticity assumption ensuring that the spectrum is discrete (at least in some region of the complex plane). In the semiclassical regime, the spectrum of such an operator is very sensitive to perturbations; quite often, the spectrum of the "pure" operator is localized along 1-dimensional curves, while the perturbed spectrum fill up the classical spectrum, which is a domain of $\mathbb{C}$. This "filling up" of the classical spectrum has been precisely studied in a series of works by Hager \cite{Ha06,Ha06b}, Sj\"ostrand
 \cite{Sj08,Sj09,HaSj08}, Bordeaux-Montrieux \cite{BM} (see also  \cite{ZwChrist10} for a similar phenomenon in the framework of Toeplitz operators on the 2-torus). These authors show that the randomly perturbed spectrum satisfies, with high probability, a complex valued version of Weyl's law: the density of eigenvalues near a point $z_0$ inside the classical spectrum, is approximately given by $(2\pi h)^{-1}\,D(z_0)$, where $D(z_0)$ is the {\it classical density} at the "energy" $z_0$, associated with the principal symbol of our operator.
 \par
This Weyl's law counts the eigenvalues in any {\it macroscopic} region of $\mathbb{C}$, so it describes the spectrum at the macroscopic scale. Since the mean density is of order $h^{-1}$, it is reasonable to think that the typical distance between nearest eigenvalues should be of order $h^{1/2}$, which we will call the {\it microscopic scale}. Our aim in this article is to investigate the distribution of eigenvalues at this microscopic scale, from a statistical point of view; in other words, we aim at studying the {\it local spectral statistics} of this family of randomly perturbed operators, in particular the type of statistical interaction between nearby eigenvalues; as a first result on this spectral statistics has been obtained by the second named author, who computed the 2-point correlation between the eigenvalues of our randomly perturbed operator \cite{Vo17}. In this article we will give an essentially complete description of this local statistics in terms of the zeros of certain Gaussian analytic functions, and prove a partial form of {\it universality}.

Before stating our result more precisely, and to motivate them, let us recall some background on the topic of spectral statistics, from a mathematical physics perspective. 
In the 1950s Wigner had the idea, when studying the spectra of nuclear Hamiltonians, to replace these complicated operators by large random matrices \cite{Wig55}. The random matrix model could not reproduce the large scale density fluctuations of the nuclear spectra, which depend on specific features of the system, but they could (empirically) reproduce the local statistical properties of the spectra, at the scale of the mean spacing between eigenvalues. Wigner and Dyson understood that these local statistical properties only depend on global symmetries of the Hamiltonian, like time reversal invariance, but not on the fine details of the nuclear Hamiltonian: these statistical properties were thus said to be {\it universal} \cite{Dy62}. In the 1980s, this universality conjecture was extended to simpler Hamiltonians, namely Laplacians on Euclidean domains with specific shapes: Bohigas-Giannoni-Schmidt observed that if the geodesic flow in the domain is "chaotic", then the local spectral statistics of the corresponding Laplacian correspond to Dyson's Gaussian Orthogonal ensemble \cite{BoGiSch84}. In parallel, a large variety of non-Gaussian random Hermitian matrix ensembles were developed and studied, notably the Wigner random matrices (all entries are i.i.d. non-Gaussian, up to Hermitian symmetry), for which the local spectral statistics was recently shown to be identical with that of the Gaussian ensembles \cite{Erd+10}. 

How about nonselfadjoint operators? Various random ensembles of
nonhermitian matrices have also been introduced in the theoretical
physics literature. The main objective has been to understand the
distribution of quantum resonances for various types of scattering or
dissipative systems, see for instance
\cite{FySo97,ZySo00,KeNoSch08,GoeSk11} (a short recent review can be
found in \cite{Fy16}). For most of these models, the focus has been to
derive the mean spectral density, without investigating the
correlation between the eigenvalues. The "historical" nonhermitian
random matrix model, for which the full eigenvalue statistics has been
derived in closed form, is the complex Ginibre ensemble \cite{Gin65},
where all entries are i.i.d. complex Gaussian; the nearby eigenvalues
then exhibit a statistical repulsion between the nearby eigenvalues,
similar to the case of Dyson's GUE ensemble of hermitian matrices. For
certain non-Gaussian ensembles, recent results \cite{BoYaYi14} have
been obtained on the eigenvalue distribution at the microscopic scale,
but they still fail to prove the universality of the local
statistics. Before coming to our work, let us mention a model studied
recently by Capitaine and Bordenave \cite{BorCap16} (see also \cite{DaHa09}), namely the case of a large Jordan block perturbed by a random Ginibre matrix: the authors prove that most eigenvalues of the perturbed matrix lie close to the unit circle, but they also prove that the "outliers" (the rare eigenvalues away from the unit circle) are distributed like the zeros of a "hyperbolic" Gaussian analytic function (GAF). Our results will involve instead a "Euclidean" Gaussian analytic function, yet the mechanisms through which GAFs appear in these two models are very similar. 

\subsection{Presentation of the results for a simple model case}\label{s:model-case}
Before stating our results in full generality, we will illustrate them by first focussing on a simple case. 
Call $h\in]0,1]$ the effective Planck's "constant", and consider the nonselfadjoint semiclassical harmonic oscilator 
\begin{equation}\label{eq0.1}
	P_h \defeq  -h^2\partial_x^2 + i x^2 \quad \text{ on } L^2(\R)\,.
\end{equation}
The (semiclassical) principal symbol of $P_h$ is given by the function
\begin{equation}\label{eq0.2}
	p(x,\xi) = \xi^2 + ix^2 \quad\text{on the phase space $\in \R^2\ni \rho=(x,\xi)$.}
\end{equation}
We call the set 
\begin{equation}\label{eq0.3}
	\Sigma \defeq  \overline{p(\R^2)}\subset \C \quad\text{the classical spectrum of $P_h$}.
\end{equation}
Here $\Sigma$ is obviously the upper right quadrant of $\C$. The spectrum  of 
$P_h$ is purely discrete, and is  contained in $\Sigma$ (actually, it is explicitly given by $\{z_n=e^{i\pi/4}h(n+1/2);\ n\in\N\}$). Take $\Omega\Subset\overset{\circ}{\Sigma}$ an open 
subset. Then, for any $z=X+iY\in\Omega$, an important data for our construction will be the structure of the "energy shell" \footnote{We will refer to the values $p(x,\xi)$ as "energies", eventhough they are complex.} $p^{-1}(z)\subset \R^2$. Since $p:\R^2\to \C$ is a local diffeomorphism for $z\in\Omega$, this energy shell made of discrete points; in the case of the harmonic oscillator, $p^{-1}(z)$  consists of the 4 points:%
\begin{equation}\label{eq0.4}
\rho^1_+=(Y^{1/2},-X^{1/2}),\,\rho^2_+= (-Y^{1/2},X^{1/2}),\,\rho^1_-=(Y^{1/2},X^{1/2}),\, \rho^2_+= (-Y^{1/2},-X^{1/2})\,. 
\end{equation}
We have labelled those points according to the sign of the Poisson bracket $\{\Rea p, \Ima p\}(\rho)= 4x\xi$: at the points $\rho^j_+$ the bracket is negative, while at the points $\rho^j_-$ it is positive. 
From this bracket condition, one can construct \cite{Da99,NSjZw04}, for each $j=1,2$ a semiclassical family of functions $(e_{+}^j(z,h) \in L^2(\R))_{h\in ]0,1]}$, $\|e_{+}^j(z,h)\|=1$, satisfying
\begin{equation}\label{eq0.5}
	 \|(P_h-z)e_+^j(z,h)\|=\mO(h^{\infty}),
\end{equation}
and such that $e_+^j(z,h)$ is microlocalized at the point $\rho^j_+(z)$\footnote{This microlocalization means that
the function $x\mapsto e_{+}^j(x;z,h)$ is concentrated near $x_{+}^j(z)$ when $h\to 0$,  while its semiclassical Fourier transform $(\cF_h e_{+}^j)(\xi;z,h)$ is concentrated near $\xi_{+}^j(z)$. }. (Here and in the entire text, all norms without index are either norms in $L^2$ or in
$\mathcal{B}(L^2)$, the set of bounded linear operators $L^2\to L^2$). 
We call each family $(e_{+}^j(z,h))$  an $h^{\infty}$-quasimode 
of $P-z$, or for short a quasimode of $P-z$.
Similarly, there exists quasimodes $e_{-}^j(z,h)\in L^2(\R)$
for the adjoint operator $(P_h-z)^*$, microlocalized at the points $\rho^j_-(z)$.
From the quasimode equation \eqref{eq0.5} it is easy to exhibit an operator $Q$ of norm unity and a parameter $\delta=\mO(h^{\infty})$, such that the perturbed operator $P_h+\delta Q$ has an eigenvalue at $z$ (for instance, if we call the error $r_+^j=(P_h-z)e_+^j$, we may take the rank 1 operator $\delta Q=- r_+^j \otimes (e^j_+)^*$). The set $\Omega$ (and hence the full interior of $\Sigma$, since $\Omega$ was chosen arbitrarily) 
is thus a zone of strong spectral instability for $P_h$. For this reason $\overset{\circ}{\Sigma}$ is called the ($h^\infty$-)pseudospectrum of $P_h$.
\\
\par 
Let us now explain how we construct \emph{random} perturbations, following \cite{HaSj08}. 
Let $\{e_k\}_{k\in\N}$  denote an orthonormal basis of $L^2(\R)$ comprised out of the eigenfunctions 
of the \emph{nonsemiclassical} harmonic oscillator $H=-\partial_x^2 + x^2$, and let $\{v_j\}_{j\in\N}$, $\{q_{jk}\}_{j,k\in\N}$ be independent and identically distributed (i.i.d.) complex Gaussian random 
variables with expectation $0$ and variance $1$ (that is, with distribution $\cN_{\C}(0,1)$). Let $N(h)=C_1/h^2$, with $C_1>0$ 
large enough. We define two types of random operators $Q$:
\begin{enumerate}
	\item \textbf{A random, Ginibre-type matrix}
		\begin{equation}\label{e:Ginibre}
		Q=M_{\omega}=\frac{1}{N(h)}\sum_{0\leq j,k<N(h)} q_{j,k} e_j \otimes e_k^* 
		\end{equation}
	\item \textbf{A random (complex valued) potential}
		\begin{equation*}
		Q=V_{\omega}=\frac{1}{N(h)}\sum_{0\leq j<N(h)} v_{j} e_j.
		\end{equation*}
\end{enumerate}
The coupling variable $\delta=\delta(h)$ will be assumed to be in the range 
\begin{equation}\label{e:delta}
h^{M} \leq \delta \leq h^{\kappa},
\end{equation}
where $\kappa > 3$, and $M>\kappa $ is an arbitrarily large  but fixed constant.
Although the random operator $Q$ and $\delta$ depend on $h$, we will skip the dependence in our notations.
We are interested in the spectrum of the perturbed operator
\begin{equation*}
	P_h^{\delta} = P_h + \delta Q, 
\end{equation*}
where the random operator $Q$ is either $M_{\omega}$ or $V_{\omega}$. With probability exponentially close to 
$1$,  $\|M_{\omega}\|_{\mathrm{HS}}, \|V_{\omega}\|_{\infty} \leq Ch^{-1}$ \cite{Ha06b,HaSj08}. 
\smallskip

Our objective will be to study the spectrum of $P_h^{\delta}$ in a microscopic neighbourhood of some given point $z_0\in\Omega$. As explained in the previous section, the probabilistic Weyl's law proved in \cite{Ha06b,HaSj08} shows that the mean density of eigenvalues near $z_0$ is of order $h^{-1}$, so we expect nearby eigenvalues to be at distances $\sim h^{1/2}$ from one another. In order to test the statistical interaction between nearby eigenvalues, we zoom to this scale $h^{1/2}$ at the point $z_0$, and define the rescaled spectral point process:
 \begin{equation*}
  	\mathcal{Z}_{h,z_0}^Q \defeq \sum_{z\in\spec (P_h+\delta Q)} \delta_{(z-z_0)h^{-1/2}}\,.
 \end{equation*}
Our main result is to show that, in the semiclassical limit $h\to 0$, this rescaled point process converges in distribution the point process formed by the zeros of a certain
random analytic function. In order to state our result, we need to define the building block of these random analytic functions, which is the (Euclidean) Gaussian analytic function (GAF). 
\subsection{The Gaussian analytic function}\label{s:GAF}
Let $(\alpha_n)_{n\in\N}$ be independent and identically distributed 
normal complex Gaussian random variables, i.e. $\alpha_n\sim\mathcal{N}_{\C}(0,1)$ for 
every $n\in\N$. For a given $\sigma>0$, we consider the random entire series  
	\begin{equation}\label{eq2.0}
		g_{\sigma}(w)\defeq \sum_{n=0}^{\infty} \alpha_n\frac{\sigma^{n/2}w^{n}}{\sqrt{n !}}, \quad 
		z\in\C\,.
	\end{equation}
With probability one, this series converges absolutely on the full
plane, and defines a Gaussian analytic function (GAF) on $\C$: $g_{\sigma}$ 
is a random entire function, so that for any $n\in\N$ and any $w_1,\dots,w_n\in\C$ 
the random vector $(g(w_1),\dots,g(w_n))$ is a centred complex Gaussian
\begin{equation}
	(g_{\sigma}(w_1),\ldots,g_{\sigma}(w_n)) \sim \mathcal{N}_{\C}(0,\Gamma),
\end{equation}
where the \emph{covariance matrix} $\Gamma\in GL_n(\C)$ has the entries
\begin{equation}
	\Gamma_{i,j} = \erw\left[g_{\sigma}(w_i)\overline{g_{\sigma}(w_j)}\right] 
	\overset{\mathrm{def}}{=}K_\sigma(w_i,\overline{w}_j) 
	= \exp(\sigma w_i \overline{w}_j)\,.
\end{equation}
The function $\C^2\ni(u,v)\mapsto K_\sigma(u,\overline{v})$ is called the \emph{covariance kernel}
of the GAF $g_{\sigma}$, it completely determines its distribution. As a result, $K_\sigma$ also
completely determines the distribution of 
\begin{equation*}
	\mathcal{Z}_{g_\sigma} \defeq \sum_{w\in g^{-1}_{\sigma}(0)} \delta_w,
\end{equation*}
the random point process given by zeros of the GAF $g_{\sigma}$, see for instance 
\cite{HoKrPeVi09}. 
In Section \ref{sec:RAF}, we will review basic notions and results 
concerning zero point processes of random analytic functions, making the above statements more precise. 

The GAF zero process has interesting geometric properties. Indeed, its covariance kernel shows that for any $w_0\in\C$, the translated function $g_\sigma(w+w_0)$ is equal in distribution to the function 
$e^{\sigma (w\bar{w}_0+|w_0|^2)}g_\sigma(w)$, which has the same zeros as $g_\sigma(w)$: this implies that the zero process $\mathcal{Z}_{g_\sigma}$ is invariant by translation. The average density (1-point function) of $\mathcal{Z}_{g_\sigma}$ is thus constant over the plane, it is equal to $\sigma/\pi$ (see section~\ref{s:2-point}). The linear dependence in $\sigma$ is coherent with the scaling covariance $g_\sigma(w) \stackrel{d}{=} g_1(\sqrt{\sigma} w)$: dilating the zero process $\mathcal{Z}_{g_1}$ by $1/\sqrt{\sigma}$ multiplies the average density by $\sigma$.
\medskip

Let us give a short historical background of the GAF. It has appeared before in the context of holomorphic representations
of quantum mechanics, when investigating the properties of {\it random states}. 
In the framework of Toeplitz quantization on a compact K\"ahler manifold $M$, one defines a positive line bundle $L$ over $M$, and for any integer $N\geq 1$ 
a "quantum" Hilbert space $\mathcal{H}_N$ is formed by the holomorphic sections on the bundle $L^{\otimes N}$; the limit $N\to\infty$ is then a form of semiclassical limit. 
In the case of the 1-dimensional projective space $M=\C P^1$, which is the phase space of the quantum spin, Hannay \cite{Ha96} defined a natural ensemble of random holomorphic sections in $\mathcal{H}_N$, and studied the point process formed by their zeros (topological constraints impose that any section has exactly $N$ zeros). He explained how to compute the $k$-point correlation function of this process, and computed explicitly the limit (after microscopic rescaling) of the 2-point correlation function, which coincides with the 2-point function of the GAF. A few years later, Bleher-Schiffman-Zelditch \cite{BlShiZe00} proved that, for a general Toeplitz quantization $(M,L)$, the zeros of random holomorphic sections converge, when $N\to\infty$, to a {\it universal} process depending only on the dimension of $M$. In dimension $1$, this process is given by the zero process of the GAF. 

\smallskip

We are now equipped to state our Theorem concerning the spectrum of $P_h^\delta$.

\begin{thm}[Complex harmonic oscillator]\label{thm01} 
Fix  $z_0=X_0+iY_0\in\Omega\Subset\mathring{\Sigma}$, and   
define the classical density for the symbol $p(x,\xi)=\xi^2+ix^2$ at the points $\rho^j_{\pm}\in p^{-1}(z_0)$:
 \begin{equation*}
\sigma(z_0)  \defeq \frac{1}{ | \{\Rea p,\Ima p\}(\rho_{\pm}^j(z_0)) | }=\frac{1}{4 \sqrt{X_0 Y_0}}\,.
 \end{equation*}
Let the perturbation $Q$ be either $M_{\omega}$ or $V_{\omega}$, and take $\delta$ as in \eqref{e:delta}.
Then, for any bounded domain $O\Subset\C$, we have the convergence in distribution of the spectral point processes:
 \begin{equation*}
  	\mathcal{Z}_{h,z_0}^Q
	\stackrel{d}{\longrightarrow} 
	\mathcal{Z}_{G_{z_0,Q}} \quad  \text{ on } O, \quad 
	\text{ as } h\to 0.
 \end{equation*}
(This convergence of point processes is explained in Thm~\ref{thm_m2}).  
Here  $\mathcal{Z}_{G_{z_0,Q}}$ is the zero point process for the random entire function $G_{z_0,Q}$ described below:
 \begin{enumerate}
  \item if the perturbation $Q=V_{\omega}$ then 
 \begin{equation*}
 G_{z_0,V}(w)=g^1_{z_0}(w)g^2_{z_0}(w), \quad w\in \C, 
 \end{equation*}
 where $g^1_{z_0},g^2_{z_0}$ are two independent copies of the GAF $g_{\sigma(z_0)}$.%
\item if $Q=V_{\omega}$ then 
 \begin{equation*}
 G_{z_0,M}(w)=\det \big(g_{z_0}^{i,j}(w)\big)_{1\leq i,j \leq 2}, \quad w\in \C, 
 \end{equation*}
 where $g_{z_0}^{i,j}$, $1\leq i,j\leq 2$, are 4 independent copies of the GAF $g_{\sigma(z_0)}$.  %
\end{enumerate}
\end{thm}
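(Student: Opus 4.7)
The plan is to reduce the eigenvalue problem for $P_h^\delta$ in a neighborhood of $z_0$ to the vanishing of the determinant of a $2\times 2$ matrix-valued random holomorphic function, and then to show that, suitably rescaled, this determinant converges in distribution to the limit $G_{z_0,Q}$ described in the theorem, so that its zero process converges to $\cZ_{G_{z_0,Q}}$.

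First, following Hager--Sj\"ostrand, I set up a Grushin problem based on the four quasimodes $e^j_\pm(z,h)$, $j=1,2$. Define $R_+(z)u\defeq(\langle u,e_+^j(z,h)\rangle)_{j=1,2}$ and $R_-(z)u_-\defeq\sum_j u_-^j\, e_-^j(z,h)$, and consider the bordered operator $\mathcal{P}(z)=\bigl(\begin{smallmatrix} P_h-z & R_-\\ R_+ & 0\end{smallmatrix}\bigr)$. For $z$ in a neighborhood of $\Omega$, one shows that $\mathcal{P}(z)$ is bijective with inverse $\mathcal{E}(z)=\bigl(\begin{smallmatrix} E & E_+\\ E_- & E_{-+}\end{smallmatrix}\bigr)$ of $\mO(1)$-norm, and moreover $E_{-+}(z)=\mO(h^\infty)$ since the quasimode equation $\|(P_h-z)e_+^j\|=\mO(h^\infty)$ forces $E_{-+}$ to be negligible. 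The Schur complement formula then gives $z\in\spec(P_h^\delta)$ iff $\det E_{-+}^\delta(z)=0$, with
\begin{equation*}
E_{-+}^\delta(z)=E_{-+}(z)-\delta\, R_+(z)\bigl(I+\delta Q\, E(z)\bigr)^{-1}Q\, E_+(z)=E_{-+}(z)-\delta\, R_+(z)Q E_+(z)+\mO(\delta^2\|Q\|^2),
\end{equation*}
and under the hypothesis \eqref{e:delta} the linear-in-$\delta$ term dominates the unperturbed $E_{-+}$.

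Second, I rescale: set $z=z_0+w h^{1/2}$ with $w\in O$ bounded, and consider the normalized random matrix
\begin{equation*}
F_h^Q(w)\defeq c(h)\, E_{-+}^\delta(z_0+wh^{1/2}),
\end{equation*}
where $c(h)$ is a scalar prefactor (depending on $\delta$ and a power of $h$) chosen so that the entries of $F_h^Q$ have covariance of order $1$. The zeros of $w\mapsto\det F_h^Q(w)$ in $O$ coincide, up to $\mO(h^\infty)$ spurious contributions, with $\cZ_{h,z_0}^Q$. The core of the proof is then the identification of the limit of the random analytic matrix $F_h^Q$. Each entry $M^{ij}(z,h)\defeq\langle Q e_+^j(z,h),e_-^i(z,h)\rangle$ is a centered complex Gaussian in both cases, and its covariance is computed using the WKB expansion of the quasimodes together with a stationary-phase evaluation. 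For the potential $Q=V_\omega$, the quasimodes $e_+^j$ and $e_-^i$ are spatially concentrated at $x_+^j$ and $x_-^i$, which coincide exactly when $i=j$ and are separated by a fixed distance otherwise; hence $M^{ij}=\mO(h^\infty)$ for $i\neq j$, while the diagonal entries $M^{jj}$ converge, jointly and as analytic functions of $w$, to two independent copies of the GAF $g_{\sigma(z_0)}$ with $\sigma(z_0)=1/|\{\Rea p,\Ima p\}(\rho_\pm^j(z_0))|$. Consequently $\det F_h^V(w)\to g^1_{z_0}(w)g^2_{z_0}(w)$. For $Q=M_\omega$, no such microlocal orthogonality holds and the four entries $M^{ij}$ converge jointly to four independent GAFs with parameter $\sigma(z_0)$, yielding $\det F_h^M(w)\to\det(g^{ij}_{z_0}(w))_{1\le i,j\le 2}$.

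Finally, I upgrade convergence in distribution of the random analytic functions $\det F_h^Q\to G_{z_0,Q}$ on compact subsets of $\C$ to convergence of the zero point processes $\cZ_{h,z_0}^Q\to\cZ_{G_{z_0,Q}}$, using the Hurwitz-type result recalled in Theorem~\ref{thm_m2}; this step requires that the limit $G_{z_0,Q}$ has almost surely only simple zeros, which is a standard property of nondegenerate GAF products and GAF determinants. The main obstacles I anticipate are twofold. The first is the careful control of the nonlinear $\mO(\delta^2\|Q\|^2)$ terms in the Schur complement: since $\delta$ may be as small as $h^M$ and the bound on $\|Q\|$ from \cite{Ha06b,HaSj08} only holds with exponentially high probability, one needs sharp weighted resolvent estimates for $P_h-z$ adapted to the pseudospectral regime, so that the linear Gaussian term remains dominant. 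The second, more technical, is the proof that the finite-dimensional covariances of $c(h)M^{ij}(z_0+wh^{1/2},h)$ converge to the Bargmann--Fock kernel $K_{\sigma(z_0)}(w,\overline{w'})=\exp(\sigma(z_0)w\overline{w'})$: this requires a quantitative expansion of the $z$-dependence of the WKB quasimodes $e_\pm^j(z,h)$, including the prefactors, and a uniform stationary-phase argument controlling the cross terms.
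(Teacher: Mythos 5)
Your proposal follows essentially the same route as the paper: a Grushin reduction to a $J\times J$ effective Hamiltonian $E_{-+}^\delta(z)$, a microscopic rescaling $z=z_0+w h^{1/2}$, identification of the (appropriately normalized) matrix entries' limiting covariance kernel as Bargmann--Fock, and an upgrade from convergence of random analytic functions to convergence of zero point processes via a Hurwitz-type argument (Proposition~\ref{prop6.6} in the paper; note that that result needs only $f_n, f\not\equiv 0$ a.s., {\it not} simple zeros). The extra simplification you exploit is legitimate and valuable: for Theorem~\ref{thm01} the perturbation coefficients are complex Gaussian, so each entry $(Q\,e_+^j(z)\mid e_-^i(z))$ is exactly a centered complex Gaussian for every $h$, and the finite-dimensional convergence reduces to a deterministic covariance computation. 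The paper must instead invoke a Lindeberg-type CLT (Theorem~\ref{thmCLT}) because it treats non-Gaussian laws satisfying only moment conditions; for the Gaussian model case your argument is clean and avoids that machinery. You also correctly pinpoint the two mechanisms driving the two limits: microlocal orthogonality (via hypothesis \eqref{eq1.8.1a} and \eqref{eq.15}) makes the off-diagonal entries $\mO(h^{\infty})$ in the potential case, yielding a product of two independent GAFs, while no such orthogonality holds in the Ginibre case, yielding a determinant of four independent GAFs.

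Two technical points you gloss over are worth a warning. First, the normalization you call a scalar prefactor $c(h)$ cannot be a constant in $w$: the normalized quasimodes $e_\pm^j(z)$ are not holomorphic in $z$, and the correct object is a holomorphic reduced determinant $G^{\delta}(z;h)$ obtained by multiplying $\det E_{-+}^\delta(z)$ by a solution $\e^{l^\delta(z)/h}$ of a $\bar\partial$-equation (see \eqref{eq5.2.5}--\eqref{eq5.2.11a}); equivalently one works with the unnormalized holomorphic quasimodes $e_\pm^{j,hol}$ and keeps track of a deterministic, nonvanishing gauge factor $\e^{F_{ij}(w)}$ multiplying the limiting GAF. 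The zeros are unaffected, so your conclusion survives, but the statement that $\det F_h^Q$ converges to $G_{z_0,Q}$ itself (rather than to $\e^{\Phi(w)}G_{z_0,Q}(w)$) needs this bookkeeping. Second, to pass from finite-dimensional distributions to distributional convergence in $\Hi(O)$ you invoke Theorem~\ref{thm_m2}, but the relevant criterion is tightness of $(F_h^Q)_h$ (Proposition~\ref{prop6.1} following Shirai), proved via Hardy's convexity inequality and the uniform covariance bounds; in the Gaussian case this is automatic once the kernel converges locally uniformly, but it should be said. Also note that in your displayed Schur-complement expansion you wrote $R_+(z)(I+\delta QE)^{-1}QE_+$ where the lower-left block of $\mathcal{E}$, namely $E_-(z)$, should appear; since $E_-(z)v=(v\mid e_-^j(z))+\mO(h^\infty)$ while $R_+(z)u=(u\mid e_+^j(z))$, this is not the same object, though your subsequent $M^{ij}=(Qe_+^j\mid e_-^i)$ is correct, so this is a typo rather than a conceptual gap.
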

As we will explain below (see section~\ref{s:k-point}), the convergence of point processes implies that all $k$-point measures converge as well to the
limiting ones.
%
\section{Main results -- general framework}\label{sec:MainRes}
The above theorem can be generalized to a large class of 1-dimensional nonselfadjoint $h$-pseudodifferential operators, and with random perturbations which are not necessarily Gaussian. We first present the class of operators we will be dealing with. 

\subsection{Semiclassical framework}
We begin by recalling the definition of  the \emph{pseudospectrum} of an operator, an important notion which quantifies its spectral instability.

For $P:L^2\to L^2$, a densely defined closed linear operator with resolvent 
set $\rho(P)$ and spectrum $\spec(P)=\C\backslash\rho(P)$. For any 
$\varepsilon>0$, we define the $\varepsilon$-pseudospectrum of $P$ by 
	\begin{equation}\label{eq1.1}
	\spec_{\varepsilon}(P) \defeq \spec(P)\cup 
	\{z\in\rho(P); \|(P-z)^{-1}\| >\varepsilon^{-1}\}.
	\end{equation}
When $\varepsilon$ is small, the set \eqref{eq1.1} describes a region of spectral instability of the operator $P$, 
since any point in the $\varepsilon$-pseudospectrum of $P$ lies in the spectrum of a certain 
$\varepsilon$-perturbation of $P$ \cite{TrEm05}. Indeed, $\spec_{\varepsilon}(P)$ can also be defined by
\begin{equation}\label{eq1.2}
   \spec_{\varepsilon}(P)= \bigcup_{\substack{Q\in\mathcal{B}(L^2) \\
    					\lVert Q\rVert < 1}}
					\spec(P+\varepsilon Q).
\end{equation}
A third, equivalent definition of the $\varepsilon$-pseudospectrum of $P$ is via the existence of
approximate solutions to the eigenvalue problem $P-z$: 
\begin{equation}\label{eq1.3}
   z \in \spec_{\varepsilon}(P)
   \Longleftrightarrow 
   \exists u_z\in D(P) \text{ s.t. } \|(P-z)u_z\| < \varepsilon \|u_z\|, 
\end{equation}
where $D(P)$ denotes the domain of $P$. Such a state $u_z$ is called an 
$\varepsilon$-quasimode or simply a \textit{quasimode}. 
\\
\par
Next, let us fix the type of unperturbed operators we will consider in this paper. 
We will use the notation $\rho=(x,\xi)\in\R^2$ for phase space points. We start by considering
an \emph{order function} $m\in\mathcal{C}^{\infty}(\R^2; [1,\infty[)$:
 \begin{equation}\label{eq1.4}
 \exists C_0\geq 1,~ \exists N_0 >0 : \quad 
 m(\rho) \leq C_0 \langle \rho-\mu\rangle^{N_0} m(\rho), 
 \quad \forall \rho,\mu \in \R^2,
 \end{equation}
with the usual notation $\langle \rho-\mu\rangle\defeq\sqrt{1+|\rho-\mu|^2}$. 
To this order function is associated a semiclassical symbol class (cf \cite{DiSj99,Zw12}):
 \begin{multline}\label{eq1.5}
S(\R^2,m)=\left\{
q\in\mathcal{C}^{\infty}(\R^2_\rho\times ]0,1]_h); ~\forall \alpha\in\N^2, ~\exists C_{\alpha}: \right. \\
\left.	|\partial^{\alpha}_{\rho}q(\rho;h)|\leq C_{\alpha} m(\rho), ~ \forall \rho\in\R^2,\ \forall h\in]0,1]
\right\}.
 \end{multline}
We assume that the symbol  $p\in S(\R^2,m)$ is "classical", namely it satisfies an asymptotic expansion in the limit $h\to 0$:%
\begin{equation}\label{eq1.6}
	p(\rho;h) \sim p_0(\rho)+ hp_1(\rho) + \dots \quad \text{in } S(\R^2,m),
\end{equation}
where each $p_j\in S(\R^2,m)$ is independent of $h$. In this case
we call $p_0$ the (semiclassical) principal symbol of $p$. We then define two subsets of $\C$ associated with $p_0$:
\begin{equation}\label{eq1.8}
		\Sigma \defeq \overline{p_0(\R^2)} \subset \C, \qquad
		\Sigma_{\infty}\defeq \{
		z\in\Sigma; ~\exists (\rho_j)_{j\geq 1} \text{ s.t. } |\rho_j|\to\infty, ~ p_0(\rho_j)\to z
		\}.
\end{equation}
$\Sigma$ is the classical spectrum of the operator $P_h$ defined below.
Furthermore, we suppose that the principal symbol $p_0$ is \emph{elliptic} at some point 
$z_{out}\in\C\backslash \Sigma$: 
\begin{equation}\label{eq1.7}
\exists C_0>0,\quad	|p_0(\rho)-z_{out}|\geq m(\rho)/C_0, \quad \forall \rho\in\R^2.
\end{equation}
For $h\in ]0,1]$ we let $P_h$ denote the $h$-Weyl quantization of 
the symbol $p$, 
\begin{equation}\label{eq1.7.5}
		P_hu(x)=p^w(x,hD_x;h)u(x) = \frac{1}{2\pi h}\iint \e^{\frac{i}{h}(x-y)\cdot\xi}\,
		p\left(\frac{x+y}{2},\xi;h\right)u(y)dy d\xi, 
\end{equation}
which makes sense for $u\in \mathcal{S}(\R^d)$ the Schwartz space. The closure of $P_h$ as an unbounded 
operator on $L^2$, has domain 
$H(m)\defeq (P_h-z_{out})^{-1}(L^2(\R))\subset L^2(\R)$, we will still denote this closed operator by $P_h$. 
Moreover, we will denote by $\|u\|_m\defeq \|(P_h-z_{out}) u\|$ the associated norm on $H(m)$\footnote{Although this norm depends on the choice of the symbol $p-z_{out}$, it is equivalent to the norm defined from any elliptic operator in $q\in S(m)$, so that the space $H(m)$ only depends on the order function $m$.}.
\\
\par
Let $\widetilde{\Omega}\subset\C\backslash\Sigma_{\infty}$ be open simply connected, 
not entirely contained in $\Sigma$, and such that 
$\overline{\widetilde{\Omega}}\cap\Sigma_{\infty}=\emptyset$. Then, 
the spectrum  of $P_h$ inside $\widetilde{\Omega}$ satisfies the following properties in the semiclassical limit \cite{Ha06b,HaSj08}:
	\begin{itemize}
	\item for $h>0$ small enough,  $\spec (P_h)\cap\widetilde{\Omega}$ is discrete 
	\item for all $\varepsilon>0$, $\exists h(\varepsilon)>0$ such that 
		\begin{equation}
			\spec(P_h)\cap \widetilde{\Omega} \subset \Sigma + D(0,\varepsilon),
			\quad \forall 0<h<h(\varepsilon).
		\end{equation}
	\end{itemize}
Here, $D(0,\varepsilon)\subset\C$ denotes the open disc of radius $\varepsilon>0$ 
centred at $0$. In this work we will study the spectrum of small random perturbations of 
$P_h$, in the semiclassical limit $h\to 0$, in the interior of $\Sigma$.
\subsection{Pseudospectrum and the energy shell}\label{sec:PsEs}
Let $\widetilde{\Omega}$ be as above and let 
\begin{equation}\label{eq1.8.2}
\Omega\Subset\widetilde{\Omega}\cap\mathring{\Sigma} 
\text{ be  open, simply connected.}
\end{equation}
Recall that $p_0$ is the principal symbol of $p$, see \eqref{eq1.6}. We also assume that:
\begin{equation}\label{eq1.0}
	\text{for every } \rho \in p_0^{-1}(\overline{\Omega}),\  \text{the $1$-forms }dp_0,d\overline{p_0} \text{ are linearly independent.}
\end{equation}
Since the dimension $d=1$, this condition is equivalent to: 
\begin{equation}\label{eq1.0.1}
	\text{ for every } \rho \in p_0^{-1}(\overline{\Omega}),\ 
	\{\Rea p_0,\Ima p_0 \}\neq 0 ,
\end{equation}
where $\{\cdot,\cdot\}$ denotes the Poisson bracket of the two functions:
$$
\{p,q\}(\rho)\defeq \partial_{\xi}p(\rho)\,\partial_xq(\rho) - \partial_{\xi}q(\rho)\,\partial_x p(\rho),\qquad \rho=(x,\xi)\in\R^2\,.
$$

It was observed by Dencker, Sj\"ostrand and Zworski \cite{NSjZw04}, and 
Sj\"ostrand \cite{Sj16} that since $\Omega$ is relatively compact and simply connected, 
\eqref{eq1.0}, or equivalently \eqref{eq1.0.1}, implies that 
there exists $J\in\N^*$ depending only on $\Omega$, so that for any $z\in\Omega$, the "energy shell" $p_0^{-1}(z)$ consists of exactly $2J$ points: 
\begin{equation}\label{eq1.8.1}
	\begin{split}
	p_0^{-1}(z)=\{\rho_{\pm}^j(z); j=1,\dots,J\},
	&\text{ with }\pm \{\Rea p,\Ima p\}(\rho_{\pm}^j(z)) <0, \\ 
	& \rho_{\pm}^i(z)\neq \rho_{\pm}^j(z) \ \text{ if } \  i\neq j,
	\end{split}\tag{HYP}
\end{equation}
and the points $\rho_{\pm}^j(z)\in\R^2$ depend smoothly on $z$. 
\par
We shall make the further (generic) assumption
\begin{equation}\label{eq1.8.1a}
	\forall z\in\Omega, \quad x_{\pm}^i(z)\neq x_{\pm}^j(z) \ \text{ if } \  i\neq j\,,\tag{HYP-x}
\end{equation}
which will play a role when studying the perturbation by a random potential.%
 
It was shown by Davies \cite{Da99} and Dencker, Sj\"ostrand and 
Zworski \cite{NSjZw04} that \eqref{eq1.8.1} implies, for each $z\in\Omega$ and each $j=1,\ldots,J$,
the existence of an $h^{\infty}$-quasimode for the problem 
$P_h-z$ (resp. $(P_h-z)^*$), microlocalized on $\rho_{+}^j(z)$ (resp. $\rho_{-}^j(z)$): there exist $e_{\pm}^j=e_{\pm}^j(x,z;h) \in L^2(\R)$,  
$\|e_{\pm}^j\|=1$, such that 
\begin{equation}\label{eq1.8.3}
	 \|(P_h-z)e_+^j\|=\mO(h^{\infty}) \quad \text{and} \quad 
	 \WF_h(e_+^j)=\{\rho_+^j(z)\},
\end{equation}
respectively
\begin{equation}\label{eq1.8.4}
	 \|(P_h-z)^*e_-^j\|=\mO(h^{\infty}) \quad \text{and} \quad 
	 \WF_h(e_-^j)=\{\rho_-^j(z)\}.
\end{equation}
Recall that, for  $u=(u(h))_{h\in ]0,1]}$ a bounded family in $L^2$, its semiclassical wavefront set $\WF_h(u)$ is defined by 
\begin{equation*}
 \WF_h(u)\stackrel{\mathrm{def}}{=}\complement
 \left\{(x,\xi)\in T^*\R ; ~ \exists 
 a\in\mathcal{C}_c^{\infty}(T^*\R),~a(x,\xi)=1,~
 \lVert a^w(x,hD_x) u(h)\rVert_{L^2}=\mO(h^{\infty})
 \right\}
\end{equation*}
where $a^w$ denotes the Weyl quantization of $a$. 
\par
In view of the characterisation \eqref{eq1.3} of the pseudospectrum, we see 
that the assumption \eqref{eq1.0} implies that $\Omega$ is contained in the 
$h^{\infty}$-pseudospectrum of $P_h$, a spectrally highly unstable region. 
\subsection{Adding a random perturbation}\label{sec:AddRandPert}
We are interested in random perturbations of the operator $P_h$ which are given by 
either a random matrix or a random potential. We will construct 
those in the following way, which generalizes the construction made in section~\ref{s:model-case}. 
We consider $\{e_k\}_{k\in\N}$ the orthonormal eigenbasis of the (nonsemiclassical) harmonic oscillator $H=-\partial_x^2 + x^2$ on $L^2(\R)$.
\begin{rem}
The choice of this orthonormal basis is a convenient one for 
us. However, it will become clear later on in the text that what we need is 
a system of states (not necessarily orthonormal) such that the 
first $N(h)$ states microlocally cover a sufficiently large part of 
phase space, namely a neighbourhood of $p_0^{-1}(\Omega)$. We also need to avoid states which would 
have a large overlap with some of the quasimodes $e_{\pm}^j$, cf. \eqref{eq1.8.3}, 
\eqref{eq1.8.4}. We refer the reader in particular to the proofs of Propositions 
\ref{lem6.3.3} and \ref{prop8.2} below.
\end{rem}
Let $\alpha$ be a complex valued random variable defined on some probability space $(\mathcal{M},\mathcal{F},\prob)$, %
with the properties
	\begin{equation}\label{eq1.9}
		\erw[\alpha] = 0, \quad \erw[\alpha^2] = 0,\quad \erw[|\alpha|^2]=1, 
		\quad  \erw[|\alpha|^{4+\epsilon_0}] < +\infty,
	\end{equation}
where $\varepsilon_0>0$ is an arbitrarily small but fixed constant. Here, 
$\erw[\cdot]$ denotes the expectation with respect 
to the probability measure $\prob$. 
The Markov inequality implies the following tail estimate: 
there exists a constant $\kappa_{\alpha}>0$ such that 
	\begin{equation}\label{eq1.9.1}
		\prob[|\alpha |\geq\gamma] \leq \kappa_{\alpha}\,\gamma^{-(4+\varepsilon_0)}, 
		\quad \forall \gamma > 0.
	\end{equation}
\begin{rem}
For instance, a complex centred Gaussian random variable as in eq.~\eqref{eq2.0} satisfies
the above assumptions.
\end{rem}
\noindent
\textbf{Random Matrix.} Let $N(h)=C_1/h^2$, $C_1>0$ large enough (we'll be more precise about this condition later). 
Let $q_{j,k}$,  $0\leq j,k <N(h)$ be independent copies of the random variable $\alpha$
satisfying the conditions \eqref{eq1.9}. We consider the "random matrix"
	\begin{equation}\label{eq1.10}
		M_{\omega}=\frac{1}{N(h)}\sum_{0\leq j,k<N(h)} q_{j,k}\, e_j \otimes e_k^*, \tag{RM}
	\end{equation}
where $e_j \otimes e_k^*u=(u|e_k)e_j$ for $u\in L^2(\R)$. 
For some coupling parameter $0<\delta\ll1$, we define the randomly perturbed perturbed operator 
\begin{equation}\label{eq1.11}
	P^{\delta}_M = P_h+\delta M_{\omega}. 
\end{equation}
\\[2ex]
\textbf{Random Potential.} Take $N(h)=C_1/h^2$, $C_1>0$ as above. Let $v_{j}$, 
$0\leq j <N(h)$ be independent copies of the random variable $\alpha$. Still using the same orthonormal family $(e_k)_{k\in\N}$, we define the random function  
	\begin{equation}\label{eq1.13}
		V_{\omega}=\frac{1}{N(h)}\sum_{0\leq j<N(h)} v_{j}\, e_j. \tag{RP}
	\end{equation}
For $0<\delta\ll1$, write the perturbed operator
\begin{equation}\label{eq1.14}
	P^{\delta}_V = P_h+\delta V_{\omega}.
\end{equation}
We call this perturbation a "random potential", eventhough $V_\omega$ is complex valued. 
When we consider this type of perturbation, we will make the additional 
symmetry assumption:
\begin{equation}\label{eq.15}
	p(x,\xi;h)=p(x,-\xi;h).\tag{SYM}
\end{equation}
This hypothesis implies that we can group the points $\rho_{\pm}^j$ forming $p_0^{-1}(z)$, see \eqref{eq1.8.1},  
such that $\rho_{\pm}^j = (x^j,\pm\xi^j)$; as a result, the centres of microlocalization 
of the quasimodes $e_+^j$ and $e_-^j$ are located on the same fibre $T^*_{x^j}\R = \{ (x_j,\xi), \, \xi\in\R\}$.
\begin{rem}
	We could relax the assumption \eqref{eq.15} into requiring this symmetry only at the 
	level of the principal symbol, i.e. $p_0(x,\xi)=p_0(x,-\xi)$. However, for the simplicity of the presentation we prefer to make the above stronger hypothesis. 
\end{rem}

\noindent\textbf{Restricting to bounded perturbations.} 
For both types of perturbations, it will be easier for us to restrict the random variables to large discs $D(0,C/h)$, i.e. 
assume that 
\begin{equation}\label{eq1.15.2}
		|v_{i}|, ~|q_{i,j}| \leq C/h, \quad      0\leq i,j <N(h),\quad \text{for some $C>0$ sufficiently large}.	    
\end{equation}
This restriction induces the boundedness of the perturbations $M_\omega$, $V_\omega$.
Using \eqref{eq1.9.1} to estimate the probability of this restriction, we deduce that the boundedness of the perturbations occurs with high probability.
Namely, there exists $C_2>0$ such that, for the random matrix, 
\begin{equation}\label{eq1.11.1}
\prob\big[ \|M_{\omega}\|_{HS} \leq Ch^{-1}\big] \geq 1 - C_2 h^{\varepsilon_0},
\end{equation}
where $\|M_{\omega}\|_{HS} $ denotes the Hilbert-Schmidt norm of $M_{\omega}$. Respectively, in the case of the random potential, 
\begin{equation}\label{eq1.15.1}
	\prob\big[ \|V_{\omega}\|_{\infty} \leq Ch^{-1}\big] \geq 1 - C_2 h^{2+\varepsilon_0}.
\end{equation}
We will take the coupling parameter $\delta$ as in \eqref{e:delta}.
\smallskip

We will see in Section \ref{sec:GP} that the spectra of $P^{\delta}_M$ and 
$P^{\delta}_V$ in $\Omega$ are purely discrete. The principal aim of this 
paper is to show that the statistical properties of these spectra, in a microscopic neighbourhood of any $z_0\in \Omega$, are universal, in a sense that we will specify later on. 
\par
Since $p_0-z$ is elliptic for every $z\in\C\backslash\Sigma$, we 
have that the resolvent norm $\|(P_h-z)^{-1}\| = \mO(1)$, uniformly as 
$h\to 0$. Therefore, in view of the characterisation \eqref{eq1.2} of the 
pseudospectrum, the spectra of $P^{\delta}_M$ and 
$P^{\delta}_V$ are contained in any neighbourhood $\Sigma+D(0,\epsilon)$, for any given $\epsilon>0$ and $h>0$ small enough.  
Moreover, since $\Omega\Subset\mathring{\Sigma}$, we will not feel the effects of the boundary of $\Sigma$; 
we will simply say that $\Omega$ lies in the \textit{bulk} of the spectrum of 
the perturbed operator. 
\subsection{Probabilistic Weyl's law and local statistics}
In a series of works by Hager \cite{Ha06,Ha06b,HaSj08} and Sj\"ostrand \cite{Sj08,Sj09}, 
the authors considered randomly perturbed operators $P^{\delta}$ as 
given in \eqref{eq1.11} and \eqref{eq1.14}. Under more restrictive assumptions on 
the random variables than \eqref{eq1.9}, they have shown the following result. 
\begin{thm}[Probabilistic Weyl's law]\label{thm:PWL}
Let $\Omega$ be as in \eqref{eq1.8.2}, \eqref{eq1.0}. Let $\Gamma\Subset\Omega$ 
be open with $\mathcal{C}^2$ boundary. Let $P^{\delta}$ be either of the randomly 
perturbed operators $P^{\delta}_M$ or $P^{\delta}_V$ with $\delta$ as in \eqref{e:delta}
 with $\kappa>0$ sufficiently large. Then, in the limit $h\to 0$,
\begin{equation}
	\#\big(\spec(P^{\delta})\cap \Gamma\big) = \frac{1}{2\pi h}
		\left( \iint_{p_0^{-1}(\Gamma)} dxd\xi + o(1) \right)\quad \text{with probability $\geq 1 - C h^{\eta}$},
\end{equation}
for some fixed $\eta>0$.
\end{thm}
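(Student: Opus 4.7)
The plan is to follow the Hager--Sj\"ostrand strategy of reducing the count of eigenvalues of $P^{\delta}$ in $\Gamma$ to the count of zeros of a single scalar holomorphic function, and then to analyse the latter by potential theory. First I would set up an adapted Grushin problem: using the quasimodes $e_{\pm}^j(z)$ from \eqref{eq1.8.3}--\eqref{eq1.8.4}, define $R_{+}u=((u\mid e_{+}^j(z)))_{j=1}^J$ and $R_{-}u_{-}=\sum_{j=1}^J u_{-}^j e_{-}^j(z)$, and form
$$\mathcal{P}(z)=\begin{pmatrix} P_h-z & R_{-} \\ R_{+} & 0 \end{pmatrix} : H(m)\oplus\C^J \longrightarrow L^2(\R)\oplus\C^J.$$
The microlocal properties of the $e_{\pm}^j(z)$ under hypothesis \eqref{eq1.8.1} make $\mathcal{P}(z)$ invertible for $z\in\widetilde{\Omega}$ and $h$ small, with effective Hamiltonian $E_{-+}(z)$. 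A Schur complement computation for the perturbed operator yields
$$E^{\delta}_{-+}(z) = E_{-+}(z) - \delta\, E_{-}(z)\, Q\, E_{+}(z) + \mO(\delta^{2}\|Q\|^{2}),$$
and the central reduction is $z\in\spec(P^{\delta})\cap\widetilde{\Omega}\Longleftrightarrow \det E^{\delta}_{-+}(z)=0$. Eigenvalue counting thus amounts to counting zeros of the holomorphic scalar function $f^{\delta}(z)\defeq \det E^{\delta}_{-+}(z)$.

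Next, Lelong--Poincar\'e applied to $\log|f^{\delta}|$, combined with the $\mathcal{C}^2$ regularity of $\partial\Gamma$, gives (modulo negligible boundary corrections)
$$\#\bigl(\spec(P^{\delta})\cap\Gamma\bigr) = \frac{1}{2\pi}\int_{\Omega} \log|f^{\delta}(z)|\,\Delta_{z}\chi_{\Gamma}(z)\,L(dz),$$
for a suitable smooth cut-off $\chi_{\Gamma}$ built from $\Gamma$. Everything then reduces to proving the asymptotic
$$\log|f^{\delta}(z)| = \frac{\Phi(z)}{h} + o\bigl(h^{-1}\bigr),$$
uniformly on compact subsets of $\Omega$, for an explicit smooth function $\Phi$ computed from the action-like phases of the WKB-type quasimodes $e^{j}_{\pm}(z)$ and from $p_0$. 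The upper bound $\log|f^{\delta}|\leq \Phi/h+o(h^{-1})$ is deterministic: it follows from Weyl's inequality on the singular values of $P_h-z$ together with the a priori control $\|Q\|\leq Ch^{-1}$ from \eqref{eq1.11.1}, \eqref{eq1.15.1}. The matching lower bound holds only with probability $\geq 1-Ch^{\eta}$ and is the main obstacle.

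Establishing that lower bound is where randomness is crucial and where the real work lies: one must preclude anomalously small singular values of $P^{\delta}-z$, equivalently prove that $\det E^{\delta}_{-+}(z)$ is not exponentially tiny. Via the moment hypothesis \eqref{eq1.9} and an anti-concentration (small-ball) argument for the linear-in-$Q$ correction appearing in $E^{\delta}_{-+}(z)$, one expects, for each fixed $z\in\Omega$,
$$\prob\!\left[\,\log|f^{\delta}(z)| \leq \frac{\Phi(z)}{h} - C\,\right] \leq C h^{\eta};$$
here $\delta\geq h^{M}$ ensures the random perturbation dominates the $\mO(h^{\infty})$ quasimode errors, while $\delta\leq h^{\kappa}$ keeps the Grushin construction valid. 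A union bound over a sufficiently fine net in $\Omega$ of cardinality polynomial in $h^{-1}$, combined with the deterministic upper bound and the subharmonicity of $\log|f^{\delta}|$, upgrades this to a uniform lower bound on an event of probability $\geq 1-Ch^{\eta}$. Inserting the expansion into the Green identity yields
$$\#\bigl(\spec(P^{\delta})\cap\Gamma\bigr) = \frac{1}{2\pi h}\int_{\Gamma}\Delta\Phi(z)\,L(dz) + o\bigl(h^{-1}\bigr).$$
A final explicit computation identifies $\frac{1}{2\pi}\Delta\Phi$ with the pushforward by $p_0$ of the symplectic volume, using the local branches $\rho^{j}_{\pm}(z)$ and the non-vanishing Poisson bracket $\{\Rea p_0,\Ima p_0\}(\rho_{\pm}^{j})$ from \eqref{eq1.8.1}: the change of variables gives $\int_{\Gamma}\Delta\Phi\,L(dz) = \iint_{p_0^{-1}(\Gamma)}dxd\xi$, which is the claimed Weyl volume.
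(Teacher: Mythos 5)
This theorem is not proved in the paper: the authors state it as background and attribute it to Hager, Sj\"ostrand and collaborators (\cite{Ha06,Ha06b,HaSj08,Sj08,Sj09}), without reproducing a proof. Your sketch must therefore be measured against those references, not against anything internal to this article.

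At the level of strategy your outline is correct and matches the Hager--Sj\"ostrand scheme: (i) well-posed Grushin problem with the WKB quasimodes $e^j_{\pm}(z)$, producing a $J\times J$ effective Hamiltonian $E_{-+}^\delta(z)$ whose zeros (with multiplicity) are the eigenvalues of $P^\delta$; (ii) a Jensen/Green counting identity applied to the holomorphic scalar $f^\delta=\det E^\delta_{-+}$; (iii) a deterministic upper bound $\log|f^\delta|\leq \Phi/h + o(h^{-1})$ coming from singular-value estimates and the a priori control $\|Q\|\leq Ch^{-1}$; (iv) a probabilistic lower bound using only the moment conditions \eqref{eq1.9}; (v) identification of $\frac{1}{2\pi}\Delta\Phi\,L(dz)$ with $(p_0)_*|d\xi\wedge dx|$ through the local branches $\rho^j_\pm(z)$. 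These are indeed the correct ingredients.

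There is, however, a genuine gap in how you pass from the pointwise probabilistic lower bound to the final count. You claim a union bound over a polynomial-in-$h^{-1}$ net, together with \emph{subharmonicity}, yields a \emph{uniform} lower bound for $\log|f^\delta|$ on $\Omega$. This is not possible: $\log|f^\delta|$ has $-\infty$ singularities at every eigenvalue of $P^\delta$, of which there are $\asymp h^{-1}$ scattered through $\Omega$, so no uniform lower bound can hold on an event of any positive probability. Moreover, subharmonicity furnishes \emph{upper} bounds at a point in terms of circle averages (sub-mean-value property), not lower bounds; it cannot be used to interpolate a lower bound between net points. The actual argument in Hager and Sj\"ostrand is finer: one establishes the lower bound at a finite collection of well-chosen reference points placed along or just outside $\partial\Gamma$, and invokes a deterministic counting lemma (a quantitative Jensen-type estimate, using the global upper bound on all of $\Omega$ together with lower bounds at those samples) to control both the number of zeros in $\Gamma$ and the number of anomalously small values near the boundary. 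The probabilistic small-ball input you describe is in the right place; it is this final deterministic step --- not a uniform lower bound via a net --- that closes the argument, and your sketch as written would stall there.
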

Furthermore the authors give an explicit control 
over both the error term in Weyl's law, and the error term in the probability estimate.  
\begin{figure}[ht]
 \begin{minipage}[b]{0.45\linewidth}
  \centering
  \includegraphics[width=\textwidth]{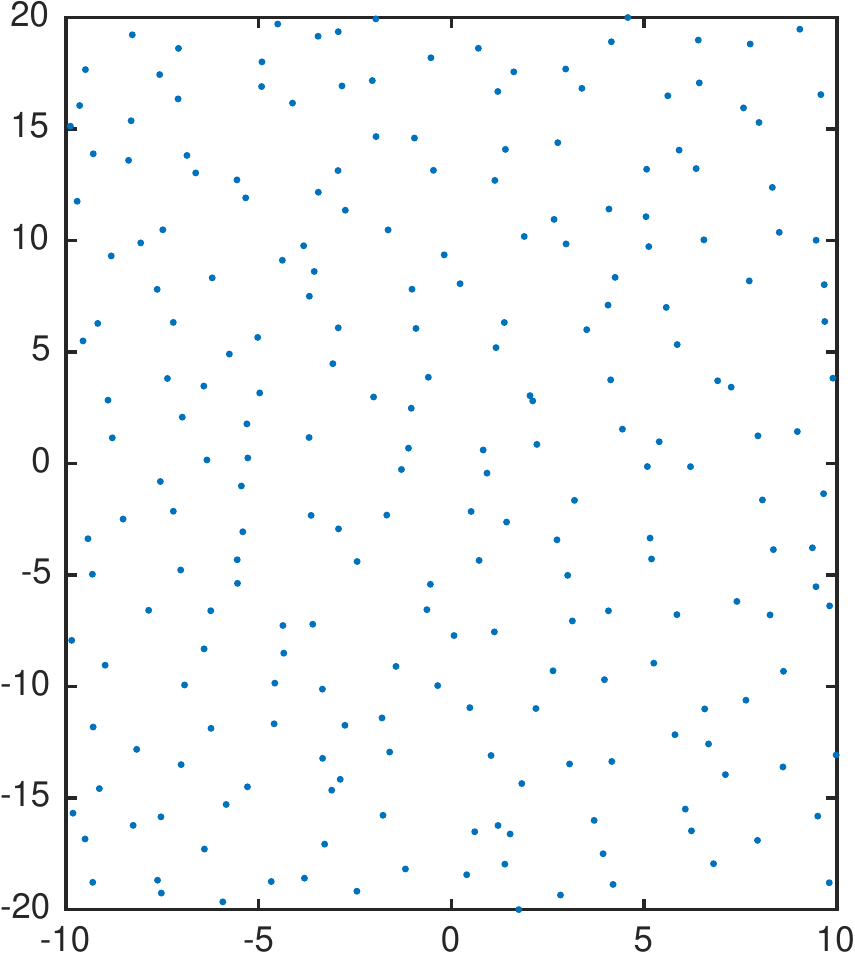}
 \end{minipage}
 \hspace{0.04\linewidth}
 \begin{minipage}[b]{0.45\linewidth}
  \centering 
  \includegraphics[width=\textwidth]{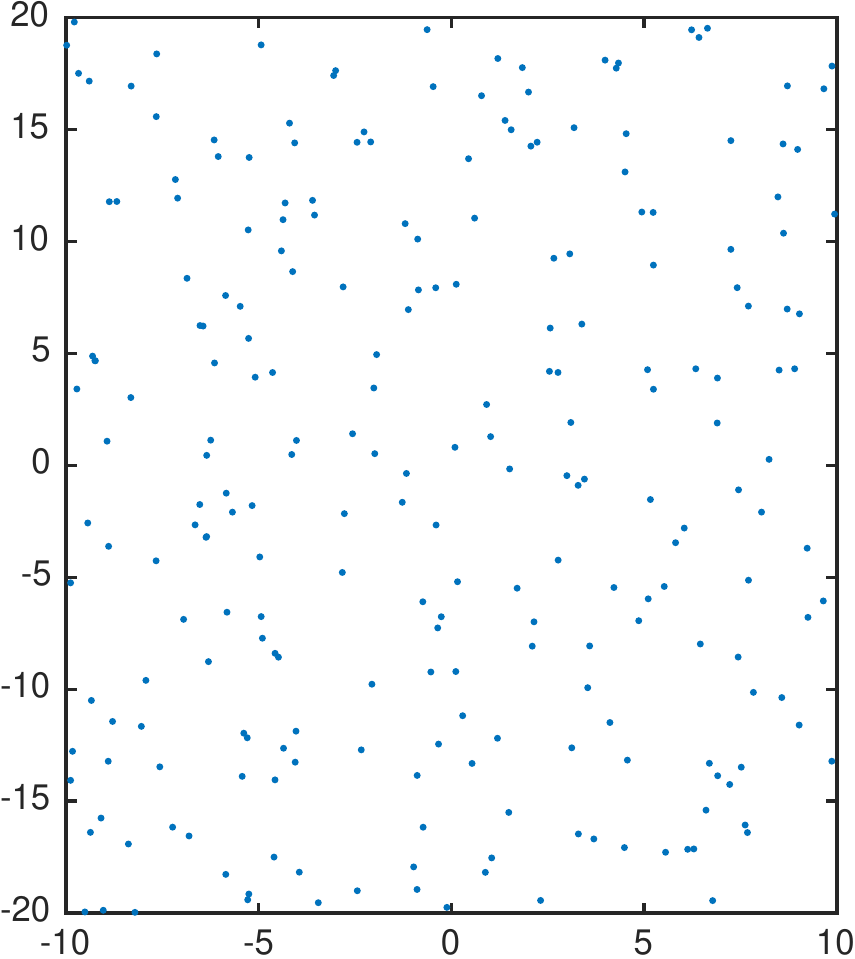}
 \end{minipage}
 \caption{Numerically computed spectra of the operator $-h^2\partial_x^2+\e^{3ix} +\delta Q$ on 
 	    $\mathbb{T}$, with $h=10^{-3}$ and $\delta= 10^{-12}$. The left plot shows the case where 
	    $Q$ is given by a complex Gaussian random 
	    matrix and the right plot the case where $Q$ is given by a random potential as in 
	    \eqref{eq1.13} with coefficients given by complex Gaussian random variables $\sim \cN_{\C}(0,1)$. }
	    \label{fig1}
\end{figure}
\par
This probabilistic Weyl's law shows that, with probability close $1$, the number 
of eigenvalues of the perturbed operator $P^{\delta}$ in any \emph{fixed} compact subset of $\Omega$ is 
of order $\asymp h^{-1}$. Hence, the spectrum of $P^{\delta}$ will spread out across
$\Omega$, with an average spacing between nearby eigenvalues of order $h^{1/2}$. 
Figure~\ref{fig1} illustrates this behaviour for some choice of perturbed operators $P^{\delta}_M$ 
and $P^{\delta}_V$. We observe in Figure~\ref{fig1} that in the case of $P^{\delta}_V$ (right plot),
the distribution of the eigenvalues looks "less uniform" than in the case of $P^{\delta}_M$, including the presence of small clusters of eigenvalues. 

To quantify this difference of "uniformity" between the spectra of $P^{\delta}_M$ 
and $P^{\delta}_V$, we study the local statistics of the eigenvalues, that is the statistics of the 
eigenvalues on the scale of their mean level spacing. 
For this purpose, we fix a point $z_0\in \Omega$. In both cases  $Q=M_\omega$ and $Q=V_\omega$, 
we view the rescale the spectrum of the randomly perturbed operator $P^\delta_Q$ as a random point process
 \begin{equation}\label{eq:PPM}
  	\mathcal{Z}_{h,z_0}^Q\defeq \sum_{z\in\spec (P^{\delta}_Q)}
				 \delta_{(z-z_0)h^{-1/2}},\qquad Q=M_\omega\ \ \text{or}\ \ Q=V_\omega,
 \end{equation}
where the eigenvalues are counted according to their algebraic multiplicities. 
\par
Notice that the rescaled eigenvalues $(z_j-z_0)h^{-1/2}$
have a mean spacing of order $\asymp 1$. 
The principal aim of this paper is to show that, under the assumption 
\eqref{eq1.9} on the random coefficients, in the limit $h\to 0$ 
the correlation functions of the processes $\mathcal{Z}_{h,z_0}^M$ and $\mathcal{Z}_{h,z_0}^V$ 
are \emph{universal}, in the sense that they %
\begin{itemize}
	\item \textit{depend} only on the structure of the energy shell $p_0^{-1}(z)$ and 
		on the type of random perturbation used, either $M_\omega$ or 
		$V_\omega$;
	\item \textit{are independent} of the random variables
		$q_{j,k}$ and $v_j$ used to define the random perturbations, 
		as long as they satisfy \eqref{eq1.9}. 
\end{itemize}
Finally, let us stress once more that our results concern solely the eigenvalues 
in the bulk of the spectra of $P^\delta_h$, that is
in the interior of the $h^{\infty}$-pseudospectrum of $P_h$. 
Near the boundary of the pseudospectrum, we expect the statistical properties of the 
eigenvalues to change drastically. It has been shown by the second
author \cite{Vo14} in the case of a model operator, that 
the probabilistic Weyl's law breaks down in the vicinity of $\partial\Sigma$, in fact, the density of eigenvalues explodes near that boundary.
\subsection{Perturbation by a random potential}
We begin with the case  of the operator $P^{\delta}_V$, \eqref{eq1.14}, perturbed by a  
random potential $V_{\omega}$. 
From the formula \eqref{eq1.8.1} defining the energy shell $p_0^{-1}(z)$, we find that the classical spectral density at the energy $z$ is given by 
\begin{equation}\label{eq2.1}
	(p_0)_*(|d\xi\wedge dx|) 
	=\sum_{j=1}^J(\sigma_+^j(z)+ \sigma_-^j(z)) L(dz), 
	\quad \sigma_{\pm}^j(z) = \frac{1}{\mp \{\Rea p_0,\Ima p_0\}(\rho_{\pm}^j(z))}\,.
\end{equation}
Here $|d\xi\wedge dx|$ denotes the measure induced by the symplectic volume form on $T^*\R \cong \R^2$. Each density
$\sigma_{\pm}^j(z)>0$ is associated with the point $\rho^j_{\pm}$ of the energy shell. These densities depend smoothly on $z$. 
\par
If we additionally assume the symmetry hypothesis \eqref{eq.15} and 
group the points such that $\rho_{\pm}^j = (x^j,\pm\xi^j)$, we find that
$\sigma_+^j(z)= \sigma_-^j(z)$ for all $j=1,\ldots,J$.
\subsubsection{Universal limiting point process}
Let us now state our main theorem for the perturbed operators $P^\delta_V$. It provides the asymptotic behaviour of the rescaled spectral point processes $\mathcal{Z}^V_{h,z_0}$ in the semiclassical limit.
\begin{thm}\label{thm_m2} 
 Let $\Omega\Subset\mathring{\Sigma}$ be as in \eqref{eq1.8.2},
 let $p$ be as in \eqref{eq1.6} satisfying \eqref{eq1.0} and \eqref{eq.15}. 
 Fix $z_0\in\Omega$. Then, for any open, connected, relatively compact 
 domain $O\Subset\C$, we have that 
 \begin{equation*}
  	\mathcal{Z}_{h,z_0}^V
	\stackrel{d}{\longrightarrow} 
	\mathcal{Z}_{G_{z_0}} \quad  \text{ in } O, \quad 
	\text{ as } h\to 0,
 \end{equation*}
This convergence of point processes means that for any test function $\phi\in\mathcal{C}_c(O,\R)$,
 \begin{equation*}
  	\langle \mathcal{Z}_{h,z_0}^V,\phi \rangle = \sum_{z\in\spec (P^{\delta}_h)} \phi((z-z_0)h^{-1/2})
	\stackrel{d}{\longrightarrow} 
	\langle \mathcal{Z}_{G_{z_0}},\phi \rangle = \sum_{z\in G_{z_0}^{-1}(0)}  \phi(z), \quad 
	\text{ as } h\to 0\,.
 \end{equation*}
 Here $\mathcal{Z}_{G_{z_0}}$ is the zero point process for the random analytic function
  \begin{equation*}
 G_{z_0}(z)=\prod_{j=1}^J g^j_{z_0}(z), \quad z\in \C, 
 \end{equation*}
 where the $g^{j}_{z_0}$ are $J$ independent GAFs $g^{j}_{z_0}\sim g_{\sigma_+^j(z_0)}$ (see section~\ref{s:GAF}), with $\sigma_{+}^i(z_0)$ the local spectral density computed in in \eqref{eq2.1}.
\end{thm}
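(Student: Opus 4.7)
\textbf{Proof plan for Theorem \ref{thm_m2}.}

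\emph{Step 1: reduction to zeros of an effective determinant via a Grushin problem.} Following the framework of Hager--Sj\"ostrand, I would set up a well-posed Grushin problem for $P_h - z$ on a neighbourhood of $\overline\Omega$, using the $2J$ quasimodes $e_+^j,e_-^j$ from \eqref{eq1.8.3}--\eqref{eq1.8.4} as the auxiliary spaces. This yields for every $z\in\Omega$ a bounded inverse matrix $\mathcal{E}(z)$ whose lower-right block $E_{-+}(z)$ is a $J\times J$ matrix (we have $J$ pairs by \eqref{eq1.8.1}) with $\det E_{-+}(z)=\mathcal{O}(h^\infty)$, and such that $z\in\spec(P_h^\delta)$ is equivalent to $\det E_{-+}^\delta(z)=0$, where $E_{-+}^\delta(z) = E_{-+}(z) - \delta E_{-+}^{(1)}(z)+ \mathcal{O}(\delta^2 h^{-C})$ with
\[
E_{-+}^{(1)}(z)_{jk} = \langle V_\omega\, e_+^k(z), e_-^j(z)\rangle + \mathcal{O}(h^\infty).
\]
After zooming in with $z = z_0 + h^{1/2}w$, the analytic function whose zero set we must control is $F_h(w) \defeq \delta^{-J}\det E_{-+}^\delta(z_0+h^{1/2}w)$ on a compact set $O\Subset\C$.

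\emph{Step 2: off-diagonal entries are negligible.} This is the crucial consequence of \eqref{eq1.8.1a} combined with \eqref{eq.15}. By (SYM) the quasimode $e_-^j$ is microlocalized at $(x^j,-\xi^j)$ while $e_+^k$ sits at $(x^k,+\xi^k)$; (HYP-x) gives $x^j\ne x^k$ for $j\ne k$. The product $e_+^k(x)\,\overline{e_-^j(x)}$ is then a coherent state concentrated near two distinct points in $x$, so its integral against any bounded function is $\mathcal{O}(h^\infty)$. Since $V_\omega$ is (with high probability, by \eqref{eq1.15.1}) bounded by $Ch^{-1}$, a careful accounting of the oscillatory and concentration scales together with the randomness of the coefficients $v_\ell$ gives $E_{-+}^{(1)}(z)_{jk}=o(1)$ in probability for $j\ne k$, uniformly on $O$. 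Hence
\[
F_h(w) = \prod_{j=1}^{J} F_h^{(j)}(w) + o(1),\qquad
F_h^{(j)}(w)\defeq\langle V_\omega\, e_+^j(z_0+h^{1/2}w), e_-^j(z_0+h^{1/2}w)\rangle.
\]

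\emph{Step 3: diagonal entries converge to independent GAFs.} For each $j$, expanding $V_\omega=N(h)^{-1}\sum_\ell v_\ell e_\ell$ gives $F_h^{(j)}(w)=N(h)^{-1}\sum_\ell v_\ell\,a_\ell^j(w;h)$, where $a_\ell^j(w;h) \defeq \int e_\ell(x)\,e_+^j(x;z)\,\overline{e_-^j(x;z)}\,dx$ with $z=z_0+h^{1/2}w$. The product $e_+^j\,\overline{e_-^j}$ is a coherent state near $x^j$ oscillating at frequency $2\xi^j/h$, so only $\mathcal O(h^{-1})$ basis functions $e_\ell$ contribute. A stationary-phase computation shows the covariance
\[
\mathbb{E}\bigl[F_h^{(j)}(w)\overline{F_h^{(j)}(w')}\bigr] = N(h)^{-2}\sum_\ell a_\ell^j(w)\overline{a_\ell^j(w')} \xrightarrow[h\to 0]{} K_{\sigma_+^j(z_0)}(w,\overline{w'}),
\]
matching the GAF covariance from \eqref{eq2.0}, with the density $\sigma_+^j(z_0)$ appearing through $|\{\Rea p_0,\Ima p_0\}(\rho_+^j)|^{-1}$ exactly as in \eqref{eq2.1}. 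Joint convergence for distinct indices $j,j'$ follows from the disjointness of the $x^j$: the relevant coefficient families $\{a_\ell^j\}$ and $\{a_\ell^{j'}\}$ become orthogonal in the limit, so the limiting Gaussian fields are independent. The law of the $v_\ell$'s is otherwise arbitrary subject to \eqref{eq1.9}; a Lindeberg-type central limit theorem with control of fourth moments using \eqref{eq1.9.1} and the truncation \eqref{eq1.15.2} upgrades convergence of covariances to convergence in distribution, first finite-dimensionally, then in the space of holomorphic functions on $O$ by a tightness argument (uniform second-moment bounds on $F_h^{(j)}$ plus Cauchy estimates).

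\emph{Step 4: conclusion by Hurwitz's theorem for random analytic functions.} Combining Steps 2 and 3 yields $F_h \xrightarrow{d} G_{z_0} = \prod_j g_{z_0}^j$ in $\mathcal{O}(O)$ equipped with the compact-open topology. By Hurwitz's theorem, the zero point process of $F_h$ converges in distribution (as an integer-valued Radon measure on $O$) to the zero point process of $G_{z_0}$, provided one checks the almost-sure absence of zeros on $\partial O$ for the limit and of multiple zeros accumulating at $\partial O$; this is standard for GAFs whose zero process is absolutely continuous, and can be ensured by shrinking $O$ slightly if needed, then taking a countable exhaustion. The main obstacle in this plan is Step 2: proving that the off-diagonal entries of $E_{-+}^{(1)}$ are genuinely $o(1)$ uniformly in $w\in O$ despite the $h^{-1}$ size of $V_\omega$, which requires exploiting both the spatial separation of the $x^j$ (via \eqref{eq1.8.1a}) and the concentration of quadratic forms of $V_\omega$ around their expectation; the delicate combinatorial bookkeeping of oscillatory phases is where the real work lies.
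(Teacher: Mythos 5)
Your plan follows the same architecture as the paper's actual proof: Grushin reduction to $\det E_{-+}^\delta$, microscopic rescaling $z=z_0+h^{1/2}w$, factorization of the determinant using (SYM) and (HYP-x), covariance computation for the diagonal entries, a central limit theorem to get Gaussian limits, and a Hurwitz/Shirai-type transfer from convergence of random analytic functions to convergence of their zero point processes. However, you have misidentified where the difficulty lies, and this matters.

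Your Step 2 is not ``where the real work lies''; in the paper it is essentially free. Under (SYM), $e_-^{j,hol}=\Gamma e_+^{j,hol}$ (see \eqref{eq8.5.a}), and the quasimodes are built with cutoffs $\chi_\pm^j$ supported near the distinct base points $x^j$; by \eqref{eq4.17.1b} (a direct consequence of (HYP-x)) these cutoffs have disjoint supports, so for $i\neq j$ the product $V_\omega\,e_+^{i,hol}\,\overline{e_-^{j,hol}}$ vanishes \emph{identically} and $(V_\omega e_+^{i,hol}\mid e_-^{j,hol})=0$ exactly (equation~\eqref{eq8.5.b}). No concentration of ``quadratic forms of $V_\omega$'' (the entry is a \emph{linear} form in $V_\omega$), no oscillatory-phase bookkeeping, and no probabilistic argument is required: the effective matrix is diagonal, not merely approximately so, and your $o(1)$-in-probability claim is both far weaker than the truth and more laborious to establish than the exact vanishing.

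The genuine work, which your plan glosses over, is elsewhere. (a) Making the covariance $\langle(e_-^{j,hol}(z))^2\mid(e_-^{j,hol}(z'))^2\rangle$ factor into a function that is almost holomorphic in one variable and anti-holomorphic in the other is what produces a legitimate GAF kernel $K(w,\overline{w'})$ after rescaling; this requires the Malgrange-type factorization and almost-holomorphic extensions of Section~\ref{sec:Quas} and a careful stationary-phase argument (Propositions~\ref{prop4.3}, \ref{prop4.4}, \ref{prop8.1}). (b) The CLT under the weak moment condition~\eqref{eq1.9} needs a Lindeberg argument (Theorem~\ref{thmCLT}) with explicit control of the fourth moment, and this is where the choice of auxiliary basis becomes essential. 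Your claim that ``only $\mathcal O(h^{-1})$ basis functions $e_\ell$ contribute'' corresponds to the \emph{semiclassical} harmonic-oscillator basis used in Hager's earlier work; the paper deliberately uses the \emph{nonsemiclassical} basis with $N(h)\sim h^{-2}$ states precisely so that each coefficient $\langle e_m, (e_-^{j,hol})^2\rangle$ is $\mathcal O(h^{1/4}\lambda_m^{-1/6})$ (Lemma~\ref{lem7.2}), forcing the contribution of each $v_\ell$ to be individually small. With too few basis elements, the Lindeberg condition can fail and universality over the class \eqref{eq1.9} would not follow. (c) Tightness of $F_h^\delta$ in $\mathcal H(O)$, needed to promote finite-dimensional convergence to convergence in distribution of random analytic functions, requires the uniform-in-$w$ second-moment bounds and Hardy's convexity theorem argument of Section~\ref{sec:Tightness2}; your ``Cauchy estimates'' remark is in the right spirit but does not engage with the specific error terms $\widetilde R_1,\widetilde R_2$ coming from the Grushin inversion, which is where the assumption $\delta\le h^\kappa$ with $\kappa>3$ enters.
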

For the reader's convenience, in Section \ref{sec:RAF} we present a short review of the 
probabilistic notions used in this paper, such as convergence in distribution. The definition and basic properties of the GAFs have been presented in section~\ref{s:GAF}.
\par
This theorem tells us that at any given point 
$z_0\in\Omega$ in the bulk of the 
pseudospectrum, the local rescaled point process of
eigenvalues is given, in the limit $h\to 0$, by the point process given by the zeros 
of the product of $J$ independent GAFs. Hence, this limiting point process is the superposition of $J$ independent point processes, 
each one generated by a GAF $g^j$. The latter only depends only on the part of the 
 classical spectral density coming from the pair of points 
 $\rho_{\pm}^j = (x^j,\pm\xi^j)$. In particular, this limiting process is independent 
of the specific probability distribution of the coefficients $(v_j)$, or of the orthonormal family $(e_j)$ used to generate the random potential $V_{\omega}$; this process
only depends on cardinal $2J$ of the energy shell $p_0^{-1}(z)$ and of the local spectral densities $\{\sigma_+^j(z_0),\ j=1,\dots, J\}$. 
 \par
It is known that the zero process of a single GAF exhibits a local repulsion between the nearby points (see the section~\ref{s:2-point}). On the other hand, as a  superposition of $J$ \emph{independent} point processes, the limiting process $\mathcal{Z}_{G_{z_0}}$ authorizes the presence of clusters of at most $J$ points. In the next section we analyze this clustering by computing the correlations between the points of the process.
\subsubsection{Scaling limit of the $k$-point measures}\label{s:k-point}
An explicit way to obtain information about the statistical interaction of 
$k$ nearby eigenvalues of $P^{\delta}_V$ is by analysing 
the \textit{$k$-point measures} of the point process $\mathcal{Z}_{h,z_0}^V$, which quantify the
correlations within $k$-point subsets of the point process. 
These are positive measures $\mu_h^{k,V,z_0}$ on $O^k\backslash\Delta$, 
where $O$ is as in Theorem \ref{thm_m2} and 
$\Delta=\{z\in\C^k; \exists~ i\neq j ~\text{s.t. }z_i=z_j\}$ is a generalized diagonal set. These measures are defined through their action on an arbitrary test funtion $\phi\in\mathcal{C}_c(O^k\backslash\Delta,\R_+)$:
\begin{equation}\label{eq2.11}
	\begin{split}
	\erw \left[ (\mathcal{Z}_{h,z_0}^V)^{\otimes k} (\phi) \right]
		&= 
		\erw \left[ \sum_{z_1,\dots,z_k\in\spec (P^{\delta})} 
			\phi \left((z_1-z_0)h^{-1/2},\dots,(z_k-z_0)h^{-1/2}\right)  \right] \\ 
		& \defeq \int_{O^k\backslash\Delta} \phi(w)\,\mu_{h,z_0}^{k,V}(dw)\,.
			\end{split}
 \end{equation}
 We have stamped out the diagonal $\Delta$ in order to avoid trivial self-correlations. 
 When these $k$-point measures are absolutely continuous with respect to the Lebesgue 
 measure on $\C$, we call their densities the \textit{$k$-point functions}.
\begin{thm}\label{thm_m3}
	Let $\mu_{h,z_0}^{k,V}$ be the $k$-point measure of $\mathcal{Z}_{h,z_0}^V$, 
	defined in \eqref{eq2.11}, and let $\mu^{k,V}_{z_0}$ be the $k$-point measure of 
	the point process $\mathcal{Z}_{G_{z_0}}$, given in 
	Theorem \ref{thm_m2}. Then, for any connected domain $O\Subset\C$ and for all $\phi\in\mathcal{C}_c(O^k\backslash\Delta,\R_+)$,  
	\begin{equation*}
		\int_{O^k\backslash\Delta} \phi(w)\, \mu_{h,z_0}^{k,V}(dw) \longrightarrow \int_{O^k\backslash\Delta} \phi(w)\, \mu^{k,V}_{z_0}(dw), 
		\quad h\to 0.
	\end{equation*}
	Moreover, $\mu^{k,V}_{z_0}$ is absolutely continuous with respect to the Lebesgue 
	measure on $\C$. Its density $d^{k,V}_{z_0}$ is given by the following formula:
\begin{equation}\label{eq2.11.4}
	 d^{k,V}_{z_0}(w_1,\ldots,w_k) = \sum_{\substack{\alpha\in\N^J, \\ \sum_j\alpha_j=k }}
	 	  \sum_{\tau\in \mathfrak{S}_k} \frac{1}{\alpha !}
		  \prod_{j=1}^J
	 	   d^{\alpha_j}_{g_j}(w_{\tau(\alpha_1+\cdots+\alpha_{j-1}+1)},\dots,w_{\tau(\alpha_1+\cdots+\alpha_{j})})
\end{equation}
where $\mathfrak{S}_k$ is the symmetric group of $k$ elements, and for all $1\leq j \leq J$ and all $r\in\N^*$, 
\begin{equation}\label{eq2.11.5}
 	d_{g^j}^r(w) = \frac{\mathrm{perm}[C_j^r(w)- B_j^r(w)(A_j^r)^{-1}(w)(B_j^r)^*(w) ] }{\det \pi A_j^r(w)},\ \ \text{while }d_{g^j}^0(w)\equiv 1\,. 
 \end{equation}
 Here, $\mathrm{perm} $ denotes the permanent of a matrix; $A_j^r, B_j^r,C_j^r $ 
 are complex $r\times r$-matrices given by 
  \begin{equation*}
  	(A_j^r)_{n,m} = K^j(w_n,\bar{w}_m), \quad 
	(B_j^r)_{n,m} = (\partial_wK^j)(w_n,\bar{w}_m), \quad 
	(C_j^r)_{n,m}= (\partial^2_{w\bar{w}}K^j)(w_n,\bar{w}_m),
  \end{equation*}
where $K^j(w,\bar{w})=\exp(\sigma_+^j(z_0)w\bar{w})$ is the covariance function of the GAFs $g^j_{z_0}$ appearing in Thm~\ref{thm_m2}.
\end{thm}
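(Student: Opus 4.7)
The plan splits into two independent parts: (i) upgrade the weak convergence of point processes from Theorem~\ref{thm_m2} to convergence of $k$-point measures tested against compactly supported test functions off the diagonal, and (ii) compute the $k$-point density of the limiting zero process $\mathcal{Z}_{G_{z_0}}$ explicitly.

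\textbf{Step 1: convergence of $k$-point measures.} Fix $\phi\in\mathcal{C}_c(O^k\setminus\Delta,\R_+)$ with compact support $K\subset O^k\setminus\Delta$. The functional $\mu\mapsto\langle\mu^{\otimes k},\phi\rangle$ is a continuous polynomial in $\mu$ for the vague topology on locally finite measures (since $\phi$ is bounded, compactly supported and vanishes on a neighbourhood of $\Delta$), so Theorem~\ref{thm_m2} immediately gives
$$\langle(\mathcal{Z}^V_{h,z_0})^{\otimes k},\phi\rangle\ \xrightarrow{d}\ \langle\mathcal{Z}_{G_{z_0}}^{\otimes k},\phi\rangle\quad\text{as }h\to 0.$$
To promote this to convergence of expectations, I would establish a uniform microscopic moment bound
$$\sup_{0<h<h_0}\erw\bigl[N_{h,z_0}(O')^{k+\varepsilon_0}\bigr]<\infty, \quad \text{for some $\varepsilon_0>0$,}$$
on any bounded $O'\subset\C$ containing the projections of $K$, where $N_{h,z_0}(O')=\#\{z\in\spec(P_V^\delta):(z-z_0)h^{-1/2}\in O'\}$. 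Given such a bound, the expectations $\erw[\langle(\mathcal{Z}^V_{h,z_0})^{\otimes k},\phi\rangle]$ are uniformly integrable and converge to the corresponding limit. The moment bound itself is obtained from the representation (used in the proof of Theorem~\ref{thm_m2}) of the perturbed eigenvalues inside the microscopic window as the zeros of a random holomorphic function $f_h$: Jensen's formula on a slightly enlarged disc controls $N_{h,z_0}(O')$ by $C\bigl(\sup_{\partial D}\log|f_h|-\log|f_h(w_0)|\bigr)$ at a fixed non-vanishing base point $w_0$, and one combines this with uniform probabilistic upper bounds on $\log|f_h|$ and lower bounds on $\log|f_h(w_0)|$ inherited from the Gaussian-type moment assumption \eqref{eq1.9}.

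\textbf{Step 2: computing the limiting density.} The key structural observation is that, since the $g^j_{z_0}$ are independent complex GAFs, almost surely no two of them share a zero, so
$$\mathcal{Z}_{G_{z_0}}\ =\ \sum_{j=1}^J \mathcal{Z}_{g^j_{z_0}}$$
as a superposition of $J$ independent point processes. The $k$-point intensity of such a superposition at pairwise distinct points decomposes over all ordered partitions $(A_1,\dots,A_J)$ of $\{1,\dots,k\}$, each block $A_j$ contributing $d^{|A_j|}_{g^j_{z_0}}((w_i)_{i\in A_j})$ by independence and by the absolute continuity of each factor's $r$-point measure. Parametrizing ordered partitions of a given profile $\alpha=(\alpha_1,\dots,\alpha_J)$, $\sum\alpha_j=k$, by permutations $\tau\in\mathfrak{S}_k$ and exploiting the symmetry of each $d^{\alpha_j}_{g^j_{z_0}}$ in its arguments (which produces the combinatorial weight $1/\alpha!=1/\prod\alpha_j!$) yields exactly formula \eqref{eq2.11.4}. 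The single-GAF density \eqref{eq2.11.5} is the Hannay--Peres--Vir\'ag/Kac--Rice formula, recalled in Section~\ref{sec:RAF}: it follows from conditioning the joint complex Gaussian vector $\bigl(g^j_{z_0}(w_n),(g^j_{z_0})'(w_n)\bigr)_{n=1}^r$ on $\{g^j_{z_0}(w_n)=0:1\le n\le r\}$ — producing the conditional covariance $C^r_j-B^r_j(A^r_j)^{-1}(B^r_j)^*$ for $((g^j_{z_0})'(w_n))_n$ — and then evaluating the expected Jacobian $\prod_n|(g^j_{z_0})'(w_n)|^2$ by the permanent-moment formula for circularly-symmetric complex Gaussians.

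\textbf{Main obstacle.} The technical heart is the uniform-in-$h$ moment bound of Step~1. Pure distributional convergence only yields a lower semi-continuous lower bound on $k$-point intensities, so without the moment bound the Portmanteau argument for the polynomial functional $\langle\cdot^{\otimes k},\phi\rangle$ fails. Securing sharp enough tail estimates on $\log|f_h|$ uniformly in $h$ requires a careful analysis of the Grushin-type reduction and of the Gaussian fluctuations of $V_\omega$ — in particular one must simultaneously control the suprema of $\log|f_h|$ on enlarged discs and avoid atypical near-zeros of $f_h$ at the chosen base point. Once this bound is in hand, Step~2 is purely combinatorial and reduces, via independence, to the well-known one-GAF density formula.
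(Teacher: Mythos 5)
Your proposal matches the paper's approach essentially step for step: the paper likewise first establishes a uniform-in-$h$ tail bound on the microscopic zero count $n_F^h(K)$ (Lemma~\ref{lem7.5}, proved by verifying $\erw[|F_h^\delta(w)|^{\pm c}]=\mO(1)$ uniformly and invoking the Offord/Jensen-type estimate \cite[Thm.~3.2.1]{HoKrPeVi09}, which is the same machinery as the Jensen argument you sketch, just packaged to give an exponential rather than polynomial tail), then passes to convergence of expectations via Proposition~\ref{prop6.6} and uniform integrability, and finally derives \eqref{eq2.11.4} by exactly the superposition-and-ordered-partitions combinatorics you describe, with each single-GAF factor computed via the Poincar\'e--Lelong permanent formula \cite[Cor.~3.4.2]{HoKrPeVi09}. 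The only cosmetic difference is that you formulate the uniform control as a $(k+\varepsilon_0)$-moment bound on the counting function whereas the paper proves the stronger exponential tail; both rest on the same fractional-moment estimates for $F_h^\delta$.
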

The function $d_{g^j}^r(z)$ in \eqref{eq2.11.5} is the $r$-point function 
for the zero process of the Gaussian analytic function $g^j$. Thm~\ref{thm_m3} 
tells us that the limiting $k$-point measures admit densities with respect to 
the Lebesgue measure, and that those can be determined by concatenating the $r$-point 
functions, for $1\leq r \leq k$, of each GAF $g^j$. 
\par
A result by Nazarov and Sodin \cite[Theorem 1.1]{NaSo12} implies the following estimate for the $r$-point densities of a single GAF.
\begin{prop}\label{thm_m3a}\cite{NaSo12}
Let $O \Subset \C$ be a compact set. Let $(g^j=g^j_{z_0})_{1\leq j\leq J}$ be the GAFs
appearing in Thm~\ref{thm_m2}, and let 
$d^r_{g^j}(w)$, $1\leq r\leq k$, be as in \eqref{eq2.11.5}. Then there exists a constant $C = C(r, g^j, O)>1$ such that, for any configuration of pairwise distinct points
$w_1,\ldots, w_k\in O$, 
\begin{equation*}
	C^{-1} \prod_{i<j}|w_i-w_j|^2\leq 
	d_{g^j}^r(w_1,\dots,w_k) 
	\leq 
	C\prod_{i<j}|w_i-w_j|^2.
\end{equation*}
\end{prop}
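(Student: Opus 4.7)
The plan is to reduce the statement to the normalized plane GAF $g_1$ via a scaling argument, and then to invoke Theorem~1.1 of \cite{NaSo12} directly. In our setting $g^j_{z_0}$ has covariance kernel $K^j(u,\bar v) = \exp(\sigma u\bar v)$ with $\sigma = \sigma_+^j(z_0)>0$ a fixed positive constant, so the scaling covariance $g_\sigma(w) \stackrel{d}{=} g_1(\sqrt{\sigma}w)$ noted in Section~\ref{s:GAF} yields
$$d^r_{g_\sigma}(w_1,\ldots,w_r) = \sigma^r \, d^r_{g_1}\bigl(\sqrt{\sigma}\, w_1, \ldots, \sqrt{\sigma}\, w_r\bigr),$$
while the Vandermonde factor $\prod_{i<j}|w_i-w_j|^2$ rescales by $\sigma^{r(r-1)/2}$ under $w_i \mapsto \sqrt{\sigma}w_i$. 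Both rescalings are absorbed into a constant $C=C(r,g^j,O)$, so it suffices to establish the two-sided bound for $g_1$ on the rescaled compact set $\sqrt{\sigma}\,O$.

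For $g_1$ the starting point is the Kac--Rice representation \eqref{eq2.11.5}, which reads $d^r_{g_1}(w) = \mathrm{perm}[C^r - B^r(A^r)^{-1}(B^r)^*] / \det(\pi A^r)$. Both the Gramian $\det A^r$ and the Schur complement $C^r - B^r(A^r)^{-1}(B^r)^*$ vanish as two of the points $w_i$ coalesce, but their ratio has a finite positive limit driven precisely by the Vandermonde factor. For the upper bound I would Taylor-expand the entries of $A^r,B^r,C^r$ (all derivatives of $\exp(u\bar v)$) around each diagonal subsimplex and apply a Hadamard-type estimate to the permanent; the factor $\prod_{i<j}|w_i-w_j|^2$ appears as the leading contribution, and the bound is uniform on $O^r$ because the coefficients of the Taylor expansion remain bounded on the compact set.

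The main obstacle will be the lower bound. Since the numerator and denominator of the Kac--Rice ratio both vanish on the collision diagonals, one needs a quantitative non-degeneracy statement: after dividing out the Vandermonde factor, the reduced covariance of the conditional Gaussian vector $(g_1'(w_1),\ldots,g_1'(w_r))$ given $\{g_1(w_i)=0\}_{i=1}^r$ must stay uniformly positive-definite on $O^r$. Nazarov--Sodin accomplish this step by exploiting the Fock-space reproducing kernel of $g_1$ and producing an orthonormal basis of the Gaussian Hilbert space spanned by $\{g_1(w_i),g_1'(w_i)\}_{i=1}^r$ adapted to the near-collision regime; equivalently, no nontrivial linear combination of the $g_1'(w_i)$ can be approximated by a combination of the $g_1(w_i)$ beyond the obvious Vandermonde obstruction. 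Once this non-degeneracy is established, the lower bound $d^r_{g_1}(w) \geq C^{-1}\prod_{i<j}|w_i-w_j|^2$ follows, and combining with the scaling argument completes the proof of Proposition~\ref{thm_m3a}.
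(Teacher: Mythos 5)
Your proposal is correct and follows the same route as the paper, which simply imports this estimate from \cite[Theorem 1.1]{NaSo12}; your scaling reduction $g_\sigma(w)\stackrel{d}{=}g_1(\sqrt{\sigma}\,w)$, giving $d^r_{g_\sigma}(w)=\sigma^{r}d^r_{g_1}(\sqrt{\sigma}\,w)$ and rescaling the Vandermonde factor by $\sigma^{r(r-1)/2}$, is the correct (and essentially only) step needed to make that citation apply to the paper's GAFs $g_{\sigma_+^j(z_0)}$. The remainder of your write-up is a sketch of Nazarov--Sodin's own argument for the normalized GAF rather than an independent proof, but since the paper appeals to their theorem as a black box, nothing further is required.
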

In formula \eqref{eq2.11.4} we see that if $k>J$, then each summand has at least one 
factor $d^{\alpha_j}_{g^j}$ has $\alpha_j\geq 2$. Hence, we immediately 
conclude from Thm~\ref{thm_m3} and Prop.~\ref{thm_m3a} the following 
\begin{cor}\label{cor:NoClu}
	Let $O\Subset \C$ be a compact set, let $k>J$ and let $d^{k,V}_{z_0}(w)$ be as in 
	\eqref{eq2.11.4}. Then there exists a positive constant $C = C(r, O)$ such that, 
	for any configuration of pairwise distinct points $w_1, \ldots , w_k\in O$, 
\begin{equation*}
	d^{k,V}_{z_0}(w_1,\ldots,w_k) 
	\leq 
	C\sum_{i<j}|w_i-w_j|^2.
\end{equation*}
\end{cor}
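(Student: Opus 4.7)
The key observation is a pigeonhole argument: for every $\alpha\in\N^J$ appearing in the sum \eqref{eq2.11.4} one has $\sum_{j=1}^J \alpha_j = k > J$, hence there exists at least one index $j^*=j^*(\alpha)$ with $\alpha_{j^*}\geq 2$. My plan is to show, term by term, that each summand in \eqref{eq2.11.4} is bounded on $O^k\setminus\Delta$ by a constant times $\sum_{i<j}|w_i-w_j|^2$; since the outer sum ranges over a finite set (the compositions $\alpha$ of $k$ into $J$ parts, and permutations $\tau\in\mathfrak{S}_k$), this will give the corollary.

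Fix such a triple $(\alpha,\tau,j^*)$, and let $(w_{i_1},\ldots,w_{i_{\alpha_{j^*}}})$ be the arguments fed to $d^{\alpha_{j^*}}_{g^{j^*}}$ in that particular summand. Since these are pairwise distinct points of $O$ and $\alpha_{j^*}\geq 2$, Proposition \ref{thm_m3a} yields
\begin{equation*}
d^{\alpha_{j^*}}_{g^{j^*}}(w_{i_1},\ldots,w_{i_{\alpha_{j^*}}}) \leq C_1 \prod_{p<q} |w_{i_p}-w_{i_q}|^2.
\end{equation*}
Since $O$ is bounded, all the factors in this product are $\leq \mathrm{diam}(O)^2$; singling out the pair $(p,q)=(1,2)$ gives
\begin{equation*}
d^{\alpha_{j^*}}_{g^{j^*}}(w_{i_1},\ldots,w_{i_{\alpha_{j^*}}}) \leq C_2\, |w_{i_1}-w_{i_2}|^2 \leq C_2 \sum_{i<j}|w_i-w_j|^2.
\end{equation*}

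The remaining factors $d^{\alpha_j}_{g^j}$ with $j\neq j^*$ are bounded by an absolute constant on $O^{\alpha_j}$: when $\alpha_j=0$ this is the convention $d^0_{g^j}\equiv 1$ from \eqref{eq2.11.5}; when $\alpha_j=1$, the translation invariance of the GAF zero process (section \ref{s:GAF}) makes $d^1_{g^j}$ the constant density $\sigma_+^j(z_0)/\pi$; and when $\alpha_j\geq 2$, Proposition \ref{thm_m3a} together with $|w_i-w_j|\leq \mathrm{diam}(O)$ again supplies a uniform bound. Multiplying these factors against the previous estimate and summing over the finitely many pairs $(\alpha,\tau)$ in \eqref{eq2.11.4} yields the announced inequality with a constant $C=C(k,O)$ (the letter $r$ in the statement appears to be a typo for $k$).

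There is no genuine obstacle in this argument: all the analytic content is already packaged in Proposition \ref{thm_m3a}, and the only thing to verify is the bookkeeping of which block of the partition contains at least two arguments. The pigeonhole step is what makes the hypothesis $k>J$ enter in exactly the right way, and it is responsible for the single factor $|w_{i_1}-w_{i_2}|^2$ which, absorbed into $\sum_{i<j}|w_i-w_j|^2$, forces the limiting process to avoid clusters of more than $J$ points (a repulsion between any $J{+}1$ nearby zeros).
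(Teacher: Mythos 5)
Your proof is correct and follows exactly the route the paper indicates: the pigeonhole step forcing some $\alpha_{j^*}\geq 2$, the upper bound in Proposition~\ref{thm_m3a} applied to that factor, and uniform bounds on the remaining factors over the compact set $O$. The paper simply states this as an immediate consequence of Theorem~\ref{thm_m3} and Proposition~\ref{thm_m3a}, so your write-up supplies the same argument with the bookkeeping made explicit (and you are right that $C(r,O)$ in the statement should read $C(k,O)$).
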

We have seen by Theorem \ref{thm_m3} that the limiting point process of the rescaled 
eigenvalues is given by the superposition of $J$ independent processes given by 
the zeros of independent Gaussian analytic functions. Due to this independence, $k$ points, each originating from a different GAF process, may approach each other without any statistical repulsion: this authorizes the formation 
of clusters of a most $J$ points. As a result, for $k\leq J$ the limiting $k$-point functions do not decay 
to zero as the distances between the $k$ points gets smaller. This behaviour is made explicit in the next section in the case $k=2$. 

On the opposite, if $k>J$ then at least two points must originate from the same GAF process, and therefore repel each other quadratically when approaching. This is exactly what Corollary~\ref{cor:NoClu} 
tells us: the probability to find more than $J$ points 
close together decays quadratically with the distance. Therefore, finding  
clusters of more than $J$ eigenvalues very close together is very unlikely. 
\subsubsection{$2$-point correlation function}\label{s:2-point} The $2$-point \emph{correlation function}
of a point process is defined by the $2$-point function, renormalized by the local $1$-point functions (or local average densities): 
\begin{equation*}
	K^{2,Q}_{z_0}(w_1,w_2) = \frac{ d^{2,Q}_{z_0}(w_1,w_2)}{d^{1,Q}_{z_0}(w_1)d^{1,Q}_{z_0}(w_2)}, \quad w_1\neq w_2 \in O,\quad Q=V_\omega,\ M_\omega\,.
\end{equation*}
By Theorem \ref{thm_m3}, the limiting local $1$-point function $d^{1,V}_{z_0}(w)$ is a constant function, given by 
\begin{equation*}
	 d^{1,V}_{z_0}(w) = \sum_{j=1}^J \frac{\sigma_+^j(z_0)}{\pi},\quad\forall w\in O.
\end{equation*}
This average density of eigenvalues (at the \emph{microscopic} scale near $z_0$) exactly corresponds to the \emph{macroscopic} density predicted by the probabilistic Weyl's law in Theorem \ref{thm:PWL}, see also \eqref{eq2.1}. 
\par
The limiting $1$-point and $2$-point functions of the zero process generated by a single GAF $g_\sigma$ (see section~\ref{s:GAF}) are given by
\begin{equation*}
	d^1_{g_\sigma}(w_1) = \frac{\sigma}{\pi},\quad\text{respectively}\quad
	d^2_{g_\sigma}(w_1,w_2) = \Big(\frac{\sigma}{\pi}\Big)^2
					   \kappa \Big(\frac{ \sigma |w_1-w_2|^2}{2} \Big),
\end{equation*}
with the universal scaling function
\begin{equation}\label{eq_KapLim}
	\kappa (t)  \defeq \frac{(\sinh^2 t + t^2)\cosh t - 2t \sinh t}{\sinh^3 t}, \quad \forall t\geq 0.
\end{equation}
\begin{figure}[ht]
  \centering 
  \includegraphics[width=0.5\textwidth]{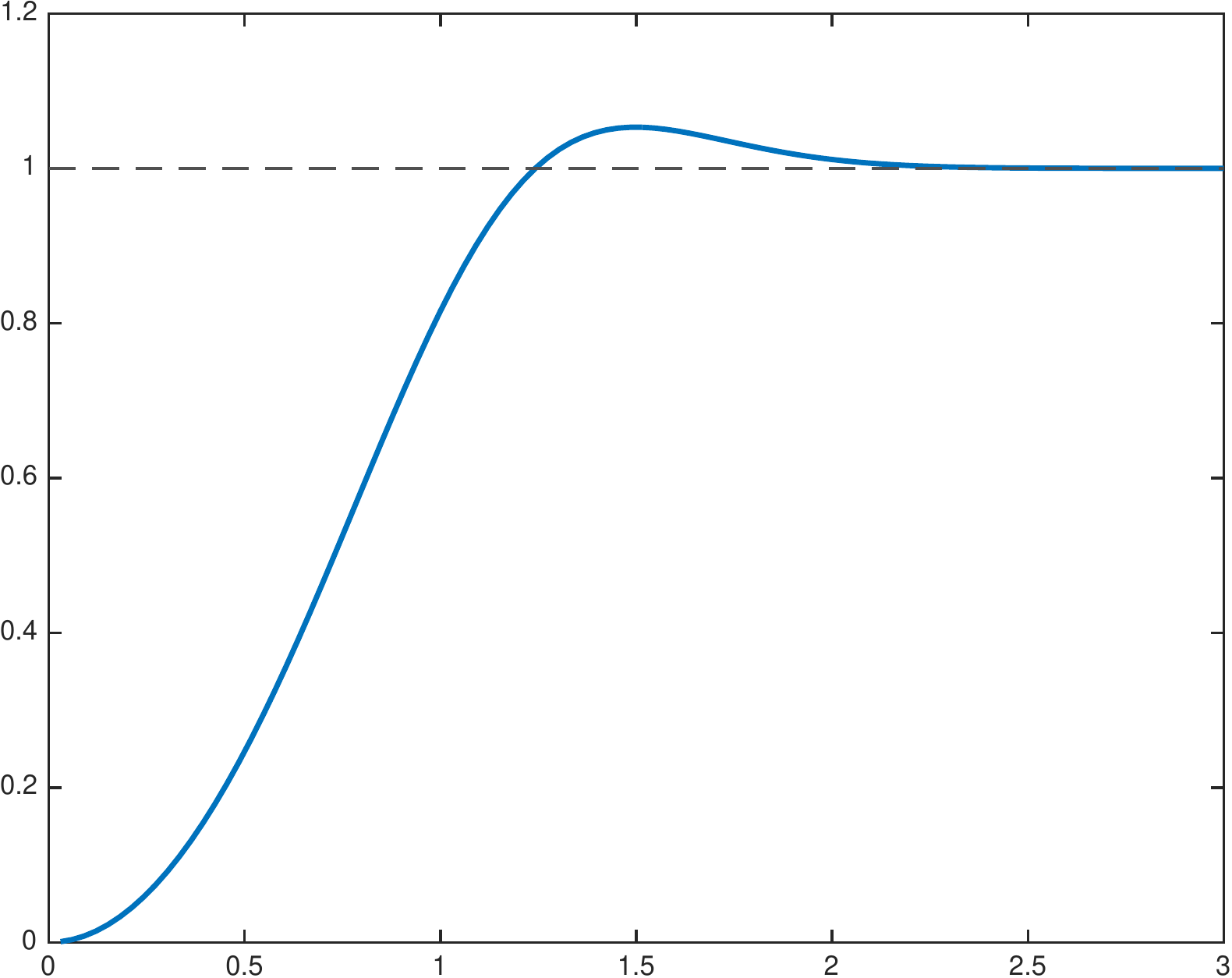}
 \caption{Plot of the scaling limit function $\kappa(t^2)$, see \eqref{eq_KapLim}.}
 \label{fig2}
\end{figure}
The function $\kappa \left(\sigma |w_1-w_2|^2/2 \right)$ 
describes the $2$-point correlation function of the zeros of the GAF $g_\sigma$. A remarkable property of this function is its isotropy: it only depends on the distance between the points $w_1,w_2$. In Figure~\ref{fig2} we plot the function $t\mapsto \kappa(t^2)$. This function behaves as $\kappa(t^2)=t^2(1+\mO(t^4))$ when $t\to 0$, which reflects the quadratic repulsion between the nearby zeros of $g_\sigma$. On the opposite, when $t\gg 1$ it converges exponentially fast to unity, showing a fast decorrelation between the zeros. 

To our knowledge, the function $\kappa$ first appeared as the scaling limit $2$-point correlation function for the zeros of certain ensembles of random polynomials describing random spin states, see J.H. Hannay~\cite{Ha96}. In the work by 
P. Bleher, B. Shiffman and S. Zelditch \cite{BlShiZe00}, $\kappa$ describes the scaling limit 
$2$-point correlation function for the zeros of random holomorphic 
sections of large powers of a positive Hermitian line bundle over a compact complex 
K\"ahler surface. 

In the present work, $\kappa$ appears as a building block for the scaling limit $2$-point 
correlation function of the eigenvalues of $P_V^{\delta}$:
\begin{equation}\label{e:K^2V}
	K^{2,V}_{z_0}(w_1,w_2) = 1
			     +\sum_{j=1}^J\frac{(\sigma_+^j(z_0))^2}
			     {\left(\sum_{j=1}^J\sigma_+^j(z_0)\right)^2}
			      \left[\kappa\left(\frac{ \sigma^j_+(z_0)|w_1-w_2|^2}{2} \right)
			      -1\right]
\end{equation}
Let us study more closely this limiting $2$-point correlation function between the rescaled eigenvalues of $P^\delta_V$ near $z_0$:\\ \\
\textbf{Long range decorrelation}: For $|w_1-w_2| \gg 1$, in the limit $h\to 0$, the $2$-point correlation function converges exponentially fast to unity	%
\begin{equation*}
	K^{2,V}_{z_0}(w_1,w_2) = 1+
	\mO\left(\e^{-\min\limits_j\sigma^j_+(z_0)|w_1-w_2|^2 }\right).
\end{equation*}
	This shows that two points at distances $|w_1-w_2|\gg 1$ are statistically uncorrelated.
\\
\\
 \textbf{A weak form of repulsion}:  When $|w_1-w_2| \ll 1 $, in the limit $h\to 0$, 
 	there is a weak form of repulsion between two nearby eigenvalues, 
\begin{equation}\label{eq.2ptCor}
	K^{2,V}_{z_0}(w_1,w_2) =1 - \sum_{j=1}^J\frac{\sigma_+^j(z_0)^2}
			     {\big(\sum_{l=1}^J\sigma_+^l(z_0)\big)^2}
			      \Big[1 - \frac{ \sigma^j_+(z_0)|w_1-w_2|^2}{2} + 
			      \mO( |w_1-w_2|^4 )
			      \Big].
\end{equation}
This formula shows that the probability to find two rescaled eigenvalues $w_1,w_2$ at distance $\ll 1$ is 
smaller than the one to find them at large distances: pairs of rescaled eigenvalues 
show a weak repulsion at short distance. However, the correlation function does not converge zero when $|w_1-w_2|\to 0$, but 
to the positive value  $1 - \frac{\sum_{j=1}^J \sigma_+^j(z_0)^2}{(\sum_{l=1}^J\sigma_+^l(z_0))^2}$. This weak repulsion can be explained by the fact that the random function $G_{z_0}$  is the product of $J$ independent GAFs: two zeros $w_1,w_2$ will not repel each other if they originate from different GAFs, while they will repel quadratically if the come from the same GAF. The net result is this weaker form of repulsion.
The larger the number of quasimodes $J$, the weaker this repulsion becomes, since two zeros $w_1,w_2$ chosen at random will have a smaller chance to come from the same GAF.
\par
In Figure~\ref{fig3} we compare the theoretical $2$-point correlation functions $K^{2,V}_{z_0}$ with the one obtained from numerical computations for two model operators on the torus $\mathbb{T}=\R/(2\pi\Z)$:
\begin{equation}\label{eq:NumOp}
	P_{h,q}^{\delta} = -h^2\partial_x^2 + \e^{-iqx} + \delta V_{\omega}, \quad q =1,3, \quad x\in \mathbb{T}. 
\end{equation}
We took the parameters $h=10^{-3}$, $\delta=10^{-12}$, and the Gaussian random potential $V_{\omega}$ 
as in section~\ref{s:model-case}.  We use operators defined
on $\mathbb{T}$ because they are numerically easier to diagonalize than operators defined on $\R$. For each operator $P_{h,q}$, we computed 1000 realizations of the random potential to extract the correlation function.
 
The analysis of the principal symbols $p_{q,0}$ shows that the classical spectrum is, in both cases, given by $\Sigma=\R_+ + U(1)$. At the energy $z_0=1.6$ we selected, the operator $(P_{h,q}-z_0)$ admits $J=2q$ quasimodes. Figure \ref{fig3} compares the 
numerically obtained $2$-point correlation functions (shown as blue dots) of the operators $P_{h,1}^{\delta}$
(left) and $P_{h,3}^{\delta}$ (right), with the theoretical scaling limit 
$2$-point correlation described in \eqref{eq.2ptCor}. For the two operators, the theoretical curve fits quite well the 
numerical points, including in the short distance limit $|w_1-w_2|\ll 1$.
\begin{figure}[ht]
 \begin{minipage}[b]{0.45\linewidth}
  \centering
  \includegraphics[width=0.9\textwidth]{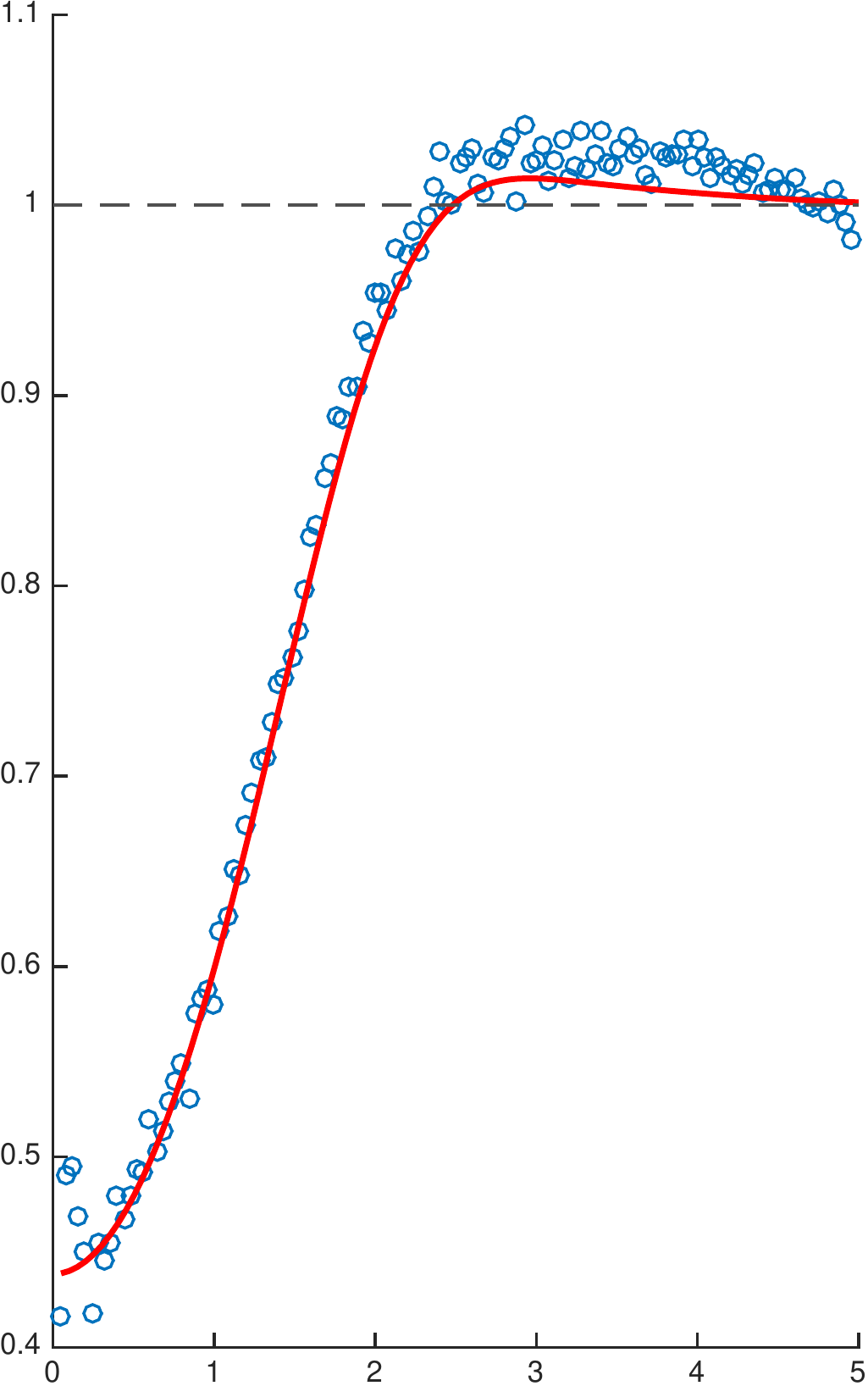}
 \end{minipage}
 \hspace{0.04\linewidth}
 \begin{minipage}[b]{0.45\linewidth}
  \centering 
  \includegraphics[width=0.88\textwidth]{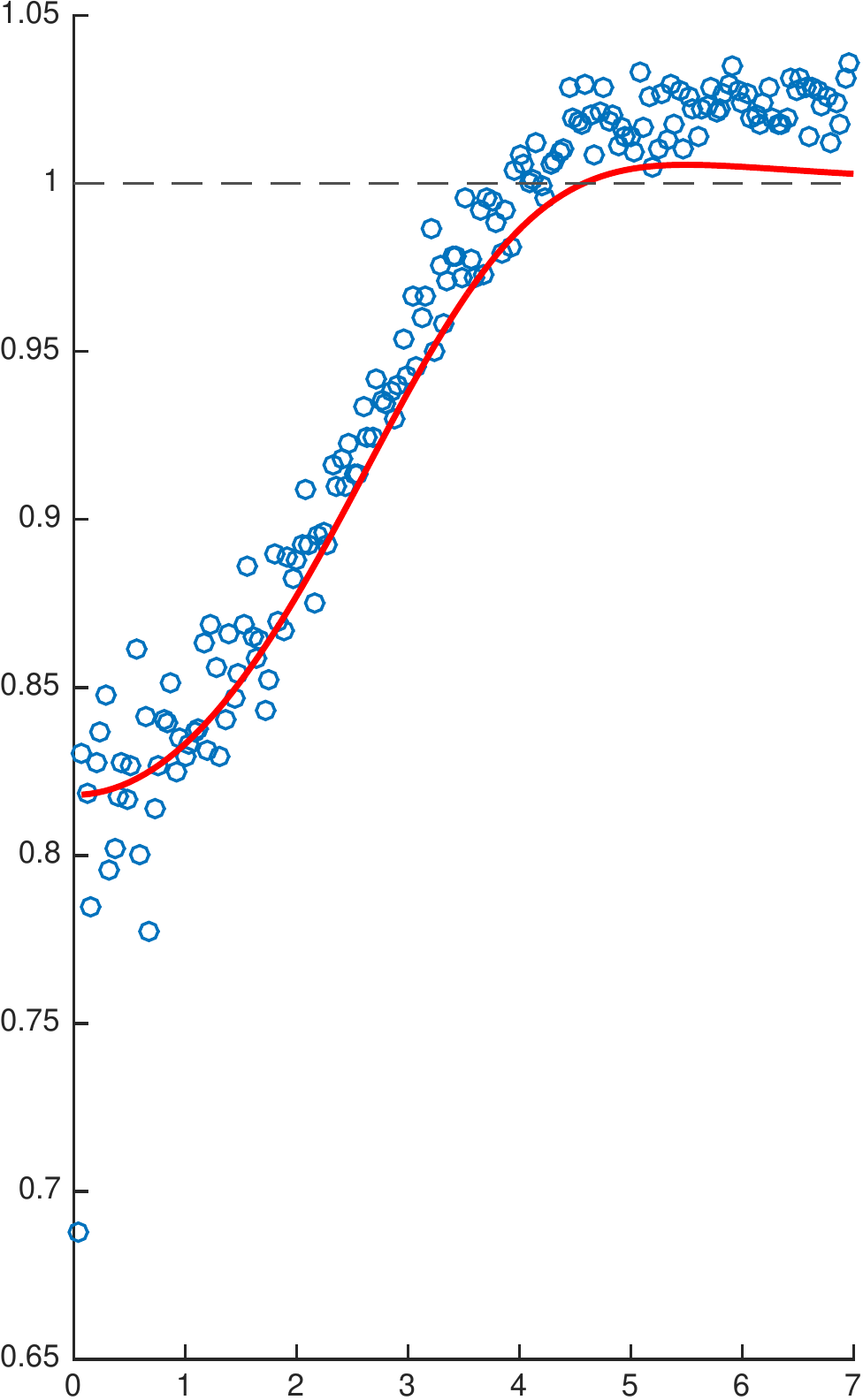}
 \end{minipage}
 \caption{Blue points: values of the $2$-point correlation functions at the energy $z_0=1.6$, obtained by numerically computing the spectra of the two operators $P_{h,1}^{\delta}$ (left) and $P_{h,3}$ (right) perturbed by a Gaussian random potential $\delta V_\omega$. Red curves: scaling limit $2$-point correlation functions $K^{2,V}_{z_0}$ for both operators, as given in \eqref{e:K^2V}; the horizontal coordinate is the rescaled square distance $|w_1-w_2|^2$.}
 \label{fig3}
\end{figure}
\subsection{Perturbation by random matrix}
We now describe the situation where the operator $P_h$ is perturbed by a small random matrix $\delta M_\omega$, as described in (\ref{eq1.10},\ref{eq1.11}). In this section we do not need to assume the symmetry property \eqref{eq.15} for the symbol $p_0$.
Here as well, we can prove a convergence of the rescaled spectral point process $\mathcal{Z}_{h,z_0}^M$ (see \eqref{eq:PPM}) towards a limiting zero process when $h\to 0$.

\subsubsection{Universal limiting point process}
\begin{thm}\label{thm_m1}
 Let $\Omega\Subset\mathring{\Sigma}$ be as in \eqref{eq1.8.2}. 
 Let $p$ be as in \eqref{eq1.6} satisfying \eqref{eq1.0}. 
 Choose $z_0\in\Omega$. Then, for any $O\Subset\C$ open 
 connected domain, the rescaled spectral point process  $\mathcal{Z}_{h,z_0}^M$ converges in distribution towards the zero point process
 associated with a random analytic function $\tG_{z_0}$ described below:
 \begin{equation}
  	\mathcal{Z}_{h,z_0}^M
	\stackrel{d}{\longrightarrow} 
	\mathcal{Z}_{\tG_{z_0}} \text{ on } O \quad 
	\text{ as } h\to 0.
 \end{equation}
The random function $\tG_{z_0}$ is defined as %
  \begin{equation*}
 \tG_{z_0}(w)\defeq\det (g_{z_0}^{i,j}(w))_{1\leq i,j \leq J}, \quad w\in \C, 
 \end{equation*}
 where $g_{z_0}^{i,j}$, for $1\leq i,j\leq J$, are $J^2$ independent GAFs $g_{z_0}^{i,j}\sim g_{\sigma^{i,j}_{z_0}}$, for the parameters %
 \begin{equation}\label{e:Gaf_ij}
\sigma^{i,j}_{z_0}= \frac{1}{2}(\sigma_+^i(z_0)+\sigma_-^j(z_0))\,.
\end{equation}
(we recall that the local classical densities $\sigma_{\pm}^i(z_0)$ associated with the points $\rho_{\pm}^i(z_0)\in p_0^{-1}(z_0)$ are defined in \eqref{eq2.1}).
\end{thm}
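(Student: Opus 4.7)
The plan is to follow the Grushin-problem approach of Hager--Sj\"ostrand, reducing the spectrum of $P_h^\delta = P_h+\delta M_\omega$ to the zeros of the determinant of a $J\times J$ matrix of smooth random analytic functions, and then showing that, after microscopic rescaling at $z_0$, this matrix converges in distribution to a matrix of independent GAFs with the covariances announced in \eqref{e:Gaf_ij}.

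\textbf{Step 1 (Grushin reduction).} Using the quasimodes $e_\pm^j(z)$ from \eqref{eq1.8.3}--\eqref{eq1.8.4}, I would construct a well-posed Grushin problem for $(P_h - z)$ on a neighborhood of $z_0$ by taking $R_+(z): H(m) \to \C^J$, $R_+(z)u = (\langle u,e_+^j(z)\rangle)_j$, and $R_-(z):\C^J\to L^2$, $R_-(z)u_- = \sum_j u_-^j\, e_-^j(z)$. Standard arguments yield an inverse $\mathcal{E}_0(z)$ of norm $\mO(1)$ whose Schur block $E_{-+}^0(z)$ is a $J\times J$ matrix with $E_{-+}^0(z) = \mO(h^\infty)$ thanks to the quasimode property. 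Since $\delta\|M_\omega\| \leq C\delta h^{-1}\ll 1$ with high probability, I perturb and invert to obtain
\begin{equation*}
E_{-+}^\delta(z) = E_{-+}^0(z) - \delta\bigl(\langle M_\omega e_+^j(z), e_-^i(z)\rangle\bigr)_{1\leq i,j\leq J} + \mO(\delta^2 h^{-2}),
\end{equation*}
and eigenvalues of $P_h^\delta$ near $z_0$, counted with multiplicity, coincide with zeros of $\det E_{-+}^\delta(z)$.

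\textbf{Step 2 (rescaling and reduction to a matrix of analytic functions).} Setting $z = z_0 + h^{1/2}w$ for $w$ in a domain containing $\overline O$, and dividing by a deterministic normalization $c_{i,j}(h)$ chosen so that each entry has variance one at $w=0$, I define the random $J\times J$ matrix of analytic functions $F_h(w)_{i,j}$. The deterministic $E_{-+}^0$ contribution and the $\mO(\delta^2 h^{-2})$ remainder are negligible after this normalization (using $\delta \gg h^M$ on one side and $\delta \leq h^\kappa$, $\kappa>3$ on the other), so it suffices to prove that $F_h$ converges in distribution, uniformly on compacts of $O$, to the matrix $(g_{z_0}^{i,j}(w))_{1\leq i,j\leq J}$ of $J^2$ independent GAFs with parameters $\sigma^{i,j}_{z_0}$. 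The determinant then passes to the limit by continuity, giving $\det F_h \stackrel{d}{\to} \tG_{z_0}$ on $O$, and Hurwitz's theorem for random holomorphic functions (applied using that the limiting zeros are a.s.\ simple) transfers this to the convergence $\mathcal{Z}^M_{h,z_0}\stackrel{d}{\to}\mathcal{Z}_{\tG_{z_0}}$.

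\textbf{Step 3 (the main obstacle: matrix CLT and microlocal decorrelation).} The heart of the proof is the Gaussian limit for the matrix family $F_h$. Writing $M_\omega = N(h)^{-1}\sum_{k,l}q_{k,l}\,e_k\otimes e_l^*$, each entry $F_h(w)_{i,j}$ is a normalized sum of $\sim h^{-4}$ independent terms $q_{k,l}\,\langle e_+^j,e_l\rangle\,\overline{\langle e_-^i,e_k\rangle}$, so a Lindeberg-type CLT, legitimized by the moment hypothesis \eqref{eq1.9} and the truncation \eqref{eq1.15.2}, yields convergence of the finite-dimensional distributions to centered complex Gaussians. Two microlocal computations carry the content: (i)~the diagonal covariance $\erw[F_h(w)_{i,j}\overline{F_h(w')_{i,j}}]$ reduces to a phase-space overlap between the Wigner functions of the quasimodes $e_\pm^{i,j}$ at nearby energies, which by stationary phase at $\rho_\pm^{i,j}(z_0)$ converges to $\exp(\sigma^{i,j}_{z_0}\,w\overline{w'})$, producing precisely the covariance kernel of $g_{\sigma^{i,j}_{z_0}}$; and (ii)~the off-diagonal covariances $\erw[F_h(w)_{i,j}\overline{F_h(w')_{i',j'}}]$ for $(i,j)\neq (i',j')$ vanish in the limit, because the overlaps $\langle e_+^j,e_+^{j'}\rangle$ and $\langle e_-^i,e_-^{i'}\rangle$ are $\mO(h^\infty)$ when the corresponding points of $p_0^{-1}(z_0)$ are distinct (this is where the hypothesis \eqref{eq1.8.1} enters). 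Tightness to upgrade finite-dimensional to functional convergence then follows from uniform second-moment bounds on $F_h$ on a slightly larger disc combined with holomorphy (Montel). The delicate point --- and the main obstacle --- is the combination of a multivariate CLT, uniform in $h$, with the sharp microlocal overlap estimates needed to establish decorrelation (ii) of the $J^2$ channels; the random matrix case is in fact cleaner than the random potential case of Theorem \ref{thm_m2}, because the independence of the $q_{k,l}$ naturally decouples the entries, so that no determinantal collapse of the type producing the product structure $\prod_j g_{z_0}^j$ occurs here.
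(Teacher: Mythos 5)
Your proposal follows essentially the same route as the paper (Grushin reduction, rescaling $z=z_0+h^{1/2}w$, a matrix CLT for the effective Hamiltonian, tightness via holomorphy and second moments, transfer to the zero point process via a random-analytic-function Hurwitz statement as in Prop.~\ref{prop6.6}), and it correctly identifies the covariance parameters $\sigma^{i,j}_{z_0}$ and the role of microlocal decorrelation between the $J^2$ channels. Two steps, however, are underspecified in a way that conceals genuine work. First, \emph{holomorphization}: the quasimodes $e^j_\pm(z)$ are only \emph{almost} holomorphic in $z$ (Lemma~\ref{lem:AH}), so the raw entries $\langle M_\omega e_+^j(z), e_-^i(z)\rangle$ and hence $\det E^\delta_{-+}(z)$ are not analytic in $z$, and the $w$-independent normalization $c_{i,j}(h)$ you propose does not remedy this. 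The paper must solve a $\bar\partial$-equation $\partial_{\bar z}l^\delta = hk^\delta$ (with $k^\delta$ built from $\partial_{\bar z}R_\pm$) and multiply by $e^{l^\delta/h}$ to obtain a genuinely holomorphic effective function (eqs.~\eqref{eq5.2.5}--\eqref{eq5.2.11a}). The resulting gauge factor $e^{F_{i,j}(v)+\overline{F_{i,j}(w)}}$ survives into the limit covariance of Prop.~\ref{lem6.3.3}, but being deterministic, nonvanishing and holomorphic it does not alter the zero process, which is how one recovers the bare kernel $K^{i,j}(v,\overline w)=\exp(\sigma^{i,j}_{z_0}v\overline w)$ announced in the theorem.

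Second, and more substantially, the \emph{Lindeberg condition} in your Step~3 is not a consequence of the moment hypothesis \eqref{eq1.9} and the truncation \eqref{eq1.15.2} alone. For the CLT (Thm.~\ref{thmCLT}) to produce a Gaussian limit, no individual term $q_{k,l}\,\langle e_+^j, e_l\rangle\,\overline{\langle e_-^i, e_k\rangle}$ may carry an $O(1)$ share of the variance; a single dominant term would give a non-Gaussian limit. This requires that each overlap $\langle h^{-1/4} e_\pm^{j,hol}(z), e_m\rangle$ be uniformly small, which is precisely Lemma~\ref{lem7.2} (bound $O(h^{1/4}\lambda_m^{-1/6})$), and that estimate in turn hinges on the choice of the \emph{non-semiclassical} harmonic-oscillator eigenbasis $(e_m)$ with $N(h)\sim h^{-2}$ states: the quasimodes are wavepackets on the $h^{1/2}$ scale, and in a semiclassically adapted basis (with only $\sim h^{-1}$ states, as in Hager's original work) they would essentially coincide with $O(1)$ basis vectors, and the Lindeberg condition would fail. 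The paper flags this explicitly in the remark following Lemma~\ref{lem7.2}. Without this microlocal spreading estimate, the convergence of finite-dimensional distributions in your Step~3 is not established at the point you call ``the main obstacle.''
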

Theorem \ref{thm_m1} tells us that at any given point 
$z_0\in\widetilde{\Omega}\cap\mathring{\Sigma}$ in the bulk of the 
pseudospectrum, the local rescaled point process of the 
eigenvalues of $P_M^{\delta}$ is given, in the limit $h\to 0$, by the 
zero process associated with the determinant of a $J\times J$ matrix whose 
entries are independent GAFs. 
The GAF situated at the position $i,j$ of the matrix only depends
on the portion of the classical spectral densities due to the points 
 $\rho_{+}^i(z_0)$ and  $\rho_{-}^j(z_0)$.  %
\begin{figure}[ht]
 \begin{minipage}[b]{0.45\linewidth}
  \centering
  \includegraphics[width=\textwidth]{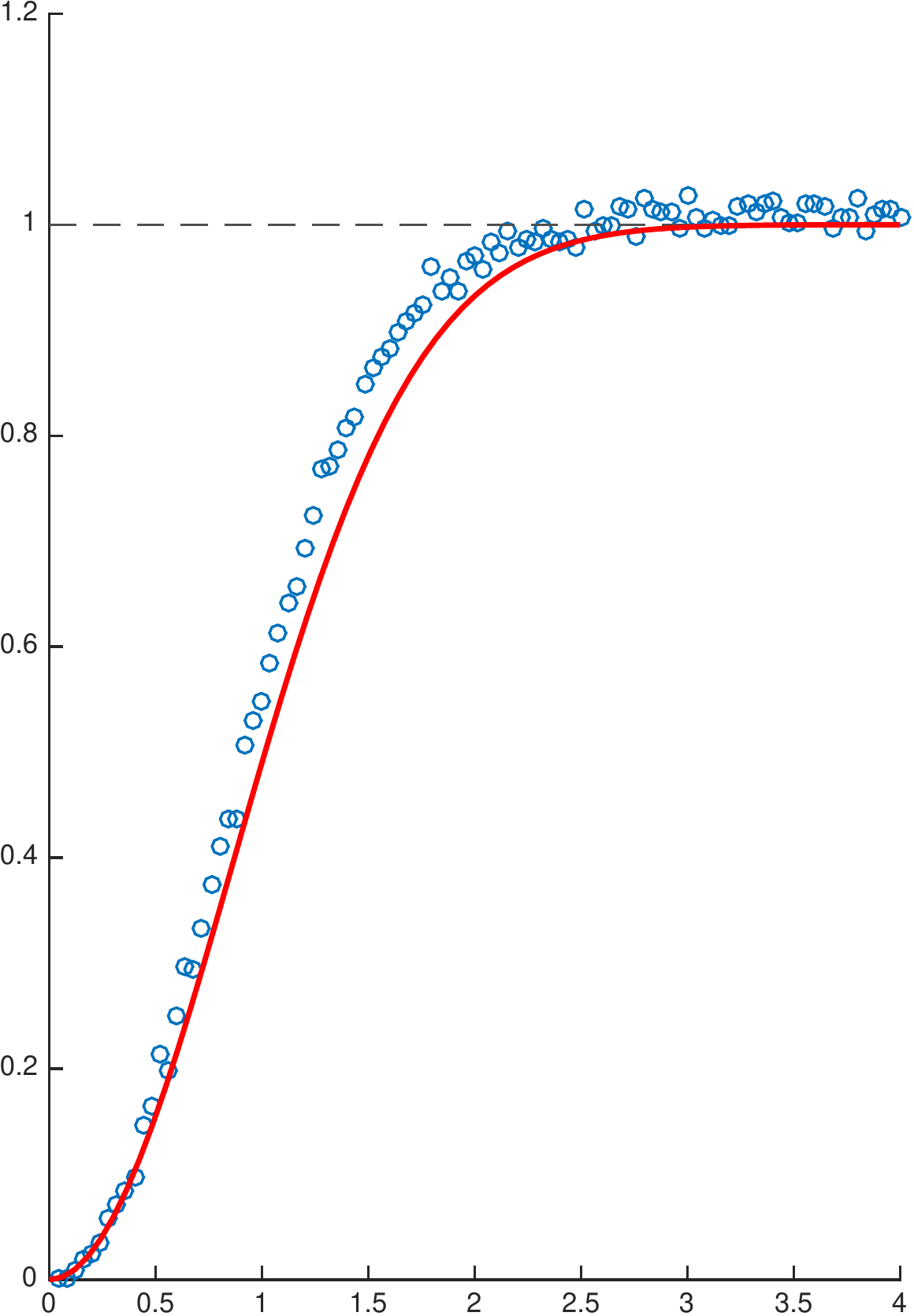}
 \end{minipage}
 \hspace{0.04\linewidth}
 \begin{minipage}[b]{0.45\linewidth}
  \centering 
  \includegraphics[width=\textwidth]{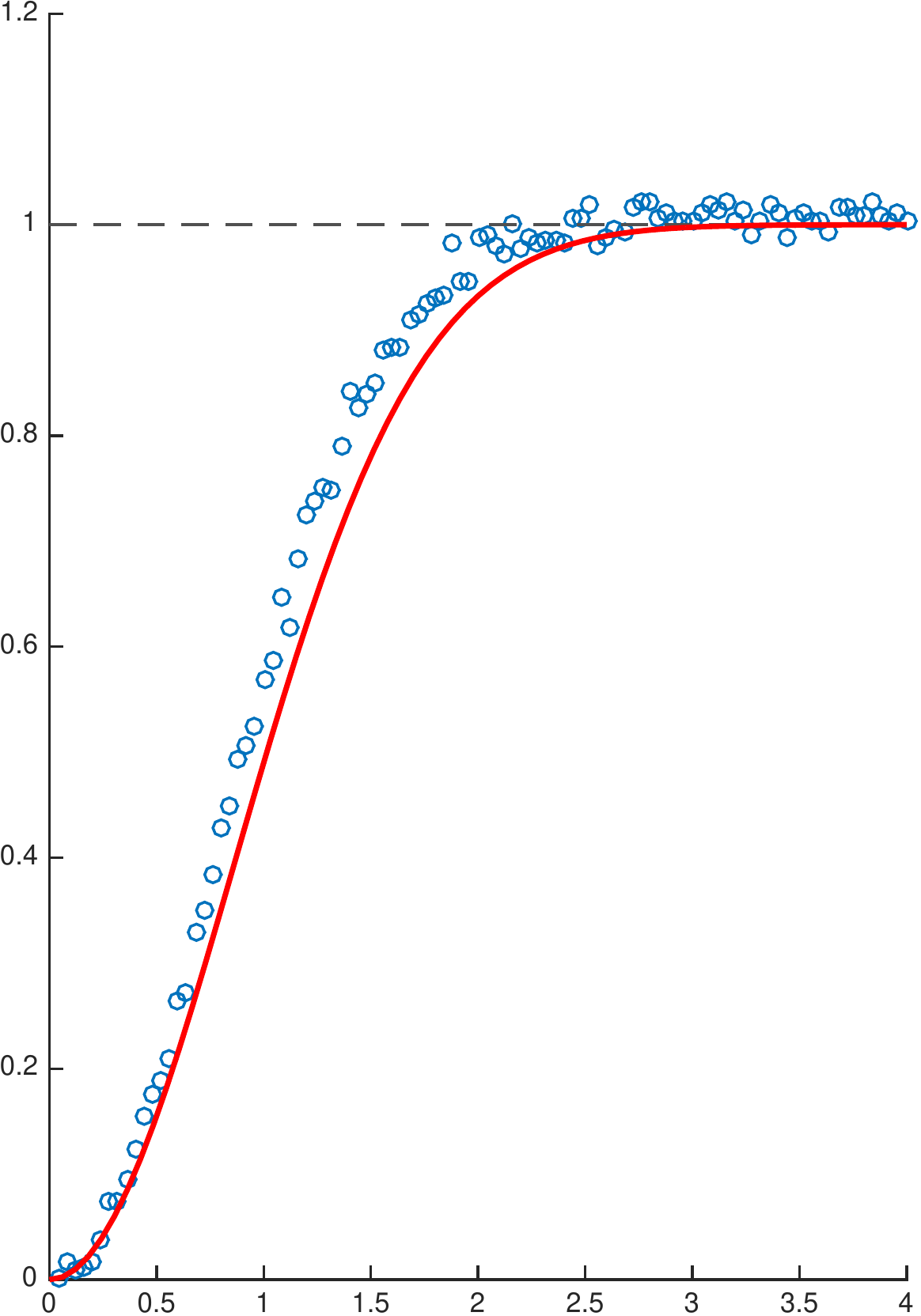}
 \end{minipage}
 \caption{Blue points: values of the $2$-point correlation functions, obtained by numerically computing the spectra of the two operators $P_{h,1}$ (left) and $P_{h,3}$ (right) perturbed by a Gaussian random matrix $\delta M_\omega$. The parameters $z_0,h,\delta$ are as in Figure~\ref{fig3}. Red curves: the 2-point correlation function for the Ginibre ensemble, $K^{2,Gin}_{z_0}$, as given in \eqref{eq:Cor}.  The horizontal coordinate is the rescaled square distance $|w_1-w_2|^2$.
 }\label{fig5}
\end{figure}
 \par
The rescaled point process $\mathcal{Z}_{h,z_0}^M$ of the eigenvalues 
of the perturbed operator $P^{\delta}_M$ has a universal limit, which is independent 
of the specific probability distribution of the potential $M_{\omega}$ \eqref{eq1.9}, but only
depends on the cardinal $2J$ of the energy shell $p_0^{-1}(z)$ the local classical densities $\{\sigma^j_{\pm}(z_0); j=1,\dots,J\}$ (we notice that without the assumption \eqref{eq.15}, the densities $\sigma^j_{+}(z_0)$ and $\sigma^j_{-}(z_0)$ are a priori unrelated).

This limiting process is of a different type from the universal limit of $\mathcal{Z}_h^V$ studied in the previous section. In particular the function $\tG_{z_0}$ is not given by a simple product of GAFs, but by a more complicated expression, namely a determinant. As we will see below, we  conjecture that the zero process of $\tG_{z_0}$  exhibits a quadratic repulsion between the nearby points, as opposed to the zero process of the function $G_{z_0}$ in Thm~\ref{thm_m2}.
\subsubsection{Scaling limit $k$-point measures}
A direct consequence of the convergence of the zero processes $\mathcal{Z}^{M}_{h,z_0}$ is the convergence of their $k$-point measures converge to those of the limiting point process. 
\begin{cor}
	Let $\mu_{h,z_0}^{k,M}$ be the $k$-point measure of $\mathcal{Z}_{h,z_0}^M$, 
	defined as in \eqref{eq2.11}, and let $\mu^{k,M}_{z_0}$ be the $k$-point measure of 
	the point process $\mathcal{Z}_{\tG_{z_0}}$ described in Thm~\ref{thm_m1}. Then, 
	for any $O\Subset\C$ open connected domain and 
	for all $\phi\in\mathcal{C}_c(O^k\backslash\Delta,\R_+)$, 
	\begin{equation*}
		\int_{O^k\backslash\Delta} \phi(w)\, \mu_{h,z_0}^{k,M}(dw) \longrightarrow \int_{O^k\backslash\Delta} \phi(w)\, \mu_{z_0}^{k,M}(dw), 
		\quad h\to 0.
	\end{equation*}
\end{cor}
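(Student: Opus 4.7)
The plan is to upgrade the convergence in distribution provided by Theorem~\ref{thm_m1} to convergence of the factorial moment measures $\mu_{h,z_0}^{k,M}$ against a fixed test function. Fix $\phi\in\mathcal{C}_c(O^k\setminus\Delta,\R_+)$ and set
\[
F(\mathcal{Z})\defeq\sum_{\substack{z_1,\dots,z_k\in\mathcal{Z}\\ z_i\neq z_j\,(i\neq j)}}\phi(z_1,\dots,z_k).
\]
By the definition \eqref{eq2.11} and the vanishing of $\phi$ on the diagonal $\Delta$, we have $\int\phi\,d\mu_{h,z_0}^{k,M}=\erw[F(\mathcal{Z}_{h,z_0}^M)]$, and similarly for the limit process. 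The compact support of $\phi$ in $O^k\setminus\Delta$ furnishes a compact $K\Subset O$ and some $\varepsilon>0$ with $\supp\phi\subset\{w\in K^k:\min_{i\neq j}|w_i-w_j|\geq\varepsilon\}$; consequently $F(\mathcal{Z})\leq\|\phi\|_\infty\, N(N-1)\cdots(N-k+1)$ with $N=\#(\mathcal{Z}\cap K)$, and $F$ is continuous with respect to the vague topology at every configuration placing no point on $\partial K$.

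The zero process $\mathcal{Z}_{\tG_{z_0}}$ is almost surely simple (the covariance of $\tG_{z_0}$ is non-degenerate) and a.s.\ places no mass on the null set $\partial K$, so $F$ is almost-surely continuous at $\mathcal{Z}_{\tG_{z_0}}$. The continuous mapping theorem combined with Theorem~\ref{thm_m1} therefore yields
\[
F(\mathcal{Z}_{h,z_0}^M)\stackrel{d}{\longrightarrow}F(\mathcal{Z}_{\tG_{z_0}})\quad\text{as }h\to 0.
\]
To promote this distributional convergence to convergence of expectations, it suffices to establish uniform integrability, which in view of the pointwise bound above reduces to the moment estimate
\[
\sup_{h\in\,]0,1]}\erw\bigl[\#(\mathcal{Z}_{h,z_0}^M\cap K)^{k+\eta}\bigr]<\infty\quad\text{for some fixed }\eta>0.
\]

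The main obstacle is precisely this uniform moment bound, since neither the weak convergence of the point processes nor the probabilistic Weyl's law (Theorem~\ref{thm:PWL}) immediately delivers control on the tails of the local count. The strategy is to count eigenvalues via Jensen's formula applied to a Grushin-type determinant $D_h(w)$, analytic in the rescaled variable $w=(z-z_0)h^{-1/2}$, whose zeros locate the eigenvalues of $P^\delta_M$ near $z_0$; this object is the backbone of the proof of Theorem~\ref{thm_m1}. Jensen's formula on a disk slightly larger than $K$ bounds $\#(\mathcal{Z}_{h,z_0}^M\cap K)$ by $\log\|D_h\|_\infty-\log|D_h(w_0)|$ at a conveniently chosen reference point $w_0$. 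The high-probability upper bounds on $\|D_h\|_\infty$ and lower bounds on $|D_h(w_0)|$ developed along the proof of Theorem~\ref{thm_m1}, together with the tail estimates \eqref{eq1.9.1} and \eqref{eq1.11.1} on the perturbation $M_\omega$, should yield moment bounds on $\#(\mathcal{Z}_{h,z_0}^M\cap K)$ to any prescribed order, uniformly in $h$; from this uniform integrability the corollary follows.
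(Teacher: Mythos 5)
Your structural diagnosis is exactly right, and it agrees with the paper's:
the crux of the corollary is upgrading the convergence in distribution of $\mathcal{Z}_{h,z_0}^M$ (Theorem~\ref{thm_m1}) to convergence of the expectations defining the $k$-point measures, and the sole obstacle is a uniform-in-$h$ moment bound (uniform integrability) for the local eigenvalue count $\#(\mathcal{Z}_{h,z_0}^M\cap K)$. The paper proves exactly this via Lemma~\ref{lem7.5}, which yields exponential tails on the counting function, and then invokes Proposition~\ref{prop6.6} (continuous mapping for the functionals $\prod_l\langle\cZ,\varphi_l\rangle$) plus density of product test functions $\varphi_1\otimes\cdots\otimes\varphi_k$ in $\mathcal{C}_c(O^k\setminus\Delta)$.

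The gap in your argument is in the claim that the required lower bound on $|D_h(w_0)|$ (equivalently, an anticoncentration estimate saying the effective-Hamiltonian determinant is unlikely to be very small) was ``developed along the proof of Theorem~\ref{thm_m1}.'' It was not. The proof of Theorem~\ref{thm_m1} establishes convergence in distribution of the rescaled determinant $F_h^\delta$ towards the determinant of a matrix of GAFs, via a CLT plus tightness; nowhere in that argument does one obtain a quantitative, uniform-in-$h$ bound of the form $\prob[|F_h^\delta(w_0)|\leq\tau]=\mO(\tau^c)$. Such a bound is the genuinely new (and hardest) ingredient of the corollary, and it is precisely what the paper supplies in the proof of Lemma~\ref{lem7.5}: there one shows $\erw[|F_h^\delta(w)|^{-1/J}]=\mO(1)$ uniformly in $w\in O$ and $h\leq h_0$, which requires (i) the Portmanteau theorem to pass the distribution function of $|F_h^\delta(w)|$ to its limit, and (ii) a careful analysis of $\prob[|\det(f_{ij}^{GAF}(w))|^{1/J}\leq\tau]$ based on a QR-type factorization of a matrix of i.i.d.\ complex Gaussians into orthogonalized column vectors with $\chi^2$-distributed norms. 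The upper moment $\erw[|F_h^\delta(w)|^{2/J}]=\mO(1)$ is comparatively easy (it follows from the covariance estimate in Proposition~\ref{prop7.1}), and corresponds to your $\log\|D_h\|_\infty$ part. In short, your Jensen's-formula route and the paper's appeal to \cite[Theorem 3.2.1]{HoKrPeVi09} both hinge on the same anticoncentration input; your proposal presupposes it, but a complete proof must actually establish it.
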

One can calculate the Lebesgue densities of the limiting $1$-point measures $ \mu_{z_0}^{1,M}$:
\begin{equation}\label{eq:MatDens}
	d^{1,M}_{z_0}(w) = \sum_{i=1}^J \frac{\sigma_+^i(z_0)+\sigma_-^i(z_0)}{2\pi},\quad \forall w\in O.
\end{equation}
This microscopic density exactly coincides with the macroscopic density of eigenvalues at $z_0$ predicted by the probabilistic Weyl's law in Thm~\ref{thm:PWL}, see also \eqref{eq2.1}. In the case of an operator $P_h$ satisfying the symmetry assumption \eqref{eq.15}, this local density is equal to the one obtained for the operator perturbed by a random potential: for such symmetric symbols, the microscopic densities $d^{1,Q}_{z_0}$ therefore cannot distinguish between the type of perturbation imposed on $P_h$. 
\par
On the opposite, we believe that the $k$-point densities $d^{k,Q}_{z_0}$ (equivalently, the $k$-point correlation functions $K^{k,Q}_{z_0}$) for $k>1$ should distinguish between the two types of perturbation (still assuming the symmetry \eqref{eq.15}). Unfortunately, we have not been able, so far, to compute in closed form the Lebesgue densities of the limiting $k$-point measures 
$ d^{k,M}_{z_0}$ associated with the random function $\tG_{z_0}$ (the determinant of a matrix of independent GAFs). However, the numerical experiments presented 
in Figure~\ref{fig5}, as well as the Proposition~\ref{thm_m3a}, lead us to the following 
\begin{con}\label{con:Cor}
	The $k$-point densities $d^{k,M}_{z_0}(w)$ of the zero point process of the random function $\tG_{z_0}$ 
	as in Theorem~\ref{thm_m1} exhibit a quadratic repulsion at short distance. Namely, for any 
	compact set $O\Subset \C$, there exists a constant $C>1$ depending only 
	on $O$ and $k$ such that, for all pairwise distinct points $w_1,\ldots,w_k\in O$, 
	\begin{equation*}
	C^{-1} \prod_{i<j}|w_i-w_j|^2\leq 
	d^{k,M}_{z_0}(w_1,\dots,w_k) 
	\leq 
	C\prod_{i<j}|w_i-w_j|^2.
	\end{equation*}
\end{con}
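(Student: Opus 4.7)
The plan is to develop a Kac--Rice-type intensity formula for the non-Gaussian random analytic function $\tG_{z_0}$, in the spirit of the analysis behind Proposition~\ref{thm_m3a}. For pairwise distinct $w_1,\dots,w_k\in O$ one writes
\begin{equation*}
  d^{k,M}_{z_0}(w_1,\dots,w_k) \;=\; \erw\Big[\,\prod_{j=1}^k |\partial_w \tG_{z_0}(w_j)|^2 \,\Big|\, \tG_{z_0}(w_j)=0,\ j=1,\dots,k\,\Big]\;\rho_{\tG}(0,\dots,0),
\end{equation*}
where $\rho_{\tG}$ denotes the joint Lebesgue density at the origin of the random vector $(\tG_{z_0}(w_j))_{j=1}^k\in\C^k$. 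This formula is well posed because $\tG_{z_0}$ is a degree-$J$ polynomial in the $J^2$ independent GAFs $g_{z_0}^{i,j}$, so its finite-dimensional marginals admit smooth densities on a generic open set.

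For the upper bound I would exploit analytic factorization. Conditionally on the event $\{\tG_{z_0}(w_j)=0,\ \forall j\}$, Weierstrass factorization produces a random entire function $h$ with
\begin{equation*}
  \tG_{z_0}(z)=\prod_{j=1}^k(z-w_j)\,h(z), \qquad \partial_w\tG_{z_0}(w_j)=h(w_j)\prod_{i\neq j}(w_j-w_i),
\end{equation*}
and hence $\prod_{j}|\partial_w\tG_{z_0}(w_j)|^2=\prod_{i<j}|w_i-w_j|^4\,\prod_j|h(w_j)|^2$. The conditional expectation of $\prod_j|h(w_j)|^2$ should stay bounded on the compact $O$ by combining Cauchy estimates on a slightly larger disc with small-ball control on the underlying Gaussians. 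For the density factor one then needs $\rho_{\tG}(0)\leq C\prod_{i<j}|w_i-w_j|^{-2}$: this reflects that, for close $w_j$'s, the first $k-1$ Taylor coefficients of $\tG_{z_0}$ at a common reference point essentially determine $(\tG_{z_0}(w_l))_{l=1}^k$ up to a Vandermonde change of variables, so passing to divided differences yields the announced scaling (just as in the pure Gaussian GAF setting where $\det (K(w_i,\bar w_j))_{i,j}\asymp\prod_{i<j}|w_i-w_j|^2$).

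For the lower bound one has to produce, with probability at least $C^{-1}\varepsilon^{2k}\prod_{i<j}|w_i-w_j|^2$, actual simultaneous zeros of $\tG_{z_0}$ near each $w_j$. My strategy is to exploit the determinantal structure: $\tG_{z_0}(w)=0$ iff the matrix $M(w)=(g_{z_0}^{i,j}(w))_{1\leq i,j\leq J}$ admits a nontrivial kernel vector. Conditioning on the low-order Taylor data of each GAF $g_{z_0}^{i,j}$, one identifies an explicit algebraic configuration of the leading Gaussian coefficients that forces $\det M$ to vanish at each prescribed $w_j$ simultaneously. Because the constraint that a degree-$J$ analytic function vanish at $k$ points at mutual distance $\leq\varepsilon$ amounts to a Vandermonde-type condition on its coefficients, the target probability should factor as $\varepsilon^{2k}\prod_{i<j}|w_i-w_j|^2$ times a Gaussian density bounded away from zero.

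The main obstacle is precisely this lower bound. Since $\tG_{z_0}$ is a polynomial rather than a Gaussian field, the direct Nazarov--Sodin machinery does not apply, and one needs matching two-sided estimates on both $\rho_{\tG}$ and the conditional law of the derivatives of $\tG_{z_0}$. A promising route is to parametrize the zero set of $\tG_{z_0}$ via the kernel directions of $M(w)$ and reduce the problem to simultaneous small-ball estimates for scalar GAFs of the form $w\mapsto v^*M(w)u$ indexed by suitable left/right vectors $u,v$, to which Proposition~\ref{thm_m3a} (or a refinement tracking the dependence on the parameters $\sigma_{z_0}^{i,j}$) can be iteratively applied.
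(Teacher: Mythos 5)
The statement you are trying to prove is labelled \texttt{con} in the source: it is stated in the paper as a \emph{conjecture}, not a theorem. Just before it, the authors write that they ``have not been able, so far, to compute in closed form the Lebesgue densities of the limiting $k$-point measures $d^{k,M}_{z_0}$ associated with the random function $\tG_{z_0}$,'' and that the conjecture is motivated only by the numerical data of Figure~\ref{fig5} and the scaling of Proposition~\ref{thm_m3a}. There is therefore no proof in the paper to compare yours against; you are attempting a genuinely open problem.

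As for your sketch: it is a reasonable research programme, but it is not a proof, and you are candid about that for the lower bound. I would add that the upper bound is not actually closed either. The step $\rho_{\tG}(0,\dots,0)\le C\prod_{i<j}|w_i-w_j|^{-2}$ is the analogue, in the Gaussian case, of $\det(K(w_i,\bar w_j))_{i,j}\asymp\prod_{i<j}|w_i-w_j|^2$, and in that case it follows from an explicit Gaussian density paired with a Vandermonde change of variables on the covariance matrix. Here $\tG_{z_0}=\det(g^{i,j}_{z_0})$ is a degree-$J$ polynomial in $J^2$ independent GAFs; the random vector $(\tG_{z_0}(w_1),\dots,\tG_{z_0}(w_k))$ is not Gaussian, it has no closed-form density, and there is no general principle guaranteeing the Vandermonde-squared scaling of its small-ball probability. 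Indeed, the collision of, say, all $k$ points into one is a high-codimension constraint that could a priori be met in several distinct ways through the matrix structure (e.g.\ rank-drop of $M(w)$ of order $\ge 2$), and one would have to rule out contributions from these strata. Similarly, the boundedness of $\erw[\prod_j|h(w_j)|^2\,|\,\tG_{z_0}(w_j)=0\ \forall j]$ needs a quantitative anti-concentration input for the conditioned law, not just Cauchy estimates on $\tG_{z_0}$; for a genuinely non-Gaussian field, this conditioning step is not routine. The lower bound is harder still, as you note; the Nazarov–Sodin technology (used for Proposition~\ref{thm_m3a}) is built on the Gaussian Wick calculus for a single GAF and does not transport to a polynomial in independent GAFs. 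In short: your outline identifies the right intermediate objects (Kac–Rice intensity, Vandermonde factorization, kernel-vector parametrization of the zero set of $\det M(w)$), which is encouraging, but each of the three key estimates (small-ball density at $0$, conditional moment of $h$, and the lower-bound small-ball construction) remains an open problem, consistent with the authors' own assessment.
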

In Figure~\ref{fig5} we compare our experimental values of the $2$-point correlation function with the $2$-point correlation function of a well-known spectral point process on $\C$, namely the one for the Ginibre ensemble of random matrices \cite{Gin65}. This ensemble corresponds to the spectra of the random matrices $M_\omega$ alone, in the case where their coefficients are Gaussian (see eq.~\eqref{e:Ginibre}), in the limit $h\to 0$, or equivalently the limit or large matrices. It is known since the work of Ginibre \cite{Gin65} that the eigenvalues of these matrices repel each other quadratically at short distance. When the eigenvalues are rescaled such that the mean local density is given by $d^1(w)=\sigma/\pi$, the 2-point correlation function takes the simple form
\begin{equation}
	K^{2,Gin}_{\sigma}(w_1,w_2) 
	= 1 - \exp(-\sigma|w_1-w_2|^2)\,.
\end{equation}
Hence, for our local density \eqref{eq:MatDens}, the 2-point function we draw on Fig.~\ref{fig5} is
\begin{equation}\label{eq:Cor}
	K^{2,Gin}_{z_0}(w_1,w_2) 
	= 1 - \exp\left[-\frac{1}{2}\big(\sum_{i=1}^J \sigma_+^i(z_0)+\sigma_-^i(z_0)\big) |w_1-w_2|^2\right].
\end{equation}
This function is markedly different from the scaling function $\kappa(t^2)$ corresponding to the zero GAF process \eqref{eq_KapLim}. It seems rather close to the experimental data of Fig.~\ref{fig5}, eventhough we observe a deviation for values $|w_1-w_2|\sim 1$. Is this deviation due to the finite value of $h$ used in the experiment? Or does the deviation persist when $h\to 0$, that is in the correlation function $K^{2,M}_{z_0}$?
We conjecture that the limiting correlation functions $K^{2,M}_{z_0}$ are not strictly equal to the Ginibre function $K^{2,Gin}_{z_0}$, but that they are becoming closer and closer to them when the number of quasimodes $J$ gets larger and larger (a property which purely concerns the classical symbol $p_0$): in this regime $\tilde G_{z_0}$ is the determinant of a large matrix of independent GAFs. In any case, computing the $k$-point densities for the process $\mathcal{Z}_{G_{z_0}}$ seems to us to be an interesting open problem.
\subsection*{Translation invariance}
One easy property of the limiting point processes obtained in Theorems \ref{thm_m3} and \ref{thm_m1}, is that they
are homogeneous and isotropic. This property is inherited from the similar invariance of the zero process of a single GAF \cite{HoKrPeVi09}.
\begin{prop}\label{thm_Tran}
	The limiting point processes $\mathcal{Z}_{G_{z_0}}$, resp. $\mathcal{Z}_{\tG_{z_0}}$ obtained in Theorems \ref{thm_m3} and 
	\ref{thm_m1} are invariant in distribution under the action of the group of translations and rotations 
	on $\C$. More precisely, for arbitray $\alpha,\beta \in \C$ with $|\alpha|=1$ let us define the plane isometry $\tau(w) = \alpha w + \beta$, $w\in\C$.
	
	Then, the zero processes of $G_{z_0}$ and $\tG_{z_0}$ satisfy 
	\begin{equation*}
		\mathcal{Z}_{G_{z_0}} \stackrel{d}{=} \mathcal{Z}_{G_{z_0}\circ\tau}, \quad\text{resp.} \quad \mathcal{Z}_{\tG_{z_0}} \stackrel{d}{=} \mathcal{Z}_{\tG_{z_0}\circ\tau}\,.
	\end{equation*}
\end{prop}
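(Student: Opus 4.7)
The plan is to reduce the invariance of the two limiting processes to the translation/rotation invariance of the zero process of a single GAF, and then to handle separately the multiplicative and determinantal structures of $G_{z_0}$ and $\tG_{z_0}$. The core tool will be the fact that, as a complex centred Gaussian field, a GAF is determined by its covariance kernel, so a composition with $\tau$ can be matched in law with a pointwise multiplication by a nowhere-vanishing deterministic entire factor, which does not alter the zero set.

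First I would establish the invariance for a single GAF $g_\sigma$. Writing $\tau(w)=\alpha w+\beta$ with $|\alpha|=1$, the field $g_\sigma\circ\tau$ is again a centred complex Gaussian analytic function, with covariance
\begin{equation*}
\erw\bigl[g_\sigma(\tau(w_1))\overline{g_\sigma(\tau(w_2))}\bigr]
=\exp\bigl(\sigma(\alpha w_1+\beta)(\bar\alpha \bar w_2+\bar\beta)\bigr)
=F(w_1)\overline{F(w_2)}\,K_\sigma(w_1,\bar w_2),
\end{equation*}
with the explicit entire factor $F(w)=\exp\!\bigl(\sigma\alpha\bar\beta\,w+\tfrac{\sigma}{2}|\beta|^2\bigr)$ (using $|\alpha|=1$). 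Since $\erw[g_\sigma(u)g_\sigma(v)]\equiv 0$ is preserved under this transformation, I conclude $g_\sigma\circ\tau\stackrel{d}{=} F\cdot g_\sigma$ as Gaussian analytic functions. As $F$ is nowhere vanishing, the zero sets coincide, giving $\mathcal{Z}_{g_\sigma\circ\tau}\stackrel{d}{=}\mathcal{Z}_{g_\sigma}$.

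For $G_{z_0}=\prod_{j=1}^J g^j_{z_0}$, the independence of the GAFs $g^j_{z_0}$ lifts the previous identity to a joint statement: $(g^1_{z_0}\circ\tau,\dots,g^J_{z_0}\circ\tau)\stackrel{d}{=}(F^1 g^1_{z_0},\dots,F^J g^J_{z_0})$ with the $F^j$ the corresponding nowhere vanishing entire factors from the first step. Multiplying these identities yields $G_{z_0}\circ\tau\stackrel{d}{=}\bigl(\prod_j F^j\bigr)\,G_{z_0}$, and again the prefactor never vanishes, so the zero processes agree in law.

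The main subtlety is the determinantal case $\tG_{z_0}=\det\bigl(g^{i,j}_{z_0}\bigr)_{1\le i,j\le J}$, because a priori the entrywise prefactors $F^{i,j}$ do not factor out of a determinant. This is where the specific form of the variance $\sigma^{i,j}_{z_0}=\tfrac12(\sigma_+^i(z_0)+\sigma_-^j(z_0))$ in \eqref{e:Gaf_ij} becomes essential: because the exponent in $F^{i,j}(w)=\exp\!\bigl(\sigma^{i,j}_{z_0}\alpha\bar\beta\,w+\tfrac{\sigma^{i,j}_{z_0}}{2}|\beta|^2\bigr)$ is linear in $\sigma^{i,j}_{z_0}$, and $\sigma^{i,j}_{z_0}$ splits as a sum over $i$ and $j$, we can write $F^{i,j}(w)=A_i(w)\,B_j(w)$ with
\begin{equation*}
A_i(w)=\exp\!\Bigl(\tfrac{\sigma_+^i(z_0)}{2}\bigl(\alpha\bar\beta w+\tfrac12|\beta|^2\bigr)\Bigr),\qquad
B_j(w)=\exp\!\Bigl(\tfrac{\sigma_-^j(z_0)}{2}\bigl(\alpha\bar\beta w+\tfrac12|\beta|^2\bigr)\Bigr).
\end{equation*}
By independence of the $J^2$ GAFs, $(g^{i,j}_{z_0}\circ\tau)_{i,j}\stackrel{d}{=}(A_iB_j\,g^{i,j}_{z_0})_{i,j}$, so multilinearity of the determinant in rows and columns gives $\tG_{z_0}\circ\tau\stackrel{d}{=}\bigl(\prod_i A_i\bigr)\bigl(\prod_j B_j\bigr)\tG_{z_0}$. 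Since the prefactor is a nowhere vanishing entire function, $\mathcal{Z}_{\tG_{z_0}\circ\tau}\stackrel{d}{=}\mathcal{Z}_{\tG_{z_0}}$, completing the proof. The only genuinely non-formal step is the factorization $F^{i,j}=A_iB_j$, which rests entirely on the additive decomposition of $\sigma^{i,j}_{z_0}$ and would fail for a more general variance pattern.
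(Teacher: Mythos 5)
Your argument is correct, and for the $\tG_{z_0}$ case it is in fact more complete than the proof given in the paper. The paper writes out only the multiplicative case $G_{z_0}=\prod_j g_j$ (showing $g_j\circ\tau\stackrel{d}{=}e^{\phi_j}g_j$ via the transformed covariance kernel, then multiplying) and dismisses the determinantal case as ``similar.'' But, as you correctly observe, the determinant case is \emph{not} formally similar: multiplying the $(i,j)$-entry of a matrix by an arbitrary factor $F^{i,j}$ does not in general move outside a determinant. The step that saves the argument is exactly your factorization $F^{i,j}=A_i B_j$, which holds precisely because $\sigma^{i,j}_{z_0}=\tfrac12(\sigma_+^i(z_0)+\sigma_-^j(z_0))$ is additive in the row and column indices, and then multilinearity of the determinant in rows and columns pulls out $\bigl(\prod_i A_i\bigr)\bigl(\prod_j B_j\bigr)$. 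You are also right to flag that the same conclusion would fail for a general variance pattern $\sigma^{i,j}$; the additive structure coming from \eqref{e:Gaf_ij} is genuinely what makes $\tG_{z_0}$ behave well under isometries. So: same underlying idea as the paper (match $\,{\circ}\,\tau$ in law with multiplication by a nowhere-vanishing holomorphic factor via the covariance kernel), but you have supplied the one step the paper omitted, and that step is the non-trivial part.
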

\subsection{Outline of the proofs}
The proof of the main results is built on two distinct parts. In the \textit{first part} we will 
reduce the eigenvalue problem of the perturbed operator $P^{\delta}_h$ to studying 
the zeros of a certain random analytic function. To do this, we will start by constructing quasimodes for 
the unperturbed operator $P_h$ in Section \ref{sec:Quas}, and study their interactions 
in Section \ref{sec:QuasInter}. Next we will use these quasimodes in Section \ref{sec:GP} 
to construct a well-posed Grushin problem for the perturbed operator $P^{\delta}$, which will yield 
an effective description of the eigenvalues of $P^{\delta}$ as the zeros of a random 
analytic function. \par
The \textit{second part} has a more probabilistic flavour. We first provide 
in Section \ref{sec:RAF} an overview of the notions and results from the theory of 
random analytic functions used in this paper. The sections \ref{sec:LS_M} and  \ref{sec:LS_V} 
then use these results to finish the proofs of our main theorems. 
%
%
\section{Quasimodes}\label{sec:Quas}
The aim of this section is provide the ingredients for a well-posed Grushin 
problem of the unperturbed operator $P_h$, cf. \eqref{eq1.7.5}. Our main objective in this section is the construction of the 
quasimodes $e^j_{\pm}(z_0)$ for the operator $(P_h-z_0)$ (resp. $(P_h-z_0)^*$). For this aim we will first factorize our symbol $p$ into a "nice" form, using semiclassical analysis: this factorized form will allow very explicit expressions of our quasimodes.
\subsection{Malgrange preparation theorem}
We begin by giving an asymptotic factorization of the symbol $p$, \eqref{eq1.6} in a neighbourhood of each of the points 
$\rho_{\pm}^j=(x^j_{\pm},\xi_{\pm}^j)\in p_0^{-1}(z_0)$, see \eqref{eq1.8.1}. The method
presented here is an adaptation of \cite{Ha06b}.
\begin{prop}\label{prop4.1}
Let $\Omega\Subset\C$ be as in \eqref{eq1.8.2}, let $z_0\in\Omega$ and 
let $p(h)\sim \sum_{k\geq 0} h^k\, p_k$ in $S(\R^2,m)$, satisfying 
\eqref{eq1.8.1}. Let $j=1,\dots,J$ and let $U_{\pm}^j$ be open neighbourhoods of 
$\rho_{\pm}^j(z_0)$. 
Then, there exists an open, relatively compact neighbourhood of $z_0$, denoted 
by $W(z_0)$, open sets $V_{\pm}^j\subset U_{\pm}$ containing 
$\rho_{\pm}^j(\overline{W(z_0)})$, and symbols in $S(V_{\pm}^j,1)$:
\begin{equation}\label{eq4.1}
q^{\pm,j}(x,\xi,z;h)\sim\sum_{k\geq 0} h^kq_k^{\pm,j}(x,\xi,z), \quad 
g^{\pm,j}(x,z;h)\sim\sum_{k\geq 0} h^kg_k^{\pm,j}(x,z)\,,
\end{equation}
depending smoothly on $z\in W(z_0)$, such that for all $z\in W(z_0)$,
\begin{equation}\label{eq4.2}
\begin{split}
p(x,\xi;h)-z \sim q^{+,j}(x,\xi,z;h)\# (\xi+g^{+,j}(x,z;h)) \quad \text{ in } S(V_{+}^j,m), \\
p(x,\xi;h)-z \sim (\xi+g^{-,j}(x,z;h))\#q^{-,j}(x,\xi,z;h) \quad \text{ in } S(V_{-}^j,m).
\end{split}
\end{equation}
Furthermore,  the principal symbols $q_0^{\pm,j}(x_{\pm}^j(z),\xi_{\pm}^j(z),z)\neq 0$ and $g_0^{\pm,j}(x_{\pm}^j(z),z)=-\xi_{\pm}^j(z)$. 
\end{prop}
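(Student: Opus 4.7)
The plan is to prove the factorization near $\rho_+^j(z_0)$; the statement near $\rho_-^j(z_0)$ follows by an entirely parallel argument with the two factors of the Moyal product interchanged (or, equivalently, by passing to the formal adjoint). The overall strategy is a complex Malgrange preparation theorem in the style of \cite{Ha06b}, adapted to the semiclassical Moyal calculus and implemented iteratively order by order in $h$.

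\emph{Principal symbol level.} From the identity $\{\Rea p_0,\Ima p_0\}=\Ima(\overline{\partial_\xi p_0}\,\partial_x p_0)$ together with the hypothesis \eqref{eq1.0.1}, one obtains $\partial_\xi p_0(\rho_+^j(z_0))\neq 0$. Fix a smooth almost analytic extension $\tilde p_0(x,\xi)$ of $p_0$ in the $\xi$-variable (so that $\bar\partial_\xi\tilde p_0$ is flat on $\{\Ima\xi=0\}$) and apply the holomorphic implicit function theorem to $\tilde p_0$, which is almost holomorphic in $\xi$: this yields a smooth function $g_0^{+,j}(x,z)\in\C$ defined on a neighbourhood of $(x_+^j(z_0),z_0)\in\R\times W(z_0)$, solving $\tilde p_0(x,-g_0^{+,j}(x,z))=z$ up to a flat error on the real axis, with $g_0^{+,j}(x_+^j(z_0),z_0)=-\xi_+^j(z_0)$. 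The fundamental theorem of calculus in $\xi$ then factors
\begin{equation*}
p_0(x,\xi)-z \;=\; (\xi+g_0^{+,j}(x,z))\,q_0^{+,j}(x,\xi,z) + \mO(|\Ima g_0^{+,j}|^\infty),
\end{equation*}
with $q_0^{+,j}(x,\xi,z)\defeq\int_0^1\partial_\xi\tilde p_0\bigl(x,-g_0^{+,j}+t(\xi+g_0^{+,j})\bigr)\,dt$. The error is flat on real $\xi$, and $q_0^{+,j}(\rho_+^j(z_0),z_0)=\partial_\xi p_0(\rho_+^j(z_0))\neq 0$, so $q_0^{+,j}$ remains elliptic on a neighbourhood $V_+^j\subset U_+^j$ after possibly shrinking $W(z_0)$.

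\emph{Subleading terms, iterative construction.} Assume that truncated symbols $q_{<N}^{+,j}\sim\sum_{k<N}h^kq_k^{+,j}$ and $g_{<N}^{+,j}\sim\sum_{k<N}h^kg_k^{+,j}$ realize \eqref{eq4.2} modulo $h^N$, with remainder $h^N r_N(x,\xi,z;h)$. Writing the next corrections as $h^N a_N(x,\xi,z)$ and $h^N b_N(x,z)$, and using the Moyal product expansion $\# =\text{pointwise}+\tfrac{h}{2i}\{\cdot,\cdot\}+\cdots$, cancellation at order $h^N$ reduces to the division problem
\begin{equation*}
r_N(x,\xi,z) \;=\; a_N(x,\xi,z)\,(\xi+g_0^{+,j}(x,z)) \;+\; q_0^{+,j}(x,\xi,z)\,b_N(x,z).
\end{equation*}
Set $b_N(x,z)\defeq r_N(x,-g_0^{+,j}(x,z),z)/q_0^{+,j}(x,-g_0^{+,j}(x,z),z)$ (well defined by ellipticity of $q_0^{+,j}$ and an almost analytic extension of $r_N$ in $\xi$); the difference $r_N-q_0^{+,j}b_N$ then vanishes at $\xi=-g_0^{+,j}$ and is divisible by $(\xi+g_0^{+,j})$, yielding $a_N$. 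Smooth dependence on $z\in W(z_0)$ is preserved at each step, and a Borel summation of the formal series $\sum_k h^k q_k^{+,j}$ and $\sum_k h^k g_k^{+,j}$ produces the announced classical symbols in $S(V_+^j,1)$.

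\emph{Main obstacle.} The core technical point is the bookkeeping of almost analytic extensions: at every order one must justify the division operation while verifying that the flat $\mO(|\Ima g_0^{+,j}|^\infty)$-type errors, together with their Moyal-expansion analogues, are absorbed into the next order of the asymptotic expansion. This requires choosing $W(z_0)$ and $V_\pm^j$ small enough that $g_0^{\pm,j}$ remains in a thin complex tube around the real axis and $q_0^{\pm,j}$ stays uniformly elliptic, so that all estimates are uniform in $z\in W(z_0)$. Once this is in place, the rest is a routine formal power-series manipulation in the Moyal calculus.
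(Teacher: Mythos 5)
Your argument and the paper's are structurally the same: both prove the factorization by (i) a Malgrange--type preparation at the principal symbol level, and (ii) an induction on the $h$-order that reduces each step to the same scalar division problem
\begin{equation*}
r_N(x,\xi,z)\;=\;a_N(x,\xi,z)\,(\xi+g_0^+(x,z))\;+\;q_0^+(x,\xi,z)\,b_N(x,z),
\end{equation*}
finishing with a Borel summation. The only real difference is that the paper simply cites the $C^\infty$ Malgrange preparation theorem (H\"ormander, Thm.~7.5.6) both at the leading order and at each inductive step, whereas you try to re-derive it inline via an almost analytic extension $\tilde p_0$ in $\xi$, the holomorphic implicit function theorem, and the fundamental theorem of calculus. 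That re-derivation is where your sketch is not quite right. The error term $\mO(|\Ima g_0^{+,j}|^\infty)$ you produce is flat in the \emph{phase-space} variable (it is $\mO(|x-x_+^j(z)|^\infty)$), \emph{not} small in $h$; so the claim in your ``Main obstacle'' paragraph that these terms are ``absorbed into the next order of the asymptotic expansion'' does not hold --- they live at fixed $h$-order and must be eliminated at that order. Eliminating them amounts to showing that a function flat at a complex characteristic $\{\xi=-g_0^{+,j}(x,z)\}$ is smoothly divisible by $\xi+g_0^{+,j}(x,z)$, and that divisibility is precisely the content of the Malgrange/Mather division lemma you were trying to bypass. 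So your approach is correct in spirit but ends up presupposing the very theorem the paper invokes; quoting H\"ormander's result directly, as the paper does, is both shorter and logically cleaner. (The rest of your outline --- the Moyal power-series induction, $\partial_\xi p_0\neq 0$ from \eqref{eq1.0.1}, the ellipticity of $q_0$ near $\rho_+^j$, the parallel treatment of the ``$-$'' case, the Borel summation --- matches the paper's argument.)
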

We recall that $\#$ indicates the Moyal product, which translates the operator composition to the symbolic level \cite[Chapter 7]{DiSj99}. For symbols $a_j\in S(\R^2,\widetilde{m}_j)$, for $j=1,2$, then  
\begin{equation*}
 	a_1^w\circ a_2^w = (a_1\# a_2)^w.
 \end{equation*}
The Moyal product $\#: S(\R^2,\widetilde{m}_1)\times S(\R^2,\widetilde{m}_2) 
\to S(\R^2,\widetilde{m}_1\widetilde{m}_2)$ is a bilinear and continuous map. 
\begin{proof}
We will focus on a single point $\rho^j_+(z)$, and will omit the $\pm$ and $j$ sub/superscripts in the proof. The case of the points $\rho^j_-$ can be treated identically.
\par
The condition \eqref{eq1.8.1} implies that for any $z\in\Omega$ we have that $p(\rho(z))-z=0$ 
and that $\partial_{\xi}p(\rho(z))\neq 0$. Let $z_0\in\Omega$. By the Malgrange preparation 
theorem \cite[Theorem 7.5.6]{Ho83}, there exist open neighbourhoods $V\subset U$ of $\rho(z_0)$ and 
$W(z_0)\subset \C$ of $z_0$, as well as smooth functions $q_0\in\mathcal{C}^{\infty}(V\times W(z_0))$ 
and $g_0\in\mathcal{C}^{\infty}(\pi_x(V)\times W(z_0))$, such that 
\begin{equation}\label{eq4.0}
	p(x,\xi)-z = q_0(x,\xi,z)(\xi+g_0(x,z)), \quad \text{for all } (x,\xi,z)\in V\times W(z_0),
\end{equation}
and $q_0(x(z_0),\xi(z_0),z_0)\neq 0$, while $g_0(x(z_0),z_0)=-\xi(z_0)$. We can suppose that 
$q_0(x,\xi,z)\neq 0$ in $V\times W(z_0)$ by potentially shrinking $V$ and $W(z_0)$. 
Up to shrinking $W(z_0)$ we may also assume that $\rho(W(z_0))\subset V$, so that
$g_0(x(z),z)=-\xi(z)$ for all $z\in W(z_0)$.
\par
Next, we make the formal Ansatz 
\begin{equation*}
q(x,\xi,z;h)=\sum_{k\geq 0} h^kq_k(x,\xi,z), \quad 
g(x,z;h)=\sum_{k\geq 0} h^kg_k(x,z),
\end{equation*}
and group together the terms of the Moyal product
\begin{equation*}
\left. q(x,\xi,z;h)\# (\xi+g(x,z;h))	= \e^{\frac{ih}{2}(D_{\xi}D_y-D_xD_{\eta})}
	q(x,\xi,z;h)(\eta+g(y,z;h))\right|_{\substack{y=x \\ \eta=\xi}}
\end{equation*}
with the same power of $h$. For any $N\geq 1$, equating the coefficient of $h^N$ of this asymptotic development 
with the symbol $p_N$, we obtain 
\begin{equation}\label{eq4.3}
	G_N(x,\xi,z) = \frac{q_N(x,\xi,z)}{q_0(x,\xi,z)}\big(\xi+g_0(x,z)\big)+g_N(x,z),
\end{equation}
where
\begin{equation}\label{eq4.3b}
	\begin{split}
	&G_N(x,\xi,z) = \frac{1}{q_0(x,\xi,z)}
	\left(p_N(x,\xi)- \sum_{l=1}^{N-1} q_{N-l}(x,\xi,z)g_l(x,z) \right.\\
	& \left.\left.+ \sum_{k=0}^{N-1} \Big( \frac{i}{2}(D_{\xi}D_y - D_xD_{\eta})\Big)^{N-k}
		\Big(q_k(x,\xi,z)\eta + \sum_{r=0}^kq_{k-r}(x,\xi,z)g_r(y,z) \Big)\right)
		\right|_{\substack{y=x \\ \eta=\xi}}
	\end{split}
\end{equation}
Notice that $G_N$ only depends on $q_k$, $g_k$,  for $k< N$, and on $p_N$. 
We can determine the functions $q_k$ and $g_k$ inductively: since 
\begin{equation*}
	\xi(z)+g_0(x(z),z)=0, \quad \partial_\xi(\xi+g_0(x,z))=1,
\end{equation*}
the Malgrange preparation theorem  implies the existence 
of smooth functions $q_N/q_0$ and $g_N$ in $V\times W(z_0)$ satisfying \eqref{eq4.3}. 
Iterating the procedure, we obtain functions $q_k \in\mathcal{C}^{\infty}(V\times W(z_0))$, 
$g_k \in\mathcal{C}^{\infty}(\pi_x(V)\times W(z_0))$, $k\geq 1$, which allow us to construct full symbols $q(x,\xi,h)$ and $g(x,h)$ by Borel summation, which satisfy \eqref{eq4.1} in $S(V,1)$.
\end{proof}
\subsection{Almost holomorphic extensions}
In general the symbol $p(x,\xi,h)$ is not real analytic in the variables $x,\xi$, so that the functions $q(\rho,z)$ and $g(x,z)$ constructed in Prop.~\ref{prop4.1} are, a priori, not holomorphic in $z$. Yet, we show below that they are almost holomorphic.

We begin by recalling the notion of an almost holomorphic extension of a smooth function. 
It has been introduced by H\"ormander \cite{Ho69} and Nirenberg \cite{Ni71} in different contexts.
\par
\begin{defn}\label{def:AH}
Let $X\subset \C^n$ be an open set and let $\Gamma\subset X$ be closed. If 
$f\in\mathcal{C}^{\infty}(X)$, we say that $f$ is almost holomorphic at $\Gamma$ 
if $\partial_{\overline{z}}f$ vanishes to infinite order there, i.e. for any $N\in\N$ there 
exists a constant $C_N>0$ such that for all $z$ in a small neighbourhood of $\Gamma$ 
in $X$
\begin{equation*}
	|\partial_{\overline{z}}f(z)| \leq C_N~\dist(z,\Gamma)^N.
\end{equation*}
In this case we write $\partial_{\overline{z}}f(z) = \mO(\dist(z,\Gamma)^{\infty})$. 
If $\Gamma = X\cap \R^n$ then we simply say that $f$ is almost holomorphic. \par
If $f,g\in\mathcal{C}^{\infty}(X)$ are almost holomorphic at $\Gamma$ and if 
$f-g$ vanishes to infinite order there, then we say that that $f$ and $g$ are 
equivalent at $\Gamma$. If $\Gamma = X\cap \R^n$, then we simply say that 
$f$ and $g$ are equivalent and we write $f\sim g$. 
\end{defn}
Any $f\in\mathcal{C}^{\infty}(X\cap\R^n)$ admits an almost holomorphic extension, uniquely 
determined up to equivalence, see e.g \cite{Ho69,MelSj74}. Before we continue we recall parts of 
a technical lemma from \cite[Lemma 1.5]{MelSj74}. 
\begin{lem}\label{lem:T}
	Let $\Omega\subset\R^d$ be an open set and suppose that $u\in\mathcal{C}^{\infty}(\Omega)$. Let 
	$v(x)$ be a Lipschitz continuous function on $\Omega$ so that $|v(x)-v(y)| \leq C_{\Omega'} |x-y|$ 
	when $x,y\in\Omega'$ and $\Omega'\Subset \Omega$. Suppose that for all $N\in\N$, we have 
	\begin{equation*}
		|u(x)| \leq C_{N,\Omega'} |v(x)|^N, \quad x\in\Omega'.
	\end{equation*}
	Then for all $\Omega'\Subset\Omega$, $N\in\N$ and any multi-index $\alpha$, there is a constant 
	$C_{N,\Omega',\alpha}$ such that 
	\begin{equation*}
		 |v(x)|^{|\alpha|}\,|D_x^{\alpha} u(x)| \leq C_{N,\Omega',\alpha} |v(x)|^N, \quad x\in\Omega'.
	\end{equation*}
\end{lem}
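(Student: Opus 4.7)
My approach is to reduce the stated inequality, after factoring out $|v(x)|^{|\alpha|}$, to the equivalent bound $|D^\alpha u(x)|\leq C\,|v(x)|^{N-|\alpha|}$ (valid when $v(x)\neq 0$), and then to establish this via a scaled interior derivative estimate on balls whose radius is proportional to $|v(x)|$.

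Several trivial regimes reduce to uniform bounds coming from smoothness and compactness. When $v(x)=0$ both sides vanish for $|\alpha|\geq 1$, while for $|\alpha|=0$ the hypothesis applied with arbitrary $N$ yields the claim directly. When $N\leq|\alpha|$ the factor $|v(x)|^{N-|\alpha|}$ is bounded below (since $|v|$ is bounded on $\overline{\Omega'}$ by continuity and compactness), so the inequality reduces to the uniform bound $\|D^\alpha u\|_{L^\infty(\overline{\Omega'})}<\infty$ afforded by $u\in\mathcal{C}^\infty$. So from now on I assume $N>|\alpha|$ and $v(x)\neq 0$. I fix a slightly larger open set $\Omega'\Subset\Omega''\Subset\Omega$ with $\mathrm{dist}(\Omega',\partial\Omega'')\geq r_0>0$, denote by $C_{\Omega''}$ the Lipschitz constant of $v$ on $\Omega''$, and set $r=\min(c|v(x)|,r_0)$ with $c=1/(2C_{\Omega''})$. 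When $r=r_0$ the quantity $|v(x)|$ is bounded below, so the trivial regime applies again; otherwise $r=c|v(x)|$, the ball $B(x,r)$ lies inside $\Omega''$, and the Lipschitz property gives $|v(y)|\leq \tfrac{3}{2}|v(x)|$ for every $y\in B(x,r)$, hence, by hypothesis, $\|u\|_{L^\infty(B(x,r))}\leq C_N|v(x)|^N$.

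The core technical input is the classical interior derivative bound for smooth functions: for every integer $k>|\alpha|$ and every ball $B(x,r)\subset\Omega''$,
\begin{equation*}
|D^\alpha u(x)| \leq C_{k,\alpha,d}\Bigl(r^{-|\alpha|}\|u\|_{L^\infty(B(x,r))} + r^{k-|\alpha|}\|D^k u\|_{L^\infty(\Omega'')}\Bigr).
\end{equation*}
This is a consequence of Taylor's formula at $x$ truncated at order $k$: evaluating the expansion at a fixed configuration of points $x+r\eta_j$ (with $\eta_j\in\overline{B(0,1)}$ chosen so that the associated Vandermonde-type interpolation matrix is invertible) produces a linear system for the derivatives $\{D^\beta u(x):|\beta|<k\}$, whose inversion yields exactly the factor $r^{-|\alpha|}$ on the function values $u(x+r\eta_j)$ and the factor $r^{k-|\alpha|}$ on the $k$-th order remainder. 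Applying this with $r=c|v(x)|$ and any integer $k\geq N$, the first term is controlled by $C\,|v(x)|^{N-|\alpha|}$ thanks to the sup bound on $u$ just established, while the second term is at most $c^{k-|\alpha|}\|D^k u\|_{L^\infty(\Omega'')}\,|v(x)|^{k-|\alpha|}=O(|v(x)|^{N-|\alpha|})$ since $|v|$ is bounded on $\overline{\Omega'}$ and $k-|\alpha|\geq N-|\alpha|$.

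The main obstacle I foresee is proving the interior estimate with the correct scaling in $r$ and tracking its dependence on $k$; once this standard Taylor-based inequality is in place, the rest of the argument is routine bookkeeping using only the Lipschitz hypothesis on $v$ and the compactness of $\overline{\Omega'}$.
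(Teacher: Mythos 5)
The paper does not reproduce a proof of this lemma: it is quoted directly from Melin--Sj\"ostrand \cite[Lemma 1.5]{MelSj74}, so there is no in-text argument against which to compare. Your self-contained proof is correct and follows the standard route for such flatness lemmas: the Lipschitz bound on $v$ forces $|v(y)|\leq \tfrac32|v(x)|$ on the ball $B(x,c|v(x)|)\subset\Omega''$, the hypothesis then gives $\|u\|_{L^\infty(B(x,r))}=\mO(|v(x)|^N)$, and the scaled interior derivative estimate (obtained by inverting a fixed Vandermonde-type interpolation system applied to the Taylor expansion of order $k\ge N$, with nodes chosen once and for all so that the constant depends only on $k$, $\alpha$ and $d$) converts this into $|D^\alpha u(x)|=\mO(|v(x)|^{N-|\alpha|})$; together with your reduction to uniform bounds in the degenerate regimes $v(x)=0$, $N\le|\alpha|$, and $c|v(x)|\ge r_0$, this closes the argument.
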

Applying this Lemma to our construction, we will obtain almost holomorphic extensions of the functions $g_0(x,z)$ appearing in our construction.
\begin{lem}\label{lem:AH}
Under the hypotheses of Proposition \ref{prop4.1}, let $\widetilde{g}_0^{\pm,j}$ 
be an almost $x$-holomorphic extension of $g_0^{\pm,j}$, for $j=1,\dots, J$. Then, 
there exists an open relatively compact neighbourhood $W(z_0)$ of $z_0$,  
open relatively compact sets $X_{\pm}^j \subset \R$ and small complex neighbourhoods 
$\widetilde{X}_{\pm}^j$ of $X_{\pm}^j$ , such that 
$x_{\pm}^j(\overline{W(z_0)})\subset X_{\pm}^j$, and such for any $N\in \N$ and any $\alpha,\beta,\gamma\in\N$,
there exists a constant $C_N^{\alpha,\beta,\gamma} >0$ such that for all $z\in W(z_0)$ and all $x\in\widetilde{X}_{\pm}^j$
\begin{equation}\label{eq4.1a}
| \partial^{\alpha}_x\partial^{\beta}_z \partial_{\bar{z}}^{\gamma+1} \widetilde{g}^{\pm,j}_0(x,z)|
 \leq C_N^{\alpha,\beta,\gamma} | x-x_{\pm}^j(z)|^{N},\qquad \forall z\in W(z_0),\quad \forall x\in\widetilde{X}_{\pm}^j.
\end{equation}
Moreover, for any symbol $g_k^{\pm,j}$, $k\in\N$ (see Proposition~\ref{prop4.1}), 
for any $\alpha\in \N$ and any $N\in\N$, there exists a constant $C^{\pm}_{N,\alpha,k,j}>0$ such that 
\begin{equation}\label{eq4.1aa}
	|\partial^{\alpha}_x\partial_{\bar{z}}\, g^{\pm,j}_k(x,z)| \leq C^{\pm}_{N,\alpha,k,j} |x-x_{\pm}^j(z)|^{N}, \quad x\in X_{\pm}^j,\ z\in W (z_0).
\end{equation}
\end{lem}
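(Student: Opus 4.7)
Construct the almost $x$-holomorphic extensions $\tilde{g}_k^{\pm,j}(\tilde x, z)$ by the standard Borel summation of the formal Taylor series in $\Ima\tilde x$ about real $\tilde x\in X_{\pm}^j$, producing functions on a small complex neighbourhood $\widetilde{X}_{\pm}^j$ of $X_{\pm}^j$ with $\partial_{\overline{\tilde x}} \tilde{g}_k^{\pm,j}(\tilde x, z) = \mO(|\Ima\tilde x|^\infty)$, uniformly in $z \in W(z_0)$. Once this extension is in place, the main task is to prove the vanishing estimates \eqref{eq4.1a} and \eqref{eq4.1aa} for real $x$; the passage to complex $\tilde x$ is then routine via almost holomorphy.

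Dropping the superscripts $\pm, j$, the key idea is that $\partial_{\bar z}^{\gamma+1} g_0(x, z)$ is independent of $\xi$. Since $p(x, \xi) - z$ is holomorphic in $z$, applying $\partial_{\bar z}$ to the factorization $p - z = q_0(\xi + g_0)$ yields on $V \times W(z_0)$ the identity
\begin{equation*}
\partial_{\bar z} g_0(x, z) = R_0(x, \xi, z)\bigl(\xi + g_0(x, z)\bigr), \qquad R_0 \defeq -\partial_{\bar z} q_0/q_0,
\end{equation*}
where $R_0$ is smooth in the given neighbourhood. Since the left-hand side does not depend on $\xi$, differentiating in $\xi$ yields $R_0 = -(\partial_\xi R_0)(\xi + g_0)$; iterating this substitution, one obtains for every $n \in \N$
\begin{equation*}
\partial_{\bar z} g_0(x, z) = \frac{(-1)^n}{n!} \partial_\xi^n R_0(x, \xi, z)\,\bigl(\xi + g_0(x, z)\bigr)^{n+1}.
\end{equation*}
Setting the real value $\xi = \xi(z)$ and using $\xi(z) + g_0(x, z) = g_0(x, z) - g_0(x(z), z) = \mO(|x - x(z)|)$ by Taylor then gives $|\partial_{\bar z} g_0(x, z)| \leq C_n |x - x(z)|^{n+1}$ for every $n$, which is the desired $\mO(|x - x(z)|^\infty)$ vanishing for real $x$.

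For the higher $\bar z$-derivatives of $g_0$, I would proceed by induction on $\gamma$: applying $\partial_{\bar z}^{\gamma+1}$ to the factorization and using Leibniz, the induction hypothesis ensures that all cross terms involving $\partial_{\bar z}^\ell g_0$ for $1 \leq \ell \leq \gamma$ are $\mO(|x - x(z)|^\infty)$ uniformly in $\xi$, leaving an identity of the same shape $\partial_{\bar z}^{\gamma+1} g_0 = R_\gamma(\xi + g_0) + E_\gamma$ with $E_\gamma = \mO(|x - x(z)|^\infty)$, and the $\xi$-iteration concludes as before. For the higher symbols $g_k$ with $k \geq 1$, the recursive formula \eqref{eq4.3}--\eqref{eq4.3b} expresses $g_N$ on the left as a $\xi$-independent quantity with a $\xi$-dependent right-hand side; applying $\partial_{\bar z}$ and the same $\xi$-iteration trick, combined with the just-established estimate on $\partial_{\bar z} g_0$, yields \eqref{eq4.1aa}.

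To obtain the full derivative estimate \eqref{eq4.1a} for complex $\tilde x$, first invoke Lemma \ref{lem:T} with $v(x, z) = |x - x(z)|$ (Lipschitz in the real variables $(x, \Rea z, \Ima z)$) to extend the $\mO(|x - x(z)|^\infty)$ vanishing of $\partial_{\bar z}^{\gamma+1} g_0$ to all its derivatives $\partial_x^\alpha \partial_z^\beta \partial_{\bar z}^{\gamma+1} g_0$. Then the almost holomorphy $\partial_{\overline{\tilde x}} \tilde g_0 = \mO(|\Ima \tilde x|^\infty)$, together with a Taylor expansion of $\tilde g_0(\tilde x, z)$ in $\Ima \tilde x$ about $\Rea \tilde x$, transfers these estimates from real $x$ to complex $\tilde x \in \widetilde{X}_{\pm}^j$, using the elementary bounds $|\Ima \tilde x|^N \leq |\tilde x - x(z)|^N$ and $|\Rea \tilde x - x(z)|^N \leq |\tilde x - x(z)|^N$ to dominate both the principal term and the remainder. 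The main bookkeeping obstacle is the inductive control of the $\xi$-dependent error $E_\gamma$ in the iteration argument; this is handled by applying Lemma \ref{lem:T} at each step to propagate the infinite-order vanishing of the lower-order $\partial_{\bar z}^\ell g_0$ through the Leibniz expansion.
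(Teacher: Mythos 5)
Your base-case argument for $\partial_{\bar z}g_0$ takes a genuinely different and rather slick route from the paper's. Instead of working with the almost $(x,\xi)$-holomorphic extension $\tilde p_0$ and evaluating at the complex critical set $\xi=-\tilde g_0(x,z)$, you stay on real $(x,\xi)$: you exploit the $\xi$-independence of $\partial_{\bar z}g_0$ to factor out arbitrarily high powers of $(\xi+g_0)$, and then set $\xi=\xi(z)$, using $\xi(z)+g_0(x,z)=g_0(x,z)-g_0(x(z),z)=\mO(|x-x(z)|)$. This yields the infinite-order vanishing at $x=x(z)$ without invoking almost-holomorphy of $p_0$ in $(x,\xi)$; the extension only enters at the very end when you transfer to complex $\tilde x$. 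Note also that a single application of Lemma~\ref{lem:T} to $u=\partial_{\bar z}g_0$ already controls all $\partial_x^\alpha\partial_z^\beta\partial_{\bar z}^{\gamma+1}g_0$ at once, so your separate induction on $\gamma$ is redundant.

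There is, however, a real gap in your treatment of the higher symbols $g_N$, $N\geq 1$. Isolating $g_N=G_N-(q_N/q_0)(\xi+g_0)$ from \eqref{eq4.3} and applying $\partial_{\bar z}$, the $\xi$-iteration kills the $(\xi+g_0)$ factor and, when evaluated at $\xi=\xi(z)$, produces arbitrarily high powers of $|x-x(z)|$ from that part of the right-hand side; but it does nothing to the term $\partial_{\bar z}G_N(x,\xi(z),z)$, which survives and must itself be shown to vanish to infinite order at $x=x(z)$. From \eqref{eq4.3b}, $G_N$ is built out of $p_N$ and of $q_k$, $g_k$ for $k<N$ together with their $(x,\xi)$-derivatives coming from the Moyal expansion; controlling $\partial_{\bar z}G_N$ at the critical point therefore requires the inductive vanishing of $\partial_\rho^\eta\partial_{\bar z}q_k(x_+(z),\xi_+(z),z)$ as well as of $\partial_x^\alpha\partial_{\bar z}g_k(x_+(z),z)$ for $k<N$. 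The estimate on $\partial_{\bar z}g_0$ alone, which is all your sketch invokes, is not enough. The paper runs a joint induction, establishing \eqref{eq4.1e} and \eqref{eq4.1h} for the $q_k$ in tandem with the estimates on the $g_k$ (the base step for $q_0$ coming from differentiation of \eqref{eq4.1c}); your argument needs an analogous companion chain for the $q_k$ to close the induction.
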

Notice in particular that for $x\in X^j_{\pm}$ \eqref{eq4.1a} reads 
\begin{equation}\label{eq4.1b}
| \partial_{\bar{z}}g^{\pm,j}_0(x,z)|
 \leq C_N | x-x_{\pm}^j(z)|^{N}.
\end{equation}
\begin{proof}[Proof of Lemma \ref{lem:AH}] Again, we focus only on the 
$+$ case and omit the superscripts $j$ and $+$. 
\par
Let $q_0(x,\xi,z)\in \mathcal{C}^{\infty}(V\times W(z_0))$ and 
$g_0(x,z)\in \mathcal{C}^{\infty}(\pi_x(V)\times W(z_0))$ be as in Prop.~\ref{prop4.1}, with $V$ an open relatively compact set containing 
$\rho_+(\overline{W(z_0)})$. As in the proof of that Proposition, we may suppose 
that $q_0(x,\xi,z)\neq 0$ in $V\times W(z_0)$. 
\par
For $\widetilde{V}$ a small relatively compact complex 
neighbourhood of $V$,  let $\widetilde{q}_0(x,\xi,z)$ denote an almost $(x,\xi)$-holomorphic extension of 
$q_0(x,\xi,z)$ defined on $\widetilde{V}\times W(z_0)$, such that 
\begin{equation}\label{eq4.1bb}
\widetilde{q}_0(x,\xi,z)\neq 0, \quad (x,\xi,z)\in \widetilde{V}\times W(z_0).
\end{equation}
Similarly, let $\widetilde{g}_0(x,z)$, for $x\in \pi_x(\widetilde{V})$, denote an 
almost $x$-holomorphic extension of $g_0(x,z)$.
Using these two functions, we may extend \eqref{eq4.0} into  
an almost $(x,\xi)$-holomorphic extension of $p_0(x,\xi)-z$, by defining
\begin{equation}\label{eq4.1c}
	\widetilde{p}_0(x,\xi)-z \defeq \widetilde{q}_0(x,\xi,z)(\xi+\widetilde{g}_0(x,z)), \quad 
	(x,\xi,z)\in \widetilde{V}\times W(z_0).
\end{equation}
By \eqref{eq1.8.1}, we have that $\partial_{\xi} p_0(\rho_+(z))\neq 0$ for all $z\in W(z_0)$. By potentially shrinking $V$, $W(z_0)$ and $\widetilde{V}$ we can arrange that 
$\rho_+(\overline{W(z_0)})\subset V$ and that 
$\partial_{\xi} \widetilde{p}_0(x,\xi)\neq 0$ for all $(x,\xi)\in\widetilde{V}$.
\par
Recall from Proposition \ref{prop4.1} that $g_0(x_+(z),z)=-\xi_+(z)$ for all $z\in W(z_0)$. Hence, 
by potentially shrinking $V$ and $W(z_0)$ and by restricting $\widetilde{g}_0(\cdot,z)$ 
to an open relatively compact convex complex neighbourhood 
$\widetilde{X}$ of $X\defeq\pi_x(V)\subset \R$, 
with $\widetilde{X}\Subset\pi_x(\widetilde{V})$, we can arrange that $x_+(\overline{W(z_0)})\subset X$ and $(x,-\widetilde{g}(x,z))\in\widetilde{V}$, for all $x\in \widetilde{X}$.
\par
Taking $\xi=-\widetilde{g}_0(x,z)$ in \eqref{eq4.1c} and then taking the $\partial_{\bar{z}}$ derivative of that equation, 
we get that for all $x\in\widetilde{X}$ and all $z\in W(z_0)$,
\begin{equation*}
	\partial_{\xi}\widetilde{p}_0(x,-\widetilde{g}_0(x,z))\partial_{\bar{z}}\widetilde{g}_0(x,z) 
	+\partial_{\bar{\xi}}\widetilde{p}_0(x,-\widetilde{g}_0(x,z))\overline{\partial_{z}\widetilde{g}_0(x,z) } =0.
\end{equation*}
Since $\widetilde{p}_0$ is almost $(x,\xi)$-holomorphic, we have that for any $N\in\N$%
\begin{equation*}
|\partial_{\bar{z}}\widetilde{g}_0(x,z) | 
\leq C_N \big(\,|\Ima \widetilde{g}_0(x,z)|^{N} + |\Ima x|^N\big),\quad x\in\widetilde{X},\ z\in W (z_0)\,.
\end{equation*}
Since $\Ima \widetilde{g}_0(x_+(z),z)=0$, see Proposition \ref{prop4.1}, and since 
$\widetilde{g}_0$ is a bounded smooth function on $\overline{\widetilde{X}}\times\overline{W(z_0)}$, 
it follows by Taylor expansion that 
\begin{equation}\label{eq4.1dd}
	|\partial_{\bar{z}}\widetilde{g}_0(x,z)| \leq C_N' |x-x_+(z)|^{N},\quad x\in\widetilde{X},\ z\in W (z_0).
\end{equation}
This proves \eqref{eq4.1a} in the case $\alpha=\beta=\gamma=0$.
Now, observe that, since $(x-x_+(z))$ is a smooth function of $(x,z)$, 
it follows by Lemma \ref{lem:T} that after slightly shrinking $\widetilde{X}$ and $W (z_0)$, 
for any $\alpha,\beta,\gamma\in\N$, 
\begin{equation}\label{eq4.1dd2}
 	|\partial^{\alpha}_x\partial^{\beta}_z \partial_{\bar{z}}^{\gamma+1}\widetilde{g}_0(x,z)| 
	\leq C_N^{\alpha,\beta,\gamma} |x-x_+(z)|^{N}.
\end{equation}
In particular, restricting $x$ to the value $x_+(z)\in X_+$, that 
\begin{equation}\label{eq4.1ddd}
 	(\partial^{\alpha}_x\partial^{\beta}_z \partial_{\bar{z}}^{\gamma+1}g_0)(x_+(z),z)=0.
\end{equation}
Next, using  \eqref{eq4.0} and that $g_0 (x_+(z),z)=-\xi_+(z)$, 
see Proposition \ref{prop4.1}, we obtain by a direct computation of 
$\{\bar{p}_0,p_0\}(\rho_+(z))$ that 
\begin{equation}\label{eq4.1d}
\Ima (\partial_x g_0)(x_+(z),z) 
 = \frac{\{\bar{p}_0,p_0\}(\rho_+(z))}{2 i |q_0(\rho_+(z),z)|^2} < 0, 
\end{equation}
where the last inequality is a consequence of \eqref{eq1.8.1}. By differentiating \eqref{eq4.1c} 
with respect to $x$ and $\overline{z}$ and by evaluating it at the point $(x_+(z),\xi_+(z),z)$ 
we get that 
\begin{equation*}
 	(\partial_{\bar{z}}q_0)(x_+(z),\xi_+(z),z) =0.
\end{equation*}
By repeated differentiation of \eqref{eq4.1c} and induction using the Leibniz rule, we obtain 
\begin{equation}\label{eq4.1e}
 	(\partial_{\rho}^{\eta}\partial_{\bar{z}}q_0)(x_+(z),\xi_+(z),z) = 0, \quad \forall \eta\in \N^2.
\end{equation}
Let us now consider the higher order symbols. For $k\geq 0$, taking $q_k\in \mathcal{C}^{\infty}(V\times W(z_0))$ and 
$g_k\in \mathcal{C}^{\infty}(\pi_x(V)\times W(z_0))$ as in Prop.~\ref{prop4.1}, 
with the sets $V$, $W(z_0)$ as above, let $\widetilde{q}_k$ (resp. $\widetilde{g}_k$)
denote their respective almost $(x,\xi)$-holomorphic (resp. almost $x$-holomorphic) extensions. 

Recall \eqref{eq1.6}, and let $\widetilde{p}_N$ denote an almost $(x,\xi)$-holomorphic 
extension of $p_N$. The almost holomorphic extensions $\widetilde{g}_k$, 
$\widetilde{q}_k$ and $\widetilde{p}_N$ define an almost $(x,\xi)$-holomorphic extension of $G_N$ 
via \eqref{eq4.3b}. Set 
$f=\widetilde {G}_N- \frac{\widetilde{q}_N}{\widetilde{q}_0}(\xi+\widetilde{g}_0)-\widetilde{g}_N$. 
Since $f$ vanishes at $V$ by \eqref{eq4.3} and since $\partial_{\overline{x}}f$ and 
$\partial_{\overline{\xi}}f$ vanish to infinite order at $V$, we see by induction and 
Taylor expansion that all derivatives of $f$ vanish at $V$, hence 
\begin{equation}\label{eq4.1f}
	\widetilde {G}_N(x,\xi,z) 
	\sim \frac{\widetilde{q}_N(x,\xi,z)}{\widetilde{q}_0(x,\xi,z)}(\xi+\widetilde{g}_0(x,z))
	+\widetilde{g}_N(x,z),
\end{equation}
in the sense of equivalence stated in Definition \ref{def:AH}. 
\par
We are going to prove \eqref{eq4.1aa} by an induction argument. Suppose 
that \eqref{eq4.1ddd}, \eqref{eq4.1e} hold for $g_k$, $q_k$ for 
$0\leq k < N$. Then, by  \eqref{eq4.3b}, \eqref{eq4.1bb}, \eqref{eq4.1f} we see that 
for any $\alpha \in \N$, 
\begin{equation}\label{eq4.1g}
	0= \partial_x^{\alpha}\partial_{\bar{z}} \widetilde {G}_N(x,-\widetilde{g}_0(x,z),z) 
	\big|_{\substack{x=x_+(z)} }
	=\partial_x^{\alpha}\partial_{\bar{z}} \widetilde {g}_N(x,z) 
	\big|_{\substack{x=x_+(z)} }
	=(\partial_x^{\alpha}\partial_{\bar{z}} g_N)(x_+(z),z).
\end{equation}
Similarly as for \eqref{eq4.1e}, we obtain by repeated differentiation of \eqref{eq4.3b} 
and induction that 
\begin{equation}\label{eq4.1h}
	(\partial_{\rho}^{\eta}\partial_{\bar{z}} q_N)(x_+(z),\xi_+(z),z) = 0, \quad \forall \eta\in\N^2,
\end{equation}
and we have shown that \eqref{eq4.1ddd}, \eqref{eq4.1e} hold for $g_k$, $q_k$ with $0\leq k\leq N$. 
Since $X_{+}$ and $W(z_0)$ are relatively compact, a Taylor expansion shows that for any 
$k\in\N$, any $\alpha\in \N$ and any $N\in\N$ there exists a constant $C_{N,\alpha,k}>0$ such that 
\begin{equation*}
	|\partial^{\alpha}_x\partial_{\bar{z}} g_k(x,z)| \leq C_{N,\alpha,k} |x-x_+(z)|^{N}, \quad x\in X_+,\ z\in W (z_0). 
	\qedhere
\end{equation*}
\end{proof}
\subsection{Construction of the quasimodes}
From now on we will always assume that the symbol $p_0$ satisfies the hypothesis \eqref{eq1.8.1}. Using the 
decomposition given in Proposition \ref{prop4.1} we will construct quasimodes for the operators 
$P_h-z$ and $(P_h-z)^*$, following the WKB method. 
\begin{prop}[Quasimodes]\label{prop4.2}
Let $\Omega\Subset\C$ be as in \eqref{eq1.8.2}, let $z_0\in\Omega$ and 
let $p(\cdot;h)\in S(\R^2,m)$ be as in \eqref{eq1.6} and satisfy \eqref{eq1.8.1}. 
Let $W(z_0)$ and $X_{\pm}^j$, $j=1,\dots,J$, be as in Proposition~\ref{prop4.1}. 
 Let $\chi_{\pm}^j\in\mathcal{C}^{\infty}_0(X_{\pm}^j,[0,1])$,  
 such that $\chi_{\pm}^j\equiv 1$ in a small neighbourhood of $x_{\pm}^j(\overline{W(z_0)})$.
Then, there exist functions
\begin{equation}\label{eq4.2.0}
\begin{split}
	&e_{\pm}^{j,hol}(x,z;h)=a_{\pm}^j(x,z;h)\chi_{\pm}^j\e^{\frac{i}{h}\varphi_{\pm}(x,z)}, 
	\quad  z\in W(z_0)\\
	&\varphi_{\pm}^j(x,z)= - \int_{x_{+}^j(z_0)}^x g_0^{j,+}(y,z)dy, \quad 
	\varphi_{-}^j(x,z)= - \int_{x_{-}^j(z_0)}^x \overline{ g_0^{j,-}(y,z)}dy,
\end{split}
\end{equation}
where  $g_0^{\pm,j}$ are as in Proposition \ref{prop4.1}, which depend 
smoothly on $x\in X_{\pm}^j$ and $z\in W(z_0)$, with    
$a_{\pm}^j(x,z;h)\sim (a_0^{\pm,j}(x,z)+ha_1^{\pm,j}(x,z)+\dots)$ 
depending smoothly on $x$ and $z$ such that all derivates
are uniformly bounded as $h\to 0$.
\par
Moreover, the states $e_{\pm}^{j,hol}$ have the following properties:
\begin{enumerate}
\item 
For all $z\in W(z_0)$,  and any $n\in\N$ 
\begin{equation*}
\begin{split}
	&(\partial_x^n\partial_{\bar{z}}a_{+}^j)(x_+^j(z),z;h) = \mO(|z-z_0|^{\infty}+h^{\infty}), \\
	&(\partial_x^n\partial_{z}a_-^j)(x_-^j(z),z;h) = \mO(|z-z_0|^{\infty}+h^{\infty})
\end{split}
\end{equation*}
\item The $L^2$ norms of those states satisfy
\begin{equation}\label{eq4.2.1}
	\begin{split}
	 &\|e_{\pm}^{j,hol}(z;h)\| =\e^{\frac{1}{h}\Phi_{\pm}^j(z;h)}, \\
	&\Phi_{\pm}^j(z;h)=-\Ima \varphi_{\pm}^j(x_{\pm}^j(z),z) + h\log \big(h^{\frac{1}{4}}A_{\pm}^j(z;h)\big),
	\end{split}
\end{equation}
with $\Ima \varphi_{\pm}^j(x_{\pm}^j(z),z)\leq 0 $, with equality iff $z=z_0$, 
and $A_{\pm}^j(z;h)\sim A_0^{j,\pm}(z)+hA_1^{j,\pm}(z)+\dots$ depending smoothly on $z$ 
such that all derivatives with respect to $z$ and $\bar{z}$ are bounded when $h\to 0$. 
\item Normalizing those states,
\begin{equation}\label{eq4.2.2}
e_{\pm}^j(x,z;h) \defeq e_{\pm}^{j,hol}(x,z;h)\e^{-\frac{1}{h}\Phi_{\pm}^j(z;h)},
\end{equation}
then we see that they are $h^\infty$-quasimodes:
\begin{equation}\label{eq4.5}
	\|(P_h-z)e_+^j\| = \mO(h^\infty), \quad \|(P_h-z)^*e_-^j\| = \mO(h^\infty).
\end{equation}
\item For all $\psi\in\mathcal{C}^{\infty}_0(\R^2,[0,1])$, such that $\psi \equiv 1$ near $\rho^j_{\pm}$, 
	and any order function $m'$, 
\begin{equation}\label{eq4.6}
	\|(1-\psi^w)e_+^j\|_{H(m')} = \mO(h^\infty), \quad \|(1-\psi^w)e_-^j\|_{H(m')} = \mO(h^\infty).
\end{equation}
\end{enumerate}
\end{prop}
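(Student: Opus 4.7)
The plan is to carry out a standard WKB construction, with the phase chosen by the eikonal equation and the amplitude by transport equations. I focus on $e_+^j$; the states $e_-^j$ are obtained by the analogous procedure applied to $(P_h-z)^*$, whose symbol is factorized by the second identity in \eqref{eq4.2} (hence the complex conjugation in the definition of $\varphi_-^j$). The key inputs are Prop.~\ref{prop4.1}, which reduces $(P_h-z)$ microlocally near $\rho_+^j$ to a first-order operator $q^{+,j}\,\#\,(\xi+g^{+,j})$, and Lem.~\ref{lem:AH}, which provides the almost-holomorphic $z$-dependence of the symbols $g_\ell^{+,j}$ at $x=x_+^j(z)$.

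\textbf{Eikonal and phase properties.} Choosing $\partial_x\varphi_+^j=-g_0^{+,j}$ makes the right factor of the factorization annihilate $e^{i\varphi_+^j/h}$ to leading order. At $x=x_+^j(z)$ this gradient equals $\xi_+^j(z)\in\R$; the Hessian $\partial_x^2\Ima\varphi_+^j(x_+^j(z),z)=-\Ima\partial_xg_0^{+,j}(x_+^j(z),z)$ is strictly positive, being proportional to $-\{\Rea p_0,\Ima p_0\}(\rho_+^j(z))>0$ by \eqref{eq1.8.1}. Hence $x_+^j(z)$ is a non-degenerate minimum of $\Ima\varphi_+^j(\cdot,z)$. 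The sign property $\Ima\varphi_+^j(x_+^j(z),z)\leq 0$ with equality iff $z=z_0$ follows from the identity $\varphi_+^j(x_+^j(z_0),z)\equiv 0$ and a direct computation: at $z=z_0$, both real partial derivatives of $z\mapsto\Ima\varphi_+^j(x_+^j(z),z)$ vanish (by reality of $\xi_+^j(z_0)$ and of $x_+^j$), and its Hessian is negative-definite by the same positivity.

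\textbf{Amplitude, norm, quasimode property.} Make the Ansatz $a_+^j\sim\sum_{k\geq 0}h^k a_k^{+,j}$ and expand $(hD_x+g^{+,j})^w(a_+^j\chi_+^j e^{i\varphi_+^j/h})$ using the Moyal calculus. On $\{\chi_+^j=1\}$, successive orders in $h$ yield linear first-order ODE in $x$ (transport equations) for the $a_k^{+,j}$, which I solve along $X_+^j$ with initial data at $x=x_+^j(z_0)$: $a_0^{+,j}(x_+^j(z_0),z)\neq 0$ and $a_k^{+,j}(x_+^j(z_0),z)=0$ for $k\geq 1$. Borel resummation produces the full symbol $a_+^j$. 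Multiplying by $q^{+,j,w}$ preserves $\mO(h^\infty)$ remainders; the commutator with $\chi_+^j$ is supported away from $x_+^j(z)$ and produces a contribution exponentially small relative to the peak of the weight $e^{-\Ima\varphi_+^j/h}$, so $(P_h-z)e_+^{j,hol}=\mO(h^\infty)$ in the appropriately weighted norm, giving \eqref{eq4.5} after normalization. The norm $\|e_+^{j,hol}\|^2=\int |a_+^j|^2 (\chi_+^j)^2 e^{-2\Ima\varphi_+^j/h}\,dx$ is evaluated by the Laplace method at the non-degenerate minimum, producing the factor $h^{1/2}$ times an asymptotic series $A_0^{+,j}(z)+hA_1^{+,j}(z)+\cdots$ with non-vanishing leading term; taking square roots yields \eqref{eq4.2.1}. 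For the microlocalization \eqref{eq4.6}, the WKB state has wavefront set at the single point $\rho_+^j(z)$ (the graph of $d\varphi_+^j$ at $x=x_+^j(z)$ is exactly $\rho_+^j(z)$, and elsewhere on $\mathrm{supp}\,\chi_+^j$ the state is exponentially small by positivity of $\Ima\varphi_+^j$), so any cutoff $(1-\psi^w)$ with $\psi\equiv 1$ near $\rho_+^j$ kills it to $\mO(h^\infty)$ in every $H(m')$-norm.

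\textbf{The main obstacle: property (1).} The delicate point is the almost-holomorphic $z$-dependence of the amplitudes. I would prove $(\partial_x^n\partial_{\bar z}a_k^{+,j})(x_+^j(z),z)=\mO(|z-z_0|^\infty)$ for all $n,k$ by induction on $k$. In the base case, the ODE for $a_0^{+,j}$ has coefficients built from $g_0^{+,j}$ and $g_1^{+,j}$; applying $\partial_{\bar z}$ produces a source involving $\partial_{\bar z}g_\ell^{+,j}$, which by Lem.~\ref{lem:AH} vanishes to infinite order at $x=x_+^j(z)$. Lem.~\ref{lem:T} applied with $v(x,z)=x-x_+^j(z)$ then converts this pointwise vanishing into vanishing of all $x$-derivatives of $\partial_{\bar z}a_0^{+,j}$ at $x=x_+^j(z)$, provided the initial condition at $x_+^j(z_0)$ is chosen almost-holomorphically in $z$. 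The inductive step is structurally identical: the source of the $k$-th transport equation, after differentiation in $\bar z$, involves $\partial_{\bar z}a_0,\ldots,\partial_{\bar z}a_{k-1}$ and $\partial_{\bar z}g_\ell^{+,j}$, all controlled by the induction hypothesis and Lem.~\ref{lem:AH}. Borel resummation preserves the vanishing at all orders in $h$, closing the argument.
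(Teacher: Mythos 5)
Your construction follows essentially the same WKB route as the paper: microlocally factorize $P_h-z$ using Proposition~\ref{prop4.1}, solve the eikonal equation for $\varphi_+^j$, build the amplitude by transport equations with initial data at $x_+^j(z_0)$, and read off the quasimode equation and $L^2$ norm via stationary phase; property (1) is then propagated from the almost-holomorphicity bounds \eqref{eq4.1aa} of Lemma~\ref{lem:AH}. Two steps, however, are asserted rather than proved.

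First, your argument for $\Ima\varphi_+^j(x_+^j(z),z)\leq 0$ invokes a negative-definite Hessian of $z\mapsto\Ima\varphi_+^j(x_+^j(z),z)$ at $z_0$. This is not established: what follows from \eqref{eq4.7} (via the computations leading to \eqref{eq4.4.4}) is strict subharmonicity, $\partial_z\partial_{\bar z}\Phi_{+,0}^j(z_0)>0$, which controls the Laplacian but not the full real Hessian; a degenerate direction is not a priori excluded. The correct and simpler route is the one taken in the paper: by \eqref{eq4.7}, $x_+^j(z)$ is the unique minimizer of $x\mapsto\Ima\varphi_+^j(x,z)$ on $X_+^j$, so
\begin{equation*}
\Ima\varphi_+^j(x_+^j(z),z)\;\leq\;\Ima\varphi_+^j(x_+^j(z_0),z)=0,
\end{equation*}
with equality iff $x_+^j(z)=x_+^j(z_0)$, i.e.\ iff $z=z_0$. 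No statement about the $z$-Hessian is needed.

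Second, your treatment of \eqref{eq4.6} concludes the $\mO(h^\infty)$ bound "in every $H(m')$-norm" from the phase-space localization and the exponential smallness of $e_+^j$ away from $x_+^j(z)$. The $L^2$ bound is indeed a standard nonstationary-phase estimate, but upgrading to an arbitrary $H(m')$-norm is precisely where the paper's Lemma~\ref{lem4.3} does real work: one extends $g^+$ to a globally defined symbol $\widetilde g_+$ so that $\xi+\widetilde g_+\in S(\langle\rho\rangle)$ is elliptic off $\rho_+^j(z)$, builds an elliptic $q^w=(hD_x+\widetilde g^+)^*(hD_x+\widetilde g^+)+\widetilde\chi^w$ with parametrix $b^w$, and writes $e_+ = b^w\eta + (b\#\widetilde\chi)^w e_+$ before applying $(1-\psi^w)$. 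Your proposal needs an ingredient of this type; as written, the passage from $L^2$-microlocalization to $H(m')$-smallness is a gap.

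The remaining ingredients (choice of phase, positivity of $\Ima\partial_x^2\varphi_+^j$, transport equations with data at $x_+^j(z_0)$, Borel summation, stationary phase for the norm, and the inductive propagation of $\partial_{\bar z}$-vanishing using \eqref{eq4.1aa}) match the paper's proof; the remainder of order $\mO(|z-z_0|^\infty)$ in (1) arises exactly, as you note, from the fact that the transport equations are integrated from $x_+^j(z_0)$ rather than from $x_+^j(z)$.
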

For future use, we voluntarily introduced two versions of the quasimodes: the normalized ones $e_+^j(z;h)$, and the almost holomorphic ones $e_{\pm}^{j,hol}(z;h)$. 

\begin{proof}[Proof of Proposition \ref{prop4.2}]
We will give the proof of this Proposition only in the "+" case, since 
the "-" case is similar. To simplify notation, we will suppress the superscript 
$j$ until further notice. We begin with the following result:
\begin{lem}\label{lem4.1}
Let $\Omega\Subset\C$ be as in \eqref{eq1.8.2}, let $z_0\in\Omega$ and 
let $p(\cdot;h)\in S^2(\R^2,m)$ be as in \eqref{eq1.6} and satisfy \eqref{eq1.8.1}. 
Let $W(z_0)$ and $X_+$ with $x_+(\overline{W(z_0)})\subset X_+ $ be as in Proposition~\ref{prop4.1}. Let $g^+(x,z;h)$ be the symbol
constructed in Proposition~\ref{prop4.1}.

Then, the equation 
\begin{equation}\label{e:simple}
(hD_x+g^+(x))f_{+}(x,z;h) =0,\quad (x,z)\in X_+ \times W(z_0) \,,
\end{equation}
admits a solution $f_{+}^{hol}(x,z;h)$ of the form 
\begin{equation}\label{eq4.4}
	\begin{split}
	f_{+}^{hol}(x,z;h)&=a_{+}(x,z;h)\,\e^{\frac{i}{h}\varphi_{+}(x,z)}, \quad 
	(x,z)\in X_+ \times W(z_0) \\ 
	\text{with}\quad\varphi(x)&= - \int_{x_+(z_0)}^x g_0^{+}(y,z)dy\,.
	\end{split}
\end{equation}
The symbol 
$a_{+}(x,z;h)\sim a_0^{+}(x,z)+ha_1^{+}(x,z)+\cdots$ 
depends smoothly on $x$ and $z$, such that all derivates are bounded as $h\to 0$.
Moreover, for all $z\in W(z_0)$ 
and any $n\in\N$,
\begin{equation*}
	(\partial_x^n\partial_{\bar{z}}a^+)(x_+(z),z;h) = \mO(|z-z_0|^{\infty}+h^{\infty}).
\end{equation*}
\end{lem}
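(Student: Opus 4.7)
The plan is to implement the standard WKB recipe, exploiting the expansion of $g^+$ furnished by Proposition \ref{prop4.1}. Inserting the ansatz $f_+^{hol}(x,z;h) = a_+(x,z;h)\,\e^{i\varphi_+(x,z)/h}$ into the equation and using $hD_x = -ih\partial_x$, we obtain
\[
 a_+\bigl(\partial_x\varphi_+ + g^+\bigr) - ih\,\partial_x a_+ = 0.
\]
The leading order in $h$ forces the eikonal equation $\partial_x\varphi_+ = -g_0^+$, which we solve with $\varphi_+(x_+(z_0),z) = 0$, recovering the formula of the Lemma. The residual equation reads $\partial_x a_+ = -i\,a_+\,(g_1^+ + hg_2^+ + \cdots)$; expanding $a_+ \sim \sum_{k\geq 0} h^k a_k^+$ produces the recursive system
\[
\partial_x a_0^+ = -ig_1^+ a_0^+, \qquad \partial_x a_k^+ + ig_1^+ a_k^+ = -i\sum_{j=1}^{k} g_{j+1}^+\,a_{k-j}^+,\quad k\geq 1.
\]
I would solve these linear first-order ODEs in $x$ successively, imposing the $z$-independent initial data $a_0^+(x_+(z_0),z) = 1$ and $a_k^+(x_+(z_0),z)=0$ for $k\geq 1$; the leading term then has the explicit form $a_0^+(x,z) = \exp\bigl(-i\int_{x_+(z_0)}^{x} g_1^+(y,z)\,dy\bigr)$. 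Each $a_k^+$ is smooth on $X_+\times W(z_0)$ with bounded derivatives, and a Borel summation assembles the full symbol $a_+(x,z;h)$.

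The heart of the argument is the almost-holomorphicity claim. Differentiating the explicit formula for $a_0^+$ in $\bar z$ gives the simple representation
\[
\partial_{\bar z} a_0^+(x,z) = -i\,a_0^+(x,z)\int_{x_+(z_0)}^{x}(\partial_{\bar z}g_1^+)(y,z)\,dy.
\]
Specializing at $x = x_+(z)$, the integration runs over the real segment $[x_+(z_0),x_+(z)]$ whose length is $\lesssim |z-z_0|$, and by estimate \eqref{eq4.1aa} of Lemma \ref{lem:AH}, $|(\partial_{\bar z}g_1^+)(y,z)| \leq C_N|y-x_+(z)|^N \leq C_N'|z-z_0|^N$ for every $N$, uniformly in $y$ along that segment. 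Combining the two bounds yields $|\partial_{\bar z}a_0^+(x_+(z),z)| = O(|z-z_0|^\infty)$. For arbitrary $x$-derivatives, the Leibniz rule applied to the above product gives terms involving either $\partial_x^{n-k-1}(\partial_{\bar z}g_1^+)(x_+(z),z)$, which vanishes by \eqref{eq4.1aa} evaluated at $x = x_+(z)$, or the integral itself, already shown to be $O(|z-z_0|^\infty)$. Hence $\partial_x^n\partial_{\bar z}a_0^+(x_+(z),z) = O(|z-z_0|^\infty)$ for all $n\geq 0$.

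An outer induction on $k$ then propagates the same property to every $a_k^+$: writing the variation-of-parameters formula
\[
a_k^+(x,z) = -i\,a_0^+(x,z)\int_{x_+(z_0)}^{x} a_0^+(y,z)^{-1}\sum_{j=1}^{k} g_{j+1}^+(y,z)\,a_{k-j}^+(y,z)\,dy,
\]
and applying $\partial_{\bar z}$ produces source terms that are either $\partial_{\bar z}g_{j+1}^+$ (vanishing to infinite order at $x_+(z)$ by Lemma \ref{lem:AH}) or $\partial_{\bar z}a_{k-j}^+$ (controlled at $x_+(z)$ by the induction hypothesis); an identical Leibniz argument handles all $\partial_x^n$ derivatives. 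The final $O(h^\infty)$ in the statement stems from the Borel resummation. The \emph{main obstacle} is precisely this almost-holomorphicity step: we are probing $\partial_{\bar z}$ at the moving base point $x_+(z)$, so the pointwise bounds on the symbols $g_k^+$ supplied by Lemma \ref{lem:AH}—which are natural in $|x-x_+(z)|$—must be converted into decay in $|z-z_0|$. This conversion hinges on the geometric fact that the integration path shrinks at the same rate as $|z-z_0|$, letting both factors cooperate to produce an infinite-order vanishing.
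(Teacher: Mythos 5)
Your proposal follows the same WKB construction as the paper: eikonal equation for $\varphi_+$, transport equations for the $a_k^+$, explicit variation-of-parameters solution \eqref{eq4.9.5}, and Borel summation for the $\mO(h^\infty)$ tail. The key step — deducing almost-holomorphicity of the amplitudes at the moving point $x_+(z)$ from the estimates \eqref{eq4.1aa} on the $g_k^+$ — is also the paper's mechanism, and the crucial geometric observation you single out (the integration path $[x_+(z_0),x_+(z)]$ shrinks at rate $|z-z_0|$) is exactly what makes the argument work.

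One formulation detail is worth flagging, because as written your induction step is slightly leaky. You take as the inductive hypothesis the bound at the single point $x_+(z)$, namely $\partial_x^n\partial_{\bar z}a_{k-j}^+(x_+(z),z)=\mO(|z-z_0|^\infty)$, and then invoke it to control $\partial_{\bar z}a_{k-j}^+(y,z)$ appearing \emph{inside} the integral, where $y$ runs over the whole segment and is not equal to $x_+(z)$. To bridge this you would need an extra Taylor expansion of $\partial_{\bar z}a_{k-j}^+(\cdot,z)$ about $y=x_+(z)$ (using that all of its $x$-derivatives vanish to infinite order in $|z-z_0|$ at that point and that $|y-x_+(z)|\lesssim|z-z_0|$ along the segment); this works but is an unstated step. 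The paper avoids this by establishing the stronger two-term pointwise estimate $\partial_x^n\partial_{\bar z}a_k^+(x,z)=\mO(|x-x_+(z)|^\infty + |x_+(z_0)-x_+(z)|^\infty)$ valid for \emph{all} $x\in X_+$, which closes the induction without any auxiliary interpolation, and only at the very end specializes to $x=x_+(z)$. Strengthening your inductive hypothesis to that form would make the argument airtight with no extra work.
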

\begin{proof}
For any $h\in(0,1]$ and $z\in W(z_0)$, the first order equation~\eqref{e:simple} can be easily solved by the Ansatz
$$
f_+^{hol}(x,z;h)\defeq \exp\Big( -\frac{i}{h}\int_{x_0}^x g^+(y,z;h)\,dy\Big)\,,
$$
where we choose to take the reference point $x_0=x_+(z_0)$ independent of $z$.
Taking into account the expansion \eqref{eq4.1} of the symbol $g^+$, its primitive may be expanded as
$$
-\int_{x_0}^x g^+(y,z;h)\,dy \sim  \varphi^+_0(x,z)+ h \varphi_1^+(x,z)+\cdots,\qquad \varphi^+_k(x,z)\defeq  - \int_{x_0}^x g^+_k(y,z)\,dy \,.
$$
Separating the first term $\varphi_+=\varphi^+_0$ from the subsequent ones, we may write 
$$
f_+^{hol}(x,z;h) = \e^{\frac{i}{h}\varphi_+(x,z)}\,a^+(x,z)\,,
$$
with the symbol $a^+(x,z)\in C^\infty(X_+\times W(z_0))$ admitting an expansion
$$
a^+(h)\sim a_0^+ + h a_1^+ + h^2 a_2^+ +\cdots\,,
$$
where each term $a_j^+\in\mathcal{C}^{\infty}(X_+\times W(z_0))$ depends on the functions $\{\varphi^+_k, k=1,\ldots,j+1\}$.

Alternatively, the expansion $\sum h^j a^+_j$ can be constructed order by order through a WKB construction (see e.g. \cite{DiSj99}): one solves iteratively the \textit{transport equations}
\begin{equation*}
	-i\partial_x a^+_n(x,z) + \sum_{k=0}^n g^+_{n+1-k}(x,z)a^+_k(x,z)=0, \quad n\geq 0\,,
\end{equation*}
by the expressions
\begin{equation}\label{eq4.9.5}
\begin{split}
	a_0^+(x,z)&= \e^{-i\int_{x_0}^x g^+_1(y,z;h)\,dy}\,,\\
	a_n^+(x,z)&=-i \, a_0^+(x,z) \int_{x_0}^x \sum_{k=0}^{n-1}\frac{g^+_{n+1-k}a^+_k}{a_0^+}\,(y,z)\,dy\,.
\end{split}
\end{equation}

Let us make some remarks about the phase function $\varphi_+$. It is the unique solution to the 
eikonal equation $\partial_x \varphi_+(x,z) + g_0^+ (x,z)= 0$, 
satisfying the boundary condition $\varphi_+(x_0,z)=0$. 
By Proposition \ref{prop4.1},
$\partial_x \varphi_+(x_+(z);z)=\xi_+(z)\in\R$, therefore $x_+(z)$ is a critical point of 
$\Ima \varphi_+$. Furthermore, by \eqref{eq4.1e},
\begin{equation}\label{eq4.7}
 \Ima \partial^2_x\varphi_+(x_+(z),z)=-\Ima (\partial_x g_0^+)(x_+(z),z) >0,
\end{equation}
hence $x_+(z)$ is a non-degenerate critical point of $\Ima \varphi(\cdot,z)$. By 
potentially shrinking $W(z_0)$  and $X_+$, we can arrange that 
$x_{+}(\overline{W(z_0)})\subset X_+$ and that \eqref{eq4.7} holds for all $x\in X_+$, so that
$x_+(z)$ is the unique critical point of $\Ima\varphi_+(\cdot,z)$ in $X_+$.

By repeated differentiation of 
\eqref{eq4.9.5} we check by using \eqref{eq4.1aa} that for all $x\in X_+$, all $z\in W(z_0)$, any $k\in\N$ and any $n\in\N$ 
\begin{equation*}
	(\partial_x^n\partial_{\bar{z}} a_k^+)(x,z) = \mO(|x-x_+(z)|^{\infty} + |x_+(z_0)-x_+(z)|^{\infty}).
\end{equation*}
By Taylor expansion of $x_+(z)$ around $z_0$, we obtain at the critical point:
\begin{equation*}
	(\partial_x^n\partial_{\bar{z}} a_k^+)(x_+(z),z) = \mO(|z-z_0|^{\infty}).
\end{equation*}
Summing over all the symbols, we obtain for the full symbol:
\begin{equation*}
\forall n\in \N,\ \forall z\in W(z_0),\quad	(\partial_x^n\partial_{\bar{z}}a^+)(x_+(z),z;h) = \mO(|z-z_0|^{\infty}+h^{\infty}).
\end{equation*}
\end{proof}
\begin{rem}
In the "$-$" case we construct a solution for $(hD_x + \overline{g^-(x,z)})f_-=0$. Hence, the phase function reads
$\varphi_-(x;z)=-\int_{x_-(z_0)}^x  \overline{g_0^{-}(y,z)}dy$. 
Moreover, the transport equations depend on $\overline{g_0^{-}(y,z)}$, which is almost anti-holomorphic w.r.t. $z$ at the point $(x_+(z),z)$. Hence, for any $n\in\N$ we obtain 
$(\partial_x^n\partial_{z}a^-)(x_-(z),z;h) = \mO(|z-z_0|^{\infty}+h^{\infty})$.
\end{rem}
Let us proceed with the proof of the Proposition. Let $\chi_+\in\mathcal{C}^{\infty}_0(X_+,[0,1])$, such that 
$\chi_+\equiv 1 $ in a small neighbourhood 
of $x_+(W(z_0))$. We define the smooth function
\begin{equation*}
	e_+^{hol}(x,z)\defeq \chi_+(x,z)\,f_+^{hol}(x,z;h),\quad (x,z)\in X_+\times W(z_0)\,.
\end{equation*}
Recall from \eqref{eq4.7} and the discussion afterwards, 
that $x_+(z)$ is the unique critical point of $\Ima \varphi(\cdot,z)$ in $X_+$, 
and that
\begin{equation}\label{eq4.11}
 \Ima \partial^2_x \varphi(x_+(z),z) >0.
\end{equation}
In particular we see that $\Ima \varphi(x,z)-\Ima \varphi(x_+(z),z) \geq 0$ for 
all $x\in X_+$, with a strict inequality for $x\neq x_+(z)$. Hence, applying the method of stationary phase, we find that
\begin{equation}\label{eq4.12}
	\|e_+^{hol}\| =h^{1/4}A_+(z;h)\,\e^{\frac{1}{h}\Phi_{+,0}(z)} \defeq \e^{\frac{1}{h}\Phi_{+}(z;h)}
\end{equation}
where $$\Phi_{+,0}(z) = -\Ima \varphi_+(x_+(z),z),$$
while
the symbol $A_+(z;h)\sim A_0^+(z)+hA_1^+(z)+\dots$ depends smoothly on $z$, and 
all derivatives in $z,\bar z$ are bounded when $h\to 0$. The principal symbol 
\begin{equation}\label{eq4.13.2}
	A_0^+(z)=\left(\frac{\pi |a_0^+(x_+(z),z)|^2}
			{\Ima \partial^2_x\varphi_+(x_+(z),z)}\right)^{1/4} >0.
\end{equation}
It follows from \eqref{eq4.7} and the property $\varphi^+(x_+(z_0),z)=0$ that $\Phi_{+,0}(z)\geq 0$ for any $z\in W(z_0)$, with equality precisely when 
$z=z_0$. Hence, for points such that $|z-z_0|\geq 1/C$, the norms of the states $e_+^{hol}(\bullet,z)$ are exponentially large. 
Using Lemma \ref{lem4.1}, we have 
\begin{equation}\label{eq4.13.1}
	(hD_x +g^+)\chi_+f_+^{hol}= 
	[hD_x,\chi_+]f_+^{hol}.
\end{equation}
Using that $x_+(z) \notin \supp\chi_+'$ and the fact that $-\Ima \varphi_+(x,z)$ reaches its maximum only at $x=x_+(z)$, we get
\begin{equation*}
 \|[hD_x,\chi_+]f_+^{hol}\| = \mO(\e^{-1/Ch}) \e^{\frac{1}{h}\Phi_{+,0}(z)}\,,
\end{equation*}
which shows that 
$$
\|(hD_x +g^+(z))\,e_+^{hol}(z)\|=\mO(e^{-C/h})\,\|e_+^{hol}(z)\|\,.
$$
Next, using \eqref{eq4.12}, we define the $L^2$-normalised state
\begin{equation}\label{eq4.14}
e_+(x,z;h) \defeq e_+^{hol}(x,z;h)\,\e^{-\frac{1}{h}\Phi_+(z;h)}
\end{equation}
which is $\mathcal{C}^{\infty}_0(X_+)$, and depends smoothly on $z\in W(z_0)$. 
This normalized state enjoys precise microlocalization properties, as shown in the following
\begin{lem}\label{lem4.3}
Under the assumptions of Lemma \ref{lem4.1}, if we take any
order function $\widetilde{m}$, and for an arbitrary $z\in W(z_0)$, take any cutoff $\psi\in\mathcal{C}^{\infty}_0(\R^2,[0,1])$ such that
$\psi=1$ near $\rho_+(z)$, we have
\begin{equation}
	\| (1-\psi)^w e_+(z;h)\|_{H(\widetilde{m})} = \mO(h^{\infty}).
\end{equation}
This shows that $e_+(z;h)$ is microlocalized on the point $\rho_+(z)$. %
\end{lem}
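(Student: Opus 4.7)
The plan is to exploit the explicit WKB form $e_+^{hol}(x,z;h)=\chi_+(x)\,a_+(x,z;h)\,e^{i\varphi_+(x,z)/h}$ from Lemma \ref{lem4.1}, together with $\partial_x\varphi_+(x_+(z),z)=\xi_+(z)\in\R$ and the non-degeneracy $\Ima\partial_x^2\varphi_+(x_+(z),z)>0$ recorded in \eqref{eq4.11}. Modulo the normalizing factor $e^{-\Phi_+/h}$, this is a complex WKB state centered on $\rho_+(z)=(x_+(z),\xi_+(z))$, and the microlocalization on that single point is the expected feature of such states.

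First I would reduce the $H(\widetilde m)$-norm bound to an $L^2$-norm bound. Choose $N$ so large that $\|u\|_{H(\widetilde m)}\lesssim \|A^wu\|_{L^2}$ for some $A\in S(\R^2,\widetilde m\langle\rho\rangle^{N})$, elliptic of positive order; by the Weyl composition formula $A^w(1-\psi)^w=b^w+R$ where $b\in S(\R^2,\widetilde m\langle\rho\rangle^{N})$ still vanishes in a neighborhood of $\rho_+(z)$ (since $1-\psi$ does) and $R$ is smoothing with Schwartz norm $\mO(h^\infty)$. It thus suffices to prove $\|b^we_+\|_{L^2}=\mO(h^\infty)$ for every symbol $b$ vanishing near $\rho_+(z)$. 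Writing this as an oscillatory integral
\begin{equation*}
(b^we_+^{hol})(x)=\frac{1}{2\pi h}\iint e^{i\Phi(x,y,\xi;z)/h}\,b\!\left(\tfrac{x+y}{2},\xi\right)a_+(y,z;h)\chi_+(y)\,dy\,d\xi,
\end{equation*}
with complex phase $\Phi(x,y,\xi;z)=(x-y)\xi+\varphi_+(y,z)$, the analysis splits into three regions. (i) For large $|\xi|$, integrate by parts in $y$ with $(h/i)(\xi-\partial_y\varphi_+(y,z))^{-1}\partial_y$, non-singular since $\partial_y\varphi_+=-g_0^+$ is bounded on $\supp\chi_+$, gaining $\mO(h^N\langle\xi\rangle^{-N})$. (ii) For $x\in\supp\chi_+$ bounded away from $x_+(z)$, the lower bound $\Ima\varphi_+(y,z)\geq -\Phi_{+,0}(z)+c_0$ on the corresponding $y$-window, coming from the strict minimum of $\Ima\varphi_+$ at $x_+(z)$, yields an exponential gain $\mO(e^{-c_0/h})\,e^{\Phi_{+,0}(z)/h}$. (iii) For $x$ close to $x_+(z)$ and $(y,\xi)$ bounded, apply complex stationary phase around the critical point $(y_c,\xi_c)=(x,-g_0^+(x,z))$; this lies close to $\rho_+(z)$, hence inside the zero set of $b$, so every derivative of $b$ at $(y_c,\xi_c)$ vanishes and every term in the asymptotic expansion vanishes, yielding $\mO(h^\infty)\,e^{\Phi_{+,0}(z)/h}$.

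The main obstacle lies in step (iii): the Hessian of $\Phi$ in the real variables $(y,\xi)$ has mixed signature with only positive-semidefinite imaginary part, so the Melin--Sj\"ostrand complex stationary phase lemma does not apply verbatim. I would circumvent this by first performing stationary phase in $y$ alone for each fixed $\xi$, using the quadratic lower bound $\Ima\varphi_+(y,z)\geq c|y-x_+(z)|^2-\Phi_{+,0}(z)$ to concentrate $y$ in a window of size $\sqrt h$ around $x_+(z)$; the remaining $\xi$ integral is a standard real-phase oscillatory integral whose critical point $\xi=\xi_+(z)$ again lies inside $\{b=0\}$, so stationary phase in $\xi$ produces the claimed $h^\infty$ gain. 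Combining the three regions and dividing the resulting pointwise bound by $\|e_+^{hol}\|=h^{1/4}A_+(z;h)\,e^{\Phi_{+,0}(z)/h}$ from \eqref{eq4.12} yields $\|b^we_+\|_{L^2}=\mO(h^\infty)$, completing the proof of the lemma.
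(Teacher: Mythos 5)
Your argument is correct but follows a genuinely different route from the paper's. The paper gives a parametrix proof: it extends $g^+$ to a globally defined $\widetilde g^+$ so that $\xi+\widetilde g^+\in S(\langle\rho\rangle)$ is elliptic away from $\rho_+(z)$, forms the globally elliptic operator $q^w=(hD_x+\widetilde g^+)^*(hD_x+\widetilde g^+)+\widetilde\chi^w$ (with $\widetilde\chi$ a microlocal cutoff near $\rho_+(z)$) with inverse $b^w$, and combines the identity $(hD_x+\widetilde g^+)^*(hD_x+\widetilde g^+)e_+=\eta$, with $\|\eta\|_{H(\widetilde m)}=\mO(\e^{-C/h})$, to obtain $e_+=b^w\eta+(b\,\#\,\widetilde\chi)^we_+$; applying $(1-\psi)^w$ and using that $\supp(1-\psi)\cap\supp\widetilde\chi=\emptyset$ then finishes, with no stationary phase at all. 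Your version instead reduces to $L^2$ and estimates the oscillatory kernel of $b^we_+^{hol}$ directly, splitting into non-stationary regions (integration by parts) and a stationary region where the asymptotic expansion is trivial because the symbol is flat at the critical point. Both routes are legitimate; the parametrix is arguably cleaner since it sidesteps complex stationary phase entirely, while yours is more explicit and requires no global extension of $g^+$. Two remarks on your version. First, the obstacle you raise in region (iii) is milder than suggested: the Melin--Sj\"ostrand stationary phase theorem \cite{MelSj74} requires $\Ima\Phi\geq0$ together with a nondegenerate complex Hessian, \emph{not} a positive-definite imaginary part; here $\det\partial^2_{(y,\xi)}\Phi=-1\neq0$ and, after writing $e_+=e_+^{hol}\e^{-\Phi_+/h}$, the effective phase has $\Ima\Phi\geq0$, so the theorem applies as stated and the nested $y$-then-$\xi$ workaround, while correct, is not strictly necessary. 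Second, your case split covers $x\in\supp\chi_+$ only; for the $L^2$ conclusion you also need $x$ away from $\supp\chi_+$, which is handled by repeated integration by parts in $\xi$ (since $|\partial_\xi\Phi|=|x-y|$ is then bounded below on the $y$-support) yielding $\mO(h^N\langle x\rangle^{-N})$, and this should be stated explicitly.
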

\begin{proof}
We smoothly extend $g_+\in\mathcal{C}^{\infty}(X_+)$  to\footnote{This notation should not be confused with the notation $\widetilde{g}_0^+$ used for the almost holomorphic extension of $g_0^+$ in Lemma~\ref{lem:AH}.} $\widetilde{g}_+$ in $\mathcal{C}^{\infty}(\R)$, such that 
$\widetilde{g}_+(x,z;h) = -\frac{i}{C}(x-x_+(z))$, for $|x|\geq C$, $C>0$, and such 
that $\xi+\widetilde{g}_+ \in S(\R^2,\langle \rho \rangle)$ is elliptic outside $\rho_+(z)$. 
This is possible due to \eqref{eq4.7}. Thus, the operator $hD_x + \widetilde{g}_+ = (\xi+\widetilde{g}_+)^w$.

Since $\supp\chi_+\subset X_+$, we have that $\widetilde{g}_+=g_+$ 
on the support of $e_+$ with respect to $x$. Hence, by \eqref{eq4.13.1}, \eqref{eq4.14}, 
\begin{multline}\label{eq4.15}
	(hD_x +\widetilde{g}^+)^*(hD_x +\widetilde{g}^+)e_+ = \eta\,,\\
	\eta= \e^{-\frac{1}{h}\Phi_+(z;h)} \Big( [hD_x, [hD_x,\chi_+]] f_+^{hol}  - 2i \Ima \widetilde{g}^+ [hD_x,\chi_+] f_+^{hol} \Big)
\end{multline}
Using that $x_+(z) \notin \supp\chi_+'$, as well as \eqref{eq4.11}, 
the norm  $\|\eta \| = \mO(\e^{-C/h})$.
\par
Let $\widetilde{\chi}\in\mathcal{C}^{\infty}_0(\R^2,[0,1])$, $\widetilde{\chi}=1$ near $\rho_+(z)$, and define 
\begin{equation}\label{eq4.16}
	q^w = (hD_x +\widetilde{g}^+)^*(hD_x +\widetilde{g}^+) + \widetilde{\chi}^w.
\end{equation}
Notice that its symbol
\begin{equation*}
	q(x,\xi,z;h) = |\xi +\widetilde{g}_0^+(x,z)|^2 + \widetilde{\chi}(x,\xi) + \mO(h)_{S(\R^2,\langle \rho\rangle^2)}, 
\end{equation*}
is elliptic. Hence, for $h>0$ small enough, $q^w$ admits a bounded inverse $b^w$, 
$b\in S(\R^2, \langle \rho\rangle^{-2})$. Notice that by \eqref{eq4.15}, \eqref{eq4.16}, 
\begin{equation*}
	e_+ = b^w \eta+ (b\#\widetilde{\chi})^w e_+.
\end{equation*}
Let $\psi\in\mathcal{C}^{\infty}_0(\R^2,[0,1])$, $\psi=1$ near $\supp\widetilde{\chi}$, 
thus for every order function $\widetilde{m}$,
\begin{equation*}
	(1-\psi)\# b\#\widetilde{\chi} =\mO(h^{\infty})_{S(\R^2, \widetilde{m}^{-1})}.
\end{equation*}
Let us consider the term $((1-\psi)\#b)^w\eta$. 
By repeated integration by parts from \eqref{eq4.15}, one can show that  $\|\eta\|_{H(\widetilde{m})}=\mO(\e^{-C/h})$; since $b^w$ and $(1-\psi)^w$ are bounded on $H(\widetilde{m})$, it follows that $\|((1-\psi)\#b)^w\eta\|_{H(\widetilde{m})} =\mO(\e^{-C/h})$. Adding the two contributions, we get the announced estimate $\|(1-\psi)^w e_+\|_{H(\widetilde{m})} =\mO(h^{\infty}) $. 
\end{proof}
Combining Proposition \ref{prop4.1} with the Lemmata \ref{lem4.1}-\ref{lem4.3}, 
we get that for any $\psi\in\mathcal{C}^{\infty}_0(\R^2)$, $\psi = 1$ near $\rho_+(z)$
\begin{equation}\label{eq4.17}
\begin{split}
 (P_h-z)e_+ &= (P_h-z)\psi^w e_+ +(P_h-z)(1-\psi)^w e_+\\ 
 	& = (\psi\#q^+\#(\xi+g^+))^w e_+ + [P_h,\psi^w]e_+ +\mO(h^{\infty}) \\
	& = \mO(h^{\infty}).
 \end{split}
\end{equation}
Here the first term in the second line is $\mO(h^{\infty})$ by 
construction of $e_+$. The second term is $\mO(h^{\infty})$ by Lemma \ref{lem4.3}, 
since the support 
of the symbol of the commutator is (up to an $\mO(h^{\infty})$ term) disjoint 
from a fixed neighbourhood of $\rho_+(z)$. 
\\
\par
This concludes the proof of Proposition \ref{prop4.2} in the "+" case. The "$-$" case can 
be obtained following the same steps to construct a quasimode for $(P_h-z)^*$, using the microlocal factorization of this operator into  $(q^{-,w})^* (hD_x+\overline{g^-})$ by 
Proposition \ref{prop4.1}. 
\end{proof}
\subsection{Quasimodes for symmetric symbols}\label{sec:QuasModSym}
Let us now assume \eqref{eq.15}, i.e. that the symbol is symmetric:
\begin{equation*}
	p(x,\xi;h) = p(x,-\xi;h).
\end{equation*}
Then the formal adjoint $(P_h-z)^*$ satisfies 
\begin{equation}\label{eq4.3.2}
	(P_h-z)^* = \Gamma ( P_h-z )\Gamma, 
	\quad \Gamma u \defeq\overline{u}.
\end{equation}
Moreover, \eqref{eq.15} implies that if $\rho=(x,\xi) \in p_0^{-1}(z)$ with 
$\{\Rea p,\Ima p\}(\rho)<0$, as in \eqref{eq1.8.2}, \eqref{eq1.8.1}, then 
$(x,-\xi) \in p_0^{-1}(z)$ with $\{\Rea p,\Ima p\}(x,-\xi)>0$. Thus, the hypotheses \eqref{eq1.8.1}, \eqref{eq1.8.1a} write as 
\begin{equation}\label{eq4.3.3}
	\begin{split}
	&p_0^{-1}(z)=\{\rho_{\pm}^j(z)=(x^j_{\pm}(z), \xi^j_{\pm}(z))
		=(x^j(z),\pm \xi^j(z)); j=1,\dots,J\},\text{ with } \\
	&\pm \{\Rea p,\Ima p\}(\rho_{\pm}^j(z)) <0, 
	 \text{ and } x^i\neq x^j, \text{if}\  i\neq j.
	\end{split}
\end{equation}
For  $j=1,\dots, J$ let $e_+^{j,hol}$, $\chi_+^j$, $\varphi_+^j$, $W(z_0)$, and $\Phi_+^j$ 
be as in Proposition \ref{prop4.2}. Let us define the "-" quasimode as:
\begin{equation}\label{eq4.3.4}
	e_-^{j,hol}(z;h) = \Gamma e_+^{j,hol}(z;h). %
\end{equation}
It is then clear form Proposition \ref{prop4.2} that for all $z\in W(z_0)$
\begin{equation}\label{eq4.3.5}
	\|e_{-}^{j,hol}(z;h)\| =
	\e^{\frac{1}{h}\Phi_+^j(z;h)}.
\end{equation}
Moreover, by \eqref{eq4.3.2}, \eqref{eq4.5}, we see that 
since  
\begin{equation}\label{eq4.3.5b}
	e_-^j(z;h)=e_{-}^{j,hol}(z;h)\,\e^{-\frac{1}{h}\Phi_+^j(z;h)} = \Gamma e_+^j(z;h)\,,
\end{equation}
$e_-^j(z;h)$ is indeed a quasimode:
\begin{equation*}
 \|(P_h-z)^*e_-^j(z;h)\|=\mO(h^\infty).
\end{equation*}
Transposing the proof of Proposition~\ref{prop4.2}, we also obtain that 
for all $\psi\in\mathcal{C}^{\infty}_0(\R^2)$, $\psi =1$ near $\rho^j_{-}(z)$, 
\begin{equation}\label{eq4.3.6}
	 \|(1-\psi^w)e_-^j(z;h)\|_{H(m')} = \mO(h^\infty),
\end{equation}
and for all $\psi\in\mathcal{C}^{\infty}_0(\R^2,[0,1])$, $\chi =1$ near the point $(x_-^j(z),2\xi_-^j(z))$, 
\begin{equation}\label{eq4.3.7}
	 \|(1-\psi^w)(e_-^j(z;h))^2 \|_{H(m')} = \mO(h^\infty).
\end{equation}
In the sequel we shall keep that $\pm$-notation to distinguish the quasimodes, even for 
symmetric symbols \eqref{eq.15}.
\subsection{Relation with the symplectic volume}
In this section we will study the functions $\Phi_{\pm}^j(z;h)$ governing the $L^2$ norm of the holomorphic quasimodes (see Prop.~\ref{prop4.2}). 
We will strongly make use of the almost holomorphicity of Lemma~\ref{lem:AH}. From the expression \eqref{eq4.2.1}, let us write
\begin{equation}\label{eq4.4.1}
	\Phi_{\pm}^j(z;h) =\Phi_{\pm,0}^j(z) + h\log \big(h^{\frac{1}{4}}A{_\pm}^j(z;h)\big), \quad
	\Phi_{\pm,0}^j(z)\defeq-\Ima \varphi_{\pm}^j(x_{\pm}^j(z),z).
\end{equation}
From now on, let us focus on the function $\Phi_{+}^j(z;h) $, and omit to indicate the super/subscript $_+^j$ on all quantities. 
\par
By \eqref{eq4.7} and the ensuing discussion, $x(z)=x_{+}^j(z)$ is the unique 
minimum of $x\mapsto \Ima \varphi(x,z)$ on the support of $\chi_{+}^j$. 
Using the equations~\eqref{eq4.1b}, \eqref{eq4.2.0} and a Taylor expansion of $x(z)$ at $z_0$, we easily obtain
\begin{equation}\label{eq4.4.2}
(\partial_{\bar{z}}\varphi)(x(z),z) = \mO(|x(z) - x(z_0)|^{\infty}) =  \mO(|z - z_0|^{\infty}),
\end{equation}
which leads to
\begin{equation}\label{eq4.4.5}
	\frac{2}{i} \partial_z\Phi_{0}^j(z)= (\partial_z\varphi)(x(z),z) +
	 \mO(|z - z_0|^{\infty})\,.
\end{equation}
Next, recall that by Proposition \ref{prop4.1} and \eqref{eq4.2.0}, we have 
$$
\partial_x\varphi(x,z)= - g_0(x,z),\quad\text{in particular}\quad  \partial_x\varphi(x(z),z)= \xi(z).
$$
Differentiating w.r.t. $z$ and $\bar{z}$, we get 
\begin{equation}\label{dxxphi}
(\partial_{xx}^2\varphi)(x(z),z)\partial_z x(z) + (\partial_{xz}^2\varphi)(x(z),z)
	= \partial_z \xi (z)
\end{equation}	
Differentiating w.r.t. $\bar{z}$ and using Lemma~\ref{lem:AH}, we obtain
$$
(\partial_{xx}^2\varphi)(x(z),z)\partial_{\bar{z}}x(z) +  \mO(|z - z_0|^{\infty}) = \partial_{\bar{z}}\xi(z).
$$
Eq.~\eqref{eq4.4.2} is a form of almost holomorphy at the point $(x(z_0),z_0)$. Using Eq.~\eqref{eq4.1ddd} one finds a natural extension of this property: 
$$
(\partial^2_{z\bar{z}}\varphi)(x(z),z)=\mO(|z - z_0|^{\infty})\,.
$$
Using this expression, we find 
$$
\partial^2_{z\bar z}\big(\varphi(x(z),z) \big) = \partial_{z}\xi(z) \partial_{\bar z} x(z) + \xi(z)\partial^2_{z\bar z}x(z) + \mO(|z - z_0|^{\infty})\,.
$$
Taking the imaginary part of this equation produces 
\begin{equation}\label{eq4.4.2a}
	\frac{2}{i} \partial^2_{z\bar{z}}\Phi_{0}(z)=
	 \partial_z\xi (z)\partial_{\bar{z}} x(z) - \partial_{\bar{z}}\xi(z)\partial_z x(z)
		+ \mO(|z - z_0|^{\infty}).
\end{equation}
We now restore the $+,j$ notations, and write this expression using 2-forms:
\begin{equation}\label{eq4.4.3.1}
	\begin{split}
	-d\xi_+^j\wedge dx_+^j(z) &= 
	\big(\partial_z\xi_+^j(z)\partial_{\bar{z}}x_+^j(z) - \partial_{\bar{z}}\xi_+^j(z)\partial_zx_+^j(z) \big)\,
	d\overline{z}\wedge dz \\
	& = \left( 4\partial^2_{z\bar{z}}\Phi_{+,0}^j(z)+ \mO(|z - z_0|^{\infty})\right) \frac{d\overline{z}\wedge dz}{2i}.  \\ 
	\end{split}
\end{equation}
This expressions provides the connection between the volume form in phase space $d\xi\wedge dx$, and the volume form in energy space $\frac{d\bar z \wedge dz}{2i}$.

One can perform the symmetric computations for the functions $\Phi_{-,0}^j(z)$, and obtain
\begin{equation}\label{eq4.4.3}
	\begin{split}
	d\xi_-^j\wedge dx_-^j(z) &= 
	\big( \partial_{\bar{z}}\xi_-^j(z)\partial_zx_-^j(z)-\partial_z\xi_-^j(z)\partial_{\bar{z}}x_-^j(z) \big)
	\,d\overline{z}\wedge dz \\
	& = \left(4 \partial^2_{z\bar{z}}\Phi_{-,0}^j(z)+ \mO(|z - z_0|^{\infty})\right) \frac{d\overline{z}\wedge dz}{2i}\,.
	\end{split}
\end{equation}
Let us now express the factor $4\partial^2_{z\bar{z}}\Phi_{+,0}^j(z)$ in terms of the symbol $p_0$. 
Differentiating the identity $p_0(\rho_{\pm}^j(z))=z$, cf. \eqref{eq1.8.1}, we obtain the linear 
system
\begin{equation*}
	\begin{cases}
	& \partial_{\xi}p_0(\rho_{\pm}^j)\partial_z \xi_{\pm}^j + \partial_{x}p_0(\rho_{\pm}^j)\partial_z x_{\pm}^j =1 \\
	&\partial_{\xi}\overline{p}_0(\rho_{\pm}^j)\partial_z \xi_{\pm}^j
	 + \partial_{x}\overline{p}_0(\rho_{\pm}^j)\partial_z x_{\pm}^j =0,
	\end{cases}
\end{equation*}
which can be solved by
\begin{equation*}
 \partial_z\xi_{\pm}^j(z) = \frac{-\partial_x\overline{p}_0}{\{\overline{p}_0,p_0\}}(\rho_{\pm}^j(z)), 
 \quad 
  \partial_z x_{\pm}^j(z) = \frac{\partial_{\xi}\overline{p}_0}{\{\overline{p}_0,p_0\}}(\rho_{\pm}^j(z)).
\end{equation*}
Using the fact that $x_{\pm}^j$, $\xi_{\pm}^j$ are real, we get by \eqref{eq4.4.2a}, and 
by a similar computation for $\Phi_{-,0}^j$, that  
\begin{equation*}
	4 \partial^2_{z\bar{z}}\Phi_{\pm,0}^j(z) = \left(\frac{\mp 1}{\frac{1}{2i}\{\overline{p}_0,p_0\}(\rho^j_{\pm}(z))}  
	+ \mO(|z - z_0|^{\infty})\right).
\end{equation*}
It follows from \eqref{eq1.8.1} that the first term on the RHS is positive, hence that the functions $\Phi_{\pm}^j$ are strictly subharmonic near $z_0$. 

Identifying the Lebesgue measure $L(dz)$ on the energy plane with the volume form $ \frac{d\overline{z}\wedge dz}{2i}$, and denoting by $|d\xi\wedge dx|$ the symplectic volume on $T^*\R$, the expression \eqref{eq4.4.3.1} (resp. \eqref{eq4.4.3})
relates an infinitesimal volume in the energy space, with an infinitesimal volume near the phase space point $\rho^j_+$ (resp. $\rho^j_-$). 
Adding the contributions of all the $2J$ points in $p_0^{-1}(z)$, we obtain the following expression for the push-forward of the symplectic volume $|d\xi\wedge dx|$ through the principal symbol $p_0$:
\begin{equation}\label{eq4.4.4}
\begin{split}
	&(p_0)_*(|d\xi\wedge dx|) =  \sum_{j=1}^J \big( \sigma_{+}^j(z)+ \sigma_{-}^j(z) \big) L(dz), \\ 
	& \text{with } \sigma_{\pm}^j(z) \defeq  4 \partial^2_{z\bar{z}}\Phi_{\pm,0}^j(z) + \mO(|z - z_0|^{\infty})\,.
\end{split}
\end{equation}
Observe that in case we assume the symmetry \eqref{eq.15}, we get  $\sigma_{+}^j(z)=\sigma_{-}^j(z)$.
\begin{rem}
The error term $\mO(|z - z_0|^{\infty})$ will be very small in the following, because we will investigate values of $z$ in some $h^{1/2}$-neighbourhood of $z_0$. 
\end{rem}
%
\section{Interaction between the quasimodes}\label{sec:QuasInter}
In this section we study the overlaps (scalar products) between nearby quasimodes. In our notations, the $L^2$ scalar product $(u|v)$ is linear w.r.t. $u$ and antilinear w.r.t. $v$.

From the assumption \eqref{eq1.8.1}, %
we may shrink implies that the neighbourhood $W(z_0)$, 
(see Proposition~\ref{prop4.2}) to ensure that 
\begin{equation}\label{eq4.17.1}
\rho_{\pm}^j(\overline{W(z_0)})\cap \rho_{\pm}^k(\overline{W(z_0)})=\emptyset\quad \text{for any } j\neq k. 
\end{equation}
Under the further assumption \eqref{eq1.8.1a}, we may even assume (up to shrinking $W(z_0)$ that
the cutoffs functions $\chi_{\pm}^j$ used to construct our quasimodes have disjoint supports:%
\begin{equation}\label{eq4.17.1b}
	\supp\chi_\pm^j \cap \supp\chi_\pm^k  %
	=\emptyset,\quad \text{for any } j\neq k. 
\end{equation}
Thus, from \eqref{eq4.17.1} and the microlocalization property \eqref{eq4.6},  we have for all $z\in W(z_0)$,%
\begin{equation}\label{eq4.6.1}
	(e_{+}^j(z)|e_{+}^k(z)) = \delta_{jk} + \mO(h^{\infty}), \quad 
	(e_{-}^j(z)|e_{-}^k(z)) = \delta_{jk} + \mO(h^{\infty}),
\end{equation}
uniformly for all $z\in W(z_0)$ (here $\delta_{jk}$ denotes the Kronecker symbol). %
\par
Similarly, one obtains by \eqref{eq4.6}, \eqref{eq4.17.1} that for all $z,w\in W(z_0)$ 
\begin{equation}\label{eq4.6.1b}
	(e_{\pm}^j(z)|e_{\pm}^k(w)) =  \mO(h^{\infty}),  \quad \text{for } j\neq k
\end{equation}
This quasi-orthogonality reflects the fact that for $j\neq k$, the points $\rho_{\pm}^j(z)$ and $\rho_{\pm}^k(w)$ remain at positive distance from each other, uniformly when $z,w\in W(z_0)$.
%
\subsection{Overlaps between nearby quasimodes}
The scalar product between quasimodes localized on nearby points is given in the following Propositions, which will be crucial for us later. The first one deals with the overlaps between the "+" quasimodes.
\begin{prop}\label{prop4.3}
Let $(e_{+}^{j,hol}(z)_{z\in W(z_0)}$ be the quasimodes constructed in Proposition \ref{prop4.2}. Recall the notation $\Phi_{+,0}^j(z;h)$ 
from \eqref{eq4.4.1}. Then, for $|z-w|\leq c$, with $c>0$ sufficiently small, 
\begin{equation}\label{eq4.18}
	( e_{+}^{j,hol}(z)|e_{+}^{j,hol}(w))
	=\e^{\frac{2}{h}\Psi^j_{+}(z,w;h)} + \mO(h^{\infty})
	\e^{\frac{1}{h}\Phi_{+,0}^j(z)+\frac{1}{h}\Phi_{+,0}^j(w)}
\end{equation}
with 
\begin{equation}\label{eq4.18a}
	\Psi^j_{+}(z,w;h) = \Psi^j_{+,0}(z,w)+\frac{h}{2}\log \big( h^{\frac{1}{2}} b^j_+(z,w;h) \big)\,.
\end{equation}
Here $\Psi^j_{+,0}(z,w)$ is almost $z$-holomorphic and almost $w$-antiholomorphic 
at $\{(z_0,z_0)\}$,  and $b_+^j(z,w;h) \sim b^{+,j}_0(z,w) + h b^{+,j}_1(z,w) + \dots $ 
is smooth in $z$ and $w$, with any derivative with respect to $z,\bar{z},w,\bar{w}$ uniformly 
bounded as $h\to 0$. Moreover, 
\begin{itemize}
	\item $\overline{\Psi_{+}^j(z,w;h)} = \Psi_{+}^j(w,z;h)$,
	\item  $\Psi^j_{+,0}(z,z)= \Phi_{+,0}^j(z)$,
	\item $b_+^j(z,z;h) = A_+^j(z,h)^2$, %
		 with $A_+^j(z,h)$ given in Proposition \ref{prop4.2},
	\item for $|\zeta_i|\leq c$, $i=1,2$, $c>0$ sufficiently small, and for any $N\in\N$, 
\begin{equation*}
	\Psi_{+,0}(z_0+\zeta_1, z_0+\zeta_2) = 
	\sum_{|\alpha|\leq N} (\partial^{\alpha_1}_z\partial^{\alpha_2}_{\bar{z}}\Phi_{+,0})(z_0)
		\frac{\zeta_1^{\alpha_1}\overline{\zeta}_2^{\alpha_2}}{\alpha !} 
		+ \mO(|\zeta |^{N+1}).
\end{equation*}
\end{itemize}
\end{prop}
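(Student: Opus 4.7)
The plan is to compute the scalar product as an oscillatory integral and apply complex stationary phase. By the explicit form of the holomorphic quasimodes in Proposition~\ref{prop4.2}, one has
\[
( e_{+}^{j,hol}(z)\mid e_{+}^{j,hol}(w))
=\int_{\R} a^j_+(x,z;h)\,\overline{a^j_+(x,w;h)}\,\chi^j_+(x)^2\,
 \e^{\frac{i}{h}F(x,z,w)}\,dx,
\]
with phase $F(x,z,w)=\varphi^j_+(x,z)-\overline{\varphi^j_+(x,w)}$. For $z=w$ one recovers the phase $2i\,(-\Ima \varphi^j_+(x,w))$, which by the discussion around \eqref{eq4.7} has a unique, nondegenerate minimum at $x=x^j_+(z)$ on $\supp\chi^j_+$; the estimate \eqref{eq4.12} then coincides with the diagonal case of \eqref{eq4.18}.

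For $z\neq w$ close to $z_0$, I would first extract a complex critical point $x_c(z,w)$ near $x^j_+(z_0)$ by solving
\[
\partial_x\tilde\varphi^j_+(x_c,z)-\overline{\partial_x\tilde\varphi^j_+(\overline{x_c},w)}=0,
\]
i.e.\ $\tilde g^{+,j}_0(x_c,z)=\overline{\tilde g^{+,j}_0(\overline{x_c},w)}$, where $\tilde\varphi^j_+$ and $\tilde g^{+,j}_0$ are the almost holomorphic extensions from Lemma~\ref{lem:AH}. By \eqref{eq4.11} the second $x$-derivative of $F$ is nondegenerate on the diagonal, so the implicit function theorem yields a smooth $x_c(z,w)$ defined near $(z_0,z_0)$, with $x_c(z,z)=x^j_+(z)\in\R$. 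Using the nonnegativity of $\Ima F$ on $\R$ coming from \eqref{eq4.7}, I would then invoke the Melin–Sj\"ostrand complex stationary phase formula (see e.g.\ \cite[Ch.~2]{MelSj74}) to obtain
\[
( e_{+}^{j,hol}(z)\mid e_{+}^{j,hol}(w))
= h^{1/2}\,b^j_+(z,w;h)\,\e^{\frac{i}{h}F(x_c,z,w)}
+\mO(h^\infty)\,\e^{\frac{1}{h}\Phi^j_{+,0}(z)+\frac{1}{h}\Phi^j_{+,0}(w)},
\]
with $b^j_+\sim b^{+,j}_0+hb^{+,j}_1+\cdots$ smooth and bounded in all its derivatives. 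Setting $\Psi^j_{+,0}(z,w)\defeq\tfrac{i}{2}F(x_c(z,w),z,w)$ produces the formula \eqref{eq4.18}--\eqref{eq4.18a}. The required error bound is controlled by the exponential dominance of the critical value over the boundary contributions of $\chi^j_+$, which uses that $x_c$ remains close to $x^j_+(z_0)$ and thus inside $\{\chi^j_+\equiv 1\}$.

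The listed symmetry and normalization properties then follow directly: the conjugation identity $\overline{\Psi^j_+(z,w;h)}=\Psi^j_+(w,z;h)$ is visible from the definition (swap $z\leftrightarrow w$ and complex conjugate, observing that the critical point equation is symmetric); $\Psi^j_{+,0}(z,z)=\Phi^j_{+,0}(z)$ since $x_c(z,z)=x^j_+(z)$ is real; and $b^j_+(z,z;h)=A^j_+(z;h)^2$ by matching to the diagonal stationary phase expansion giving $\|e^{j,hol}_+(z)\|^2$ in \eqref{eq4.12}.

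The main obstacle is the almost holomorphy of $\Psi^j_{+,0}$ in $z$ and almost antiholomorphy in $w$ at $(z_0,z_0)$. Since $x_c$ is \emph{not} holomorphic in $(z,\bar w)$, one cannot conclude directly; however, at the critical point the partial derivatives of $F$ with respect to $x$ vanish, so by the envelope principle the $\bar z$-derivative of $\Psi^j_{+,0}(z,w)$ reduces to $\partial_{\bar z}\tilde\varphi^j_+(x_c(z,w),z)$, which by Lemma~\ref{lem:AH} vanishes to infinite order at $x=x^j_+(z)$, hence at $(z_0,z_0)$ modulo $|z-z_0|^\infty$. The analogous computation handles $\partial_w\Psi^j_{+,0}$, and iterating this envelope argument controls all higher mixed derivatives $\partial^\alpha_{\bar z}\partial^\beta_w$. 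Combined with the identity $\Psi^j_{+,0}(z,z)=\Phi^j_{+,0}(z)$, the resulting almost (anti)holomorphy forces the Taylor coefficients of $\Psi^j_{+,0}$ at $(z_0,z_0)$ to coincide with the mixed $z,\bar z$-derivatives of $\Phi^j_{+,0}$, which yields the last bullet point of the statement.
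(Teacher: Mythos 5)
Your proposal is correct and follows essentially the same route as the paper's proof: extract an oscillatory integral with phase $\varphi_+(x,z)-\overline{\varphi_+(x,w)}$, extend almost-holomorphically via Lemma~\ref{lem:AH}, locate the complex critical point $x^c(z,w)$ of $\partial_x\widetilde\psi=0$ (equivalently $\widetilde g_0^{+,j}(x^c,z)=\overline{\widetilde g_0^{+,j}(\overline{x^c},w)}$, which is what the paper also writes), apply Melin--Sj\"ostrand complex stationary phase, and define $\Psi_{+,0}$ as the critical value. Your ``envelope principle'' argument is exactly the paper's chain-rule computation: at $x^c$ the $\partial_x\widetilde\psi$-term drops by criticality, the $\partial_{\bar x}\widetilde\psi$-term is $\mO(|\Ima x^c|^\infty)$ by almost $x$-holomorphy of $\widetilde\psi$, and the residual $\partial_{\bar z}\widetilde\varphi_+(x^c,z)$ is $\mO(|x^c-x_+(z)|^\infty)$ by Lemma~\ref{lem:AH}; the paper spells this out via \eqref{eq4.n4} (showing $\partial_{\bar z}x^c=\mO(|z-w|^\infty)$, $\partial_w x^c=\mO(|z-w|^\infty)$) and then the displayed computation of $\partial_{\bar z}\Psi_{+,0}$, but the substance matches.
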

The second proposition, symmetric to the previous one, deals with the overlaps between nearby "$-$" quasimodes.
\begin{prop}\label{prop4.3b}
Let $(e_{-}^{j,hol}(z)_{z\in W(z_0)}$ be the quasimodes constructed in Proposition \ref{prop4.2}. 
Then, for $|z-w|\leq c$, with $c>0$ sufficiently small, 
\begin{equation}\label{eq4.19}
	(e_{-}^{j,hol}(z)| e_{-}^{j,hol}(w))
	=\e^{\frac{2}{h}\Psi^j_{-}(z,w;h)}
	+ \mO(h^{\infty})
	\e^{\frac{1}{h}\Phi_{-,0}^j(z)+\frac{1}{h}\Phi_{-,0}^j(w)},
\end{equation}
with 
\begin{equation}\label{eq4.18b}
	\Psi^j_{-}(z,w;h) = \Psi^j_{-,0}(z,w)+\frac{h}{2}\log\big( h^{\frac{1}{2}}b^j_-(z,w;h)\big), 
\end{equation}
Here $\Psi^j_{-,0}(z,w)$ is almost $z$-antiholomorphic 
and almost $w$-holomorphic at $\{(z_0,z_0)\}$, and $\Phi_{-,0}^j(z;h)$  is the function defined in \eqref{eq4.4.1}. The symbol $b_-^j(z,w;h) \sim b^{-,j}_0(z,w) + h b^{-,j}_1(z,w) + \dots $ 
is smooth in $z$ and $w$, with any derivative with respect to $z,\bar{z},w,\bar{w}$ 
uniformly bounded as $h\to 0$. Moreover, 
\begin{itemize}
	\item $\overline{\Psi_{-}^j(z,w;h)} = \Psi_{-}^j(w,z;h)$,
	\item $\Psi^j_{-,0}(z,z)= \Phi_{-,0}^j(z)$, 
	\item  $b_-^j(z,z;h)= A_-^j(z,h)^2$  
		with $A_-^j(z,h)$ as in Proposition \ref{prop4.2}, 
	\item for $|\zeta_i|\leq c$, $i=1,2$, $c>0$ small enough, and for any $N\in\N$,
	\begin{equation*}
	\Psi_{-,0}(z_0+\zeta_1,z_0+\zeta_2) = 
	\sum_{|\alpha|\leq N} ( \partial^{\alpha_1}_{\bar z} \partial^{\alpha_2}_{{z}}\Phi_{-,0})(z_0)
		\frac{\bar\zeta_1^{\alpha_1}\zeta_2^{\alpha_2}}{\alpha !} 
		+ \mO(|\zeta |^{N+1}).
\end{equation*}
\end{itemize}
\end{prop}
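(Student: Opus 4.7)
The plan is to mirror the proof of Proposition \ref{prop4.3}, exchanging the roles of $z$ and $\bar z$ throughout because the phase $\varphi_-^j$ of the ``$-$'' quasimode is built from $\overline{g_0^{j,-}}$ and is therefore almost anti-holomorphic in $z$ rather than almost holomorphic. Concretely, by Proposition \ref{prop4.2}, the overlap can be written as the oscillatory integral
$$(e_-^{j,hol}(z)\,|\,e_-^{j,hol}(w)) = \int a_-^j(x,z;h)\,\overline{a_-^j(x,w;h)}\,\chi_-^j(x)^2\,\exp\!\bigl(\tfrac{i}{h}\Theta(x,z,w)\bigr)\,dx,$$
with $\Theta(x,z,w)\defeq\varphi_-^j(x,z)-\overline{\varphi_-^j(x,w)}$. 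Since $g_0^{j,-}(\cdot,z)$ is almost holomorphic in $z$ at $z_0$ by Lemma \ref{lem:AH}, the phase $\varphi_-^j(x,z)$ is almost $z$-anti-holomorphic and $\overline{\varphi_-^j(x,w)}$ is almost $w$-holomorphic, so that $\Theta$ inherits the required almost (anti-)holomorphic structure at $(z_0,z_0)$.

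Next I would apply the Melin--Sj\"ostrand complex stationary phase to the $x$-integral, in close analogy with the ``+'' case. The critical point equation $\partial_x\Theta=g_0^{j,-}(x,w)-\overline{g_0^{j,-}(x,z)}=0$ reduces at $z=w$ to $g_0^{j,-}(x,z)\in\R$, which by Proposition \ref{prop4.1} is satisfied at $x=x_-^j(z)$. The non-degeneracy follows from the analogue of \eqref{eq4.1d} for the ``$-$'' case: $\partial_x^2\Theta(x_-^j(z),z,z)=2i\,\Ima(\partial_x g_0^{j,-})(x_-^j(z),z)$ has strictly positive imaginary part because $\{\Rea p_0,\Ima p_0\}(\rho_-^j(z))>0$. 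Extending $g_0^{j,-}(\cdot,w)$ and $\overline{g_0^{j,-}(\cdot,z)}$ almost-holomorphically in $x$, the implicit function theorem provides a unique complex critical point $x_c(z,w)$ depending smoothly on $(z,w)$ near $(z_0,z_0)$, with $x_c(z,z)=x_-^j(z)$. Stationary phase then yields
$$(e_-^{j,hol}(z)\,|\,e_-^{j,hol}(w)) = h^{1/2}b_-^j(z,w;h)\exp\!\bigl(\tfrac{i}{h}\Theta(x_c(z,w),z,w)\bigr)+\mO(h^\infty)\e^{\frac{1}{h}\Phi_{-,0}^j(z)+\frac{1}{h}\Phi_{-,0}^j(w)},$$
with $b_-^j$ a classical symbol. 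Setting $\Psi_{-,0}^j(z,w)\defeq\tfrac{i}{2}\Theta(x_c(z,w),z,w)$ and $\Psi_-^j(z,w;h)\defeq\Psi_{-,0}^j(z,w)+\tfrac{h}{2}\log\bigl(h^{1/2}b_-^j(z,w;h)\bigr)$ produces \eqref{eq4.19}.

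The four listed properties then follow in sequence. The symmetry $\overline{\Psi_-^j(z,w;h)}=\Psi_-^j(w,z;h)$ is a direct consequence of $\overline{(u\,|\,v)}=(v\,|\,u)$. The diagonal identities $\Psi_{-,0}^j(z,z)=\Phi_{-,0}^j(z)$ and $b_-^j(z,z;h)=A_-^j(z;h)^2$ come from comparing with the norm formula \eqref{eq4.2.1} applied to $\|e_-^{j,hol}(z)\|^2$. The almost $\bar z$-holomorphy in the first variable and almost $w$-holomorphy in the second of $\Psi_{-,0}^j$ at $(z_0,z_0)$ transfer from the corresponding properties of $\Theta$ together with the smoothness of $x_c$. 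The Taylor expansion is obtained by evaluating the mixed derivatives $\partial_{\bar z}^{\alpha_1}\partial_w^{\alpha_2}\Psi_{-,0}^j$ at $(z_0,z_0)$ and reducing them via the diagonal identity to derivatives of $\Phi_{-,0}^j$. The principal obstacle will be establishing the quantitative almost-holomorphy of $\Psi_{-,0}^j$ with the precise control demanded by Definition \ref{def:AH}, which requires a careful propagation of the infinite-order vanishing estimates on $\partial_{\bar z}g_0^{j,-}$ through the implicit-function construction of $x_c$ and the full stationary phase expansion of the amplitude.
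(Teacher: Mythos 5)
Your proposal is correct and mirrors exactly what the paper intends: the paper simply remarks that the proof of Proposition \ref{prop4.3b} is ``totally symmetric'' to that of Proposition \ref{prop4.3}, and you have spelled out that symmetric argument, correctly tracking the sign flip $\{\Rea p_0,\Ima p_0\}(\rho_-^j)>0$, the anti-holomorphy in $z$ coming from the phase $\varphi_-^j=-\int\overline{g_0^{j,-}}$, and the resulting exchange of holomorphic and anti-holomorphic roles in the critical-point construction and in $\Psi_{-,0}^j$.
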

\begin{proof}[Proof of Proposition \ref{prop4.3}]
Until further notice we will suppress the superscript $j$. By Proposition 
\ref{prop4.2} and \eqref{eq4.4.1} we have that for $z,w \in W(z_0)$ 
\begin{equation}\label{eq4.n0}
	\begin{split}
	&(e_{+}^{hol}(z)|e_{+}^{hol}(w))
	= \e^{\frac{1}{h}\Phi_{+,0}(z)+\frac{1}{h}\Phi_{+,0}(w)}I(z,w), \\
	&
	I(z,w) = \int \chi_+(x)\,a_+(x,z;h)\,\overline{a_+(x,w;h)}\, \e^{\frac{i}{h}\Theta(x,z,w)} dx,
	\end{split}
\end{equation}
with 
\begin{equation}\label{eq4.n1}
	\begin{split}
	 \Theta (x,z,w) &= \varphi_+(x,z) - \overline{\varphi_+(x,w)} 
	  + i \Phi_{+,0}(z) + i \Phi_{+,0}(w)  \\
	  & \stackrel{\mathrm{def}}{=}  2\psi(x,z,w)  
	  + i \Phi_{+,0}(z) + i \Phi_{+,0}(w)  
	  \end{split}
\end{equation}
By \eqref{eq4.2.0}, \eqref{eq4.4.1} and the ensuing discussion, we have that 
for all $x\in\supp\chi_+$ and $z,w\in W(z_0)$
\begin{equation}\label{eq4.n1b}
	 \Ima  \Theta(x,z,w)  \geq 0,
\end{equation}
with equality on the submanifold  $\{(x_+(z),z,z)\}$. Here, we used \eqref{eq4.7} and the discussion 
afterwards, which implies that 
\begin{equation}\label{eq4.n2}
	\partial_x\psi(x_+(z),z,z) =0, \quad   \Ima \partial^2_{xx} \psi(x_+(z),z,z) > 0. 
\end{equation}
We have in fact $\Ima (\partial^2_{xx} \psi)(x,z,z)>0$ for all $x\in\supp \chi_+$. Hence, 
by the method of stationary phase for complex-valued phase functions \cite[Theorem 2.3]{MelSj74}, 
we see that for $|z-w|< c$, $c>0$ sufficiently small, the integral $I(z,w)$ can be written as
\begin{equation}\label{e:I(z,w)}
	I(z,w) = h^{\frac{1}{2}}b_+(z,w;h) 
	\e^{\frac{1}{h}(2\Psi_{+,0}(z,w) - \Phi_{+,0}(z) - \Phi_{+,0}(w) )} + \mO(h^\infty),
\end{equation}
where $b_+(z,w;h) \sim b_{+,0}(z,w) + h b_{+,1}(z,w) + \dots $ is smooth in $z$ and $w$,
and all derivatives with respect to $z,\bar{z},w,\bar{w}$ are bounded as $h\to 0$. 
To compute the "off-diagonal" phase function $\Psi_{+,0}(z,w)$, we extend the function $\psi(\cdot,z,w)$ to an almost holomorphic extension $\widetilde{\psi}(\cdot,z,w)$ defined in a small convex complex neighbourhood $\widetilde{X}$ of $\supp\chi_+$,
such that 
\begin{equation*}
	\partial^2_{xx} \widetilde{\psi}(x,z,w) \neq 0, \quad \forall x\in\widetilde{X},\ \forall z,w\in W(z_0)\,.
\end{equation*}
We can indeed find such a neighbourhood of $\supp\chi_+$, since $\partial^2_{xx} \psi(x,z,z)\neq 0$ for 
all $x\in\supp \chi_+$. 
From  \eqref{eq4.n1}, \eqref{eq4.2.0}, we see that $\widetilde{\psi}$ 
can be obtained by using an almost $x$-holomorphic extension $\widetilde{g}_0^+$ of $g_0^+$ in $\widetilde{X}$, and then defining $\widetilde{\varphi}_+(x,z)=-\int_{x_0}^x \tilde{g}_0(y,z)\,dy$ by taking as contour of integration the straight line connecting $x_0$ to $x\in \widetilde{X}$.  The exact relation $\partial_x\varphi_+(x,z)=-g_0^+(x,z)$ is then replaced by the following approximate relations: for any $\alpha,\beta\in\N$, 
\begin{equation}\label{e:approx-deriv}
\partial^\alpha_z\partial^\beta_{\bar z}\partial_x \widetilde{\varphi}_+(x,z) = - \partial^\alpha_z\partial^\beta_{\bar z}\widetilde{g}_0^+(x,z) + \mO_{\alpha,\beta}(|\Ima x|^\infty),\quad z\in W(z_0),\ x\in\widetilde{X}\,,
\end{equation}
with the implied constants uniform w.r.t. $z\in W(z_0)$.

The extension $\widetilde{\psi}$ can then be defined through
\begin{equation}\label{e:tilde-psi}
\widetilde{\psi}(x,z,w)=\widetilde{\varphi}_+(x,z)-\overline{\widetilde{\varphi}_+(\bar{x},w)}\,.
\end{equation}
Now, fixing $z,w\in W(z_0)$, the function $\widetilde{\psi}(\cdot,z,w)$ admits a unique, nondegenerate critical point $x=x^c(z,w)\in \widetilde{X}$, namely a point satisfying
\begin{equation}\label{eq4.n3}
	\partial_{x} \widetilde{\psi}(x^c(z,w),z,w) =0, \quad 
	\partial^2_{xx}\widetilde{\psi}(x^c(z,w),z,w) \neq 0.
\end{equation}
In particular, on the diagonal $(z,w)=(z,z)$ on finds that $x^c(z,z)=x(z)$ is real valued.
The phase function appearing in \eqref{e:I(z,w)} is now defined by
\begin{equation}\label{eq4.n5}
	\Psi_{+,0}(z,w) = i\widetilde{\psi}(x^c(z,w),z,w).
\end{equation}
Since there is an arbitrariness in the extension $\widetilde{\psi}$ of $\psi$, there is also an arbitrariness in the function $\Psi_{+,0}(z,w)$, especially in the regime $|z-w|\asymp 1$; however, in this regime the first term in \eqref{e:I(z,w)} is exponentially small, and therefore absorbed by the $\mO(h^\infty)$ remainder.
\par
Next, let us check that $x^c(z,w)$ is an almost $z$-holomorphic and almost 
$w$-antiholomorphic function at the diagonal $\{(z,z),\ z\in W(z_0)\}$. 
Differentiating the first equation in \eqref{eq4.n3} with respect to $z$ and $\bar{z}$, 
one obtains by \eqref{e:tilde-psi} that 
\begin{equation}\label{eq4.n4}
	\begin{split}
	&\partial_z x^c(z,w) = \frac{ -(\partial^2_{xz} \widetilde{\varphi}_+)(x^c(z,w),z) 
	+ \mO(|\Ima x^c(z,w) |^{\infty})}{2\partial^2_{xx}\widetilde{\psi}(x^c(z,w),z,w)},\\
	&\partial_{\bar{z}} x^c(z,w) = \frac{ - (\partial^2_{x\bar{z}}\widetilde{\varphi}_+)(x^c(z,w),z) 
	+ \mO(|\Ima x^c(z,w) |^{\infty})}{2\partial^2_{xx}\widetilde{\psi}(x^c(z,w),z,w)}, 
	\end{split}
\end{equation}
where we used that $\widetilde{\psi}$ is almost $x$-holomorphic. 
Similar expressions can be obtained for the $w$ and $\bar{w}$ derivatives of $x^c$. 
Lemma \ref{lem:AH} and \eqref{e:approx-deriv} imply that 
 \begin{equation*}
 \partial_{x\bar{z}} \widetilde{\varphi}_+(x^c(z,w),z) 
 = \mO(|x^c(z,w)-x_+(z)|^{\infty}).
 \end{equation*} 
A similar computation as for \eqref{eq4.n4} in the case of \eqref{eq4.n2} shows 
that $(\partial_z x^c)(z,z) = \partial_z x_+(z) = - (\partial_{\bar{w}}x^c)(z,z)$. 
Since $x^c$ depends smoothly on $z$ and $w$, and since $x^c(z,z) = x_+(z)$, it 
follows by Taylor expansion that $|x^c(z,w) -x_+(z)|= \mO(|z-w|) $ and by \eqref{eq4.n3} that 
\begin{equation*}
	\partial_{\bar{z}} x^c(z,w) = \mO(|z-w|^{\infty}) , \quad \partial_{w} x^c(z,w) = \mO(|z-w|^{\infty}). 
\end{equation*}
By \eqref{eq4.n5}, \eqref{e:tilde-psi} and \eqref{eq4.1a} in Lemma \ref{lem:AH}, we then see that 
$\Psi_{+,0}(z,w)$ is almost $z$-holomorphic at the point $z=w=z_0$:
 \begin{equation*}
	\begin{split}
	-i \partial_{\bar{z}} \Psi_{+,0}(z,w) &=
	(\partial_x\widetilde{\psi})(x^c(z,w),z,w)\partial_{\bar{z}}x^c(z,w) + \\
	&\quad + (\partial_{\bar{x}}{\widetilde{\psi}})(x^c(z,w),z,w)\overline{\partial_{z}x^c(z,w) } 
	+ \frac12\partial_{\bar{z}}\widetilde{\varphi}_+(x^c(z,w),z) \\ 
	& = \mO(|z-w|^{\infty}+|z-z_0|^{\infty}).
	\end{split}
\end{equation*}
\par
By \eqref{eq4.n0}, \eqref{eq4.2.1},  we have the normalization $I(z,z)=1$. Hence 
$\Psi_{+,0}(z,z)= \Phi_{+,0}(z)$ and $b_+(z,z;h)\sim A_+(z;h)^2$ 
in $\mathcal{C}^{\infty}(W(z_0))$, with 
$A_+(z;h)$ as in Proposition \ref{prop4.2}. Thus, by Taylor expansion 
at $(z_0,z_0)$ we get for any $N\in\N$ and $|\zeta| \ll 1$ 
\begin{equation*}
	\Psi_{+,0}(z_0+\zeta_1,z_0+\zeta_2) = 
	\sum_{|\alpha|\leq N} (\partial^{\alpha_1}_z\partial^{\alpha_2}_{\bar{z}}\Phi_{+,0})(z_0)
		\frac{\zeta_1^{\alpha_1}\overline{\zeta}_2^{\alpha_2}}{\alpha !} 
		+ \mO(|\zeta |^{N+1}).
\end{equation*}
In particular
\begin{equation*}
	2\Rea \Psi_{+,0}(z,w) - \Phi_{+,0}(z) - \Phi_{+,0}(w)  
	= -\partial^2_{z\bar z} \Phi_{+,0}(z_0) |z-w|^2
		+ \mO((|z-z_0|,|w-z_0|)^3).
\end{equation*}
Finally, \eqref{e:tilde-psi} and \eqref{eq4.n5} show that $x^c(z,w)=\overline{x^c(w,z)}$, and hence that $\Psi_{+,0}(w,z)=\overline{\Psi_{+,0}(z,w)}$, from which we draw that $\Psi_{+,0}(z,w)$ is almost 
$w$-antiholomorphic at the point $\{z=w=z_0\}$. 

Actually, setting
$\Psi_{+}^j(z,w;h) = \Psi_{+,0}^j(z,w) + \frac{h}{2}\log \big( h^{\frac{1}{2}}b_+^j(z,w;h)\big)$,  
we directly see by \eqref{eq4.n0} and \eqref{e:I(z,w)} that 
$\overline{\Psi_{+}^j(z,w;h)} = \Psi_{+}^j(w,z;h)$. \qedhere
\end{proof}
The proof of Proposition \ref{prop4.3b} is totally symmetric to the one of  Proposition \ref{prop4.3} we have just given.

\subsection{Symmetric symbols}\label{s:symmetric-symbols}
Assume \eqref{eq.15}, the additional symmetry of the symbol 
$p(x,\xi;h)$, we will also need to compute the interaction between the squares of the quasimodes 
$e_-^j$, when considering perturbations by a random potential. The construction of these quasimodes was discussed in Section 
\ref{sec:QuasModSym}.

\par
Notice that by \eqref{eq4.3.3}, \eqref{eq4.3.7} and the assumption \eqref{eq4.17.1b}, we have for 
all $z,w\in W(z_0)$ 
\begin{equation}\label{eq4.21.0}
	((e_{-}^j(z))^2|(e_{-}^k(w))^2) =  0, \quad  \text{for } j\neq k.
\end{equation}
Similarly as in the proof of Proposition \ref{prop4.2}, one obtains by the 
method of stationary phase that 
\begin{equation}\label{eq4.21.1}
\begin{split}
	&\|(e_{-}^{j,hol}(z))^2\| = \e^{\frac{1}{h}\Phi^j_s(z;h)}, \\
	&\Phi^j_s(z;h)\defeq \Phi^j_{s,0}(z)  + h\log \big( h^{\frac{1}{4}}A^j_s(z;h)\big), 
	\quad \Phi^j_{s,0}(z)\defeq 2 \Phi_{+,0}^j(z)
\end{split}
\end{equation}
where $\Phi_{+,0}^j(z)$ is as in \eqref{eq4.4.1} and, in view of \eqref{eq4.3.3}, 
$x_+^j = x_-^j=x^j$. 
We remind that  $\Phi_{+,0}^j(z)\geq 0 $, with equality if and only if $z=z_0$, 
and $A^j_s(z;h)\sim A_0^{j,s}(z)+hA_1^{j,s}(z)+\dots$ depends smoothly on $z$ 
such that all derivatives with respect to $z$ and $\bar{z}$ are bounded when $h\to 0$. 
Moreover, 
\begin{equation}\label{eq4.21.1b}
	A_0^{j,s}(z)=
	 \left(\frac{\pi |a_0^+(x_+(z);z)|^4}{2\Ima \partial_{xx}^2\varphi_+(x_+(z),z)}\right)^{1/4}>0,
\end{equation}
where $a_0^+$ is as in \eqref{eq4.9.5}.
\begin{prop}\label{prop4.4}
Suppose that \eqref{eq.15} holds and that $W(z_0)\Subset\C$, as in Proposition \ref{prop4.2} 
satisfies \eqref{eq4.17.1}, \eqref{eq4.17.1b}. Let $\Phi^j_s(z;h)$, $z\in W(z_0)$, be as in 
\eqref{eq4.21.1}. Then, for $|z-w|\leq c$, with $c>0$ sufficiently small, 
\begin{equation}\label{eq4.22}
	((e_{-}^{j,hol}(z))^2|( e_{-}^{j,hol}(w))^2)
	=\e^{\frac{2}{h}\Psi^j_s(z,w;h)}+ \mO(h^{\infty})
	\e^{\frac{1}{h}\Phi^j_{s,0}(z)+\frac{1}{h}\Phi^j_{s,0}(w)},
\end{equation}
with 
\begin{equation}\label{eq4.22a}
	\Psi^j_{s}(z,w;h) = \Psi^j_{s,0}(z,w)+\frac{h}{2}\log\big( h^{\frac{1}{2}} b^j_s(z,w;h)\big), 
\end{equation}
where $\Psi^j_{s,0}(z,w)$ is almost $z$-antiholomorphic  and almost $w$-holomorphic
at $\{(z_0,z_0)\}$ and $b_s^j(z,w;h) \sim b^{s,j}_0(z,w) + h b^{s,j}_1(z,w) + \dots $ 
is smooth in $z$ and $w$ and all derivatives with respect to $z,\bar{z},w,\bar{w}$ 
are bounded as $h\to 0$. Moreover, 
\begin{itemize}
	\item $\overline{\Psi_{s}^j(z,w;h)} = \Psi_{s}^j(w,z;h)$, 
	\item $\Psi^j_{s,0}(z,z)= \Phi^j_{s,0}(z)=2\Phi_{+,0}^j(z)$,
	\item $b_s^j(z,z;h)\sim A_s^j(z,h)^2$ with $A_s^j(z,h)$ as in \eqref{eq4.21.1}, 
	\item  for $|\zeta_i|\leq c$, $i=1,2$, $c>0$ small enough, and for any $N\in\N$,
\begin{equation*}
	\Psi_{s,0}^j(z_0+\zeta_1,z_0+\zeta_2) = 
	2\sum_{|\alpha|\leq N} (\partial^{\alpha_1}_{\bar z}\partial^{\alpha_2}_{z}\Phi_{+,0})(z_0)
		\frac{\overline{\zeta}_1^{\alpha_1}\zeta_2^{\alpha_2}}{\alpha !} 
		+ \mO(|\zeta |^{N+1}).
\end{equation*}
\end{itemize}
\end{prop}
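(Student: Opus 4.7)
The plan is to proceed in close analogy with the proofs of Propositions~\ref{prop4.3} and \ref{prop4.3b}, using the symmetry relation $e_{-}^{j,hol}(z;h)=\Gamma e_{+}^{j,hol}(z;h)$ from \eqref{eq4.3.4} to recast the inner product as a single oscillatory integral, and then applying the complex-valued stationary phase method.

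Concretely, since $(e_{-}^{j,hol}(x,z))^2=\overline{a_{+}^j(x,z;h)}^{\,2}\,(\chi_{+}^j(x))^{2}\,\e^{-(2i/h)\overline{\varphi_{+}^j(x,z)}}$, I would write
\begin{equation*}
((e_{-}^{j,hol}(z))^2\,|\,(e_{-}^{j,hol}(w))^2) = \int (\chi_{+}^j)^{4}\,\overline{a_{+}^j(x,z;h)}^{\,2}\,(a_{+}^j(x,w;h))^{2}\,\e^{\frac{i}{h}\Theta_s(x,z,w)}\,dx,
\end{equation*}
with phase $\Theta_s(x,z,w)\defeq 2\bigl(\varphi_{+}^j(x,w)-\overline{\varphi_{+}^j(x,z)}\bigr)$. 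For real $x\in\supp\chi_{+}^j$ one has $\Ima\Theta_s = 2(\Ima\varphi_{+}^j(x,w)+\Ima\varphi_{+}^j(x,z))\geq 0$, with equality iff $x=x_{+}^j(z)=x_{+}^j(w)$, which forces $z=w$ by uniqueness of the minimum of $\Ima\varphi_{+}^j(\cdot,z)$. Next, I would extend $\varphi_{+}^j$ almost $x$-holomorphically to $\widetilde{\varphi}_{+}^j$ on a small convex complex neighbourhood $\widetilde{X}_{+}^j\supset\supp\chi_{+}^j$, and set $\widetilde\Theta_s(x,z,w)=2\bigl(\widetilde\varphi_{+}^j(x,w)-\overline{\widetilde\varphi_{+}^j(\bar x,z)}\bigr)$. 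The critical equation $\widetilde g_{0}^{+,j}(x,w)=\overline{\widetilde g_{0}^{+,j}(\bar x,z)}$ has a unique nondegenerate solution $x^c_s(z,w)\in\widetilde X_{+}^j$ for $|z-w|$ small enough, with $x^c_s(z,z)=x_{+}^j(z)\in\R$, and $\partial^2_{xx}\widetilde\Theta_s(x_{+}^j(z),z,z)=4i\,\Ima\,\partial^2_{xx}\varphi_{+}^j(x_{+}^j(z),z)$ is nonzero by \eqref{eq4.7}.

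Applying \cite[Theorem 2.3]{MelSj74} then produces the expansion \eqref{eq4.22} with the phase
\begin{equation*}
\Psi_{s,0}^j(z,w)\defeq \frac{i}{2}\,\widetilde\Theta_s(x^c_s(z,w),z,w) = i\bigl(\widetilde\varphi_{+}^j(x^c_s,w)-\overline{\widetilde\varphi_{+}^j(\overline{x^c_s},z)}\bigr),
\end{equation*}
and $b_s^j(z,w;h)$ a smooth classical symbol in $(z,w)$, the $h^{1/2}$ prefactor being absorbed into the logarithmic correction of \eqref{eq4.22a}. The diagonal value $\Psi^j_{s,0}(z,z)=-2\,\Ima\,\varphi_{+}^j(x_{+}^j(z),z)=2\Phi_{+,0}^j(z)=\Phi^j_{s,0}(z)$ and the identity $b_s^j(z,z;h)\sim A_s^j(z;h)^{2}$ follow directly from comparison with \eqref{eq4.21.1}, \eqref{eq4.21.1b}. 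The Hermitian symmetry $\overline{\Psi_s^j(z,w;h)}=\Psi_s^j(w,z;h)$ is immediate from $\overline{\widetilde\Theta_s(x,z,w)}=\widetilde\Theta_s(\bar x,w,z)$ together with $\overline{x^c_s(z,w)}=x^c_s(w,z)$.

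The almost $z$-antiholomorphicity and almost $w$-holomorphicity of $\Psi_{s,0}^j$ at $(z_0,z_0)$ are the most delicate part, but they follow the same pattern as in Proposition~\ref{prop4.3b}. Differentiating the critical equation with respect to $z,\bar z,w,\bar w$ and using \eqref{eq4.1a} of Lemma~\ref{lem:AH} shows that $\partial_z x^c_s$ and $\partial_{\bar w} x^c_s$ vanish to infinite order along the diagonal and at $(z_0,z_0)$. Combined with the almost $x$-holomorphicity of $\widetilde\varphi_{+}^j$ and with \eqref{eq4.1b}, this propagates to $\partial_z\Psi_{s,0}^j,\ \partial_{\bar w}\Psi_{s,0}^j=\mO(|z-w|^\infty+|z-z_0|^\infty)$. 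The Taylor expansion at $(z_0,z_0)$ in the variables $(\overline{\zeta_1},\zeta_2)$ is then fixed by the diagonal restriction $\Psi_{s,0}^j(z,z)=2\Phi_{+,0}^j(z)$ together with almost (anti)holomorphy, yielding the announced formula. I expect this last step, the precise identification of the mixed Taylor coefficients at $(z_0,z_0)$ from the diagonal behaviour via almost holomorphy, to be the main bookkeeping obstacle; everything else is a direct transcription of the analysis for Propositions~\ref{prop4.3}, \ref{prop4.3b}, with the factor $2$ in $\Theta_s$ accounting for the doubling $\Phi^j_{s,0}=2\Phi_{+,0}^j$.
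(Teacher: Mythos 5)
Your overall strategy---expanding the squared quasimode as $(e_-^{j,hol})^2=\overline{a_+^j}^2(\chi_+^j)^2\e^{-2i\overline{\varphi_+^j}/h}$, writing the overlap as an oscillatory integral, extending almost holomorphically and applying \cite[Theorem 2.3]{MelSj74}---is exactly what the paper intends; its own proof is literally ``Similar to the proof of Proposition~\ref{prop4.3}.'' However, two sign misstatements in your sketch need correcting, because taken literally one of them would derail the argument.

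First, the claim ``$\Ima\Theta_s=2(\Ima\varphi_+^j(x,w)+\Ima\varphi_+^j(x,z))\geq 0$, with equality iff $x=x_+^j(z)=x_+^j(w)$'' is false. From \eqref{eq4.2.1} and the discussion after \eqref{eq4.12}, $\Ima\varphi_+^j(x_+^j(z),z)=-\Phi_{+,0}^j(z)\leq 0$ with equality only at $z=z_0$; so $\Ima\varphi_+^j(x,z)$ ranges through negative values near $x_+^j(z)$ whenever $z\neq z_0$, and $\Ima\Theta_s$ attains a strictly negative minimum. If your claim were true, the overlap would be $\mO(1)$, contradicting \eqref{eq4.21.1}, which makes it exponentially large $\asymp\e^{2\Phi_{+,0}^j(z)/h+2\Phi_{+,0}^j(w)/h}$. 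The fix is the same as in \eqref{eq4.n0}--\eqref{eq4.n1}: pull out the factor $\e^{\frac{1}{h}\Phi_{s,0}^j(z)+\frac{1}{h}\Phi_{s,0}^j(w)}$ first and introduce the shifted phase $\Theta_s+i\Phi_{s,0}^j(z)+i\Phi_{s,0}^j(w)$, whose imaginary part is indeed $\geq 0$ on $\supp\chi_+^j$, with equality only on $\{(x_+^j(z),z,z)\}$.

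Second, the conjugation relation should read $\overline{\widetilde\Theta_s(x,z,w)}=-\widetilde\Theta_s(\bar x,w,z)$, not $+\widetilde\Theta_s(\bar x,w,z)$: indeed $\overline{\widetilde\Theta_s(x,z,w)}=2\big(\overline{\widetilde\varphi_+(x,w)}-\widetilde\varphi_+(\bar x,z)\big)$, whereas $\widetilde\Theta_s(\bar x,w,z)=2\big(\widetilde\varphi_+(\bar x,z)-\overline{\widetilde\varphi_+(x,w)}\big)$. With the erroneous $+$ sign, combined with $\Psi_{s,0}^j=\frac{i}{2}\widetilde\Theta_s(x^c_s,\cdot,\cdot)$ and $\overline{x^c_s(z,w)}=x^c_s(w,z)$, you would obtain $\overline{\Psi_{s,0}^j(z,w)}=-\Psi_{s,0}^j(w,z)$, contradicting the Hermitian symmetry you are asserting. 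With the correct minus sign, the sign from $\bar\imath=-i$ in $\overline{\frac{i}{2}\widetilde\Theta_s}$ cancels it, and $\overline{\Psi_{s,0}^j(z,w)}=\Psi_{s,0}^j(w,z)$ follows cleanly. Once these two sign slips are repaired, the rest of your proof (the critical point equation, the nondegeneracy $\partial_{xx}^2\widetilde\Theta_s(x_+^j(z),z,z)=4i\,\Ima\partial_{xx}^2\varphi_+^j(x_+^j(z),z)\neq 0$, the identification of $\Psi_{s,0}^j(z,z)$, $b_s^j(z,z)$, and the almost (anti)holomorphy via Lemma~\ref{lem:AH}) mirrors Propositions~\ref{prop4.3} and \ref{prop4.3b} faithfully.
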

\begin{proof}
	Similar to the proof of Proposition \ref{prop4.3}.
\end{proof}
\subsection{Finite rank truncation of the quasimodes}
In this section we show that the quasimodes $\{e_{\pm}^j(z)\}$ essentially live in a finite-dimensional subspace of $L^2(\R)$, which we build using the the orthonormal eigenbasis $\{e_m\}_{m\in\N}$ 
of the harmonic oscillator $H=-\partial_x^2+x^2$, corresponding to the eigenvalues $\{\lambda_{m}=2m+1\}$. 

For any $N\in \N$, let us call $\Pi_N$ the orthogonal projector on the subspace of $L^2$ spanned by the states $\{e_m\}_{0\leq m\leq N}$. In the following Lemma we show that if $N$ is chosen large enough, this projection does essentially not modify our quasimodes. 
\begin{lem}\label{lem7.1}
Let $W(z_0)\Subset\C$ be as in Proposition \ref{prop4.2}, and satisfying \eqref{eq4.17.1b}. 
Let $\{e_{\pm}^{j}(z),z\in W(z_0)\}$ be the normalized quasimodes constructed in Proposition \ref{prop4.2}. Then, if $C_1>0$ is chosen sufficiently large, taking $N(h)=C_1/h^2$, we have
\begin{equation}\label{e:Pi_N}
	\forall z\in W(z_0),\ \forall j=1,\ldots,J,\quad	\| (1-\Pi_{N(h)})\,e_{\pm}^{j}(z)\| =   \mO(h^{\infty}) .
\end{equation}
In the case of a symmetric symbol \eqref{eq.15}, we will also need the estimate
\begin{equation}
		\forall z\in W(z_0),\ \forall j=1,\ldots,J,\quad	\| (1-\Pi_{N(h)})\,(e_{\pm}^{j}(w))^2 \| = \mO(h^{\infty}) .
\end{equation}
In both cases the $\mO(h^\infty)$ remainder is uniform w.r.t. $z\in W(z_0)$ and $h\in (0,1]$.
\end{lem}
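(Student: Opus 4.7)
The quasimodes $e_\pm^j(z;h)$ are microlocally concentrated on the phase space points $\rho_\pm^j(z)$, which range in a bounded set $K\subset\R^2$ as $z$ varies in $\overline{W(z_0)}$. On the other hand, $\Pi_{N(h)}$ is the spectral projector of $H = -\partial_x^2 + x^2$ onto the interval $[0, 2N(h)+1]$, or equivalently the spectral projector of the operator $h^2 H = (hD_x)^2 + h^2 x^2$ onto $[0, 2C_1 + h^2]$. Since $h^2 H$ is the $h$-Weyl quantization of the symbol $\xi^2 + h^2 x^2$, which is bounded by $\asymp R^2$ on any fixed bounded set $\{|\rho|\leq R\}$, choosing $C_1$ larger than a suitable multiple of $R^2$ will ensure that $\Pi_{N(h)}$ captures the whole quasimode, up to an $\mO(h^\infty)$ error.

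Concretely, I would first invoke Proposition~\ref{prop4.2}(4) to replace $e_\pm^j(z;h)$ by $\psi^w e_\pm^j(z;h)$ modulo $\mO(h^\infty)$ in $L^2$, where $\psi\in\mathcal{C}_c^\infty(\R^2,[0,1])$ is equal to $1$ in a neighborhood of $\bigcup_{z\in\overline{W(z_0)},j,\pm}\{\rho_\pm^j(z)\}$ and supported in a disc $\{|\rho|\leq R\}$; the uniformity in $z\in\overline{W(z_0)}$ follows by compactness. In the symmetric case \eqref{eq.15}, I would enlarge $\psi\equiv 1$ to also contain the points $(x^j(z),\pm 2\xi^j(z))$, and appeal to \eqref{eq4.3.7} to apply the same localization to the squares $(e_\pm^j(z;h))^2$. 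Next, take $C_1$ large enough that $2R^2+1<2C_1$, and pick $\phi\in\mathcal{C}_c^\infty(\R,[0,1])$ with $\phi\equiv 1$ on $[-1,2R^2+1]$ and $\supp\phi\subset[-2,2C_1]$. Since $He_m=(2m+1)e_m$, for $m>N(h)$ we have $h^2(2m+1)>2C_1+h^2>2C_1$, so $\phi(h^2(2m+1))=0$; this gives the exact identity $(1-\Pi_{N(h)})\,\phi(h^2 H)=0$.

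The core of the proof is the symbol-level identity
\[
(1-\phi(h^2H))\,\psi^w=\mO_{L^2\to L^2}(h^\infty).
\]
To establish it I would apply the Helffer--Sj\"ostrand functional calculus to $h^2H$ in the Shubin-type metric symbol class associated with the order function $\rho\mapsto\langle\rho\rangle^2$: $\phi(h^2H)$ is then a pseudodifferential operator with principal $h$-Weyl symbol $\phi(\xi^2+h^2x^2)$, plus lower order terms in the same class. On a neighborhood of $\supp\psi$ one has $\xi^2+h^2x^2\leq R^2(1+h^2)<2R^2+1$ for $h$ small, so $\phi(\xi^2+h^2x^2)\equiv 1$ there. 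By the usual stationary-phase analysis of the Moyal product between symbols with disjoint supports, $(1-\phi(\xi^2+h^2x^2))\,\#\,\psi=\mO(h^\infty)$ in the symbol calculus, and the Calder\'on--Vaillancourt bound then yields the claimed $L^2$ estimate.

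Putting the pieces together,
\[
(1-\Pi_{N(h)})\,e_\pm^j(z;h)=(1-\Pi_{N(h)})\,\phi(h^2H)\,\psi^w\,e_\pm^j(z;h)+\mO(h^\infty)=\mO(h^\infty),
\]
uniformly in $z\in W(z_0)$ and $h\in(0,1]$, and the squared quasimodes are treated identically. The main obstacle is the justification of the symbol calculus in Step~3: the symbol $\xi^2+h^2x^2$ is not a standard compactly supported semiclassical symbol, so one must either carefully invoke the Shubin metric calculus (as in \cite{DiSj99}), or bypass this by expanding $\phi(h^2H)$ in the Hermite basis and estimating $\|\psi^w e_m\|=\mO(h^\infty)$ directly for $m>(2R^2+1)/(2h^2)$, using that $e_m$ is an eigenfunction of $h^2H$ with eigenvalue lying outside the range of the symbol $\xi^2+h^2x^2$ on $\supp\psi$.
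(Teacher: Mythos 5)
Your overall strategy matches the paper's: microlocalize $e_\pm^j(z)$ by a compactly supported cut-off $\psi^w$ using Proposition~\ref{prop4.2}(4), then show that the spectral truncation $\Pi_{N(h)}$ of the harmonic oscillator is compatible with this microlocal cut-off once $C_1$ is large enough. You also correctly handle the squared quasimodes via \eqref{eq4.3.7}. What is unresolved is precisely the step you flag as "the main obstacle," and it is a genuine gap, not a cosmetic one.

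Your primary route (Option~(a)) tries to treat $h^2H$ as an $h$-Weyl quantization of the symbol $\xi^2+h^2x^2$ and to apply functional calculus directly. The difficulty is not merely that $\xi^2+h^2x^2$ is not compactly supported: it is that the symbol degenerates as $h\to 0$. On compact $x$-regions its $x$-dependence collapses, while on the set where $h^2x^2$ is of order one, the symbol lives at $|x|\asymp 1/h$. Consequently the natural order function for which $\xi^2+h^2x^2$ is elliptic is itself $h$-dependent, and building a uniform parametrix for $(h^2H-z)^{-1}$---and hence an $h$-uniform symbolic expansion of $\phi(h^2H)$ via Helffer--Sj\"ostrand---requires either a second-microlocal or Shubin-type calculus with explicit control of all remainders in $h$. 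You correctly sense that this is non-trivial, but you do not carry it out, and it is not clear the argument closes without substantial extra work.

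The paper sidesteps this entirely with a dilation. Setting $\tilde h=\lambda_m^{-1}$ and conjugating by $U_{\lambda_m}u(x)=\lambda_m^{1/4}u(\lambda_m^{1/2}x)$ turns $H-\lambda_m$ into $\lambda_m\bigl(q^w(x,\tilde hD_x)-1\bigr)$ with the \emph{fixed} elliptic symbol $q(x,\xi)=\xi^2+x^2$, and turns your cut-off $\psi^w(x,hD_x)$ into $\tilde\chi^w(x,\tilde hD_x)$, $\tilde\chi(x,\xi)=\psi(\lambda_m^{1/2}x,\lambda_m^{1/2}h\xi)$. Because $m>N(h)$ forces $\lambda_m^{1/2}h>\sqrt{2C_1}$, the rescaled cut-off $\tilde\chi$ is supported in a small ball $B(0,1/3)$ once $C_1$ is large, while the rescaled eigenfunction $\tilde e_m$ is $\tilde h$-microlocalized on the circle $q^{-1}(1)$. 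This reduces the estimate to a disjoint-support argument for a fixed semiclassical operator. The one remaining subtlety---which the paper does address---is that $\tilde\chi$ lies only in the exotic class $S_{1/2}(1)$ in the $\tilde h$-calculus, so the composition with the semiclassical functional calculus for $q^w(x,\tilde hD_x)$ has to be carried out in that class; your Option~(b) is thus the correct fallback, but its execution is exactly this rescaling argument, which your proposal gestures at without making.

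In short: right decomposition $(1-\Pi_{N(h)})\psi^w=(1-\Pi_{N(h)})(1-\phi(h^2H))\psi^w$ and right intuition, but the key move---dilate by $U_{\lambda_m}$ so the harmonic oscillator becomes a fixed $\tilde h$-pseudodifferential operator and all the non-uniformity is pushed into an $S_{1/2}(1)$ cut-off---is missing, and without it the proof does not close.
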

\begin{proof}
We will focus on proving the first estimate, the estimate for squared quasimodes being very similar. 
\par
Let $U\Subset\R^2$ be a bounded open set, so that 
	 \begin{equation}\label{eq4.n13.5}
	 	\bigcup_{j=1}^J \rho_{\pm}^j\left(\overline{W(z_0)}\right) \subset U.
	 \end{equation}
Our proof will amount to show that the projector $\Pi_{N(h)}$ is microlocally equal to the identity in a neighbourhood of $U$.

Let $\psi\in\mathcal{C}^{\infty}_c(\R^2,[0,1])$ such that $\psi \equiv 1$ in a small 
neighbourhood of $\overline{U}$ and $\supp \psi \subset B(0,R)\subset \R^2$ the ball of radius $R$ centered at $0$, for some sufficiently large $R>0$ . Then, by the microlocalization property \eqref{eq4.6}, we have
\begin{equation}\label{eq_OL7}
	\|(1 - \psi^w)e_{\pm}^j(z)\| =\mO(h^{\infty}),\quad \text{uniformly in $z\in W(z_0)$.}
\end{equation}
It suffices to prove that there exists a constant $C_1>0$ such that
\begin{equation}\label{eq_OL3}
\forall z\in W(z_0), \forall m>N(h)=C_1/h^2,\qquad	\|\psi^w(x,hD_x) e_m\|=\mO( \lambda_m^{-\infty})\,,
\end{equation}
where the implied constant is uniform w.r.t. $m>N(h)$, $h\in (0,1]$ and $z\in W(z_0)$. 
Indeed, the above bounds imply that 
$$
\|(1-\Pi_{N(h)})\psi^w e_{\pm}^{i}(z)\|^2 = \sum_{m>N(h)} |(\psi^w e_{\pm}^{i}(z)| e_m)|^2=\mO(h^\infty)\,.
$$
Together with  \eqref{eq_OL7}, this shows the requested estimate \eqref{e:Pi_N}. 

\medskip

Let us then prove \eqref{eq_OL3}. We recall the definition of "exotic" symbol classes.
Let $\widetilde{m}$ be an order function as in \eqref{eq1.4}. For any  and let $0\leq\delta\leq 1/2$, we define a symbol class, generalizing the classes in \eqref{eq1.5}:
\begin{equation}\label{eq_OL1}
	S_{\delta}(\widetilde{m}) = S_{\delta}(\R^2,\widetilde{m})\stackrel{\mathrm{def}}{=} \{a\in\mathcal{C}^{\infty}(\R^2); ~
	|\partial^{\alpha}_{\rho}a(\rho)| \leq C_{\alpha} h^{-\delta|\alpha|} \widetilde{m}(\rho), 
	~~ \forall \rho \in\R^2 \}.
\end{equation}
For more details on the semiclassical calculus in such a class of symbols we refer the 
reader to \cite[Chapter 4]{Zw12}. 
\par
For $\lambda>0$, let $U_{\lambda}$ be the unitary transformation on $L^2(\R)$ given by 
$U_{\lambda} u(x) = \lambda^{1/4}u(\lambda^{1/2}x)$. The harmonic oscillator can be written $H=q^w(x,D_x)$, where $q(x,\xi) = \xi^2+x^2$. A direct computation shows that
\begin{equation}\label{e:HO}
	U_{\lambda_m} (H - \lambda_m )U_{\lambda_m}^{-1} =\lambda_m ( q^w(x,\lambda_m^{-1} D_x) - 1)\,,\quad \tilde h=\lambda_m^{-1}\,.
\end{equation}
Let us insert the dilation $U_{\lambda_m}$ in the expression we want to estimate:
$$
\|\psi^w(x,hD_x)  e_m = \| U_{\lambda_m}\psi^w(x,hD_x) U_{\lambda_m}^{-1}\, \tilde e_m) \|,\quad \tilde e_m = U_{\lambda_m} e_m.
$$
We notice that 
\begin{equation}\label{e:tilde-e_m}
(q^w(x,\tilde h D_x) - 1)\tilde e_m = 0\,.
\end{equation}
We want to write $U_{\lambda_m}\psi^w(x,hD_x)U_{\lambda_m}^{-1}$ as a 
pseudodifferential operator for the $\tilde h$-calculus. 
An easy computation shows that
$$
U_{\lambda_m}\psi^w(x,hD_x) U_{\lambda_m}^{-1} = \tilde\chi^w(x, \tilde h D_x),\quad \text{for the symbol }
\tilde\chi(x,\xi)=\psi(\lambda_m^{1/2}x,\lambda_m^{1/2}h\xi)\,.
$$
The symbol $\tilde\chi$ belongs to the exotic class $\widetilde{S}_{1/2}(1)$, the analogue of $S_{1/2}(1)$ (see \eqref{eq_OL1}) for the semiclassical parameter $\tilde h$. Our task is thus to show that the norm
\begin{equation}\label{e:element2}
\| \psi^w(x,hD_x)  e_m \|= \| \tilde\chi^w(x,\tilde h D_x)  \tilde e_m \|
\end{equation}
is very small when $\tilde h\to 0$.
Since we are in the parameter range $m>N(h)$, we have $\lambda_m^{1/2}> \sqrt{2C_1} h^{-1}$, hence $\lambda_m^{1/2}h>\sqrt{2C_1} $. As a result, since $\psi$ was assumed to be supported in some compact neighbourhood of $\bar{U}$, by taking $C_1$ large enough we can ensure that $\tilde\chi$ is supported inside the ball $B(0,1/3)\subset \R^2$. 

On the other hand, \eqref{e:tilde-e_m} implies that the  $\tilde h$-wavefront set of $\tilde e_m$ is contained in the unit circle $q^{-1}(1)$. As we show below\footnote{This consequence would be standard if $\tilde\chi\in S(1)$; here we will carefully take into account the fact that $\tilde\chi\in \tilde S_{1/2}(1)$.}, these two facts imply that $\tilde\chi(x,\tilde hD_x) \tilde e_m=\mO_{L^2}(\tilde h^\infty)$.
\par
Let $\eta \in \mathcal{C}^{\infty}_0(\R,[0,1])$ such that $\eta \equiv 1$ on 
$q(B(0,1/2))=[0,1/4]$, and $\supp \eta \subset [-2/3,2/3]$. By the functional calculus for 
semiclassical pseudodifferential operators \cite[Chapter 8]{DiSj99},
\begin{equation}\label{eq4.n10}
	\eta(q^w(x,\widetilde{h}D_x)) = g^w(x,\widetilde{h}D_x),\qquad \text{for a symbol }
	g \in S(\langle \rho \rangle^{-\infty})).
 \end{equation}
Here $S(\langle \rho \rangle^{-\infty})$ is the intersection of all symbol classes $S(\langle \rho \rangle^{-n})$, $n\geq 0$.
Moreover, for any $\chi_1\in C^\infty_c(B(0,1/2))$, we have
\begin{equation}\label{eq_OL6}
	g\chi_1 = 1 +b,\quad\text{for some }b \in\widetilde{h}^\infty  S(\langle \rho \rangle^{-\infty}).
\end{equation}
Next, we recall from \cite[Chapter 7]{DiSj99} a basic fact about the composition of two pseudo 
differential operators: Let $a_j\in \tilde S_{\delta}(\R^2,\widetilde{m}_j)$, for $j=1,2$, then 
 \begin{equation*}
 	a_1^w(x,\widetilde{h}D_x)\circ a_2^w(x,\widetilde{h}D_x)
	=(a_1\# a_2)^w(x,\widetilde{h}D_x),
 \end{equation*}
where the Moyal product $\#: \tilde S_{\delta}(\widetilde{m}_1)\times \tilde S_{\delta}(\widetilde{m}_2) 
\to \tilde S_{\delta}(\widetilde{m}_1\widetilde{m}_2)$ is  bilinear and continuous. 
\par
Let us choose the cutoff $\chi_1\in\mathcal{C}_c^{\infty}(B(0,1/2),[0,1])$ such that 
$\chi\equiv 1$ on $B(0,5/12)$, and set $\chi_2 =1 -\chi_1$. 
By \eqref{eq_OL6}, 
 \begin{equation*}
 	\begin{split}
 	\widetilde{\chi} \# g&=  \widetilde{\chi} \# g\chi_1+ \widetilde{\chi}\# g\chi_2 \\ 
	&=  \widetilde{\chi} \#\chi_1+ \widetilde{\chi} \# b+ \widetilde{\chi} \# g\chi_2\\ 
	&=  \widetilde{\chi} - \widetilde{\chi}\#  \chi_2+ \widetilde{\chi}\#b + \widetilde{\chi}\# g\chi_2.
	\end{split}
 \end{equation*}
Since $\dist(\supp\chi_2,\supp \widetilde{\chi}) \geq 1/12$, it follows \cite[Theorem 4.25]{Zw12} that 
 \begin{equation*}
 	(\widetilde{\chi}\#g\chi_2)^w(x,\widetilde{h}D_x)= 
	\mO_{L^2\to L^2}(\widetilde{h}^{\infty}),\quad 
	(\widetilde{\chi}\# \chi_2)^w(x,\widetilde{h}D_x) = 
	\mO_{L^2\to L^2}(\widetilde{h}^{\infty}).
 \end{equation*}
The bilinearity of the Moyal product shows that $\widetilde{\chi}\# b \in \widetilde{h}^\infty \tilde S_{1/2}(\langle \rho \rangle^{-\infty})$, and from the Calder\'on-Vaillancourt theorem that
 $( \widetilde{\chi} \# b)^w(x,\widetilde{h}D_x)
	= \mO_{L^2\to L^2}(\widetilde{h}^{\infty}).
$
We have obtained
 \begin{equation*}
 	\widetilde{\chi}^w(x,\widetilde{h}D_x) = 
	\widetilde{\chi}^w(x,\widetilde{h}D_x)\# \eta(q^w(x,\widetilde{h}D_x))  + \mO_{L^2\to L^2}(\widetilde{h}^{\infty}).
 \end{equation*}
Inserting this identify in \eqref{e:element2} and using the fact that $\tilde e_m$ is a normalized null eigenstate of $\eta(q^w(x,\tilde h D_x))$ (since $\eta(1)=0$), we get the required estimate:
\begin{equation*}
	\|\psi^w e_m\|= \|\widetilde{\chi}^w(x,\widetilde{h}D_x) \eta(q^w(x,\widetilde{h}D_x)) \tilde e_m\| + \mO(\widetilde{h}^{\infty})= \mO(\widetilde{h}^{\infty}) = \mO(\lambda_m^{-\infty}). \qedhere
\end{equation*}
\end{proof}
The following result is not of immediate importance, but will be relevant later on in 
the text. 
\begin{lem}\label{lem7.2}
Let $e_{\pm}^{j,hol}(z)$ be the quasimodes constructed in Proposition \ref{prop4.2}. 
Take as before $\{e_k\}_{k\in\N}$ the orthonormal eigenbasis of the harmonic oscillator $H$, associated with the eigenvalues $\{\lambda_k=2k+1\}$. 
Then, 
\begin{equation}\label{e:bound-overlap}
\forall m\geq 0,\qquad	|(h^{-1/4}e_{\pm}^{j,hol}(z)| e_m)|
	=\mO(h^{1/4}) \,\lambda_m^{-1/6} \,\e^{\frac{1}{h}\Phi_{\pm,0}^j(z)}\,,
\end{equation}
where $\Phi_{\pm,0}^j(z)$ is given in \eqref{eq4.4.1}.
Similarly, the squared quasimodes satisfy
\begin{equation*}
\forall m\geq 0,\qquad	|(h^{-1/4}(e_{-}^{j,hol}(z))^2| e_m)|
	=\mO(h^{1/4})\, \lambda_m^{-1/6} \,\e^{\frac{2}{h}\Phi_{+,0}^j(z)}.
\end{equation*}
In both estimates the implied constant is uniform w.r.to $m\in\N$, $h\in(0,1]$ and $z\in W(z_0)$.
\end{lem}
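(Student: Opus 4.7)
The plan is to combine a simple $L^1$ bound on the WKB quasimode $e_{\pm}^{j,hol}(z)$ with a pointwise bound on the Hermite functions $e_m$ restricted to the (fixed) compact support of $\chi_{\pm}^j$. The factor $h^{1/4}$ in the statement comes from the first bound, and the factor $\lambda_m^{-1/6}$ from the second.

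First I would establish
\begin{equation*}
\|e_{\pm}^{j,hol}(z)\|_{L^1(\R)} \leq C\, h^{1/2}\, e^{\frac{1}{h}\Phi_{\pm,0}^j(z)},
\end{equation*}
uniformly in $z\in W(z_0)$ and $h\in(0,1]$. This is immediate from the WKB form: the integrand $|e_+^{j,hol}(x,z)| = |a_+^j(x,z;h)|\,\chi_+^j(x)\,e^{-\Ima\varphi_+^j(x,z)/h}$ is concentrated in a window of width $\sim h^{1/2}$ around the nondegenerate real maximum $x=x_+^j(z)$ of $-\Ima\varphi_+^j(\cdot,z)$ (strict positivity of the Hessian is \eqref{eq4.11}), so Laplace's method produces the $h^{1/2}$ gain. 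The companion estimate $\|(e_-^{j,hol}(z))^2\|_{L^1} \leq C h^{1/2} e^{\frac{2}{h}\Phi_{+,0}^j(z)}$ is just $\|e_-^{j,hol}(z)\|_{L^2}^2$ and follows at once from the norm formula \eqref{eq4.21.1}.

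Next I would prove the uniform pointwise bound $\|e_m\|_{L^\infty(K)} \leq C \lambda_m^{-1/6}$ for any fixed compact $K\subset\R$; this is the main content of the proof. Choose $R_0$ with $K\subset[-R_0,R_0]$. For the finitely many indices $m$ with $\lambda_m \leq 4R_0^2$, the smoothness of each individual Hermite function yields $\|e_m\|_{L^\infty(K)} \leq C$, while $\lambda_m^{-1/6}$ is bounded below on this finite range. For $\lambda_m > 4R_0^2$ the set $K$ lies in the deep bulk of the classical region $\{x^2<\lambda_m\}$, and the Plancherel--Rotach (or WKB) amplitude gives the pointwise estimate $|e_m(x)| \leq C(\lambda_m-x^2)^{-1/4} \leq C'\lambda_m^{-1/4}$ on $K$; since $\lambda_m\geq 1$, one has $\lambda_m^{-1/4}\leq\lambda_m^{-1/6}$, so both regimes are covered by the same bound.

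Finally, H\"older's inequality applied to the integral defining $(e_+^{j,hol}(z)\mid e_m)$ on the compact set $\supp\chi_+^j$ yields
\begin{equation*}
|(e_+^{j,hol}(z)\mid e_m)| \leq \|e_+^{j,hol}(z)\|_{L^1}\, \|e_m\|_{L^\infty(\supp\chi_+^j)} \leq C\, h^{1/2}\lambda_m^{-1/6}\, e^{\frac{1}{h}\Phi_{+,0}^j(z)},
\end{equation*}
and dividing by $h^{1/4}$ recovers the claimed bound. The ``$-$'' quasimode and the squared-quasimode cases are identical, with the weight $e^{\frac{1}{h}\Phi_{+,0}^j(z)}$ replaced by $e^{\frac{1}{h}\Phi_{-,0}^j(z)}$, respectively $e^{\frac{2}{h}\Phi_{+,0}^j(z)}$. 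The main obstacle is the second step: the familiar uniform bound $\|e_m\|_{L^\infty(\R)}\lesssim \lambda_m^{-1/12}$, which is attained near the turning points, is too weak to give $\lambda_m^{-1/6}$ directly; one genuinely needs to exploit that $K$ is fixed, so that the turning points $\pm\sqrt{\lambda_m}$ eventually escape $K$ as $m\to\infty$ and one enters the bulk WKB regime where the sharper amplitude $(\lambda_m-x^2)^{-1/4}$ applies.
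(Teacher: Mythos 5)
Your proposal follows the same skeleton as the paper's proof: H\"older pairing the $L^1$ norm of the quasimode against the $L^\infty$ norm of the Hermite function, a Laplace-type estimate for the $L^1$ norm giving the $h^{1/2}$ gain (hence $h^{1/4}$ after dividing by $h^{1/4}$), and a pointwise bound on $e_m$. The one genuine difference is how you obtain the factor $\lambda_m^{-1/6}$. The paper quotes Koch--Tataru \cite[Corollary 3.2]{KoTa05} for what reads as a \emph{global} bound $\|e_m\|_{L^\infty(\R)}\lesssim \lambda_m^{-1/6}$; as you correctly flag at the end of your proposal, this is not quite right on all of $\R$ --- the sharp global $L^\infty$ bound for the normalized Hermite function is $O(\lambda_m^{-1/12})$, attained near the turning points $x\approx\pm\sqrt{\lambda_m}$. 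What rescues the lemma (and what the paper leaves implicit) is that the quasimode is supported inside the fixed compact set $\supp\chi_{\pm}^j$, so that only $\|e_m\|_{L^\infty(K)}$ enters; on a fixed compact $K$ the turning points eventually leave $K$, and the bulk WKB/Plancherel--Rotach amplitude $(\lambda_m-x^2)^{-1/4}\asymp\lambda_m^{-1/4}$ applies, which is even stronger than $\lambda_m^{-1/6}$ once $\lambda_m\geq1$; the finitely many low modes are handled trivially. Your version is therefore more self-contained and makes the correct mechanism explicit, at the mild cost of invoking (or re-deriving) the bulk pointwise asymptotics for Hermite functions. The remaining steps match the paper: the uniformity in $z\in W(z_0)$ comes from the smoothness of $a_{\pm}^j,\varphi_{\pm}^j$ and the lower bound on $\Ima\partial_{xx}^2\varphi_{\pm}^j$ from \eqref{eq4.7}, and the squared-quasimode case is handled by $\|(e_-^{j,hol})^2\|_{L^1}=\|e_-^{j,hol}\|_{L^2}^2=\e^{2\Phi_+^j/h}$ together with \eqref{eq4.3.5}.
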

\begin{proof}
 We only show the case of the nonsquared quasimodes, the other case being similar. 
 By the H\"older inequality,
 \begin{equation*}
	|(h^{-1/4}e_{\pm}^{j,hol}(z)| e_m)| 
	\leq \| (h^{-1/4}e_{\pm}^{j,hol}(z)\|_{L^1} \, \|e_m\|_{L^\infty}.
\end{equation*}
Using \eqref{eq4.11}, the method of stationary phase yields
 \begin{equation*}
	\| h^{-1/4}e_{\pm}^{j,hol}(z)\|_{L^1}
	=\mO(h^{\frac{1}{4}})\, \e^{\frac{1}{h}\Phi_{+,0}^j(z)},
\end{equation*}
where $\Phi_{+,0}^j(z)$ is as in \eqref{eq4.4.1}. By \cite[Corollary 3.2]{KoTa05}, 
there exists a constant $C>0$, such that for all $m\in\N$,
 \begin{equation}\label{eqKoTa}
 	\| e_m\|_{\infty} \leq C\lambda_m^{-1/6}\|e_m\|_{L^2}. 
 \end{equation}
\end{proof}
\begin{rem}
The choice of the orthonormal basis $(e_m)_{m\geq 0}$ used to define the truncation operator is rather arbitrary, as explained in \cite{Ha06b,HaSj08}. What is needed is the fact that for $N=N(h)$ large enough, the projection $\Pi_{N(h)}$ on the subspace spanned by a collection of $N(h)$ of those states is microlocally equivalent to the identity in some given bounded region $U\subset \R^2$. Any orthonormal basis of $L^2(\R)$ will have this property, but the number $N(h)$ of necessary state will depend on the choice of basis.

In \cite{Ha06b} the author used the eigenbasis of the semiclassical harmoric oscillator $q^w(x,hD_x)$, in which case it was sufficient to include only $\mO(h^{-1})$ eigenstates. Our choice to use the nonsemiclassical harmonic oscillator $H$ requires to include a larger number $N(h)=\mO(h^{-2})$ of states. This choice was guided by the extra requirement that each quasimode $e^j_{\pm}(z)$ should decomposes into {\it many} basis states $e_m$ (as shown by Lemma~\ref{lem7.2}), a fact which will be important in Section~\ref{s:weak_convergence} when applying the Central Limit Theorem.
\end{rem}

\section{Grushin Problem}\label{sec:GP}
We begin by giving a short refresher on Grushin problems. They 
have become an important tool in microlocal analysis and are 
employed in a vast number of works, especially when dealing with spectral studies of nonselfadjoint operators. 
As reviewed in \cite{SjZw07}, the central idea 
is to analyze the operator $P(z)=(P-z):\cH_1\to \cH_2$ by extending this operator into a larger operator of the form
\begin{equation*}
 \begin{pmatrix}
  P(z) & R_- \\ 
  R_+ & 0 \\
 \end{pmatrix}
 :
 \mathcal{H}_1\oplus \mathcal{H}_- 
 \longrightarrow \mathcal{H}_2\oplus \mathcal{H}_+,
\end{equation*}
where $\cH_{\pm}$ (resp. $R_{\pm}$) are well-chosen auxiliary spaces (resp. operators). The Grushin problem is 
said to be {\it well-posed}
if this matrix of operators is bijective for the range of $z$ under study, 
with a good control on its inverse. In the cases where
$\dim\mathcal{H}_-  = \dim\mathcal{H}_+ < \infty$, on writes this inverse blockwise as
\begin{equation*}
 \begin{pmatrix}
  P(z) & R_- \\ 
  R_+ & 0 \\
 \end{pmatrix}^{-1}
 =
 \begin{pmatrix}
  E(z) & E_{+}(z) \\ 
  E_{-}(z) & E_{-+}(z) \\
 \end{pmatrix}.
\end{equation*}
The key observation, going back to Schur's complement formula or, 
equivalently, the Lyapunov-Schmidt bifurcation method, is the following: 
the operator $P(z): \mathcal{H}_1 \rightarrow \mathcal{H}_2$ 
is invertible if and only if the finite rank operator
$E_{-+}(z):\cH_{+}\to \cH_-$ is invertible, in which case both inverses are related by:
\begin{equation*}
  P(z)^{-1}= E(z) - E_{+}(z) E_{-+}^{-1}(z) E_{-}(z).
\end{equation*}
The finite rank operator $E_{-+}(z)$ is often called an {\it effective Hamiltonian} for the original problem $P(z)$. As opposed to $P(z)$, it depends in a nonlinear way of the variable $z$, but it has the advantage to be finite dimensional. In a sense, $E_{-+}(z)$ encapsulates, in a compact way, the spectral properties of $P$.

\subsection{Grushin problem for our unperturbed nonselfadjoint operator}
Hager \cite{Ha06b} showed how to construct an efficient Grushin problem for our nonselfadjoint operator $P_h$, using the quasimodes constructed in Proposition \ref{prop4.2}.
\begin{prop}[Unperturbed Grushin]\label{prop5.1}
Let $p(\cdot;h)$ in $S(\R^2,m)$ be as in \eqref{eq1.6} and satisfy \eqref{eq1.7}, 
\eqref{eq1.8.1}. Let $W(z_0)\Subset \C$ and $\{e_{\pm}^j(z),z\in W(z_0)\}$, $j=1,\dots,J$, be as
in Proposition~\ref{prop4.2}, with $W(z_0)$ satisfying \eqref{eq4.17.1}.

For $z\in W(z_0)$ let 
\begin{equation*}
 \mathcal{P}(z)=\begin{pmatrix}
  P_h-z & R_-(z) \\ 
  R_+(z) & 0 \\
 \end{pmatrix}
 : H(m)\times \C^J \longrightarrow L^2 \times \C^J\,,
\end{equation*}
with 
\begin{equation}\label{eq5.2}
 (R_+(z)u)_j\defeq (u|e_+^j(z)), \ \  u\in H(m), 
 \qquad 
 R_-(z) u_- \defeq \sum_{j=1}^J u_-^je_-^j(z), ~~ u_-\in\C^J.
\end{equation}
Then, $\mathcal{P}(z)$ is bijective,  with bounded inverse: %
\begin{equation*}
 \mathcal{E}(z)=\begin{pmatrix}
  E(z) & E_{+}(z) \\ 
  E_{-}(z) & E_{-+}(z) \\
 \end{pmatrix}
 : L^2\times \C^J \longrightarrow H(m) \times \C^J.
\end{equation*}
The components $E_{\pm}(z)$ are given by
\begin{equation}\label{eq5.3}
 (E_-(z)v)_j=(v|e_-^j(z)) + \mO(h^{\infty}), ~~ v\in L^2, 
 \qquad 
 E_+(z)v_+ = \sum_{j=1}^J v_+^je_+^j(z) + \mO(h^{\infty}), ~~ v_+\in\C^J\,.
\end{equation}
Finally, the blocks of $\mathcal{E}(z)$ admit the following bounds in the semiclassical limit:
\begin{equation*}
\begin{split}
\|E(z)\|_{L^2\to H(m)}=\mO(h^{-1/2}),&\quad \|E_{+}(z)\|_{\C^J\to L^2}=\mO(1),\\
\|E(z)\|_{L^2\to\C^J}=\mO(1),&\quad \|E_{-+}(z)\|_{\C^J\to \C^J}=\mO(h^{\infty}),
\end{split}
\end{equation*}
uniformly for $z\in W(z_0)$. 
\end{prop}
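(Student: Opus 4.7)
The plan is to apply the Schur complement scheme: construct an approximate inverse $\tilde{\mathcal{E}}(z)$ of $\mathcal{P}(z)$ satisfying $\mathcal{P}(z)\tilde{\mathcal{E}}(z) = I + \mO(h^\infty)$ in operator norm, then correct to the exact inverse via a Neumann series. The natural candidates for three of the four blocks come directly from the quasimodes built in Proposition~\ref{prop4.2}: set
\[
\tilde{E}_+(z)v_+ = \sum_{j=1}^J v_+^j e_+^j(z),\qquad \tilde{E}_-(z)v = \bigl((v\,|\,e_-^j(z))\bigr)_{j=1}^J,\qquad \tilde{E}_{-+}(z)\equiv 0.
\]
The normalization $\|e_\pm^j(z)\|=1$ together with the quasi-orthogonality \eqref{eq4.6.1} immediately gives $\|\tilde{E}_\pm(z)\|=\mO(1)$ as well as $R_+(z)\tilde{E}_+(z) = I_{\C^J} + \mO(h^\infty)$, while the quasimode estimate \eqref{eq4.5} yields $(P_h-z)\tilde{E}_+(z)=\mO(h^\infty)$. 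So two of the four blocks of $\mathcal{P}\tilde{\mathcal{E}}$ match those of the identity modulo $\mO(h^\infty)$ essentially for free.

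The real content lies in constructing the upper-left block $\tilde{E}(z):L^2 \to H(m)$ so that
\[
(P_h-z)\tilde{E}(z) + R_-(z)\tilde{E}_-(z) = I + \mO(h^\infty),\qquad R_+(z)\tilde{E}(z) = \mO(h^\infty),
\]
with the quantitative norm bound $\|\tilde{E}(z)\|_{L^2 \to H(m)} = \mO(h^{-1/2})$. This is where the microlocal/WKB analysis of Hager and Sj\"ostrand \cite{Ha06b,HaSj08} intervenes: one shows that $(P_h-z)$ has exactly $J$ exponentially small singular values, with singular vectors close to the quasimodes $e_\pm^j(z)$, and that the next singular value is bounded below by a constant times $h^{1/2}$ uniformly for $z\in W(z_0)$. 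The operator $\tilde{E}(z)$ is then built as a ``pseudo-inverse'' of $(P_h-z)$ on the orthogonal complement of $\mathrm{span}(e_-^j(z))$, with range approximately orthogonal to $\mathrm{span}(e_+^j(z))$; the $h^{1/2}$ spectral gap delivers the norm estimate, and the ellipticity at infinity \eqref{eq1.7} ensures that the target lands in $H(m)$.

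Once $\tilde{\mathcal{E}}(z)$ is assembled, one verifies block by block, using \eqref{eq4.5}, \eqref{eq4.6} and \eqref{eq4.6.1}--\eqref{eq4.6.1b}, that $\mathcal{P}(z)\tilde{\mathcal{E}}(z) = I + K(z)$ with $\|K(z)\| = \mO(h^\infty)$ uniformly for $z\in W(z_0)$. The Neumann series then produces the genuine right inverse $\mathcal{E}(z) = \tilde{\mathcal{E}}(z)(I+K(z))^{-1}$, and the parallel construction applied to $\mathcal{P}(z)^*$ yields a left inverse, hence bijectivity. The $\mO(h^\infty)$ Neumann correction preserves all the norm bounds, produces the formulas \eqref{eq5.3}, and in particular forces $E_{-+}(z) = \mO(h^\infty)$ since $\tilde{E}_{-+}\equiv 0$. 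The one genuine obstacle in this proof is the construction and norm control of $\tilde{E}(z)$ --- equivalently, the lower bound on the $(J+1)$-th singular value of $P_h-z$; everything else amounts to bookkeeping with the quasimode estimates of Section~\ref{sec:Quas}.
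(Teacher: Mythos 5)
Your proposal follows the same Schur--complement-plus-Neumann-series scheme that underlies Hager's proof of \cite[Proposition~4.1]{Ha06b}, which is exactly what the paper invokes: its own ``proof'' consists of the single sentence referring the reader to that result. Your structural reduction is accurate: take $\widetilde{E}_\pm$ built from the quasimodes, take $\widetilde{E}_{-+}=0$, check via \eqref{eq4.5}, \eqref{eq4.6}, \eqref{eq4.6.1}--\eqref{eq4.6.1b} that three of the four blocks of $\mathcal{P}(z)\widetilde{\mathcal{E}}(z)$ already match the identity up to $\mO(h^\infty)$, and reduce everything to producing one block $\widetilde{E}(z)$ with $\|\widetilde{E}\|_{L^2\to H(m)}=\mO(h^{-1/2})$, then run a Neumann series. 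The point I would flag is the framing of that remaining step. You justify the $h^{-1/2}$ bound by appealing to a lower bound of order $h^{1/2}$ on the $(J{+}1)$-th singular value of $P_h-z$ and cite \cite{Ha06b,HaSj08} for it. That bound is true, but in those references it is a \emph{corollary} of the well-posed Grushin problem, not an ingredient; quoting it here is circular unless you establish it by some independent variational argument, which you do not indicate. What Hager actually does is construct $\widetilde{E}(z)$ directly as a microlocal parametrix: away from $p_0^{-1}(z)$ the operator $P_h-z$ is elliptic and invertible with $\mO(1)$ norm; near each $\rho_\pm^j(z)$ one uses the Malgrange factorization of Proposition~\ref{prop4.1} to reduce to inverting $(hD_x+g^{\pm,j})^w$ on the quasimode's orthogonal complement, which is done by an explicit oscillatory integral kernel; and the $\mO(h^{-1/2})$ bound is a stationary-phase estimate on that kernel. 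These local pieces are glued with a microlocal partition of unity, and the $\mO(h^\infty)$ defect you then feed into the Neumann series. So your last sentence is the right diagnosis --- the construction of $\widetilde{E}$ is essentially the entire content of the proposition --- but it needs to be carried out by hand along the lines above, not deduced from a spectral-gap statement that is itself downstream of the result you are proving.
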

\begin{proof}
One can follow line by line, with the obvious changes, the proof of \cite[Proposition 4.1]{Ha06b}.
\end{proof}
\subsection{Grushin problem for the perturbed operator}
We wish to study the eigenvalues of 
\begin{equation}\label{eq5.2.0}
	P^{\delta} = P_h + \delta Q,
\end{equation}
where $\delta>0$ satisfies \eqref{e:delta} and $Q$ is given by a random 
matrix $M_{\omega}$, as in \eqref{eq1.11}, or by a random potential 
$V_{\omega}$,  as in \eqref{eq1.14}. Recall from \eqref{eq1.15.2} that we have restricted 
the random variables used to construct $M_{\omega}$ and $V_{\omega}$,  
to large discs of radius $C/h$. By the tail estimate \eqref{eq1.9.1} and by the fact that 
$N(h)=C_1/h^2$, this restriction holds with probability close to one, more precisely
there exists a constant $C_2>0$ such that 
\begin{equation}\label{eq5.2.1.1}
	\begin{split}
	&\prob\big[|q_{i,j}| \leq C/h, ~\forall  i,j <N(h)\big] \geq 1 - \kappa N(h)^2 h^{4+\varepsilon_0} = 1 - C_2 h^{\varepsilon_0}, \\
	&\prob\big[|v_{i}| \leq C/h, ~\forall  i<N(h)\big] \geq 1 - \kappa N(h) h^{4+\varepsilon_0} = 1- C_2 h^{2+\varepsilon_0}.
	\end{split}
\end{equation}
Thus, by \eqref{eq5.2.1.1}, we have with probability 
$\geq 1 - C_2 h^{\varepsilon_0}$, that 
\begin{equation}\label{eq5.2.1.5}
	\|M_{\omega}\|_{HS} = N(h)^{-1}\Big(\sum_{i,j <N(h)}|q_{ i,j}|^2\Big)^{1/2} \leq C h^{-1}. 
	\end{equation}
Similarly, using \eqref{eq5.2.1.1}, \eqref{eqKoTa}, we have with probability 
$\geq 1 - C_2 h^{2+\varepsilon_0}$, for $h$ small enough,
\begin{equation}\label{eq5.2.1.6}
	\|V_{\omega}\|_{\infty} \leq N(h)^{-1}\sum_{n<N(h)} |v_n| \|e_n\|_{\infty}
		\leq Ch^{-1}.
\end{equation}
This proves the estimates \eqref{eq1.11.1} and \eqref{eq1.15.1} on the size of the perturbations.
\par
Until further notice we will work in the restricted probability space, where 
all $|q_{i,j}|\leq C/h$ respectively $|v_{j}|\leq C/h$. Then, \eqref{eq5.2.1.5} 
respectively \eqref{eq5.2.1.6} holds.
Using Proposition \ref{prop5.1}, we obtain a well-posed Grushin 
Problem for the perturbed operator $P^{\delta}$.
\begin{prop}[Pertubed Grushin]\label{prop5.2.1}
Let $p(\cdot;h)$ in $S(\R^2,m)$ be as in \eqref{eq1.6} and satisfy \eqref{eq1.7}, 
\eqref{eq1.8.1}. Let $P^{\delta}$ be as in \eqref{eq5.2.0} with $\delta>0$ satisfying 
\eqref{e:delta}. 
For $z\in W(z_0)$ let 
\begin{equation*}
 \mathcal{P}^{\delta}(z)
 =\begin{pmatrix}
  P^{\delta}-z & R_-(z) \\ 
  R_+(z) & 0 \\
 \end{pmatrix}
 : H(m)\times \C^J \longrightarrow L^2 \times \C^J
\end{equation*}
with 
\begin{equation}\label{eq5.2.1}
 (R_+(z)u)_j\defeq (u|e_+^j(z)), ~~ u\in H(m), 
 \qquad 
 R_-(z)u_- \defeq \sum_{j=1}^J u_-^je_-^j(z), ~~ u_-\in\C^J.
\end{equation}
Then, $P^{\delta}(z)$ is bijective with bounded inverse %
\begin{equation*}
 \mathcal{E}^{\delta}(z)=\begin{pmatrix}
  E^{\delta}(z) & E_{+}^{\delta}(z) \\ 
  E_{-}^{\delta}(z) & E_{-+}^{\delta}(z) \\
 \end{pmatrix}
 : L^2\times \C^J \longrightarrow H(m) \times \C^J,
\end{equation*}
such that $\mathcal{P}^{\delta} \mathcal{E}^{\delta} = 1_{L^2\times \C^J}$ and 
$ \mathcal{E}^{\delta} \mathcal{P}^{\delta} = 1_{H(m)\times \C^J}$. 

Moreover, the perturbed blocks  $E^\delta_{\pm}$, $E^\delta_{-+}$ are related as follows with the unperturbed blocks of Proposition \ref{prop5.1}:
\begin{equation}\label{eq5.2.2}
 E^{\delta}_- = E_- +\mO(\delta h^{-3/2}), \quad E^{\delta}_+ = E_+ +\mO(\delta h^{-3/2}),
\end{equation}
and 
\begin{equation}\label{eq5.2.3}
 E^{\delta}_{-+} = E_{-+} - \delta E_- Q E_+ + \mO(\delta^2 h^{-5/2}).
\end{equation}
\end{prop}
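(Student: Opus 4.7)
The plan is to view $\mathcal{P}^\delta(z)$ as an additive perturbation of the unperturbed Grushin matrix $\mathcal{P}(z)$ from Proposition~\ref{prop5.1}, and to invert it by Neumann series. Writing
\begin{equation*}
\mathcal{P}^\delta(z) = \mathcal{P}(z) + \delta\mathcal{Q},
\qquad \mathcal{Q} \defeq \begin{pmatrix} Q & 0 \\ 0 & 0 \end{pmatrix}\,,
\end{equation*}
and composing on the right with $\mathcal{E}(z)$, I obtain $\mathcal{P}^\delta(z)\,\mathcal{E}(z) = 1 + \delta\,\mathcal{Q}\mathcal{E}(z)$ on $L^2\times\C^J$. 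A symmetric computation with $\mathcal{E}(z)\mathcal{P}^\delta(z)$ on $H(m)\times \C^J$ yields $1+\delta\mathcal{E}(z)\mathcal{Q}$. The aim is to prove that the perturbations $\delta\mathcal{Q}\mathcal{E}$ and $\delta\mathcal{E}\mathcal{Q}$ have operator norm $\ll 1$, whence both composites are invertible by Neumann series and $\mathcal{E}^\delta(z) = \mathcal{E}(z)(1+\delta\mathcal{Q}\mathcal{E}(z))^{-1}$.

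The norm estimate is where one uses the size constraints on $Q$ and the bounds from Proposition~\ref{prop5.1}. In the restricted probability space we have $\|Q\|_{L^2\to L^2}=\mO(h^{-1})$ by \eqref{eq5.2.1.5}--\eqref{eq5.2.1.6}. Computing the block structure
\begin{equation*}
\mathcal{Q}\mathcal{E}(z) = \begin{pmatrix} QE & QE_+ \\ 0 & 0 \end{pmatrix},
\end{equation*}
the worst term is $QE$, whose norm as an operator $L^2\to L^2$ is $\mO(h^{-1})\cdot\mO(h^{-1/2})=\mO(h^{-3/2})$. Since $\delta\leq h^\kappa$ with $\kappa>3$, we get $\delta\|\mathcal{Q}\mathcal{E}\|=\mO(\delta h^{-3/2})=\mO(h^{\kappa-3/2})\to 0$, so the Neumann series converges.

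It then remains to read off the blocks of $\mathcal{E}^\delta(z) = \sum_{n\geq 0}(-\delta)^n\,\mathcal{E}(\mathcal{Q}\mathcal{E})^n$. An elementary induction gives, for $n\geq 1$,
\begin{equation*}
(\mathcal{Q}\mathcal{E})^n = \begin{pmatrix} (QE)^n & (QE)^{n-1}QE_+ \\ 0 & 0 \end{pmatrix},
\end{equation*}
so that the $(2,2)$ block of $\mathcal{E}(\mathcal{Q}\mathcal{E})^n$ equals $E_-(QE)^{n-1}QE_+$, of norm $\mO(h^{-1})\cdot\mO(h^{-3/2})^{n-1}$. Summing yields $E^\delta_{-+}(z)=E_{-+}(z)-\delta\,E_-QE_++\mO(\delta^2 h^{-5/2})$, which is \eqref{eq5.2.3}. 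The off-diagonal block identities \eqref{eq5.2.2} follow similarly from truncating the Neumann series after the zeroth order and bounding the first-order terms $-\delta EQE_+$ and $-\delta E_-QE$ (both of size $\mO(\delta h^{-3/2})$) together with the geometric tails.

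I don't expect a real obstacle here: the entire argument is a bookkeeping exercise on the operator norms of matrix products, and the semiclassical smallness of $\delta h^{-3/2}$ provided by \eqref{e:delta} with $\kappa>3$ is precisely tuned so that the Neumann series converges and both $E^\delta_{\pm}$, $E^\delta_{-+}$ admit the announced expansions. The only subtle point to keep straight is that $Q$ must be interpreted on the correct space (the identification $H(m)\hookrightarrow L^2$ is bounded by definition of $H(m)$), so that the compositions $E_-QE_+$, $E_-QEQE_+$, etc., make sense and inherit the product bounds; this is routine once one verifies that $Q$ maps $H(m)$ into $L^2$ with the stated norm.
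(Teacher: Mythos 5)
Your proposal is correct and is essentially the same argument the paper gives: write $\mathcal{P}^\delta\mathcal{E}=1+K$ with $K=\delta\,\mathcal{Q}\mathcal{E}$, bound $\|K\|=\mO(\delta h^{-3/2})\ll 1$ using $\|Q\|=\mO(h^{-1})$ and $\|E\|_{L^2\to H(m)}=\mO(h^{-1/2})$, invert by Neumann series, and read off the blocks. The norm bookkeeping, including the point about interpreting $Q$ on $H(m)\hookrightarrow L^2$, matches the paper's reasoning.
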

%
\begin{proof}
The result follows from an application of the Neumann series. Let $\mathcal{E}$ 
be as in Proposition \ref{prop5.1}, then 
\begin{equation*}
	\mathcal{P}^{\delta}\mathcal{E} = \mathcal{P}\mathcal{E} + 
	\begin{pmatrix}
	\delta Q & 0 \\ 
	0 & 0 \\
	\end{pmatrix}
	\mathcal{E}
	= 1 + 
	\begin{pmatrix}
	\delta QE & \delta QE_+ \\ 
	0 & 0 \\
	\end{pmatrix}
	= 1+ K.
\end{equation*}
By Proposition \ref{prop5.1} and \eqref{eq5.2.1.5}, \eqref{e:delta}, 
$\|K\| \leq \delta\|Q\| \|E\| = \mO(\delta h^{-3/2}) \ll1$. Thus, $\mathcal{P}^{\delta}$ 
has the inverse $\mathcal{E}(1+K)^{-1}$ and, by a Neumann series expansion, 
we get 
\begin{equation}\label{eq5.2.3.1}
	\begin{split}
	\mathcal{E}^{\delta}
	&=\begin{pmatrix}
 	 E^{\delta}(z) & E_{+}^{\delta}(z) \\ 
 	 E_{-}^{\delta}(z) & E_{-+}^{\delta}(z) \\
 	\end{pmatrix}\\
	&=
	\begin{pmatrix}
	\sum_{n=0}^{\infty}(-1)^n E(\delta QE)^n& \sum_{n=0}^{\infty}(-1)^n (\delta QE)^nE_+ \\ 
	& \\
	\sum_{n=0}^{\infty}(-1)^n E_-(\delta QE)^n & 
	E_{-+}+\delta\sum_{n=1}^{\infty}(-1)^n E_-(\delta QE)^{n-1}QE_+ \\
	\end{pmatrix}.
	\end{split}
\end{equation}
This, yields the claimed estimates and concludes the proof. 
\end{proof}
Using Propositions \ref{prop4.2}, \ref{prop5.1} and \eqref{eq5.2.3}, we get 
\begin{equation}\label{eq5.2.4}
	(E_{-+}^{\delta}(z))_{i,j}= 
	-\delta (Qe_+^j |e_-^i ) +  \mO(\delta h^{\infty}) + \mO(h^{\infty})
	+\mO(\delta^2 h^{-5/2}).
\end{equation}
\begin{rem}We will show below that, with high probability, the first term in the right hand side of \eqref{eq5.2.4} dominates the following ones: the matrix $E_{-+}^\delta$ is thus dominated by the perturbation term, eventhough the latter has a small norm. 
This effect comes from the fact that unperturbed matrix $E_{-+}$
is extremely small, a consequence of the strong pseudospectral effect. 
\end{rem}%

Using the assumption \eqref{e:delta} on the size of $\delta$ and taking the determinant, we get
\begin{equation}\label{eq5.2.4.1}
	\det\left[  \delta^{-1} E_{-+}^{\delta}(z)\right] =
	(-1)^J \det \left[ (Qe_+^j(z) |e_-^i(z) )_{i,j\leq J} 
	+\mO(\delta h^{-5/2})\right].
\end{equation}
Notice that $\det E_{-+}^{\delta}(z)$ depends smoothly on $z\in W(z_0)$ since we 
have used the normalised quasimodes. However, we can make it 
holomorphic in $z$ since it satisfies a $\overline{\partial}$-equation in $z$. 
To see this, we take the derivative with respect to $\bar{z}$ of 
$ \mathcal{P}^{\delta} \mathcal{E}^{\delta} = 1_{L^2\times \C^J}$, and we get 
\begin{equation*}
     \partial_{\bar{z}}\mathcal{E}^{\delta} = - 
     \mathcal{E}^{\delta}\partial_{\bar{z}}\mathcal{P}^{\delta}\mathcal{E}^{\delta}.
\end{equation*}
Hence, 
\begin{equation*}
     \partial_{\bar{z}}{E}_{-+}^{\delta} = 
     - {E}_{-+}^{\delta}(\partial_{\bar{z}}R_+ )E_+^{\delta} 
     -E_-^{\delta}(\partial_{\bar{z}}R_-){E}_{-+}^{\delta}.
\end{equation*}
This, together with the identity $ \partial_{\bar{z}}\log\det{E}_{-+}^{\delta} = 
\tr({E}_{-+}^{\delta} )^{-1} \partial_{\bar{z}}{E}_{-+}^{\delta} $ then yields 
\begin{equation}\label{eq5.2.5}
	\begin{split}
     \partial_{\bar{z}} \det{E}_{-+}^{\delta} &= 
     -\tr [
     (\partial_{\bar{z}}R_+ )E_+^{\delta} +
     E_-^{\delta}(\partial_{\bar{z}}R_-)
     ]
     \det{E}_{-+}^{\delta} \\
     &\defeq -k^{\delta}  \det{E}_{-+}^{\delta} \,.
     \end{split}
\end{equation}
Let us study the factor $k^\delta(z)$. 
Using the expressions \eqref{eq5.2}, 
\eqref{eq5.3}, \eqref{eq5.2.2}, we find 
$$
\big((\partial_{\bar{z}}R_+ )E_+^{\delta}\big)_{jj} = (e_+^j(z)+\mO(\delta h^{-3/2})|\partial_z e_+^j(z)),\quad
\big(E_-^{\delta} \partial_{\bar{z}}R_- )\big)_{jj} = (\partial_{\bar z}e_-^j(z)| e_-^j(z)+\mO(\delta h^{-3/2})).
$$
The expressions for the quasimodes in Prop.~\ref{prop4.2} show that $\|\partial_ze_+^j (z)\|=\mO(h^{-1})$, $\|\partial_{\bar{z}}e_-^j(z)\|=\mO(h^{-1})$. For instance,
\begin{equation}\label{eq5.2.6}
\partial_ze_+^j = \e^{-\frac{1}{h}\Phi_+^j} \Big(\partial_ze_+^{j,hol} -h^{-1}(\partial_z\Phi_+^j) e_+^{j,hol}\Big)
=\mO(h^{-1})_{L^2}.
\end{equation}
so we have
\begin{equation}\label{eq5.2.8}
     k^{\delta} = \sum_{j=1}^J
     [(e_+^j|\partial_{z}e_+^j)+(\partial_{\bar{z}}e_-^j|e_-^j)]
     +\mO(\delta h^{-5/2}),
\end{equation}
uniformly in $z\in W(z_0)$. 
Taking the $\partial_z$ derivative of $\|e_+^{j,hol}\|=\e^{\Phi_+^j/h}$, 
cf. \eqref{eq4.2.1}, we get 
\begin{equation*}
    h^{-1}\partial_z\Phi_+^j = \frac{1}{2}\e^{-\frac{2}{h}\Phi_+^j} \left(
    (\partial_{z}e_+^{j,hol}|e_+^{j,hol})
    + (e_+^{j,hol}|\partial_{\bar{z}}e_+^{j,hol})\right).
\end{equation*}
Using these identities together with \eqref{eq4.2.1} and  
\eqref{eq5.2.6}, we find 
\begin{equation}\label{eq5.2.9}
\begin{split}
    (e_+^j|\partial_{z}e_+^j) &= \frac{1}{2}\e^{-\frac{2}{h}\Phi_+^j} 
   (e_+^{j,hol}|\partial_{z}e_+^{j,hol})+ \mO(h^{-1}|z-z_0|^{\infty}+h^{\infty})\\
    &=h^{-1}\partial_{\bar{z}}\Phi_+^{j} +\mO(h^{-1}|z-z_0|^{\infty}+h^{\infty}).
 \end{split}
\end{equation}
Similar computations show that
\begin{equation}\label{eq5.2.9a}
\begin{split}
    (\partial_{\bar{z}}e_-^j|e_-^j) &= 
    \frac{1}{2}\e^{-\frac{2}{h}\Phi_-^j} 
   (\partial_{\bar{z}}e_-^{j,hol}|e_-^{j,hol})+ \mO(h^{-1}|z-z_0|^{\infty}+h^{\infty})\\   
&=h^{-1}\partial_{\bar{z}}\Phi_-^{j} +\mO(h^{-1}|z-z_0|^{\infty}+h^{\infty}),
 \end{split}
\end{equation}
which finally results in
$$
k^\delta = h^{-1}\sum_{j=1}^J \big(\partial_{\bar{z}}\Phi_+^{j}(z) + \partial_{\bar{z}}\Phi_-^{j}(z)\big)+\mO(h^{-1}|z-z_0|^{\infty} 
+\delta h^{-5/2}).
$$
Since $k^{\delta}(z)$ depends smoothly on $z\in W(z_0)$, the equation $\partial_{\bar{z}} l^{\delta}=hk^{\delta}$ can be solved in 
$W(z_0)$ (see e.g. \cite[Theorem 1.4.4]{Ho66} or \cite[page 6]{GrHa78}) with a solution of the form
\begin{equation}\label{eq5.2.10}
  l^{\delta}(z) = -2J h\log (h^{1/4})+ \sum_{j=1}^J \big(\Phi_+^{j}(z;h) + \Phi_-^{j}(z;h)\big)+\mO(|z-z_0|^{\infty}
  +\delta h^{-3/2}).
\end{equation}
Here we added the constant term $-2J h\log (h^{1/4})$ in order to balance the behaviour of
$\Phi_{\pm}^{j}(z_0;h) = h\log (h^{1/4})+\mO(1) $.
From \eqref{eq5.2.5}, we conclude that the following function is holomorphic in $z\in W(z_0)$:
\begin{equation*}
\begin{split}
   G^{\delta}(z;h) & \defeq (-\delta)^{-J}\e^{\frac{1}{h}l^{\delta}(z)}\det E_{-+}^{\delta}(z)\\
	&= 
	\e^{\frac{1}{h}l^{\delta}(z)} \det \left[ (Q e_+^j(z) |  e_-^i(z) )_{i,j\leq J}  	
	+\mO(\delta h^{-5/2})\right].
\end{split}
\end{equation*}
Using \eqref{eq5.2.10} and \eqref{eq4.2.2}, this holomorphic function can be written as
\begin{equation}\label{eq5.2.11a}
	\begin{split}
   	G^{\delta}(z;h)=\big(1+ R_1\big)
	\det \left[ (Q h^{-1/4} e_+^{j,hol}(z) | h^{-1/4} e_-^{i,hol}(z) )_{i,j\leq J}  + R_2\right],
	\end{split}
\end{equation}
where (using \eqref{eq4.2.1})
\begin{equation}\label{eq5.2.12}
\begin{split}
 & R_1 \defeq R_1(z;h)  = \mO(|z-z_0|^{\infty}+\delta h^{-3/2}), \\ 
 & R_2 \defeq R_2(z;h) \defeq \Lambda_+ \mO(\delta h^{-3}) \Lambda_- ,
\end{split}
\end{equation}
where $\Lambda_{\pm}\defeq \mathrm{diag}(\e^{\frac{1}{h}( \Phi_{\pm}^{j}(z)+\mO(|z-z_0|^{\infty} 
  +\delta h^{-3/2}))})_{j=1,\dots, J}$. 
The important fact is that the eigenvalues of $P^{\delta}$ in $W(z_0)$ are given by the zeros of the 
holomorphic function $G^{\delta}(z;h)$.
\section{Random analytic function}\label{sec:RAF}
In this section we provide background material and references concerning the 
theory of random analytic functions, which are needed for the proofs in Sections \ref{sec:LS_M} 
and \ref{sec:LS_V}. We begin by recalling some standard notions and facts about random 
analytic functions and stochastic processes, as discussed for instance 
in \cite{HoKrPeVi09,Kal97}.
\par
Let $O\subset \C$ be an open, simply connected domain, and let $\Hi(O)$ denote 
the space of holomorphic function on $O$. Giving ourselves an exhaustion by compact subsets $ K_j\Subset O $ of the domain $O$, we endow $\Hi(O)$ with the metric 
\begin{equation}\label{eq6.0.1}
	d(f,g)=\sum_{j=1}^{\infty}\frac{1}{2^j}\frac{\|f-g\|_{K_j}}{1+\|f-g\|_{K_j}},
\end{equation}
where $\|f\|_{K_j}\defeq \max_{z\in K_j}|f(z)|$. This metric induces the topology of uniform 
convergence of analytic functions on compact set. This makes $\Hi(O)$ a complete 
separable metric space, and we may equip 
it with its Borel $\sigma$-algebra  $\mathcal{B}(\Hi(O))$. This makes $(\Hi(O),\mathcal{B}(\Hi(O)))$ a measurable space.
\begin{defn}
Let $(\mathcal{M},\mathcal{A},\nu)$ be a probability space. Then, any measurable map 
	\begin{equation*}
	 f: (\mathcal{M},\mathcal{A})\longrightarrow (\Hi(O),\mathcal{B}(\Hi(O)))
	\end{equation*}
is called a $\C$-valued stochastic process on $O$ with paths in $\Hi(O)$.
\end{defn}
The Borel $\sigma$-algebra $\mathcal{B}(\Hi(O))$ is equal to the 
$\sigma$-algebra generated by the evolution maps 
\begin{equation*}
	\pi_z:~ \Hi(O)\longrightarrow \C, \quad \pi_z g \defeq g(z), \quad z\in O,
\end{equation*}
namely it is the smallest $\sigma$-algebra in $\Hi(O)$ such that $\pi_z$ is 
measurable for every $z\in O$. It is then a standard fact that 
$f:(\mathcal{M},\mathcal{A})\to (\Hi(O),\mathcal{B}(\Hi(O)))$ is measurable if and only if 
$\pi_z f:(\mathcal{M},\mathcal{A})\to (\C,\mathcal{B}(\C))$ is measurable for all $z\in O$, 
where $\mathcal{B}(\C)$ denotes the Borel $\sigma$-algebra in $\C$. 
\par
Hence, for any $ f: \mathcal{M}\longrightarrow \Hi(O)$ a $\C$-valued stochastic 
process on $O$ with paths in $\Hi(O)$, we can regard $f$ as well as a
function from $O\times \mathcal{M}$ to $\C$:
\begin{equation*}
	f(z,\omega) = \pi_z f(\omega), \quad (z,\omega)\in O\times \mathcal{M}.
\end{equation*}
From now on we will call such an $f$ simply a \textit{random analytic function on $O$}. 
Due to the above measurability property it is equivalent to consider $f$ as a 
collection of random elements $f(z)$ in $\C$. The associated finite-dimensional 
distributions are given by direct images of the probability measure $\nu$ by 
the random vector $(f(z_1),\dots,f(z_k))\in\C^k$, i.e. 
\begin{equation*}
	\mu_{z_1,\dots,z_k} = (f(z_1),\dots,f(z_k))_{*}(\nu), 
	\quad z_1,\dots, z_k \in O, ~ k\in \N^*. 
\end{equation*}
For all $(z_1,\dots, z_k)\in O^k$ we have that $\mu_{z_1,\dots,z_k}$ is a probability 
measure on $\C^k$ , called the joint distribution of $(f(z_1),\dots,f(z_k))$.
\\
\par
The next result tells us that the distribution of a random analytic function, i.e. the 
direct image measure $f_*\nu$, is determined by its finite-dimensional distributions, 
see e.g. \cite[Proposition 2.2]{Kal97}.
\begin{thm}\label{thm:finite-dim-dist}
Let $f$ and $g$ be two random analytic	 functions, then 
\begin{equation*}
	f \stackrel{d}{=} g \quad \Longleftrightarrow \quad 
	(f(z_1),\dots,f(z_n))\stackrel{d}{=}(g(z_1),\dots,g(z_n)), ~
	\forall z_1,\dots,z_n\in O,~ \forall n\in\N,
\end{equation*}
where the symbol $\stackrel{d}{=}$ means equality in distribution, i.e. that the 
respective direct image measures are equal. 
\end{thm}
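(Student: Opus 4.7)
The plan is to prove the equivalence by a standard $\pi$-$\lambda$ (Dynkin) argument, using the measurability structure already spelled out in the excerpt. The forward implication is essentially by definition: if $f\stackrel{d}{=}g$, meaning that the push-forward measures $f_*\nu$ and $g_*\nu'$ on $(\Hi(O),\mathcal{B}(\Hi(O)))$ coincide, then for each tuple $(z_1,\dots,z_n)\in O^n$ the evaluation map
\[
\Phi_{z_1,\dots,z_n}:\Hi(O)\longrightarrow\C^n,\qquad h\longmapsto(h(z_1),\dots,h(z_n)),
\]
is measurable (it is in fact continuous), so pushing $f_*\nu$ and $g_*\nu'$ further through $\Phi_{z_1,\dots,z_n}$ yields the same joint distribution on $\C^n$.

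For the converse, I would consider the collection of cylinder sets
\[
\mathcal{C}\defeq\big\{\Phi_{z_1,\dots,z_n}^{-1}(A):\ n\geq 1,\ z_1,\dots,z_n\in O,\ A\in\mathcal{B}(\C^n)\big\}\subset\mathcal{B}(\Hi(O)).
\]
This family is a $\pi$-system: the intersection of two cylinder sets based on the tuples $(z_1,\dots,z_n)$ and $(w_1,\dots,w_m)$ is again a cylinder set based on the enlarged tuple $(z_1,\dots,z_n,w_1,\dots,w_m)$. The hypothesis that all finite-dimensional distributions of $f$ and $g$ coincide is exactly the statement that $f_*\nu$ and $g_*\nu'$, viewed as probability measures on $(\Hi(O),\mathcal{B}(\Hi(O)))$, agree on $\mathcal{C}$. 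By the excerpt's identification of $\mathcal{B}(\Hi(O))$ with the $\sigma$-algebra generated by the evaluation maps $\{\pi_z\}_{z\in O}$, we have $\sigma(\mathcal{C})=\mathcal{B}(\Hi(O))$. Dynkin's $\pi$-$\lambda$ theorem then forces $f_*\nu=g_*\nu'$ on the whole Borel $\sigma$-algebra, i.e.\ $f\stackrel{d}{=}g$.

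The only mildly technical point — more of a bookkeeping issue than an obstacle — is the claim that $\sigma(\mathcal{C})$ really exhausts $\mathcal{B}(\Hi(O))$. This is recalled in the excerpt, but is worth unpacking: the metric \eqref{eq6.0.1} makes $\Hi(O)$ a separable metric space whose topology is generated by suprema over compacts, and by the maximum principle together with analyticity each such supremum can be recovered from the values of $h$ on a countable dense subset of $O$. Hence the open balls (which generate $\mathcal{B}(\Hi(O))$) lie in the $\sigma$-algebra generated by countably many evaluations, and so in $\sigma(\mathcal{C})$. Once this is in hand, the argument above is purely formal.
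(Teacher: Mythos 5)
Your proof is correct. The paper does not give its own argument for this statement but simply cites \cite[Proposition 2.2]{Kal97}, and the $\pi$-$\lambda$ argument you supply is precisely the standard proof found there: the forward direction by continuity of the evaluation maps, the converse by observing that cylinder sets form a $\pi$-system generating $\mathcal{B}(\Hi(O))$ on which the two push-forward probability measures agree, whence Dynkin's lemma gives equality. One very minor remark: in your final paragraph you invoke analyticity and the maximum principle to recover $\|f-g\|_{K_j}$ from countably many evaluations, but mere continuity already suffices (the supremum of a continuous function over a compact set equals its supremum over any countable dense subset), so that step does not actually rely on holomorphy; this does not affect the correctness of your argument.
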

Next, let us recall that a $\C^k$-valued random variable $X$ is said to have 
a centred symmetric complex Gaussian distribution with a covariance matrix 
$\Sigma_k\in \mathrm{GL}_k(\C)$, in short $X\sim \mathcal{N}_{\C}(0,\Sigma_k)$, if 
its distribution is given by
\begin{equation*}
	X_*\nu= (\det \pi \Sigma_k)^{-1}\e^{-X^*\Sigma_k^{-1}X} L(dX),
\end{equation*}
where $L(dX)$ denotes the Lebesgue measure on $\C^k$. The covariance matrix $\Sigma_k$ is Hermitian definite positive. As usual, this Gaussian distribution is characterized by its variances:
\begin{equation*}
	\erw[ X_i X_j ] = 0, \quad
	\erw[ X_i \overline{X}_j ] = (\Sigma_k)_{ij},\qquad 1\leq i,j\leq k.
\end{equation*}
\begin{defn}(Gaussian analytic function - GAF)
 Let $O\subset\C$ be an open simply connected complex domain. 
 A random analytic function $f$ on $O$ is called a Gaussian analytic function 
 on $O$ if its finite-dimensional 
 distributions are centred symmetric complex Gaussian, namely if for all $k\in\N^*$ and all $z_1,\dots,z_k\in O$, the 
 random vectors $(f(z_1),\dots,f(z_k)) \sim \mathcal{N}_{\C}(0,\Sigma_k)$ for some covariance matrix $\Sigma_k$.
\end{defn}
The matrix $\Sigma_k$ depends on $(z_1,\dots,z_k)$. Each of its entries $(\Sigma_k)_{ij}$ is
given by the covariance kernel
\begin{equation*}
 K(z_i,\bar{z}_j)\stackrel{\mathrm{def}}{=}\erw[ f(z_i) \overline{f(z_j)} ], 
\end{equation*}
which is a $z_i$-holomorphic and $z_j$-anti-holomorphic function on $O\times O$. Hence, the complete distribution of the GAF is fully characterized by the covariance kernel. 

For more details on the theory of Gaussian analytic functions we refer the reader to 
\cite{HoKrPeVi09}. 
\subsection{Sequences of random analytic functions}\label{sec:Conv}
For the reader's convenience, we recall in this section some of the notions and results 
concerning the convergence of sequences of random analytic functions which we shall 
need in the sequel. 
\begin{defn}\label{def_CD}
Let $X_n$, $n\in\N$, and $X$ be random variables taking values in a complete separable metric 
space $(S,d)$, these variables being defined on probability spaces $(\mathcal{M}_n,\mathcal{F}_n,\nu_n)$, 
$n\in\N$, respectively $(\mathcal{M},\mathcal{F},\nu)$. We say that 
$(X_n)_n$ {\em converges in distribution} to $X$ if the induced probability measures on $S$ 
converge:
\begin{equation*}
	(X_n)_*(\nu_n) \stackrel{w^*}{\longrightarrow} X_*(\nu), ~ n\to\infty,
\end{equation*}
or equivalently if for all bounded continuous functions on $S$, $\varphi\in\mathcal{C}_b(S,\R)$, 
\begin{equation*}
	\int \phi(X_n) d\nu_n \longrightarrow \int \phi(X)d\nu,~ n\to\infty.
\end{equation*}
When this is the case we write $X_n\stackrel{d}{\to} X$. 
\end{defn}
\begin{defn}\label{def_CFidi}
Let $O\subset\C$ be an open and simply connected domain. Let $f_n$, $n\in\N$, and $f$ 
be random analytic functions on $O$ (not necessarily defined on the same probability space). 
We say that $f_n$ {\em converges in the sense of finite dimensional distributions} to $f$ if 
for all $k\geq 1$ and all $z_1,\dots,z_k\in O$,
\begin{equation*}
	(f_n(z_1),\dots,f_n(z_k))\stackrel{d}{\longrightarrow} 
	(f(z_1),\dots,f(z_k)), ~ n\to\infty.
\end{equation*}
When this is the case we write $f_n\stackrel{fd}{\to} f$. 
\end{defn}
As discussed in Thm~\ref{thm:finite-dim-dist}, the distribution of each random 
analytic function is uniquely determined by its finite dimensional distributions. However, the
convergence of a sequence of random analytic 
functions  in the sense of finite dimensional distributions does in general {\em not} imply the convergence in distribution. 
To achieve this implication, one needs 
to add a {\em tightness condition}, providing the relative compactness of the sequence. 
\begin{defn}
Let $(\mu_n)_n$ be a sequence of probability measures on some complete separable 
metric space $(S,\mathcal{B}(S))$. The sequence $(\mu_n)_n$ is said to be {\em tight} if 
\begin{equation*}
	\sup\limits_{K\Subset S} \liminf\limits_{n\to\infty} \mu_n( K )=1,
\end{equation*}
where the supremum is taken over all compact sets $K\Subset S$. \\
Similarly, a sequence of random variables $(X_n)_n$ taking values in $S$ is called tight if 
\begin{equation*}
	\sup\limits_{K\Subset S} \liminf\limits_{n\to\infty} \mathds{P}[X_n \in K ]=1,
\end{equation*}
where we use the standard notation that $\mathds{P}[ X_n \in K ] = \mu_n(K)$ 
is the probability measure induced by $X_n$ on $S$. 
\end{defn}
An important result due to Prohorov, see for instance \cite[Theorem 14.3]{Kal97}, is the following. 
\begin{thm}[Prohorov]\label{thm:Pro}
For any sequence of random variables $(X_n)_n$ taking values in a complete separable metric space, 
tightness is equivalent to relative compactness in distribution, i.e. the sequence of probability measures 
$(\mu_n)_n$ induced by $(X_n)_n$ is relatively compact in the weak-* topology.
\end{thm}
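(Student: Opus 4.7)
The plan is to prove the two directions separately, since one is a diagonal extraction argument while the other reduces to Ulam's theorem on inner regularity of Borel probability measures on Polish spaces.

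For the easier direction, tightness $\Rightarrow$ relative compactness, I would argue as follows. Given a tight sequence $(\mu_n)$, pick an exhausting family of compact sets $K_m \subset S$ with $\mu_n(K_m) \ge 1 - 1/m$ uniformly in $n$. Since $(S,d)$ is Polish, each $K_m$ is a compact metric space, hence $C(K_m)$ is separable in the sup norm. View the restrictions $\mu_n \restriction K_m$ as elements of the unit ball of $C(K_m)^*$ through Riesz representation; Banach--Alaoglu, combined with separability, gives weak-$*$ sequential compactness on each $C(K_m)$. A diagonal extraction then produces a single subsequence $(\mu_{n_k})$ such that $\mu_{n_k}\restriction K_m$ converges weakly to some positive measure $\nu_m$ on $K_m$ for every $m$. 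The $\nu_m$ are consistent and their union defines a Borel measure $\mu$ on $S$ with $\mu(S) = \lim_m \mu(K_m) = 1$. Weak convergence $\mu_{n_k} \to \mu$ on bounded continuous test functions $\varphi$ then follows by splitting $\int \varphi\, d\mu_{n_k}$ into contributions on $K_m$ (handled by construction) and on $S \setminus K_m$ (small uniformly in $k$ thanks to tightness).

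For the harder direction, relative compactness $\Rightarrow$ tightness, the key input is Ulam's theorem: every Borel probability measure on a Polish space is inner regular on compacts. I would proceed by contradiction. Assume $(\mu_n)$ is relatively compact but not uniformly tight, so there exists $\varepsilon > 0$ such that for every compact $K \subset S$ some $n$ satisfies $\mu_n(S \setminus K) > \varepsilon$. Fixing a countable dense set $\{x_k\}$ and exhausting compact sets $L_j = \bigcup_{k\le j} \overline{B(x_k, 1/j)}$, one can extract a subsequence $\mu_{n_j}$ with $\mu_{n_j}(S \setminus L_j) > \varepsilon$. By relative compactness this has a weakly convergent sub-subsequence $\mu_{n_{j_\ell}} \to \mu$, and by Ulam's theorem the limit $\mu$ is tight: choose a compact set $K$ with $\mu(K) > 1 - \varepsilon/2$. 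Using the Polish structure I would enlarge $K$ to a relatively compact open neighborhood $U$ (covering $K$ by finitely many open balls of small radius inside a slightly larger compact) and invoke the portmanteau inequality $\liminf_\ell \mu_{n_{j_\ell}}(U) \ge \mu(U) \ge 1 - \varepsilon/2$. This contradicts $\mu_{n_{j_\ell}}(S \setminus L_{j_\ell}) > \varepsilon$ once $L_{j_\ell} \supset \overline{U}$, which happens for $\ell$ large since $\overline{U}$ is compact and $L_j$ eventually absorbs any compact set.

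The main obstacle is the second direction, and specifically the step where one produces a relatively compact open set sandwiching the compact set $K$ and then compares it to the exhausting family $L_j$. The argument works precisely because $S$ is Polish: completeness gives Ulam's theorem, while separability provides an explicit countable cover that allows one to pass from an abstract weak-limit tightness to a uniform quantitative statement about the whole sequence. Once these two ingredients are set up correctly, the contradiction is immediate. The first direction, in contrast, is mostly bookkeeping with Banach--Alaoglu and diagonal extraction, and the only subtle point is verifying that the diagonal limit defines an honest probability measure on $S$ (rather than a sub-probability measure losing mass at infinity), which is ensured by the uniform lower bounds $\mu_n(K_m) \ge 1 - 1/m$.
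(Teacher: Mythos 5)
Your direction ``tightness $\Rightarrow$ relative compactness'' is essentially the classical diagonal-extraction argument; the one soft spot is the claim that the local weak-$*$ limits $\nu_m$ on the $K_m$ are consistent, which is not automatic because mass can escape through $\partial K_m$ in the limit. This is repairable (nest so that $K_m\subset K_{m+1}^{\circ}$ and test only against functions supported in the interior, or embed $S$ in a metric compactification and work there), but it deserves to be spelled out. The genuine problem is in your second direction.

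Your argument for ``relative compactness $\Rightarrow$ tightness'' breaks in two places, both stemming from a hidden assumption of local compactness, which fails in general Polish spaces and in particular fails for the space $\mathcal{H}(O)$ to which the paper applies the theorem. First, the sets $L_j=\bigcup_{k\le j}\overline{B(x_k,1/j)}$ need not be compact: in an infinite-dimensional Fr\'echet space closed metric balls are not compact, and in $\mathcal{H}(O)$ with the metric \eqref{eq6.0.1} a closed $d$-ball of radius $r$ does not even constrain the seminorms $\|\cdot\|_{K_j}$ for $j$ large enough that $2^{-j}\le r$. Consequently the failure of tightness does not force $\mu_n(S\setminus L_j)>\varepsilon$ for some $n$; the entire sequence could sit inside the single non-compact set $L_1$. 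Second, the step ``enlarge $K$ to a relatively compact open neighborhood $U$'' is impossible: in a non-locally-compact Polish space compact sets have empty interior and no open set is relatively compact, so there is no such $U$ to which portmanteau could be applied. The standard proof is structurally different: one first shows that for any open exhaustion $G_1\subset G_2\subset\cdots$ with $\bigcup_m G_m=S$ and any $\varepsilon>0$ there exists $m$ with $\inf_n\mu_n(G_m)>1-\varepsilon$ (if not, extract $\mu_{n_m}$ with $\mu_{n_m}(G_m)\le 1-\varepsilon$, note $n_m\to\infty$ by Ulam applied to the individual measures, take a weak subsequential limit $\mu$, and apply the open-set portmanteau inequality to each fixed $G_k$ to contradict $\mu(S)=1$). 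Applying this at each scale $1/j$ to a countable cover by $(1/j)$-balls yields finite unions $G^{(j)}$ with $\inf_n\mu_n(G^{(j)})>1-\varepsilon 2^{-j}$; the set $\overline{\bigcap_j G^{(j)}}$ is totally bounded, hence compact by completeness, and carries mass $\ge 1-\varepsilon$ uniformly in $n$. The uniform compact set is manufactured by intersecting approximate covers over all scales, not by fattening the Ulam compact of a limit measure into an open set.
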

\begin{rem}\label{Rem6.1}
As a consequence of the above proposition, for a tight sequence of probability 
measures on a  complete separable metric space $S$, convergence with respect to the $w^*$-topology of 
$\mathcal{C}_b(S)'$, where $\mathcal{C}_b(S)$ denotes the space of bounded continuous functions on $S$, is equivalent to convergence with respect to the $w^*$-topology of $\mathcal{C}_c(S)'$ where $\mathcal{C}_c(S)$ denotes the space of continuous functions on $S$ with compact support. The latter  topology is sometimes referred to as the {\em vague topology}. 
\end{rem}
Shirai \cite[Proposition 2.5]{Sh12} provides a useful criterion for tightness of sequences 
of random analytic functions: 
\begin{prop}\label{prop6.1} \cite{Sh12}
Let $f_n$, $n\in \N$, and $f$ be random analytic functions on an open simply connected 
set $O\subset\C$. Suppose that for all 
compact sets $K\Subset O$, the sequence of random real variables $(\|f_n\|_{L^{\infty}(K)})_n$ is tight , i.e. 
\begin{equation*}
	\lim\limits_{r\to\infty} \limsup\limits_{n\to\infty} \mathds{P}[\|f_n\|_{L^{\infty}(K)} > r] = 0.
\end{equation*}
Then, the sequence $(f_n)_n$ is tight in the space of random analytic functions on $O$, and 
\begin{equation*}
	f_n\stackrel{fd}{\longrightarrow} f ~\text{as}~ n\to\infty \quad 
	\implies \quad f_n\stackrel{d}{\longrightarrow} f ~\text{as}~ n\to\infty.
\end{equation*}
These properties naturally extend to a family of functions $(f_h)_{h}$ depending on a continuous parameter $h\in (0,h_0]$: it holds for any sequence $(f_{h_n})_n$ such that $h_n\to 0$ as $n\to\infty$.
\end{prop}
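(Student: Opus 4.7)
The plan is to establish tightness of $(f_n)_n$ in $\mathcal{H}(O)$ by using the given compactwise tightness of $\|f_n\|_{L^\infty(K)}$ together with Montel's theorem, and then combine this with the convergence of finite dimensional distributions via Prohorov's theorem and Theorem~\ref{thm:finite-dim-dist}.

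First I would recall that, by Montel's theorem, a subset $A\subset \mathcal{H}(O)$ is relatively compact in the topology induced by the metric~\eqref{eq6.0.1} if and only if it is uniformly bounded on each compact set $K\Subset O$ (the equicontinuity being automatic for holomorphic functions). Since $O$ is a countable union of compacts $K_j\Subset O$, as in the definition of the metric, this reduces tightness on $\mathcal{H}(O)$ to controlling the sup-norms $\|f_n\|_{L^\infty(K_j)}$ for every $j$.

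Next, given $\varepsilon>0$, I would use the hypothesis that $(\|f_n\|_{L^\infty(K_j)})_n$ is a tight family of real random variables to pick, for each $j\geq 1$, a radius $R_j>0$ such that
\begin{equation*}
\sup_n \mathds{P}\bigl[\|f_n\|_{L^\infty(K_j)} > R_j\bigr] \leq \varepsilon\, 2^{-j}.
\end{equation*}
(Strictly speaking, the hypothesis guarantees this only for $n$ large, but one can then enlarge $R_j$ to absorb finitely many exceptional $n$.) The set
\begin{equation*}
A \defeq \bigl\{ g\in \mathcal{H}(O) : \|g\|_{L^\infty(K_j)} \leq R_j \text{ for all } j\geq 1 \bigr\}
\end{equation*}
is then closed and uniformly bounded on each $K_j$, hence relatively compact in $\mathcal{H}(O)$ by Montel's theorem, and actually compact since it is closed. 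A union bound gives
\begin{equation*}
\mathds{P}[f_n \notin A] \leq \sum_{j\geq 1} \mathds{P}\bigl[\|f_n\|_{L^\infty(K_j)} > R_j\bigr] \leq \varepsilon,
\end{equation*}
uniformly in $n$. This proves the tightness of $(f_n)_n$ as a sequence of $\mathcal{H}(O)$-valued random variables.

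Finally, assuming in addition $f_n\stackrel{fd}{\to} f$, I would combine tightness with Prohorov's theorem (Theorem~\ref{thm:Pro}): every subsequence of $(f_n)$ admits a further subsequence converging in distribution to some $\mathcal{H}(O)$-valued random analytic function $\tilde f$. Along this subsequence the finite dimensional distributions also converge, so that the marginals of $\tilde f$ coincide with those of $f$. By Theorem~\ref{thm:finite-dim-dist}, a random analytic function is determined in law by its finite dimensional distributions, so $\tilde f \stackrel{d}{=} f$. Since every subsequence of $(f_n)$ has a further subsequence converging in distribution to the same limit $f$, the full sequence $f_n$ converges in distribution to $f$. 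The extension to families indexed by a continuous parameter $h\in(0,h_0]$ follows by testing against arbitrary sequences $h_n\to 0$.

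The only mildly delicate point in carrying out this plan is the verification that $A$ is genuinely compact (not merely relatively compact) in the metric~\eqref{eq6.0.1}: one checks that a limit $g_\infty$ of a sequence $(g_k)\subset A$ satisfies $\|g_\infty\|_{L^\infty(K_j)}\leq R_j$ for each $j$ by the uniform convergence on $K_j$, so $A$ is closed. Everything else is a straightforward application of Montel's theorem, Prohorov's theorem, and the characterization of the law by finite dimensional marginals.
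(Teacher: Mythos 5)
The paper states this result as a citation to Shirai \cite[Proposition~2.5]{Sh12} and does not provide a proof of its own. Your proof is correct and is the standard argument (Montel's theorem to characterize compactness in $\mathcal{H}(O)$, a union bound over the exhausting compacts $K_j$ to produce a high-probability compact set, Prohorov's theorem, and the subsequence principle combined with the fact that the law of a random analytic function is determined by its finite-dimensional marginals via Theorem~\ref{thm:finite-dim-dist}); this is essentially how Shirai proves it. The one small point you correctly flag — that the hypothesis only controls $\limsup_n$, not $\sup_n$ — is handled exactly as you indicate, by enlarging $R_j$ to cover the finitely many exceptional indices, and your verification that $A$ is closed (hence compact, not merely relatively compact) under locally uniform limits is also right.
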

%
%
\subsection{Point processes given by the zeros of a random analytic function} 
Let $O\subset\C$ be an open connected domain, and let $M(O)$ denote 
the space of complex valued, locally finite Borel measures on $O$, which we endow with the 
vague topology of $\mathcal{C}_c(O)'$. This topology is metrisable, and it makes $M(O)$ a 
complete, separable metric space which we can equip with its Borel $\sigma$-algebra. 
The random elements in $M(O)$ are called random measures on $O$. 
\par
Inside $M(O)$ we distinguish the space $PM(O)$ of integer-valued measures in $M(O)$. It forms  a closed 
subspace of $M(O)$. Any element $\mu\in PM(O)$ is a point measure: it can be 
expressed as 
\begin{equation*}
	\mu = \sum_{z_i} \delta_{z_i}
\end{equation*}
where $\delta_{z_i}$ denotes the Dirac measure on the point $z_i\in O$. The points $\{z_i\}_i$ form a finite or countable set, which has no accumulation point (yet  each $z_i$ can be repeated 
finitely many times). 
A $PM(O)$-valued random variable is called a
{\em point process} on $O$.
\\
\par
Let $f\not\equiv 0$ be a nontrivial analytic function on $O$. Then, we call 
\begin{equation}\label{eq:PpP}
	\cZ_f \defeq \sum_{\lambda\in f^{-1}(0)}\delta_\lambda
\end{equation}
the point measure defined by its set of zeros (counted with multiplicities). If $f$ is a random analytic function (with $f\not\equiv 0$ a.s.), then 
$\cZ_f$ defines a point process on $O$, which we will call a zero point process. 
This is indeed a point process: $f$ is measurable and 
every functional $\pi_{\varphi}:\mathcal{H}(O)\setminus\{0\}\to\R$, 
$\pi_{\varphi}(f)=\langle \cZ_f,\varphi\rangle$, $\varphi\in\mathcal{C}_c(O,\R)$, 
is continuous on $(\mathcal{H}(O)\backslash\{0\},d)$, with $d$ the metric as in \eqref{eq6.0.1}. 
This was observed for instance 
by Shirai \cite{Sh12}.
\begin{rem}\label{rem6.1}
	An easy extension is that the mapping 
	$\pi_{\varphi^{\otimes M}}:  (\mathcal{H}(O)\backslash\{0\})^M
	\to \C$, 
	\begin{equation*}
	\pi_{\varphi^{\otimes M}}(f_1,\dots,f_M) = 
	\langle \cZ_{f_1},\varphi_1\rangle\cdots \langle \cZ_{f_M},\varphi_M\rangle,
	\quad \varphi^{\otimes M}\in\bigotimes_{j=1}^M \mathcal{C}_c(O,\R), 
	\end{equation*}
	 is continuous 
	with respect to the metric $\widetilde{d}(f,g)=\sum_{i=1}^M d(f^i,g^i)$ on 
	$\mathcal{H}(O)^M$.
\end{rem}
The following result is essentially due to Shirai \cite[Proposition 2.3]{Sh12}.
\begin{prop}\label{prop6.6}
Let $O\subset\C$ be an open, simply connected domain. Let $f_n$, $n\in\N$, and $f$ 
be random analytic functions on $O$, not necessarily defined on the same probability 
space. Suppose that $f_n,f \not\equiv 0$ almost surely, and suppose that $f_n$ converges 
in distribution to $f$. 

Then the zero point processes  $\cZ_{f_n}$ converge in distribution 
to $\cZ_f$. Moreover, for any  $\varphi^{\otimes M}\in\bigotimes_{j=1}^M \mathcal{C}_c(O,\R)$, we have the following 
convergence of real random variables:
\begin{equation*}
	\langle \cZ_{f_n},\varphi_1\rangle\cdots \langle \cZ_{f_n},\varphi_M\rangle 
	\stackrel{d}{\longrightarrow} 
	\langle \cZ_{f},\varphi_1\rangle\cdots \langle \cZ_{f},\varphi_M\rangle.
\end{equation*}
\end{prop}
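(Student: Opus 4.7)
The overall strategy is to deduce both conclusions from the continuous mapping theorem, applied to the map $f\mapsto \cZ_f$ from $\Hi(O)\setminus\{0\}$ to $PM(O)$ (endowed with the vague topology) and, for the second statement, to the composition $f\mapsto \prod_{j=1}^M \langle \cZ_f,\varphi_j\rangle$. Since $\Hi(O)$ is a complete separable metric space (cf.\ \eqref{eq6.0.1}), and $f_n\stackrel{d}{\to} f$ by assumption, it suffices to show that these maps are continuous on a set of full $f_*\nu$-measure; the hypothesis $f\not\equiv 0$ almost surely guarantees that $\Hi(O)\setminus\{0\}$ has full measure, so the main technical work is to verify continuity on this set.

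To establish the continuity of $f\mapsto \cZ_f$, I would invoke Hurwitz's theorem from complex analysis. Fix a nonzero $f\in\Hi(O)$ and a test function $\varphi\in\mathcal{C}_c(O,\R)$ with $\supp\varphi\subset K\Subset O$. Because the zeros of $f$ are isolated, one can select an intermediate compact set $K'$ with $K\Subset K'\Subset O$ such that $f$ has no zeros on $\partial K'$. If $f_n\to f$ in $\Hi(O)$, then $f_n\to f$ uniformly on $K'$, and Hurwitz's theorem implies that for $n$ large enough $f_n$ has exactly the same number of zeros in $K'$ as $f$, counted with multiplicities; furthermore, after an appropriate relabeling, each zero of $f_n$ in $K'$ converges to a corresponding zero of $f$. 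Since $\varphi$ is compactly supported inside $K\subset K'$ and continuous, this yields $\langle \cZ_{f_n},\varphi\rangle \to \langle \cZ_f,\varphi\rangle$. This being true for every $\varphi\in\mathcal{C}_c(O,\R)$ is, by definition, the vague convergence $\cZ_{f_n}\to \cZ_f$ in $PM(O)$, which proves the continuity of $\cZ_{\bullet}$ at $f$.

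Having continuity on $\Hi(O)\setminus\{0\}$, the continuous mapping theorem immediately yields $\cZ_{f_n}\stackrel{d}{\to}\cZ_f$ in $PM(O)$. For the second statement, the same argument shows that the map
\[
\Phi:\Hi(O)\setminus\{0\}\longrightarrow \R,\qquad \Phi(f)\defeq \prod_{j=1}^M \langle \cZ_f,\varphi_j\rangle,
\]
is continuous (this is essentially the content of Remark~\ref{rem6.1}, applied to the diagonal inclusion $f\mapsto (f,\ldots,f)$ followed by the $M$-linear product functional), so a second application of the continuous mapping theorem gives the convergence in distribution of the real random variables $\Phi(f_n)\stackrel{d}{\to}\Phi(f)$, which is precisely the desired statement.

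The most delicate point is the choice of the intermediate compact $K'$ in the Hurwitz argument: one needs to exclude zeros of $f$ lying on $\partial K'$ so that Hurwitz's theorem on $K'$ can be applied to uniformly convergent sequences. Once this is done, one also needs to be slightly careful that Hurwitz gives convergence with the right multiplicities (so that the point \emph{measure}, not merely the support, converges); this is standard but is the only nontrivial ingredient in an otherwise direct application of the continuous mapping theorem.
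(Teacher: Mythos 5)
Your proof is correct and follows essentially the same route as the paper: continuity of the zero-counting functionals $\pi_\varphi$ (resp. $\pi_{\varphi^{\otimes M}}$) on $\Hi(O)\setminus\{0\}$, then the continuous mapping theorem for convergence in distribution. The only difference is that you spell out the Hurwitz-theorem argument underlying this continuity, which the paper simply delegates to the discussion preceding Remark~\ref{rem6.1} and to Shirai's reference; your handling of the subtlety (choosing an intermediate compact $K'$ with no zeros of $f$ on $\partial K'$, and keeping track of multiplicities) is the right way to verify it.
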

\begin{proof}
The convergence in distribution of the point processes $\cZ_{f_n}$ to $\cZ_f$ is 
equivalent to 
\begin{equation*}
	\langle \cZ_{f_n},\varphi\rangle \stackrel{d}{\longrightarrow} \langle \cZ_{f},\varphi\rangle.
\end{equation*}
for all $\varphi\in\mathcal{C}_c(O,\R)$. As discussed in Remark \ref{rem6.1}, the mapping 
$\pi_{\varphi^{\otimes M}}$ is continuous. Hence, the continuous mapping theorem, stating that the convergence in distribution of random variables is preserved through continuous mappings between metric spaces, see e.g. \cite[Theorem 3.27]{Kal97}, implies the claimed results.
\end{proof}
\subsection{A central limit theorem for complex valued random variables}
We prove the following version of a central limit theorem for complex valued random variables. 
\begin{thm}\label{thmCLT}
Let $\sigma>0$ and let $\xi\sim \mathcal{N}_{\C}(0,\sigma)$ be a complex Gaussian 
random variable with mean $0$ and variance $\sigma$. Let $\{\xi_{nj}\}_{n\in \N, 1\leq j \leq N(n)}$ 
be a triangular array, with $N(n)\to +\infty$, as $n\to +\infty$, of row-wise independent complex-valued random variables satisfying 
\begin{enumerate}[(i)]
	\item $\sum_{j=1}^{N(n)} |\erw [\xi_{nj} ]|\longrightarrow 0, \text{ as } n\to +\infty$, 
	\item $\sum_{j=1}^{N(n)} \erw [\xi_{nj}^2 ]\longrightarrow 0, \text{ as } n\to +\infty$, 
	\item $\sum_{j=1}^{N(n)} \erw [|\xi_{nj}|^2 ]\longrightarrow \sigma, 	\text{ as } n\to +\infty$,
	\item $\sum_{j=1}^{N(n)} \erw [|\xi_{nj}|^2\, \mathbf{1}_{\{|\xi_{nj}|>\varepsilon\}} ]\longrightarrow 0, 	\text{ as } n\to +\infty$, for any $\varepsilon >0$. 
	\end{enumerate}
Then, $\sum_{j=1}^{N(n)} \xi_{nj} \stackrel{d}{\longrightarrow} \xi$ as $n\to +\infty$.
\end{thm}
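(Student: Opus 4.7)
The plan is to reduce this complex-valued CLT to the classical (multidimensional) Lindeberg--Feller CLT for real vectors, identifying $\C \cong \R^2$. The key dictionary is that $\xi \sim \mathcal{N}_\C(0,\sigma)$ corresponds to the bivariate real Gaussian $\mathcal{N}_{\R^2}(0,\tfrac{\sigma}{2}I_2)$: writing $\xi = X + iY$, the conditions $\erw[\xi]=0$, $\erw[\xi^2]=0$, $\erw[|\xi|^2]=\sigma$ translate into $\erw[X]=\erw[Y]=0$, $\Var(X)=\Var(Y)=\sigma/2$, and $\Cov(X,Y)=0$.

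First, I would decompose each $\xi_{nj} = X_{nj} + i Y_{nj}$, and translate the hypotheses (i)--(iv) into real form. Condition (iii) gives $\sum_j(\erw[X_{nj}^2]+\erw[Y_{nj}^2])\to\sigma$. The real and imaginary parts of (ii) give respectively $\sum_j(\erw[X_{nj}^2]-\erw[Y_{nj}^2])\to 0$ and $\sum_j\erw[X_{nj}Y_{nj}]\to 0$, so that
\begin{equation*}
\sum_j \erw[X_{nj}^2] \longrightarrow \tfrac{\sigma}{2},\qquad \sum_j \erw[Y_{nj}^2] \longrightarrow \tfrac{\sigma}{2},\qquad \sum_j \erw[X_{nj}Y_{nj}] \longrightarrow 0.
\end{equation*}

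Next, I would center the variables by setting $\tilde\xi_{nj} = \xi_{nj} - \erw[\xi_{nj}]$. By (i), $\sum_j \erw[\xi_{nj}]\to 0$, so it suffices to prove the convergence for $\sum_j\tilde\xi_{nj}$. The corrections to the second-moment quantities are controlled via
\begin{equation*}
\sum_j|\erw[\xi_{nj}]|^2 \,\leq\, \bigl(\max_j|\erw[\xi_{nj}]|\bigr)\sum_j|\erw[\xi_{nj}]|,
\end{equation*}
which tends to $0$ by (i); hence the asymptotics of the covariance matrix of $(\sum_j \Rea\tilde\xi_{nj},\sum_j\Ima\tilde\xi_{nj})$ are the same as before, converging to $\tfrac{\sigma}{2}I_2$.

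Then I would verify the Lindeberg condition for the centered $\R^2$-valued triangular array. From (iv) and the bound $|\tilde\xi_{nj}|\leq|\xi_{nj}|+|\erw[\xi_{nj}]|$ with $\max_j|\erw[\xi_{nj}]|\to 0$, one checks that for any $\varepsilon>0$,
\begin{equation*}
\sum_j \erw\bigl[|\tilde\xi_{nj}|^2\,\mathbf{1}_{\{|\tilde\xi_{nj}|>\varepsilon\}}\bigr] \longrightarrow 0.
\end{equation*}
Applying the multidimensional Lindeberg--Feller CLT (equivalently, the Cram\'er--Wold device combined with the scalar Lindeberg--Feller CLT applied to the linear combination $a\sum_j\Rea\tilde\xi_{nj}+b\sum_j\Ima\tilde\xi_{nj}$ for arbitrary $a,b\in\R$) yields that $\sum_j\tilde\xi_{nj}$ converges in distribution to $\mathcal{N}_{\R^2}(0,\tfrac{\sigma}{2}I_2)\cong\mathcal{N}_\C(0,\sigma)$, which gives the claim after undoing the centering. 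There is no real obstacle here; the only point requiring care is the interplay between (ii) (pseudo-variance condition) and (iii), which together guarantee the circular symmetry of the Gaussian limit.
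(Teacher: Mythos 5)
Your proof is correct, and it takes a route that is close in spirit to the paper's but differs in the key reduction step. Both you and the paper translate the complex problem to $\R^2$, derive the limiting real covariance matrix $\tfrac{\sigma}{2}I_2$ from conditions (ii) and (iii), and invoke the Cram\'er--Wold device (explicitly in the paper, implicitly in your appeal to the multidimensional Lindeberg--Feller theorem). The difference is that you first center the array, setting $\tilde\xi_{nj}=\xi_{nj}-\erw[\xi_{nj}]$, check that the second-moment corrections and the Lindeberg condition survive centering (for which you correctly use that $\max_j|\erw[\xi_{nj}]|\le\sum_j|\erw[\xi_{nj}]|\to 0$ and bound the indicator on $\{|\tilde\xi_{nj}|>\varepsilon\}$ by the indicator on $\{|\xi_{nj}|>\varepsilon/2\}$ for large $n$), and then invoke the \emph{classical} centered Lindeberg--Feller CLT as a black box, restoring the mean at the end via (i). The paper instead works with the \emph{uncentered} real projections $\widetilde\xi_{nj}(s,t)=s\,\Rea\xi_{nj}+t\,\Ima\xi_{nj}$, compares directly to a mean-zero Gaussian array $\zeta_{nj}\sim\mathcal{N}(0,\erw[\widetilde\xi_{nj}^2])$ matching raw second moments rather than variances, and re-proves the characteristic-function estimate $|\widehat\mu_n(\tau)-\widehat\nu_n(\tau)|\to 0$ by products of factors and Taylor expansion, with the mean mismatch contributing the $\sum_j|a_{nj}|$ error term that (i) controls. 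Your version is more modular, leaning on a standard theorem; the paper's is self-contained but essentially re-derives the Lindeberg--Feller argument in the uncentered setting. Either way, the roles played by conditions (i)--(iv) are the same in the two arguments.
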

\begin{rem}
	Condition $(iv)$ is also known as the Lindeberg condition for a central limit theorem. 
	A simpler version of the above Theorem was presented in \cite[Proposition 4.2]{Sh12} supposing 
	that all random variables $\xi_{nj}$ have expectation $0$ and satisfy that for all $n$ and $j$
	the random variables $\Rea \xi_{nj}$ and $\Ima \xi_{nj}$ are independent and have the same 
	variance. Notice that these assumptions imply conditions $(i)$ and $(ii)$. 
\end{rem}
\begin{proof}[Proof of Theorem \ref{thmCLT}]
The proof we present here is a modification of the proof of the well-known central limit theorem 
under the Lindeberg condition, see \cite[Theorem 4.12]{Kal97}. 
\\
\par
1. Identifying $\C \cong \R^2$, it follows that $\sum_{j=1}^{N(n)} \xi_{nj} \stackrel{d}{\longrightarrow} \xi$,  as $n\to +\infty$, is equivalent to the following convergence of random vectors in $\R^2$:
\begin{equation}\label{clt1}
	 \sum_{j=1}^{N(n)} \begin{pmatrix} \Rea \xi_{nj} \\ \Ima \xi_{nj} \\ \end{pmatrix}
	 \stackrel{d}{\longrightarrow} \Xi \sim \mathcal{N}(0,\Sigma) ,  \text{ as } n\to +\infty,
\end{equation}
where $\Xi$ is a random vector in $\R^2$ with $2$-dimensional real Gaussian distribution 
$\mathcal{N}(0,\Sigma) $ with mean $0$ and covariance matrix 
$\Sigma = \mathrm{diag}(\sigma/2,\sigma/2) $. More precisely, 
\begin{equation*}
	 \Xi_*(d\prob) = (\det 2\pi \Sigma )^{-1/2} \e^{- \frac{1}{2} \Xi^t \Sigma^{-1} \Xi} d\Xi_1 d\Xi_2. 
\end{equation*}
By the Cram\'er-Wold Theorem \cite[Corollary 4.5]{Kal97} it follows that \eqref{clt1} is equivalent 
to 
\begin{equation}\label{clt2}
	  \widetilde{\xi}_n(s,t)\defeq \sum_{j=1}^{N(n)} (s\,\Rea \xi_{nj} +t \,\Ima \xi_{nj} )
	 \stackrel{d}{\longrightarrow}( s\,\Xi_1 + t\,\Xi_2) \defeq \widetilde{\xi}(s,t) ,  \text{ as } n\to +\infty, 
\end{equation}
for any $s,t\in \R$. Since $\Xi_1$ and $\Xi_2$ are independent and identically distributed 
real Gaussian random variables $\sim \mathcal{N}(0,\sigma/2)$, it follows that 
$\widetilde{\xi}(s,t) \sim \mathcal{N}(0,(s^2+t^2)\sigma/2)$. 
\par
Write $\widetilde{\xi}_{nj}(s,t) = s\,\Rea \xi_{nj} +t \,\Ima \xi_{nj} $ and set 
$c_{nj} = \erw [ \widetilde{\xi}_{nj}(s,t)^2]$.
Let $\{\zeta_{nj}\}_{n\in\N}$ be a triangular array of row-wise independent (real) 
Gaussian random variables with distribution 
$\zeta_{nj} \sim \mathcal{N}(0,c_{nj})$. Let $c_n = \sum_{j=1}^{N(n)}c_{nj}$, 
then 
\begin{equation*}
	 \zeta_n \defeq \sum_{j=1}^{N(n)}\zeta_{nj} \sim \mathcal{N}(0,c_n).
\end{equation*}
Using that $(\Rea z)^2 = \frac{1}{4}(z^2+\bar{z}^2 + 2z\bar{z})$ and 
$(\Ima z)^2 = -\frac{1}{4}(z^2+\bar{z}^2 - 2z\bar{z})$, it follows by  $(ii)$ that 
\begin{equation}\label{clt5}
	\sum_{j=1}^{N(n)}\erw[ (\Rea \xi_{nj})^2 ] = \sum_{j=1}^{N(n)}\erw[ (\Ima \xi_{nj})^2 ] 
	+ o_n(1),
\end{equation}
where $o_n(1)\to 0$ as $n\to\infty$. Similarly, we see by using \eqref{clt5}, the identitiy 
$z^2 = (\Rea z)^2 - (\Ima z)^2 + 2i\Rea z \Ima z$ and $(ii)$, that 
\begin{equation}\label{clt6}
	\sum_{j=1}^{N(n)}\erw[ \Rea  \xi_{nj} \Ima \xi_{nj}] = o_n(1) .
\end{equation}
Then,
\begin{equation}\label{clt6.1}
\begin{split}
	c_n &= \sum_{j=1}^{N(n)}\erw [ \widetilde{\xi}_{nj}(s,t)^2] \\
	& = \frac{s^2+t^2}{2}  \sum_{j=1}^{N(n)}\erw[ (\Rea \xi_{nj})^2 + (\Ima \xi_{nj})^2] 
	      +o_n(1) \\ 
	     & \to \frac{(s^2+t^2)\sigma}{2} , \quad \text{as } n\to\infty.
\end{split}
\end{equation}
Here, in the last line we used as well \textit{(iii)}. 
This implies that $\zeta_n \stackrel{d}{\to}\widetilde{\xi}(s,t)$, as $n\to \infty$.
\\
\\
2. We show that $\widetilde{\xi}_n(s,t)$ and $\zeta_n$ converge to the same limit in 
distribution. Recall Definition \ref{def_CD}. 
\par
Let $\mu_n = (\widetilde{\xi}_n(s,t))_*(d\prob)$, $\mu = (\widetilde{\xi}(s,t))_*(d\prob)$ 
and $\nu_n = (\zeta_n)_*(d\prob)$. We use a similar notation for the probability measures induced 
by the random variables $ \widetilde{\xi}_{nj}(s,t)$ and $\zeta_{nj}$. Since all induced measures 
are probability measures it follows that $\mu_n \stackrel{w^*}{\to} \mu$, as $n\to \infty$, if and 
only if the Fourier transforms of these measures converges pointwise for any point, i.e. 
$\widehat{\mu}_n(\tau) \to\widehat{\mu}(\tau)$, as $n\to \infty$, for any $\tau\in\R$. Notice that  
$\widehat{\mu}_n(\tau)=\erw [ \e^{i\tau\widetilde{\xi}_n} ]$ and 
$\widehat{\nu}_n(\tau)=\erw [ \e^{i\tau\zeta_n(s,t)} ]$ . Similar statments hold 
for $\nu_n$. Hence, to prove the statment of the Theorem it is sufficient to show that  
$|\widehat{\mu}_n(\tau) - \widehat{\nu}_n(\tau) |\to 0$ for all $\tau\in\R$. 
\\
\par
We begin with two helpful identities: Let $z_{i},w_{i}\in \C$, for $i=1,\dots, n$, with $|z_i|, |w_i| =1$. 
Then, $|z_1z_2 - w_1w_2| \leq |z_1-w_1| + |z_2-w_2|$, and by induction 
\begin{equation}\label{clt3}
	 \left| \prod_{i=1}^n z_i - \prod_{i=1}^n w_i\right| \leq \sum_{i=1}^n |z_i - w_i|. 
\end{equation}
Moreover, by Taylor expansion we see that for all $\tau\in\R$
\begin{equation}\label{clt4}	
	 \left| \e^{i\tau} - \sum_{k=1}^n \frac{(i\tau)^k}{k!} \right| \leq \min\left(\frac{2|\tau|^n}{n!},\frac{|\tau|^{n+1}}{(n+1)!} \right).
\end{equation}
Write $a_{nj} = \erw[\widetilde{xi}_{nj}]$. Fix $\tau\in \R$. 
Since $\xi_{nj}$ and $\zeta_{nj}$ are row-wise independent, it follows by \eqref{clt3}, \eqref{clt4}, 
\begin{equation*}
\begin{split}
	&|\widehat{\mu}_n(\tau) - \widehat{\nu}_n(\tau)  | = 
	 \left| \prod_{j=1}^{N(n)} \widehat{\mu}_{nj}(\tau)  - \prod_{j=1}^{N(n)} \widehat{\nu}_{nj}(\tau)\right| \\
	 &\leq \sum_{j=1}^{N(n)}\left|  \widehat{\mu}_{nj}(\tau)  -  \widehat{\nu}_{nj}(\tau)\right| \\
&\leq \sum_{j=1}^{N(n)}\left| \erw[  \e^{i\tau\widetilde{\xi}_{nj}(s,t)}] - 1 - i\tau a_{nj} + \frac{1}{2}\tau^2c_{nj} \right| 
+    \sum_{j=1}^{N(n)}\left|  \erw[  \e^{i\tau\zeta_{nj}}] - 1 - i\tau a_{nj} + \frac{1}{2}\tau^2c_{nj} \right|\\
&\leq \mO(1)
 \sum_{j=1}^{N(n)} \erw\left[ \widetilde{\xi}_{nj}(s,t)^2 \min(1, |\widetilde{\xi}_{nj}(s,t)|) \right]
+    \mO(1)\sum_{j=1}^{N(n)} \left( \erw[\zeta_{nj}^2 \min(1, |\zeta_{nj}|)] + |a_{nj}| \right) \\ 
& \defeq I_1(n) + I_2(n).
\end{split}
\end{equation*}
We are going to show that both terms in the last line converge to $0$ as $n\to\infty$. 
Notice that for any $\varepsilon >0$
\begin{equation*}
\begin{split}
\sum_{j=1}^{N(n)}\erw\left[ \widetilde{\xi}_{nj}(s,t)^2 \min(1, |\widetilde{\xi}_{nj}(s,t)|) \right] 
&\leq \mO(1) \sum_{j=1}^{N(n)}\erw\left[|\xi_{nj}|^2 \min(1, |\xi_{nj}|)\right] \\
&\leq \mO(1) \varepsilon \sum_{j=1}^{N(n)}\erw\left[|\xi_{nj}|^2\right] + 
\mO(1) \sum_{j=1}^{N(n)} \erw\left[|\xi_{nj}|^2\mathbf{1}_{|\xi_{nj}|> \varepsilon}\right] \\
&\leq \mO(1) \varepsilon c_n + 
\mO(1) \sum_{j=1}^{N(n)} \erw\left[|\xi_{nj}|^2\mathbf{1}_{|\xi_{nj}|> \varepsilon}\right].
\end{split}
\end{equation*}
Taking the limit $n\to \infty$ and using \eqref{clt6.1} and assumption $(iv)$, it follows that 
$\lim_{n\to\infty}I_1(n) = \mO(\varepsilon)$ for any $\varepsilon$. Since $I_1(n)$ does not 
depend on $\varepsilon$ it follows that $I_1(n)\to 0$, as $n\to \infty$. 
\\
\par
Next, let us treat $I_2(n)$. First notice by assumption $(i)$ we have that 
$\sum_{j=1}^{N(n)}|a_{nj}| \to 0$, as $n\to \infty$.
Since $\zeta_{nj}\sim \mathcal{N}(0,c_{nj})$, we have that 
\begin{equation*}
	 \erw[|\zeta_{nj}|^4 ] = \int x^4 \e^{-\frac{x^2}{2c_{nj}}}\frac{dx}{\sqrt{2c_{nj}}} 
	 			      = \frac{3c_{nj}^2}{2}.
\end{equation*}
Hence, 
\begin{equation*}
\begin{split}
\sum_{j=1}^{N(n)}\erw[\zeta_{nj}^2 \min(1, |\zeta_{nj}|)] 
&\leq \sum_{j=1}^{N(n)} \erw[|\zeta_{nj}|^3] 
\leq  \sum_{j=1}^{N(n)}\erw[|\zeta_{nj}|^4]^{1/2}  \erw[|\zeta_{nj}|^2]^{1/2}  \\
 &\leq \frac{\sqrt{3}}{\sqrt{2}} \sup\limits_{j} c_{nj}^{1/2}\sum_{j=1}^{N(n)}c_{nj} 
 = \frac{\sqrt{3}}{\sqrt{2}}c_n \sup\limits_{j} c_{nj}^{1/2}.
\end{split}
\end{equation*}
Finally, notice that for any $\varepsilon >0$ 
\begin{equation*}
\begin{split}
	\sup_j c_{nj} &=\sup_j\erw[ |\widetilde{\xi}_{nj}(s,t)|^2] 
			   \leq \mO(1) \sup_j\erw[ |\xi_{nj}|^2] \\
			   &\leq \mO(1) \sup_j\erw[ |\xi_{nj}|^2\mathbf{1}_{|\xi_{nj}|\leq  \varepsilon}] + 
			   \mO(1) \sup_j\erw[ |\xi_{nj}|^2\mathbf{1}_{|\xi_{nj}|> \varepsilon}] \\
			    &\leq \mO(\varepsilon^2 )+ 
			   \mO(1) \sum_{j=1}^{N(n)} \erw[ |\xi_{nj}|^2\mathbf{1}_{|\xi_{nj}|> \varepsilon}]. 
\end{split}
\end{equation*}
In view of assumption $(iv)$, we see that $\sup_j c_{nj}\to 0$ as $n\to +\infty$. In conclusion 
it follows that  $|\widehat{\mu}_n(\tau) - \widehat{\nu}_n(\tau)  | \to 0$, as $n\to \infty$, which concludes 
the proof of the Theorem.
\end{proof}

%
\section{Local Statistics of the eigenvalues of $P_h$ perturbed by a random matrix $M_{\omega}$}
\label{sec:LS_M}
In this section we are interested in the local statistics of the eigenvalues of the perturbed 
operator
\begin{equation}\label{eq7.1}
	P^{\delta}=P_h + \delta M_{\omega}, 
\end{equation}
see also \eqref{eq1.11}, where the perturbation $M_{\omega}$ is the random matrix defined in \eqref{eq1.10}. Let 
$\mathrm{PD}_{N}(0,C/h)=D(0,C/h)^{N}\subset \C^{N}$, $N\in\N^*$, denote 
the polydisc of radius $C/h$ centred at $0$. Recall from \eqref{eq1.15.2} 
that we have restricted the random variables to large polydiscs:
\begin{equation}\label{eq6.3.0}
q=(q_{j,k})_{j,k<N(h)}\in \mathrm{PD}_{N(h)^2}(0,C/h),
\end{equation}
with as consequence the Hilbert-Schmidt estimate \eqref{eq5.2.1.5}. Recall that the 
random variables $q_{j,k}$, $j,k<N(h)$, are independent and identically 
distributed. We denote by $(\mathcal{M}_h,\mathcal{F}_h, \prob_h)$, the 
probability space $(\mathcal{M},\mathcal{F}, \prob)$ restricted to 
$\bigcap_{j,k<N(h)} q_{j,k}^{-1}(D(0,C/h))$. The random variables restricted to this space will be denoted by 
\begin{equation}\label{eq7.2}
	q_{j,k}^h, \quad j,k\leq N(h).
\end{equation}
The restricted probability measure $\prob_h$ is given by the conditional probability 
\begin{equation}\label{eq7.2.5}
	\prob_h[~\cdot~] = \prob[~\cdot~ | ~|q_{j,k}|\leq C/h].
\end{equation}
Like their unrestricted versions, the $q_{j,k}^h$ are independent and identically distributed. Moreover, by \eqref{eq1.9.1}, 
\eqref{eq1.9}, and by Fubini's Theorem, we obtain the following estimates of their average and variance of these restricted variables:
\begin{equation}\label{eq7.3}
	\begin{split}
	\big | \erw[ q_{j,k}^h ] \big |& \leq \left(1 + \mO(h^{4+\varepsilon_0})\right)
	\erw\left[ \int_0^{\infty} 1_{\{|q_{j,k}|>t\}}dt\, 1_{\{|q_{j,k}|\geq C/h\}}\right]\\
	& =\left(1 + \mO(h^{4+\varepsilon_0})\right)
	\int_0^{\infty} \prob\left [\{|q_{j,k}| > t\}\cap\{|q_{j,k}| \geq C/h\}\right] dt \\
	& = \mO(h^{3+\varepsilon_0}).
	\end{split}
\end{equation}
A similar computation shows that 
\begin{equation}\label{e:E(q2)}
\big | \erw[ (q_{j,k}^h)^2 ] \big | = \mO(h^{2+\vareps_0})\,.
\end{equation}
Finally,
\begin{equation}\label{eq7.4}
	\begin{split}
	\erw[ |q_{j,k}^h|^2 ] &\leq \left(1 + \mO(h^{4+\varepsilon_0})\right)\left(\erw[ |q_{j,k}|^2 ] + 
	\int_0^{\infty}\prob\left [\{|q_{j,k}|^2 > t\}\cap\{|q_{j,k}| \geq C/h\}\right] dt \right)\\
	& = 1+ \mO(h^{2+\vareps_0}).
	\end{split}
\end{equation}
Next, let $\Omega\Subset\mathring{\Sigma}$ be as in \eqref{eq1.8.2} and pick a $z_0\in\Omega$. 
By the Grushin problem constructed in Proposition \ref{prop5.2.1} and by \eqref{eq5.2.11a}, 
the eigenvalues  of the perturbed operator $P^{\delta}$, in a small neighborhood $W(z_0)$ 
of $z_0$, are precisely the zeros of the function
\begin{equation}\label{n1}
   	G^{\delta}(z;h)=\big(1+ R_1(z;h)\big)
	\det \left[ (M_{\omega}h^{-\frac{1}{4}}e_+^{j,hol}(z) |h^{-\frac{1}{4}}e_-^{i,hol}(z) )_{i,j\leq J} 
	+ R_2(z;h) \right].
\end{equation}
Since the spectrum of $P^{\delta}$ in $W(z_0)$ is discrete, it follows that 
$G^{\delta}(\cdot;h)\not\equiv 0$ in $W(z_0)$. To compute the local spectral statistics near the point $z_0$, we rescale the 
spectral parameter $z$ by the average distance between nearest neighbours, which is 
of order $\asymp h^{1/2}$, namely we write
\begin{equation}\label{e:rescaling}
z=z_w=z_0+h^{1/2}w\,,\quad w\in\C.
\end{equation}
For $z$ an eigenvalue of $P^\delta$ in $W(z_0)$, we will call the corresponding $w$ a {\em rescaled eigenvalue}, which forms the rescaled spectrum. We will focus on the eigenvalues in a {\em microscopic} neighbourhood of $z_0$: we will consider an arbitrary open, simply connected and relatively compact set $ O\Subset\C$, and only consider the points such that $w\in O$. For $h>0$ small enough, the rescaled eigenvalues 
are precisely the zeros of the $w$-holomorphic function
\begin{equation}\label{eq6.3.1}
\begin{split}
F^{\delta}_h(w)&\stackrel{\mathrm{def}}{=} G^{\delta}(z_w;h) \\
&=(1+ \widetilde{R}_1(w;h))\det \left[ (f^{\delta,h}_{i,j}(w))_{i,j\leq J} + \widetilde{R}_2(w;h)\right] , 
\quad w \in  O.
\end{split}
\end{equation}
with 
\begin{equation}\label{eq6.3.1b}
\begin{split}
&f^{\delta,h}_{i,j}(w)\stackrel{\mathrm{def}}{=} 
	(M_{\omega}h^{-\frac{1}{4}} e_+^{j,hol}(z_w) | h^{-\frac{1}{4}}e_-^{i,hol}(z_w)), 
	\quad 0\leq i,j\leq J, \\ 
& \widetilde{R}_1(w;h)\stackrel{\mathrm{def}}{=} R_1(z_w;h), \\
& \widetilde{R}_2(w;h)\stackrel{\mathrm{def}}{=} R_2(z_w;h).
\end{split}
\end{equation}
From now on we let 
\begin{equation}\label{eq6.3.1a}
	O\Subset\C \text{ be an open, simply connected and relatively compact set } 
\end{equation}
and $h>0$ small enough such that \eqref{eq6.3.1} is well defined. 
\par
We have $F^{\delta}_h\not\equiv 0$ in 
$O$ hence, by the discussion in Section \ref{sec:Conv}, 
\begin{equation}
	\cZ_h \defeq \sum_{w\in (F^{\delta}_h)^{-1}(0)}\delta_w
\end{equation}
is a well-defined point process on $O$ (the zeros are repeated according to 
their multiplicities). This is the rescaled spectral point process $\cZ_{h,z_0}^M$ of \eqref{eq:PPM}, restricted to the set $O$.
%
\subsection{Covariance}\label{sec:Cov1}
In this section we study the covariance of the random variables 
$f^{\delta,h}_{i,j}(w)$ defined over $O\Subset \C$. This will be of fundamental 
importance to the main result of this paper. 
\begin{prop}\label{prop7.1}
Let $\sigma_{\pm}^j(z_0)$ be as in \eqref{eq4.4.4} and let
$f^{\delta,h}_{i,j}(\bullet)$, for $0\leq i,j \leq J$, be the rescaled functions as in \eqref{eq6.3.1b}. 

Then, we have the following expression for the covariance of those functions, uniformly for $v,w\in  O$:
\begin{equation}\label{e:covar}
\begin{split}
\erw\big[f^{\delta,h}_{i,j}(v) &\overline{f^{\delta,h}_{l,k}(w)}\big] \e^{-F_{i,j}(v;h)-\overline{F_{l,k}(w;h)}} \\
 &= \delta_{i,l}\delta_{j,k} K^{i,j}(v,\overline{w})(1 + \mO(h^{1/2})) 
 	+\mO(h^2).%
 \end{split}
\end{equation}
Here $\delta_{i,j}$ denotes the Kronecker-delta and the most important part of the above formula is the kernel
\begin{equation}\label{eq7.9.1}
K^{i,j}(v,\overline{w}) = 
\exp\Big(\frac{1}{2}\big(\sigma_+^j(z_0)+\sigma_-^i(z_0)\big)\,v\overline{w}\Big).
\end{equation}
The other ingredients are the "gauge functions"
\begin{equation}\label{e:F_ij}
	F_{i,j}(v;h) = \phi_{+}^j(v;h) +\phi_{-}^i(v;h), 
\end{equation}
where
\begin{equation}\label{e:phi^j}
	\phi_{\pm}^j(v;h) = \frac{1}{2}\left[ \log  A_{\pm}^j(z_0;h)+ 
				(\partial^2_{zz}\Phi_{{\pm},0}^j)(z_0)v^2
	\right],
\end{equation}
with $A_{\pm}^j(z_0;h)\sim A_0^{j,\pm}(z_0)+hA_1^{j,\pm}(z_0)+\dots $ as 
in Proposition \ref{prop4.2} and $\Phi_{{\pm},0}^j(z)$ as in \eqref{eq4.4.1}. 
\end{prop}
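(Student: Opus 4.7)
The plan is to unfold the random matrix inner product in the harmonic oscillator basis $\{e_k\}_{k\in\N}$. Writing $\widetilde e_\pm^j(z) := h^{-1/4}e_\pm^{j,\mathrm{hol}}(z)$, the definition \eqref{eq1.10} of $M_\omega$ gives
\begin{equation*}
f^{\delta,h}_{i,j}(v) = \frac{1}{N(h)}\sum_{j',k'<N(h)} q^h_{j'k'}\,(\widetilde e_+^j(z_v)|e_{k'})\,(e_{j'}|\widetilde e_-^i(z_v)),
\end{equation*}
so that $f^{\delta,h}_{i,j}(v)\overline{f^{\delta,h}_{l,k}(w)}$ expands as a quadruple sum. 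Taking expectation and exploiting the independence of the $q^h_{j'k'}$, I would split this into a ``diagonal'' piece indexed by $(j'',k'')=(j',k')$, weighted by $\erw|q^h|^2 = 1+\mO(h^{2+\vareps_0})$ from \eqref{eq7.4}, and an ``off-diagonal'' remainder weighted by $|\erw[q^h]|^2 = \mO(h^{6+2\vareps_0})$ from \eqref{eq7.3}.

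The diagonal piece factorizes as a product of two truncated inner products which, by Lemma~\ref{lem7.1}, coincide with the full inner products $(\widetilde e_+^j(z_v)|\widetilde e_+^k(z_w))$ and $(\widetilde e_-^l(z_w)|\widetilde e_-^i(z_v))$ up to $\mO(h^\infty)$ errors in the natural exponential gauge. When $j\neq k$ or $i\neq l$, the quasi-orthogonality \eqref{eq4.6.1b} of quasimodes microlocalized on disjoint points of the energy shell forces these overlaps to be $\mO(h^\infty)$, yielding the $\delta_{jk}\delta_{il}$ factor in \eqref{e:covar}. In the truly diagonal case $j=k$, $i=l$, Propositions~\ref{prop4.3} and~\ref{prop4.3b} furnish the precise asymptotics $(\widetilde e_\pm^j(z_v)|\widetilde e_\pm^j(z_w)) = b_\pm^j(z_v,z_w;h)\e^{\frac{2}{h}\Psi_{\pm,0}^j(z_v,z_w)} + \mO(h^\infty)\e^{\cdots}$.

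Substituting the microscopic rescaling $z_v = z_0+h^{1/2}v$, $z_w = z_0+h^{1/2}w$, I would Taylor-expand $\Psi_{\pm,0}^j$ at $(z_0,z_0)$ using the formulae stated in those propositions. Since $\Phi_{\pm,0}^j$ is a real function with a nondegenerate minimum at $z_0$, its first derivatives there vanish; the surviving quadratic contributions are $\partial^2_{zz}\Phi_{\pm,0}^j(z_0)v^2$, its conjugate in $\bar w^2$, and the mixed derivative identified via \eqref{eq4.4.4} with $\partial^2_{z\bar z}\Phi_{\pm,0}^j(z_0)=\sigma_\pm^j(z_0)/4$. The pure $v^2$ and $\bar w^2$ pieces, combined with $b_\pm^j(z_0,z_0;h) = A_\pm^j(z_0;h)^2$, reorganize into $F_{i,j}(v;h)+\overline{F_{l,k}(w;h)}$, while the mixed $v\bar w$ terms assemble to $\log K^{i,j}(v,\bar w) = \tfrac12(\sigma_+^j(z_0)+\sigma_-^i(z_0))\,v\bar w$.

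The main obstacle will be the uniform control of the off-diagonal contribution $(j',k')\neq(j'',k'')$: writing it as $|\erw[q^h]|^2\,|S|^2/N(h)^2$ with $S = \sum_{j',k'<N(h)}(\widetilde e_+^j(z_v)|e_{k'})(e_{j'}|\widetilde e_-^i(z_v))$ and bounding $|S|$ via Cauchy--Schwarz against $\|\sum_{j'<N(h)} e_{j'}\|^2 = N(h)$, together with the pointwise estimates of Lemma~\ref{lem7.2}, should produce a remainder of size $\mO(h^{2+2\vareps_0})$, fitting the additive $\mO(h^2)$ error in \eqref{e:covar}. The relative $\mO(h^{1/2})$ factor in the main term will then come from the $h$-subprincipal pieces of the symbols $b_\pm^j$ and $A_\pm^j$, together with cubic Taylor remainders in $\Psi_{\pm,0}^j$; the scaling factor $h^{1/2}$ from $z_v-z_0 = h^{1/2}v$ ensures that each extra order costs exactly $h^{1/2}$.
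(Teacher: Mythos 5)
Your proposal follows essentially the same route as the paper's own proof: expand $M_\omega$ in the harmonic oscillator basis so $\erw[f^{\delta,h}_{i,j}(v)\overline{f^{\delta,h}_{l,k}(w)}]$ becomes a quadruple sum; split according to whether the $q$-indices coincide, using \eqref{eq7.4} for the diagonal weight and \eqref{eq7.3} squared for the off-diagonal remainder; invoke Lemma~\ref{lem7.1} to replace the truncated resolution of identity by the full one up to $\mO(h^\infty)$; use the quasi-orthogonality \eqref{eq4.6.1b} to produce the Kronecker deltas; apply Propositions~\ref{prop4.3} and~\ref{prop4.3b} for the overlaps; and finally Taylor-expand $\Psi^j_{\pm,0}$ and $\Phi^j_{\pm,0}$ at $z_0$, identifying $\partial^2_{z\bar z}\Phi^j_{\pm,0}(z_0)=\sigma^j_\pm(z_0)/4$ via \eqref{eq4.4.4} and absorbing the pure $v^2,\bar w^2$ and logarithmic contributions into the gauge factors $F_{i,j}$. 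This is precisely the paper's argument, and the steps are all in the right place.
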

We already notice that the kernel $K^{i,j}(v,\overline{w})$ corresponds to the GAF in Theorem~\ref{thm_m1} (see \eqref{e:Gaf_ij}).
We recall from~\eqref{eq4.13.2} that $A_0^{j,\pm}(z_0)>0$. The above proposition implies the limit
\begin{equation}\label{eq7.10}
\erw\big[f^{\delta,h}_{i,j}(v) \overline{f^{\delta,h}_{l,k}(w)}\big]\e^{-F_{i,j}(v;h)-\overline{F_{l,k}(w;h)}} 
 \xrightarrow{h\to 0}
 \delta_{i,l}\delta_{j,k} \,K^{i,j}(v,\overline{w}),
\end{equation}
uniformly for $v,w\in O$. The exponentials $\e^{F_{i,j}(\bullet;h)}$ 
have no influence on the zeros of the random function $f^{\delta,h}_{i,j}(\bullet)$, 
we can therefore see them as irrelevant gauge functions, which we shall get rid of later. 
 \begin{proof}[Proof of Proposition \ref{prop7.1}]
For $v,w\in O$, we have by \eqref{eq6.3.1b}, \eqref{eq1.10}, 
\begin{equation}\label{eq7.5}
\begin{split}
	hK^{i,j,l,k}_h(v,\overline{w})&\stackrel{\mathrm{def}}{=}
	h\erw \left[f^{\delta,h}_{i,j}(v)\overline{f^{\delta,h}_{l,k}(w)} \right] \\
	&= 
	\sum_{n,m,n',m'} \erw\left[q_{n,m}^h\,\overline{q_{n',m'}^h}\right] 
	\zeta_{n,m}^{i,j}(v)\,\overline{\zeta_{n',m'}^{l,k}(w)},
\end{split}
\end{equation}
where all indices are summed in the range $[0,N(h))$, and we use the notation
\begin{equation}\label{eq7.5.1}
	\zeta_{n,m}^{i,j}(w) = (e_+^{j,hol}(z_w)|e_m)
					 (e_n| e_-^{i,hol}(z_w)),
\end{equation}
where we the notation $z_w=z_0+h^{1/2}w$, $z_v=z_0+h^{1/2}v$ as in \eqref{e:rescaling}. By \eqref{eq7.3}, \eqref{eq7.4} and the independence of the $q_{n,m}$,  the expression \eqref{eq7.5} is equal to 
\begin{equation*}
\begin{split}
	&(1+\mO(h^{2+\varepsilon_0}))\left(\sum_{m}(e_+^{j,hol}(z_v)|e_m)
			 (e_m| e_+^{k,hol}(z_w))   \right) 
	\left(\sum_{n} ( e_-^{l,hol}(z_w)|e_n)
			      (e_n| e_-^{i,hol}(z_v))  \right) \\
	& + \mO(h^{6+2\varepsilon_0}) \sum_{n,m,n',m'} |(e_+^{j,hol}(z_v)|e_m)|\,
		| (e_n|e_-^{i,hol}(z_v))| \,
	|( e_+^{k,hol}(z_w)|e_{m'})|\,
	| (e_{n'} | e_-^{l,hol}(z_w))|.
	\end{split}
\end{equation*}
From Lemma \ref{lem7.1},  
\begin{equation*}
	 \sum_{n<N(h)} |(e_+^{j,hol}(z_v)|e_n)| \leq 
	 N(h)^{1/2}\| e_+^{j,hol}(z_v)\| \left(1+ \mO(h^{\infty})\right).
\end{equation*}
Since $N(h)=\mO(h^{-2})$, we obtain by another application of Lemma \ref{lem7.1}, that 
\begin{equation}\label{eq7.0.6}
\begin{split}
	hK^{i,j,l,k}_h(v,\overline{w})= 
	&
	(e_+^{j,hol}(z_v)| e_+^{k,hol}(z_w)) \,
	(e_-^{l,hol}(z_w)| e_-^{i,hol}(z_v)) \\
	& + \mO(h^{2+\varepsilon_0}) 
	\| e_+^{j,hol}(z_v)\|\,\|  e_-^{i,hol}(z_v)\| \,
	\| e_+^{k,hol}(z_w)\|\,\|e_-^{l,hol}(z_w)\|.
\end{split}
\end{equation}
Using Proposition \ref{prop4.2}, Proposition \ref{prop4.3}, \eqref{eq7.0.6} and 
\eqref{eq4.6.1b}, we get the following expression:%
\begin{equation}\label{eq7.6}
\begin{split}
hK^{i,j,l,k}_h(v,\overline{w}) = 
& \delta_{i,l}\delta_{j,k} \,
 \e^{\frac{2}{h} \big(\Psi^j_+(z_v,z_w;h) + 
\Psi^i_-(z_w, z_v;h) \big)} \\
&+\mO(h^{2+\varepsilon_0}) \e^{\frac{1}{h}(\Phi^j_+(z_v;h) + 
\Phi^k_+(z_w;h) +
\Phi^i_-(z_v;h) +
\Phi^l_-(z_w;h))}.
 \end{split}
\end{equation}
\\
\par
Now, recall from \eqref{eq4.2.0}, \eqref{eq4.4.1} that the phase functions $\Phi^j_{\pm,0}$ had been "centered" at the point $z_0$, so that
$$
\Phi_{\pm,0}^j(z_0)=0,\quad \partial_z\Phi_{\pm,0}^j(z_0)=0,\quad
(\partial_{\bar{z}}\Phi_{\pm,0}^j)(z_0) = 0.
$$
Taking into account that $\partial^\alpha \log A^j_{\pm}(\bullet;h)=\mO(1)$ in $W(z_0)$, the Taylor expansion of $\Phi_{\pm}^j(\bullet;h)$ around $z_0$ gives
\begin{equation}\label{eq7.7}
	\frac{1}{h}\Phi_{\pm}^j(z_v;h) = \log (h^{1/4}) + 
	(\partial^2_{z\bar{z}}\Phi_{\pm,0}^j)(z_0)v\overline{v} 
	+ {\phi}_{\pm}^j(v;h) + \overline{{\phi}_{\pm}^j(v;h)}
	+\mO(h^{1/2}),
\end{equation}
where we used the notation \eqref{e:phi^j}.
\par
Similarly, by Proposition \eqref{prop4.3}, we have:%
\begin{align}\label{eq7.9}
	\frac{1}{h}\Psi_{+}^j(z_v, z_w;h) 
	&= \log (h^{1/4}) + (\partial^2_{z\bar{z}}\Phi_{+,0}^j)(z_0)v\overline{w} 
	+ {\phi}_{\pm}^j(v;h) + \overline{{\phi}_{+}^j(w;h)}
	+\mO(h^{1/2})\nonumber\\ 
	&= \log (h^{1/4}) +\frac{1}{4}\sigma_+^j(z_0)v\overline{w} 
	+{\phi}_{+}^j(v;h) + \overline{{\phi}_{+}^j(w;h)}
	+\mO(h^{1/2}),\\
\intertext{where in the second line we used \eqref{eq4.4.4}. We also have}
\frac{1}{h}\Psi_{-}^j(z_w, z_v;h) &= \log (h^{1/4}) + \frac{1}{4}\sigma_-^j(z_0) \overline{w} v 
+ \overline{{\phi}_{-}^j(w;h)}
	+{\phi}_{-}^j(v;h) 
	+\mO(h^{1/2}).
\end{align}
In all estimates the error terms are uniform in $z,w \in O$. 
Thus, combining \eqref{eq7.6} with (\ref{eq7.7}-\ref{eq7.9}) and using the notation \eqref{e:F_ij}, we obtain
$$
K^{i,j,l,k}_h(v,\overline{w}) = \delta_{i,l}\delta_{j,k} e^{\frac12( \sigma_+^j(z_0) + \sigma_-^i(z_0)  ) v\overline{w}}\ e^{F_{i,j}(v;h) + \overline{F_{l,k}(w;h)} + \mO(h^{1/2})} + \mO(h^{2+\varepsilon_0}).
$$
Notice that we used the factor $h$ on the left hand side of \eqref{eq7.6} was facing four factors $h^{1/4}$ on the right hand side, so removed them all. 
This estimate is equivalent with the equation \eqref{e:covar} of the proposition.
\end{proof}

\subsection{Tightness}\label{sec:Tightness1}
In view of Proposition \ref{prop6.1}, we will now show that the family 
of random analytic functions $(F^{\delta}_h(\bullet))_{h\to 0}$ on $O$, defined in \eqref{eq6.3.1}, is tight, namely the function $F^{\delta}_h$ has a small probability to be large on $O$, uniformly w.r.t. $h\to 0$. 
\begin{prop}
There exists $h_0>0$ such that the family of random analytic functions $(F^\delta_h(\bullet))_{0<h\leq h_0}$ on the open set $O$, defined in \eqref{eq6.3.1}, is tight, in the sense used in Prop.~\ref{prop6.1}. 
\end{prop}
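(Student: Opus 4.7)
The plan is to apply Proposition~\ref{prop6.1}: to prove tightness of the family $(F^\delta_h)_{0<h\leq h_0}$ of random analytic functions, it suffices to prove that, for every compact $K \Subset O$,
$$\sup_{0<h\leq h_0}\mathbb{E}\big[\|F^\delta_h\|_{L^\infty(K)}^2\big] < +\infty,$$
which via Markov's inequality yields the tail bound required by that proposition.

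The first reduction is from an $L^\infty$ to a pointwise $L^2$ estimate. Since $F^\delta_h$ is holomorphic on $O$, $|F^\delta_h|^2$ is subharmonic; choosing $K \Subset K' \Subset O$ and $r := \mathrm{dist}(K,\partial K')$, the submean value property gives
$$\|F^\delta_h\|_{L^\infty(K)}^2 \leq \frac{1}{\pi r^2}\int_{K'}|F^\delta_h(w)|^2\, dA(w).$$
Taking expectation and using Fubini, it suffices to bound $\mathbb{E}[|F^\delta_h(w)|^2]$ uniformly in $w \in K'$ and $h$. To treat this pointwise second moment I would exploit the formula \eqref{eq6.3.1}. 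The prefactor $(1+\widetilde R_1)$ is harmless, since $\widetilde R_1 = \mO(h^{1/2})$ on $O$. The Taylor expansion \eqref{eq7.7} of $\Phi^j_{\pm}(z_w;h)/h$ on compacts shows $e^{\Phi^j_{\pm}(z_w;h)/h} = h^{1/4}\,\mO(1)$, so each entry of $\widetilde R_2(w;h)$ is $\mO(\delta h^{-5/2}) = \mO(h^{\kappa-5/2}) = \mO(h^{1/2})$ under \eqref{e:delta}. Combining the Leibniz formula with $|\det M|^2\leq J!\sum_{\sigma\in\mathfrak S_J}\prod_i|M_{i,\sigma(i)}|^2$ and Hölder's inequality with exponents $(J,\ldots,J)$, the problem reduces to uniformly controlling the $2J$-th moments $\mathbb{E}[|f^{\delta,h}_{i,j}(w)|^{2J}]$.

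The higher moment control of $f^{\delta,h}_{i,j}$ is the main obstacle. The second moment is direct: Proposition~\ref{prop7.1} gives $\mathbb{E}[|f^{\delta,h}_{i,j}(w)|^2] = \mO(1)$ on $K'$, since the gauge factor $e^{F_{i,j}(w;h)}$ is uniformly bounded on any compact. For $2p$-moments with $p\geq 2$, I would write $f^{\delta,h}_{i,j}(w) = \sum_{n,m<N(h)}q^h_{n,m}\,c^{i,j}_{n,m}(w)$ as a sum of independent random variables, with deterministic coefficients
$$c^{i,j}_{n,m}(w) = \frac{h^{-1/2}}{N(h)}\,(e_+^{j,hol}(z_w)\mid e_m)\,(e_n\mid e_-^{i,hol}(z_w)).$$
Combining Lemma~\ref{lem7.2} with $N(h)\asymp h^{-2}$ gives $|c^{i,j}_{n,m}(w)|\leq C\, h^{5/2}\lambda_m^{-1/6}\lambda_n^{-1/6}$; together with the restriction $|q^h_{n,m}|\leq C/h$, each summand is almost surely of size $\mO(h^{3/2})$.

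Rosenthal's inequality for sums of independent, bounded, (essentially) centred random variables then yields
$$\mathbb{E}\big[|f^{\delta,h}_{i,j}(w)|^{2p}\big] \leq C_p\Big(\mathbb{E}[|f^{\delta,h}_{i,j}(w)|^2]^{p} + \sum_{n,m}|c^{i,j}_{n,m}(w)|^{2p}\,\mathbb{E}|q^h_{n,m}|^{2p}\Big),$$
and a straightforward bookkeeping of powers of $h$ --- using $\mathbb{E}|q^h_{n,m}|^{2p}\lesssim h^{-(2p-4-\varepsilon_0)_+}$ from \eqref{eq1.9.1} --- shows that the second sum on the right is $\mO(h^{\alpha_p})$ with $\alpha_p>0$. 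The tiny non-zero mean $\mathbb{E}[f^{\delta,h}_{i,j}(w)] = \mO(h^{3/2+\varepsilon_0})$ coming from \eqref{eq7.3} is negligible, so the centring causes no trouble. Inserting these uniform moment bounds into the determinantal estimate produces $\mathbb{E}[|F^\delta_h(w)|^2] = \mO(1)$ uniformly in $w\in K'$ and $h$, which via Steps 1 and 2 completes the proof.
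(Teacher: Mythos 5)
Your proof is correct, but it takes a genuinely different route from the paper, and it is worth being explicit about the trade-off. The paper never attempts to bound $\mathbb{E}\big[\|F^\delta_h\|^2_{L^\infty(K)}\big]$; instead it exploits the scale--invariance of tightness and applies Markov's inequality with the exponent $1/J$ rather than $1$. Concretely, it writes $\prob[\|F^\delta_h\|^2_{L^\infty(K)}>r]=\prob[\|F^\delta_h\|^{2/J}_{L^\infty(K)}>r^{1/J}]\leq r^{-1/J}\mathbb{E}[\|F^\delta_h\|^{2/J}_{L^\infty(K)}]$, and then uses Hardy's inequality with exponent $p=2/J$ together with the Hadamard-type bound $|\det M|^{2/J}\leq\|M\|_{HS}^2$. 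This collapses the determinant to a \emph{sum of squares of the entries}, so only the $L^2$-estimate $\mathbb{E}[|f^{\delta,h}_{i,j}(w)|^2]=\mO(1)$ from Proposition~\ref{prop7.1} is needed, and the resulting tail is $\mO(r^{-1/J})$ --- decaying slower but perfectly sufficient for Proposition~\ref{prop6.1}.

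You instead prove the stronger statement $\mathbb{E}[\|F^\delta_h\|^2_{L^\infty(K)}]=\mO(1)$, which after the Leibniz/Cauchy--Schwarz decomposition of $|\det|^2$ and H\"older with exponent $J$ requires uniform control of the $2J$-th moments $\mathbb{E}[|f^{\delta,h}_{i,j}(w)|^{2J}]$. Your handling of these is sound: the restriction $|q^h_{n,m}|\leq C/h$ converts the assumption $\mathbb{E}[|\alpha|^{4+\varepsilon_0}]<\infty$ into the growth bound $\mathbb{E}|q^h_{n,m}|^{2p}\lesssim h^{-(2p-4-\varepsilon_0)_+}$; the coefficients $c^{i,j}_{n,m}$ inherit the $\lambda_m^{-1/6}\lambda_n^{-1/6}$ decay from Lemma~\ref{lem7.2} together with a factor $h^{5/2}$; and the book-keeping in Rosenthal's inequality indeed gives a positive power of $h$ in the "individual moment" term for every $J\geq 2$ (with a logarithmic loss only at $J=3$, which is harmless). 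Two minor points you should spell out in a final write-up: you must first centre the $q^h_{n,m}$ (Rosenthal requires mean-zero summands), and verify that the deterministic drift $\sum_{n,m}\mathbb{E}[q^h_{n,m}]\,c^{i,j}_{n,m}(w)$ is $\mO(1)$; both follow from \eqref{eq7.3}, as you indicate. What your route buys is a quantitatively stronger tail $\mO(r^{-1})$; what the paper's route buys is economy --- no Rosenthal, no higher moments, no use of Lemma~\ref{lem7.2}, and no dependence on the exact value of $\varepsilon_0$ beyond what is already needed for the second moment. Both are acceptable, but the $2/J$-power trick is the more efficient device here.
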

\begin{proof}
Recall the estimates \eqref{eq5.2.12} for the remainders $R_1$, $R_2$; for $z\in O$, we get $|z-z_0|=\mO(h^{1/2})$, so all terms $|z-z_0|^\infty$ are negligible. Besides, the expansion \eqref{eq7.7} for $\frac{1}{h}\Phi_{\pm}^j(z_v;h)$ implies that 
  \begin{equation}\label{eqNL1}
	\widetilde{R}_1(v;h)= \mO(\delta h^{-3/2}), \qquad
  	\widetilde{R}_2(v;h) =\mO(\delta h^{-5/2}),
  \end{equation}
uniformly in $v\in O$ and in $q\in\mathrm{PD}_{N(h)^2}(0,C/h)$. %
\\
\par
Let $K\Subset O$ be some compact subset. For $\varepsilon>0$, let 
$K_{\varepsilon} = K + \overline{D(0,\varepsilon)}$ be the closure of an 
$\varepsilon$-neighbohood of $K$. Pick $\varepsilon>0$ small enough 
so that $K_{\varepsilon}\Subset O$. By Proposition \ref{prop7.1}, we 
have for $h_0>0$ small enough:
\begin{equation}\label{eq7.11}
\sup\limits_{0<h\leq h_0} \erw \left[ \| f_{i,j}^{h,\delta} \|^2_{L^2(K_{\varepsilon})} \right]  
\leq C(K_{\varepsilon}) < +\infty.
\end{equation}
Since $F_h^{\delta}$ is holomorphic, Hardy's convexity theorem \cite[Lemma 2.6]{Sh12} 
implies that there exists a positive constant $C_{K_{\varepsilon}}>0$ depending only on
 $K_{\varepsilon}$ so that for any $p>0$
\begin{equation}\label{eq7.13b}
\| F_h^{\delta} \|^p_{L^{\infty}(K)}   \leq   
C_{K_{\varepsilon}}  \int_{K_{\varepsilon}}  |F_h^{\delta}(w) |^p L(dw). 
\end{equation}
Notice that 
\begin{equation}\label{e:det-HS}
	|\det ( (f_{i,j}^{h,\delta}(w))_{i,j}+\widetilde{R}_2(w;h))|^2 
	\leq  \| (f_{i,j}^{h,\delta}(w))_{i,j}+\widetilde{R}_2(w;h)\|_{HS}^{2J}.
\end{equation}
Markov's inequality, together with \eqref{eqNL1} and \eqref{eq6.3.1}, then shows that 
for $h_0>$ small enough 
\begin{equation}\label{eq7.13c}
\begin{split}
 \sup\limits_{0<h<h_0} &\prob \left[\| F_h^{\delta} \|^2_{L^{\infty}(K)} >r \right]  \\
 &=\sup\limits_{0<h<h_0} \prob\left[\| F_h^{\delta} \|^{2/J}_{L^{\infty}(K)} >r^{1/J} \right] \\
 & \leq \sup\limits_{0<h<h_0} r^{-1/J} \erw \left[\| F_h^{\delta} \|^{2/J}_{L^{\infty}(K)} \right]  \\
&\leq \frac{2 C_{K_{\varepsilon}}}{ r^{1/J} }\sup\limits_{0<h<h_0}  
\erw \left[ \|1+\widetilde{R}_1\|^{2/J}_{L^{\infty}(K_{\varepsilon})}
 \int_{K_{\varepsilon}} \big( \| (f_{i,j}^{h,\delta}(w))_{i,j\leq J}\|_{HS}^{2}+\|\widetilde{R}_2(w;h)\|_{HS}^{2} \big)L(dw) \right ] \\
&\leq \mO(r^{-1/J}).
\end{split}
\end{equation}
This concludes the proof that the family $(F_h^{\delta})_{0<h\leq h_0}$ is tight.%
\end{proof}

For future use, let us remark that similarly to the above, we obtain that 
for any $w_1, \dots, w_k \in O$, for any $t >0$ and $h>0$ small enough:
\begin{equation}\label{eq7.13d}
\begin{split}
 \prob \bigg[ \sum_{r=1}^k|\det \big( (f_{i,j}^{h,\delta}(w_r))_{i,j} +\widetilde{R}_2(w_r;h) \big)|^2 >t \bigg]  
 &\leq \mO(t^{-1/J}k^{1/J})  \erw \big[\| F_h^{\delta} \|^{2/J}_{L^{\infty}(O)}  \big] \\
&\leq \mO(t^{-1/J}k^{1/J}) .
\end{split}
\end{equation}
Taking the $L^{\infty}$ norm over $O$ is justified since we can arrange that, 
for $h>0$ small enough, $F_h^{\delta}$ is well-defined and holomorphic in a slightly larger set. 

\subsection{Weak convergence to a Gaussian analytic function}
\label{s:weak_convergence}
Next we will show that the random analytic function $F_h^{\delta}$ converges in distribution to a {\em Gaussian} analytic 
function when $h\to 0$. By Prop.~\ref{prop6.1} and Section~\ref{sec:Tightness1}, 
it is sufficient to prove the convergence of the finite-dimensional distributions of $F_h^{\delta}$. 

We begin with the following result.
\begin{prop}\label{lem6.3.3}
Let $f_{i,j}^{h,\delta}$ be as in \eqref{eq6.3.1b}, and let 
$K^{i,j}(z,\overline{w})$ be as in \eqref{eq7.9.1}. Then 
\begin{equation}\label{eq7.14}
	(f_{i,j}^{h,\delta}; 1\leq i,j \leq J) \stackrel{fd}{\longrightarrow} 
	(f_{i,j}^{GAF}; 1\leq i,j \leq J), \quad h\to 0,
\end{equation}
where $f_{i,j}^{GAF}$ are independent Gaussian analytic functions on $ O$ 
with covariance kernel 
\begin{equation}\label{eq7.14.1}
	K^{i,j}(v,\overline{w})\,\e^{F_{i,j}(v;0)+\overline{F_{i,j}(w;0)}}, \quad z,w\in O.
\end{equation}
Here $K^{i,j}$ was defined in \eqref{eq7.9.1}, and the function $F_{i,j}(w;0)$ is the limit as $h\to 0$ of the function defined in \eqref{e:F_ij}, \eqref{e:phi^j}.
\end{prop}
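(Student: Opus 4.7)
The plan is to reduce the convergence of finite-dimensional distributions of the random analytic functions $(f_{i,j}^{h,\delta})_{i,j}$ to an application of the complex Central Limit Theorem stated in Theorem \ref{thmCLT}. By Theorem \ref{thm:finite-dim-dist}, we have to show that for every finite collection $w_1,\dots,w_k\in O$, the random vector $(f^{h,\delta}_{i,j}(w_r))_{i,j\leq J,\,r\leq k}$ converges in distribution, as $h\to 0$, to the centred complex Gaussian vector $(f^{GAF}_{i,j}(w_r))_{i,j\leq J,\,r\leq k}$ whose covariance is fixed by \eqref{eq7.14.1} together with the independence of the components for distinct pairs $(i,j)$. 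Applying the complex Cram\'er--Wold device (the version for $\C^n\simeq\R^{2n}$), this reduces in turn to showing that every $\C$-linear combination
\begin{equation*}
L_h:=\sum_{i,j,r}\alpha_{i,j,r}\,f^{h,\delta}_{i,j}(w_r)
\end{equation*}
converges in distribution to a centred symmetric complex Gaussian with the prescribed variance $\sigma=\sum \alpha_{i,j,r}\overline{\alpha_{l,s,t}}\,\delta_{i,l}\delta_{j,s}\,K^{i,j}(w_r,\bar w_t)\,\e^{F_{i,j}(w_r;0)+\overline{F_{l,s}(w_t;0)}}$.

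Next, using \eqref{eq7.5.1}, I would write $L_h$ as a triangular sum of row-wise independent complex variables
\begin{equation*}
L_h=\sum_{n,m<N(h)}\xi_{n,m},\qquad \xi_{n,m}\defeq \frac{h^{-1/2}}{N(h)}q_{n,m}^h\,\eta_{n,m},\qquad \eta_{n,m}\defeq\sum_{i,j,r}\alpha_{i,j,r}\,\zeta_{n,m}^{i,j}(w_r),
\end{equation*}
and verify the four hypotheses of Theorem \ref{thmCLT}. Condition (i) follows from $|\erw[q_{n,m}^h]|=\mO(h^{3+\vareps_0})$ in \eqref{eq7.3} combined with the crude sum $\sum_{n,m<N(h)}|\eta_{n,m}|=\mO(h\cdot N(h)^{5/3})$ obtained from Lemma \ref{lem7.2} and $\sum_{n<N(h)}\lambda_n^{-1/6}\lesssim N(h)^{5/6}$. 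Condition (ii), which is precisely what is needed for the pseudo-covariance $\erw[L_h^2]$ to vanish in the limit and hence for the limit to be a \emph{symmetric} complex Gaussian, comes from $|\erw[(q_{n,m}^h)^2]|=\mO(h^{2+\vareps_0})$ of \eqref{e:E(q2)} together with the $\ell^2$ bound $\sum_{n,m}|\eta_{n,m}|^2=\mO(h)$, which follows by Parseval from $\|e_{\pm}^{j,hol}(z_w)\|^2=\mO(h^{1/2})$. Condition (iv), the Lindeberg condition, is trivially satisfied thanks to a uniform almost-sure bound: from the restriction $|q^h_{n,m}|\leq Ch^{-1}$ and Lemma \ref{lem7.2} giving $|\zeta_{n,m}^{i,j}(w)|\leq \mO(h)\,\lambda_n^{-1/6}\lambda_m^{-1/6}$, one gets $|\xi_{n,m}|=\mO(h^{3/2})$ uniformly in $(n,m)$, so that for fixed $\vareps>0$ the indicator $\mathbf 1_{\{|\xi_{n,m}|>\vareps\}}$ vanishes for $h$ small enough.

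The main obstacle lies in condition (iii), namely the verification that $\sum_{n,m}\erw[|\xi_{n,m}|^2]$ converges to the prescribed variance $\sigma$. This is however essentially the content of Proposition \ref{prop7.1}: by bilinearity, the sum decomposes into a finite combination of the covariances $\erw[f^{h,\delta}_{i,j}(w_r)\overline{f^{h,\delta}_{l,s}(w_t)}]$ weighted by $\alpha_{i,j,r}\overline{\alpha_{l,s,t}}$, and each such covariance has the limit \eqref{e:covar}. The crucial Kronecker factor $\delta_{i,l}\delta_{j,k}$ in \eqref{e:covar} is precisely what encodes the asymptotic independence of the $J^2$ limiting GAFs $f^{GAF}_{i,j}$. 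Finally, the gauge functions $F_{i,j}$ appearing in \eqref{eq7.14.1} are deterministic multiplicative factors whose $h\to 0$ limit $F_{i,j}(\bullet;0)$ is well defined in view of \eqref{e:phi^j}, so they enter both the covariance of the prelimit and that of the limit in the same way and cause no difficulty (they will anyhow disappear when passing from random analytic functions to their zero processes via Proposition \ref{prop6.6}).
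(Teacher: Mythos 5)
Your proof follows the same overall strategy as the paper: reduce to finite-dimensional distributions via Cram\'er--Wold, then apply the complex CLT (Theorem \ref{thmCLT}) to the triangular array obtained by expanding each linear combination of the $f^{h,\delta}_{i,j}(w_r)$ in the i.i.d.\ entries $q^h_{n,m}$, with conditions (i)--(iii) controlled by the moment estimates \eqref{eq7.3}--\eqref{eq7.4} and the covariance computation of Proposition \ref{prop7.1}. The one place where your writeup genuinely deviates is condition (iv): instead of the $(4+\varepsilon_0)$-moment bound via Lemma \ref{lem7.2} that the paper computes and then feeds into $\sum\erw\bigl[|\xi|^2\,\mathbf 1_{\{|\xi|>\varepsilon\}}\bigr]\leq\varepsilon^{-(2+\varepsilon_0)}\sum\erw\bigl[|\xi|^{4+\varepsilon_0}\bigr]$, you observe that on the restricted probability space already in force ($|q^h_{n,m}|\leq C/h$), the entry $\xi_{n,m}=\tfrac{h^{-1/2}}{N(h)}q^h_{n,m}\eta_{n,m}$ satisfies the \emph{deterministic} uniform bound $|\xi_{n,m}|=\mO(h^{3/2})$ coming from $|\eta_{n,m}|=\mO(h)\lambda_n^{-1/6}\lambda_m^{-1/6}$, so that for each fixed $\varepsilon>0$ the Lindeberg truncation is identically zero once $h$ is small. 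This is valid and somewhat cleaner, since the moment estimate of \eqref{eq1.9} is already ``used up'' once in establishing the restriction event. Your bound for condition (i), replacing the paper's Cauchy--Schwarz $\sum_{n,m}|G_{n,m}|\lesssim N(h)$ by the pointwise $\mO(h)\lambda_n^{-1/6}\lambda_m^{-1/6}$ estimate and summing, gives a worse power of $h$ than Cauchy--Schwarz but still tends to zero, so it serves the purpose.
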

Before we give the proof of this Proposition, we draw an immediate consequence: 
\begin{cor}\label{cor7.3}
Let $\widetilde{R}_2$ be as in \eqref{eq6.3.1b}. Then, under the assumptions of 
Proposition \ref{lem6.3.3}, we have that 
\begin{equation}\label{eq7.14.b}
	\det \left( (f_{i,j}^{h,\delta})_{i,j}+\widetilde{R}_2\right)%
	\stackrel{fd}{\longrightarrow} 
	\det \left( (f_{i,j}^{GAF})_{i,j}\right)%
	, \quad h\to 0.
\end{equation}
%
\end{cor}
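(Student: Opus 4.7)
The plan is to derive this corollary from Proposition \ref{lem6.3.3} via two standard tools: the continuous mapping theorem (because the determinant is a polynomial in its entries) and Slutsky's theorem (to absorb the deterministic remainder $\widetilde{R}_2$). No new probabilistic input is required.

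First, I would fix an integer $k \geq 1$ and points $w_1,\dots,w_k \in O$. Proposition \ref{lem6.3.3}, together with the fact that the joint convergence in finite-dimensional distributions implies joint convergence of any finite set of evaluations, yields the convergence in distribution in $\C^{J^2 k}$
\begin{equation*}
\big(f_{i,j}^{h,\delta}(w_r)\big)_{1\leq i,j\leq J,\ 1\leq r\leq k}
\xrightarrow[h\to 0]{d}
\big(f_{i,j}^{GAF}(w_r)\big)_{1\leq i,j\leq J,\ 1\leq r\leq k}.
\end{equation*}
Since the map $\C^{J\times J}\ni A\mapsto \det A \in \C$ is a polynomial, hence continuous, the continuous mapping theorem gives
\begin{equation*}
\Big(\det\big(f_{i,j}^{h,\delta}(w_r)\big)_{i,j}\Big)_{r=1}^{k}
\xrightarrow[h\to 0]{d}
\Big(\det\big(f_{i,j}^{GAF}(w_r)\big)_{i,j}\Big)_{r=1}^{k}.
\end{equation*}

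Next, I would control the perturbation by $\widetilde{R}_2$. From the bound \eqref{eqNL1}, namely $\widetilde{R}_2(w;h) = \mO(\delta\, h^{-5/2})$, combined with the hypothesis $\delta \leq h^{\kappa}$ with $\kappa > 3$ from \eqref{e:delta}, we get $\widetilde{R}_2(w;h) = \mO(h^{\kappa-5/2}) = \mO(h^{1/2})$, uniformly for $w$ in any compact subset of $O$. Crucially, this estimate is deterministic on the restricted probability space $(\mathcal{M}_h,\mathcal{F}_h,\prob_h)$. Expanding by multilinearity,
\begin{equation*}
\det\!\big((f_{i,j}^{h,\delta}(w_r)) + \widetilde{R}_2(w_r;h)\big) - \det\!\big((f_{i,j}^{h,\delta}(w_r))\big)
\end{equation*}
is a sum of monomials, each of which contains at least one factor drawn from $\widetilde{R}_2(w_r;h)$ and at most $J-1$ factors of the form $f_{i,j}^{h,\delta}(w_r)$. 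The $f_{i,j}^{h,\delta}(w_r)$ are tight (by Proposition \ref{prop7.1} their second moments are uniformly bounded as $h \to 0$), so each such monomial is the product of a tight random variable and a deterministic quantity tending to zero; hence the difference is $o_{\prob}(1)$.

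Applying Slutsky's theorem, the limit in distribution of $\det((f_{i,j}^{h,\delta}(w_r)) + \widetilde{R}_2(w_r;h))$ coincides with that of $\det((f_{i,j}^{h,\delta}(w_r)))$, which is $\det((f_{i,j}^{GAF}(w_r)))$. Since $k$ and the points $w_1,\dots,w_k$ were arbitrary, this is exactly the claim \eqref{eq7.14.b}. The only mildly delicate point is ensuring that $\widetilde{R}_2$ does not ruin joint tightness, but since its bound is deterministic the argument is entirely straightforward once Proposition \ref{lem6.3.3} is in hand.
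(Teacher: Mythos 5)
Your proof is correct and relies on the same two probabilistic tools as the paper's proof --- the continuous mapping theorem for the (polynomial, hence continuous) determinant map, plus a Slutsky-type argument to absorb $\widetilde{R}_2$ --- but applies them in the opposite order. The paper first absorbs $\widetilde{R}_2$ at the level of matrix entries: it proves a small auxiliary lemma stating that if $X_n \stackrel{d}{\to} X$ as random vectors in $\C^N$ and $R_n \to 0$ in probability, then $X_n + R_n \stackrel{d}{\to} X$; applied to the $J^2 k$-dimensional vector of evaluations $(f_{i,j}^{h,\delta}(w_r))$ and the deterministic-on-$\mathcal{M}_h$ remainder $\widetilde{R}_2$, this gives \eqref{cor7.3e_1}, after which the continuous mapping theorem with $\det$ finishes the job. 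You instead apply the continuous mapping theorem first to get convergence of $\det((f_{i,j}^{h,\delta}))$, then argue that $\det((f_{i,j}^{h,\delta}) + \widetilde{R}_2) - \det((f_{i,j}^{h,\delta}))$ is $o_{\prob}(1)$ via a column-wise multilinear expansion together with tightness of the entries (which does follow from the uniform second-moment bound in Proposition \ref{prop7.1}). Both routes are valid and give the same conclusion from Proposition \ref{lem6.3.3} and the bound \eqref{eqNL1}; the paper's order of operations is marginally cleaner because the remainder is absorbed while it is still a simple additive perturbation of a random vector, so no multilinear bookkeeping is needed, whereas your version trades that for a slightly longer but elementary tightness argument at the determinant level.
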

\begin{proof}
From Definition \ref{def_CD} it is immediately clear that the notion of convergence 
in distribution is preserved by composition with a continuous function between 
complete separable metric spaces. This is otherwise known as the continuous 
mapping theorem, see e.g. \cite{Kal97}. The first statement is then an immediate 
consequence of 
\begin{equation}\label{cor7.3e_1}
	(f_{i,j}^{h,\delta}+(\widetilde{R}_2)_{i,j}; 1\leq i,j \leq J) \stackrel{fd}{\longrightarrow} 
	(f_{i,j}^{GAF}; 1\leq i,j \leq J), \quad h\to 0.
\end{equation}
This follows immediately from Proposition~\ref{lem6.3.3} and \eqref{eqNL1}, and 
the following easy Lemma, which states that adding decaying perturbations to a converging sequence of random variables does not change the limit.
\begin{lem}
Let $(X_n)_{n\in\N}$ and $X$ be random vectors in $\C^N$ such that $X_n\stackrel{d}{\to} X$, 
and let $(R_n)_{n\in\N}$ be a sequence 
of random vectors in $\C^N$ such that
\begin{equation}\label{q1}
	\forall t>0\qquad \lim\limits_{n\to+\infty} \prob [ |R_n| > t ] = 0. 
\end{equation}
Then $X_n+R_n\stackrel{d}{\to} X$.
\end{lem}
\begin{proof}
We first check that $\{X_n\}_{n\in\N}$ is a tight sequence of random vectors. Indeed, for $r>1$ consider the continuous function  
$\C^N \ni x\mapsto g(x)\defeq (1-(r - |x|)_+)_+$. We then have 
\begin{equation*}
\begin{split}
	\limsup\limits_{n\to+\infty} \prob [ |X_n| > r ] 
	&=\limsup\limits_{n\to+\infty} \erw [ \mathbf{1}_{\{|X_n| > r\}} ]  \\
	&\leq \limsup\limits_{n\to+\infty} \erw [ g(|X_n|) ] 
	= \erw [ g(|X|) ]  
	\leq \prob [ |X| > r-1 ] .
\end{split}
\end{equation*}
Since $\prob [ |X| > r ] \to 0 $ as $r\to +\infty$ it follows that $\{X_n\}_{n\in\N}$ is  tight. 
Next, notice that 
\begin{equation*}
\begin{split}
	 \prob [ |X_n+R_n| > r ] &\leq \prob [ |X_n|+|R_n| > r ] \\
	 & \leq  \prob [ |X_n|+|R_n| > r \big| |R_n| > 1] \times  \prob [ |R_n| > 1  ]  \\
	 & \ \ \ + \prob [ |X_n|+|R_n| > r \big| |R_n| \leq 1  ] ]  \times \prob [ |R_n| \leq 1  ] \\
	 &\leq   \prob [ |R_n| > 1  ]  + \prob [ |X_n| > r -1 ]
\end{split}
\end{equation*}
where $\prob[A \big| B]$ denotes the conditional probability of $A$ conditioned on $B$. 
Since $(X_n)_n$ is tight, it follows by \eqref{q1} that $(X_n + R_n)_n$ is tight as well. 

Let us show that $X_n+R_n \stackrel{d}{\rightarrow} X$ as $n\to +\infty$. 
By Definition \eqref{def_CD} and Remark \eqref{Rem6.1} it is sufficient to show convergence 
of the induced probability measures in the $w^*$ topology of $\mathcal{C}_c(\C^N)'$. 
So take $\phi \in \mathcal{C}_c(\C^N;\R)$, and choose $\varepsilon >0$. Since $\phi$ is 
uniformly continuous there exists a $t >0$ such that for any $|X-Y|< t$ we have 
$|\phi(X)-\phi(Y)|< \varepsilon/2$. Let $G_n$ denote the event that $|R_n|>t$ and 
let $G_n^c$ denote its complement. By 
\eqref{q1} there exists $n_\varepsilon>0$ such that for any $n\geq n_{\varepsilon}$,  
$\prob [G_n] <\varepsilon/(2\|\phi\|_{\infty})$.  
Then, for $n\geq n_{\varepsilon}$,
\begin{equation*}
\begin{split}
	\erw[ |\phi (X_n + R_n) - \phi(X_n)|] & \leq 
	\erw[ |\phi (X_n + R_n) - \phi(X_n)| \mathbf{1}_{G_n}]  \\
	&~+ \erw[ |\phi (X_n + R_n) - \phi(X_n)|  \mathbf{1}_{G_n^c}] \\ 
	&\leq \varepsilon
\end{split}
\end{equation*}
This shows that $X_n+R_n \stackrel{d}{\rightarrow} X$ as $n\to +\infty$. 
\end{proof}

If we consider $N$-tuples of random elements of the form $f_{i,j}^{h,\delta}(w_k)+\widetilde{R}_2(w_k)$,   Proposition~\ref{lem6.3.3} and estimate \eqref{eqNL1}, proves  \eqref{cor7.3e_1}, hence the Corollary. \end{proof}
\begin{proof}[Proof of Proposition \ref{lem6.3.3}]
The proof is an adaptation of the proof of \cite[Theorem 4.4]{Sh12}. We want to show 
that for any $M\in\N^*$ and all $w_1,\dots,w_M \in  O$ 
\begin{equation}\label{eq7.14.0}
	(f_{i,j}^{h,\delta}(w_l); 1\leq i,j \leq J,~ 1\leq l \leq M) \stackrel{d}{\longrightarrow} 
	(f_{i,j}^{GAF}(w_l); 1\leq i,j \leq J,~ 1\leq l \leq M), \quad h\to 0,
\end{equation}
By the Cram\'er-Wold Theorem \cite[Corollary 4.5]{Kal97}, to prove \eqref{eq7.14.0} it suffices 
to show that for all $\lambda=(\lambda^{i,j}_l; 1\leq i,j \leq J, 1\leq l \leq M) \in \C^{J^2\cdot M}$,
the complex valued random variable 
$$
 S(\lambda) \defeq \sum_{i,j,l} \lambda^{i,j}_lf_{i,j}^{h,\delta}(w_l) 
 $$
converges in distribution, as $h\to 0$, to the complex Gaussian random variable
\begin{equation}\label{eq7.14.00}
 	S^{GAF}(\lambda)\defeq \sum_{i,j,l} \lambda^{i,j}_l f_{i,j}^{GAF}(w_l).
\end{equation}
Let us write $S(\lambda)$ in terms of our restricted complex random variables $q^h_{n,m}$:
\begin{equation*}
 S(\lambda) = \sum_{n,m<N(h)} q^h_{n,m} G_{n,m},  \quad 
		   G_{n,m} = \sum_{i,j,l}\lambda^{i,j}_l h^{-1/2} \zeta_{n,m}^{i,j}(w_l),
\end{equation*}
where we used the notation of \eqref{eq7.5.1}. To prove the limit $S(\lambda)\stackrel{d}{\to} S^{GAF}(\lambda)$, we will use the central limit theorem expressed in Theorem~\ref{thmCLT}. 
We thus need to check that the family of random variables $(S(\lambda))_{0<h\leq h_0}$ satisfies the four conditions stated in the theorem.
\par
Let us first estimate the average of $S(\lambda)$: from \eqref{eq7.3}, we get
\begin{align}\label{eee1}
\sum_{n,m<N(h)} &|\erw [q^h_{n,m} G_{n,m}]| 
\leq \mO(h^{3+\vareps_0}) \sum_{n,m<N(h)} |G_{n,m}|\\
&\leq \mO(h^{3+\vareps_0}) \sum_{i,j,l}\sum_{m<N(h)} \left|(h^{-1/4}e_+^{j,hol}(z_{w_l})|e_m)\right| 
	\sum_{n<N(h)} \left|( h^{-1/4} e_-^{i,hol}(z_{w_l})|e_n)\right|\\
&\leq  \mO(h^{3+\vareps_0}) N(h) \sum_{i,j,l}\| h^{-1/4}e_+^{j,hol}(z_{w_l}) \| \,\|( h^{-1/4} e_-^{i,hol}(z_{w_l})\|\\
&\leq  \mO(h^{1+\vareps_0})\,.
\end{align}
In the third line we used two Cauchy-Schwarz inequalities, and in the last one the fact that the states $\| h^{-1/4} e_{\pm}^{j,hol}(z_{w})\|=\mO(1)$ uniformly when $w\in O$. This proves the point $(i)$ in Theorem~\ref{thmCLT}.
Let us now check condition $(ii)$ of Theorem \ref{thmCLT}. 
Using \eqref{e:E(q2)}, we draw the bound
\begin{equation}\label{eee2}
\begin{split}
\left| \sum_{n,m<N(h) }\erw[(q_{n,m}^h)^2G_{n,m}^2] \right|
&=\sum_{n,m<N(h)}  \left|\erw[(q_{n,m}^h)^2]G_{n,m}^2\right|\\
&\leq \mO(h^{2+\vareps_0}) \sum_{m,n} |G_{n,m}|^2 \\
& \leq  \mO(h^{2+\vareps_0}) \sum_{i,j,l}\sum_{m<N(h)} \left|(h^{-1/4}e_+^{j,hol}(z_{w_l})|e_m)\right|^2
	\sum_{n<N(h)} \left|( h^{-1/4} e_-^{i,hol}(z_{w_l})|e_n)\right|^2\\
& \leq \mO(h^{2+\vareps_0}) \,.
\end{split}
\end{equation}
The variance of $S(\lambda)$ needs more care. By \eqref{eq7.3} and \eqref{eq7.4},
\begin{equation*}
	\erw \bigg[\sum_{n,m<N(h)} |q^h_{n,m}G_{n,m}|^2 \bigg]
	= (1+\mO(h^{2+\varepsilon_0}))\sum_{i,j,l,r,s,t }\lambda^{i,j}_l\overline{\lambda^{r,s}_t}
	\sum_{n,m<N(h)} h^{-1}\zeta_{n,m}^{i,j}(w_l)\overline{\zeta_{n,m}^{r,s}(w_t)}
\end{equation*}
which, by \eqref{eq7.5.1} and Lemma \ref{lem7.1} is equal to 
\begin{equation}\label{eq7.15}
	\begin{split}
	(1+\mO(h^{2+\varepsilon_0}))\sum_{i,j,l,r,s,t }\lambda^{i,j}_l\overline{\lambda^{r,s}_t}
	h^{-1}&\left(e_+^{j,hol}(z_{w_l})|
			  e_+^{s,hol}(z_{w_t})  \right)\\
	&\times\left(e_-^{r,hol}(z_{w_t})|
			   e_-^{i,hol}(z_{w_l})  \right) + \mO_\lambda(h^\infty).
       \end{split}
\end{equation}
Here, we used as well $ \| e_{\pm}^{k,hol}(z_{w})\|= \mO(h^{1/4})$ uniformly for $w\in O$.

By \eqref{eq4.6.1b} we see that the terms in \eqref{eq7.15} with $i\neq r$ and $j\neq s$ 
are $\mO(h^\infty)$. Similar to the proof of Proposition \ref{prop7.1}, we then 
obtain as $h\to 0$:
\begin{equation}\label{eq7.16}
	\erw \left[\sum_{n,m<N(h)} |q^h_{n,m}G_{n,m}|^2 \right]
	\longrightarrow \sum_{i,j,l,t }\lambda^{i,j}_l\overline{\lambda^{i,j}_t} 
	K^{i,j}(w_l,w_t)\,\e^{F_{i,j}(w_l;0)+\overline{F_{i,j}(w_t;0)}}
	\defeq\sigma(\lambda,w).
\end{equation}
\par
In order to check the condition $(iv)$ of Thm~\ref{thmCLT}, we compute the $(4+\vareps_0)$-moment of $S(\lambda)$.  
From \eqref{eq1.9} and the H\"older inequality, we get
\begin{equation*}
\begin{split}
	&\sum_{n,m<N(h)} \erw \left[|q^h_{n,m}G_{n,m}|^{4+\varepsilon_0} \right]
	\leq C_{\lambda}  \sum_{i,j,l}
	\sum_{n,m<N(h)} | h^{-1/2} \zeta_{n,m}^{i,j}(w_l)|^{4+\varepsilon_0} \\ 
	&= C_{\lambda}  \sum_{i,j,l}
	\left(\sum_{m<N(h)}\left|(h^{-1/4}e_+^{j,hol}(z_{w_l})|e_m)\right|^{4+\varepsilon_0}  \right) 
	\left(\sum_{n<N(h)} \left|( h^{-1/4} e_-^{i,hol}(z_{w_l})|e_n)\right|^{4+\varepsilon_0} \right).
\end{split}
\end{equation*}
Splitting $|\bullet|^{4+\vareps_0}=|\bullet|^2|\bullet|^{2+\vareps_0}$, using the fact that $\{e_m\}$ forms an orthonormal basis of $L^2(\R)$ and the bound \eqref{e:bound-overlap} of Lemma \ref{lem7.2}, one obtains
\begin{equation*}
\begin{split}
	\sum_{n,m<N(h)}&\erw \big[  |q^h_{n,m}G_{n,m}|^{4+\vareps_0} \big] \\ 
	&= \mO(h^{1+\varepsilon_0/2}) \sum_{i,j,l} 
	\left\|h^{-1/4} e_+^{j,hol}(z_{w_l}))\right\|^2
	\left\|h^{-1/4}e_-^{i,hol}(z_{w_l})\right\|^2
	\e^{\frac{2}{h}\Phi_{-,0}^i(z_{w_l})}\e^{\frac{2}{h}
	\Phi_{+,0}^j(z_{w_l})}\\
	&= \mO(h^{1+\varepsilon_0/2}). 
\end{split}
\end{equation*}
In the last line we used the fact that both the norms and the exponentials are $\mO(1)$ uniformly for $w\in O$. 
\begin{rem} It is especially for the smallness of this $(4+\vareps_0)$-moment that we needed the overlaps $(h^{-1/4}e_{\pm}^{j,hol}(z_{w})|e_m)$ to all be small, and for this very reason that we chose our auxiliary basis $(e_m)_{m\in\N}$ to be composed of nonsemiclassical states.
\end{rem}
From this $(4+\vareps_0)$-moment, we verify the condition $(iv)$ of Thm~\ref{thmCLT}: for any $\varepsilon >0$
\begin{equation}\label{eq7.17}
 \sum_{n,m<N(h)} \erw \big[ |q^h_{n,m}G_{n,m}|^2 
 	\mathds{1}_{\{|q^h_{n,m}G_{n,m}|>\varepsilon\}} \big]
 < \varepsilon^{-(2+\varepsilon_0)}\sum_{n,m<N(h)} \erw \big[ |q^h_{n,m}G_{n,m}|^{4+\varepsilon_0} \big] 
 \longrightarrow 0,
\end{equation}
as $h\to 0$. Equipped with the estimates \eqref{eee1}, \eqref{eee2},\eqref{eq7.15}, \eqref{eq7.16}, \eqref{eq7.17}, we may apply the  version of the CLT given in Thm~\ref{thmCLT}, to show that 
$S(\lambda)$ converges in distribution to the complex Gaussian random variable $\mathcal{N}_{\C}(0,\sigma(\lambda,w)^2)$, with variance given in \eqref{eq7.16}. 
\\
\par
On the other hand, since the $(f_{i,j}^{GAF})_{i,j\leq J}$ are independent Gaussian analytic functions 
with covariance kernel \eqref{eq7.14.1}, it follows that 
\begin{equation}
	 \sum_{i,j,l} \lambda^{i,j}_l f_{i,j}^{GAF}(w_l) \sim \mathcal{N}_{\C}(0,\widetilde{\sigma}(\lambda,w))
\end{equation}
with
\begin{equation}
\begin{split}
\widetilde{\sigma}(\lambda,w)  &=\erw\left[\big| \sum_{i,j,l} \lambda^{i,j}_l f_{i,j}^{GAF}(w_l)\big|^2\right] 
= 
\sum_{i,j,l,r,s,t} \lambda^{i,j}_l \overline{\lambda^{r,s}_t}\,\erw\left[ f_{i,j}^{GAF}(w_l) \overline{f_{r,s}^{GAF}(w_t)} \right] \\
	& = \sum_{i,j,l,t }\lambda^{i,j}_l\overline{\lambda^{i,j}_t} 
	K^{i,j}(w_l,w_t)\,\e^{F_{i,j}(w_l;0)+\overline{F_{i,j}(w_t;0)}} \\
	& = \sigma(\lambda,w),
\end{split}
\end{equation}
where the second line uses the independence of the different $f_{i,j}^{GAF}$. Since a complex Gaussian random variable is uniquely determined by its expectation and its variance, we conclude 
that $S(\lambda)\stackrel{d}{\to} S^{GAF}(\lambda)$, hence the Proposition.
\end{proof}
Next, we show that the finite dimensional distributions of the random functions 
$\det ( (f_{i,j}^{h,\delta})_{i,j}+\widetilde{R}_2)$ and $F^{\delta}_h$ (see \eqref{eq6.3.1}) converge in distribution to the same limit. 
\par
From \eqref{eq7.13b}, \eqref{eq7.13c}, it is clear that 
for any $n\in \N^*$ and any $ w = (w_1,\dots,w_n) \in O^n$, the family of random vectors in $\C^n$:
\begin{equation*}
	\big(\widetilde{F}^{\delta}_h(w) \big)_{h\in (0,h_0]}\stackrel{\mathrm{def}}{=} \big(F^{\delta}_h(w_1),\dots,F^{\delta}_h(w_n)\big)_{h\in (0,h_0]}, 
\end{equation*}
is tight. 
\\
\par
Next, let $n\in\N^*$ and $\phi\in\mathcal{C}_c(\C^n,\R)$. Set for $w = (w_1,\dots,w_n) \in O^n$:
\begin{equation*}
	\widetilde{g}^{\delta}_h(w)\stackrel{\mathrm{def}}{=}
	\big(g^{\delta}_h(w_1),\cdots ,{g}^{\delta}_h(w_n)\big),\qquad g^{\delta}_h(w_l)=\det \big((f_{i,j}^{h,\delta}(w_l))_{i,j}+\widetilde{R}_2(w_l;h)\big).\end{equation*}
Let $G_h$ denote the event that $\| \widetilde{g}^{\delta}_h(w)\|_{\ell^2([1,n])}^2 \leq Ch^{-1}$ 
and let $G_h^c$ denote its complement. Then, 
\begin{equation*}
\begin{split}
	\big|\erw [ \phi(\widetilde{F}^{\delta}_h(w))] - \erw [ \phi(\widetilde{g}^{\delta}_h(w))] \big| 
	&\leq \erw [ | \phi(\widetilde{F}^{\delta}_h(w)) -\phi(\widetilde{g}^{\delta}_h(w)) | 
	{G_h} ] \times \prob[G_h]\\
	&
	\phantom{\leq}
	+  \erw [ | \phi(\widetilde{F}^{\delta}_h(w)) -\phi(\widetilde{g}^{\delta}_h(w)) | 
	{G_h^c} ]\times \prob[G_h^c].
\end{split}
\end{equation*}
Since $\phi$ is bounded, it follows by \eqref{eq7.13d} that the second term is 
of order $\mO(h^{1/J})$. Suppose that $G_h$ holds. Then, by \eqref{eqNL1}, 
\begin{equation*}
\begin{split}
	 \|\widetilde{F}^{\delta}_h(w)- \widetilde{g}^{\delta}_h(w)\|_{\ell^2([1,n])} &\leq \left(\mO(h^{\infty}) + \mO(\delta h^{-5/2})\right)
	   	  \| \widetilde{g}^{\delta}_h(w)\|_{\ell^2([1,n])}  \\ 
          &\leq \mO(\delta h^{-3})
\end{split}
\end{equation*}
Since $\phi$ is uniformly continuous, we conclude that for any $\varepsilon>0$ 
there exists $h_0>0$ such that for all $0<h<h_0$ 
\begin{equation}\label{eq7.18}
	\big| \erw [ \phi(\widetilde{F}^{\delta}_h(z))] - \erw [ \phi(\widetilde{g}^{\delta}_h(z))] \big| \leq \varepsilon.
\end{equation}
Since $(\widetilde{F}^{\delta}_h(z))_{h\leq h_0}$ is a tight family of random vectors, it follows from
Remark \ref{Rem6.1}, \eqref{eq7.18} and Corollary \ref{cor7.3} that the random function
\begin{equation}\label{eq7.18.1}
	F^{\delta}_h(\bullet) \stackrel{fd}{\longrightarrow} \det \big((f_{i,j}^{GAF}(\bullet))_{1\leq i,j \leq J}\big) 
	\quad 
	\text{when }h\to 0. 
\end{equation}
Moreover, since $F^{\delta}_h(\bullet)$ is a tight sequence of random analytic functions, see Section 
\ref{sec:Tightness1}, Prop.~\ref{prop6.1} and \eqref{eq7.18.1} imply that 
\begin{equation}\label{eq7.19}
	F^{\delta}_h(\bullet) \stackrel{d}{\longrightarrow} \det \big((f_{i,j}^{GAF}(\bullet))_{1\leq i,j \leq J}\big) 
	\quad 
	\text{when }h\to 0.
\end{equation}
We recall that the GAFs $ f_{i,j}^{GAF}$ were defined in Proposition \ref{lem6.3.3}, their covariance kernel is
given by 
\begin{equation*}
K^{i,j}(v,\overline{w})\,\e^{F_{i,j}(v;0)}\e^{\overline{F_{i,j}(w;0)}}, 
\qquad 
F_{i,j}(v;0) = \phi_-^i(v;0) + \phi_+^j(v;0),
\end{equation*}
cf. \eqref{eq7.14.1}. Since $\e^{F_{i,j}(\bullet;0)}$ is a nonvanishing  
deterministic holomorphic function on $O$, we define the random analytic function
 \begin{equation*}
 	g_{i,j}(\bullet)\defeq f_{i,j}^{GAF}(\bullet)\,\e^{-F_{i,j}(\bullet;0)},\qquad i,j=1,\ldots,J.
 \end{equation*}
Then $\{g_{i,j}(\bullet);i,j=1,\ldots, J\}$ are independent GAFs on $O$, with covariance 
kernels $K^{i,j}(v,\overline{w})$. If we define the diagonal matrices
\begin{equation*}
	\Lambda_{\pm} =
	\mathrm{diag}( \e^{-\phi_{\pm}^i(z;0)})_{1\leq i\leq J },\qquad \phi_{\pm}^i\quad\text{as in Prop. \ref{prop7.1}},
\end{equation*}
we get the equality between random analytic functions:
$$
T\defeq \det \big( ( f_{i,j}^{GAF})_{i,j}\big) = \det( \Lambda_- \Lambda_+) \det \big( (g_{i,j} )_{i,j}\big) \defeq  \det( \Lambda_- \Lambda_+) G.
$$
Since, $ \det( \Lambda_- \Lambda_+)(\bullet)$ never vanishes and 
is deterministic, it follows by the continuous mapping theorem \cite{Kal97} and the discussion after 
\eqref{eq:PpP}, that the zero point processes
 \begin{equation*}
 	\cZ_{T}\defeq \sum_{\lambda\in T^{-1}(0)}  \delta_{\lambda} =
	\cZ_G.
 \end{equation*}
Hence, by \eqref{eq7.19} and Proposition \ref{prop6.6}, we see 
that
 \begin{equation}\label{eq7.19.1}
  	\cZ_{F^{\delta}_h}
	\stackrel{d}{\longrightarrow} 
	\cZ_{T}
	=\cZ_{G} 
	\text{ when } h\to 0.
 \end{equation}
Together with the discussion at the beginning Section \ref{sec:LS_M}
and the fact that $\cZ_{F^{\delta}_h}=\cZ^M_{h,z_0}$ represents the set of rescaled eigenvalues of $P^\delta$, this concludes the proof of Theorem~\ref{thm_m1}. 
%
\subsection{$k$-point measures}\label{sec:kptmes}
In this subsection we will show that the $k$-point measures $\mu_k$ of the point process $\mathcal{Z}_{h,z_0}^M$, 
defined in \eqref{eq2.11}, converge to the $k$-point measures $\mu_k$ of the 
point process $\mathcal{Z}_{G_{z_0}}$ defined in Theorem \ref{thm_m1}. We begin 
with a technical
\begin{lem}\label{lem7.5}
	Let $F_h^{\delta}(w)$ be as in \eqref{eq6.3.1}, and let $G_{z_0}(w)$ be as in 
	Theorem \ref{thm_m1}, with $w\in O$. Then, for any compact 
	$K\Subset O$ the numbers $n_F^h(K)$ (resp. $n_{G_{z_0}}(K)$) of zeros of $F_h^{\delta}$ (resp. $G_{z_0}$)
	in $K$ have exponential tail: there exist constants $C_1,C_2>0$ so that for any $h\leq h_0$,
	\begin{equation}\begin{split}
	\forall h\leq h_0,\qquad	\prob[ n_F^h(K) > \lambda ] &\leq C_1\e^{-C_2\lambda},\\
		\prob[ n_{G_{z_0}}(K) > \lambda ] &\leq C_1\e^{-C_2\lambda}.
	\end{split}\end{equation}
\end{lem}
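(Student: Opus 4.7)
The standard tool is Jensen's formula. The plan is to cover the compact set $K$ by finitely many closed disks $\overline{D(w_i, r)}$ with $\overline{D(w_i, R)}\Subset O$. For any holomorphic function $F\not\equiv 0$ with $F(w_i)\neq 0$, Jensen's formula yields
\begin{equation*}
n_F(\overline{D(w_i,r)}) \leq \frac{1}{\log(R/r)}\bigl(\log\|F\|_{L^{\infty}(\overline{D(w_i,R)})} - \log|F(w_i)|\bigr).
\end{equation*}
Summing over the $w_i$, the lemma reduces to establishing two uniform (in $h$) exponential tail bounds: one for $\log\|F\|_{L^{\infty}(K')}$ from above, where $K'=\bigcup_i\overline{D(w_i,R)}$, and one for $-\log|F(w_i)|$ from above, each to be verified for both $F=F^\delta_h$ and $F=G_{z_0}$.

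The first bound follows directly from the moment estimate \eqref{eq7.13c} already established in Section~\ref{sec:Tightness1}: $\erw\bigl[\|F_h^\delta\|_{L^\infty(K')}^{2/J}\bigr] = \mO(1)$ uniformly in $h$. By Markov's inequality,
\begin{equation*}
\prob\bigl[\log\|F_h^\delta\|_{L^\infty(K')} > \lambda\bigr] = \prob\bigl[\|F_h^\delta\|_{L^\infty(K')} > \e^{\lambda}\bigr] \leq C\e^{-2\lambda/J},
\end{equation*}
which is the desired exponential tail. The analogous estimate for $G_{z_0}$ is classical for GAFs, see \cite{HoKrPeVi09}. For the second bound, the essential observation is that $F(w_i)$ is a polynomial of degree at most $J$ in the underlying random variables. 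In the limit, $G_{z_0}(w_i) = \det(g^{i,j}_{z_0}(w_i))_{i,j\leq J}$ is a degree-$J$ polynomial in $J^2$ independent centred complex Gaussians, and the complex Carbery--Wright inequality gives the anti-concentration $\prob\bigl[|G_{z_0}(w_i)| < t\bigr] \leq Ct^{1/J}$, hence $\prob\bigl[\log|G_{z_0}(w_i)| < -\lambda\bigr] \leq C\e^{-\lambda/J}$.

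The main obstacle will be to extend this anti-concentration estimate to $F^\delta_h(w_i)$ uniformly in $h$, since the random variables $q^h_{n,m}$ are not Gaussian but merely satisfy \eqref{eq1.9}. The intended strategy is to exploit a CLT-type comparison: by Proposition~\ref{prop7.1} and Proposition~\ref{lem6.3.3}, each entry $f^{\delta,h}_{i,j}(w_i)$ is a sum of $N(h)^2$ small, independent contributions whose joint law converges to that of the $J^2$ independent Gaussians $g^{i,j}_{z_0}(w_i)$. A quantitative version of this convergence (in Kolmogorov or total-variation distance, obtained via a Berry--Esseen type bound using the $(4+\varepsilon_0)$-moment assumption), combined with the smoothness of the determinant and the remainder estimate \eqref{eqNL1}, should transfer the Carbery--Wright anti-concentration from the Gaussian limit to $F^\delta_h(w_i)$, giving $\prob\bigl[|F^\delta_h(w_i)| < t\bigr] \leq C t^{1/J}$ uniformly for $h\in(0,h_0]$ and $t$ in any bounded range. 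Completing this uniform transfer is the delicate point of the proof.
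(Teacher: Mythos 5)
Your overall architecture parallels the paper's: both reduce the exponential tail to two moment-type bounds on $\log|F|$, one from above (controlled by the positive $2/J$-moment estimate already in Section~\ref{sec:Tightness1}) and one from below (an anti-concentration or negative-moment estimate near zero). The paper packages this via the criterion of \cite[Theorem 3.2.1]{HoKrPeVi09}, requiring $\erw[|F_h^{\delta}(w)|^{\pm c}]\leq b$ uniformly in $w\in O$ and $h\leq h_0$, which is the Offord-type estimate underlying your Jensen-formula reduction. Where you diverge is precisely at the point you flag as delicate: the transfer of anti-concentration from the Gaussian limit to $F_h^\delta$.

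The paper's route is softer and avoids any quantitative CLT. For the negative-moment bound it writes $\erw[|F_h^{\delta}(w)|^{-1/J}]=\int_0^{\infty}\prob[|F_h^{\delta}(w)|^{1/J}\leq\tau]\,\tau^{-2}\,d\tau$, and controls the small-$\tau$ integrand by invoking the convergence in distribution \eqref{eq7.19} that was already established, together with the Portmanteau theorem: this gives $\limsup_{h\to 0}\prob[|F_h^{\delta}(w)|^{1/J}\leq\tau]\leq\prob[|\det(f_{i,j}^{GAF}(w))|^{1/J}\leq\tau]$. The Gaussian small-ball probability on the right is then computed \emph{explicitly}, not via Carbery--Wright, but by a Gram--Schmidt decomposition of the random determinant into independent column-projections, reducing $|\det(f_{i,j}^{GAF}(w))|^2$ to a product of independent $\chi^2_{2(J-j+1)}$ variables and yielding $\prob[|\det(f_{i,j}^{GAF}(w))|^{1/J}\leq\tau]\leq J\tau^2/\eta(w)^2$. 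This exponent $2$ in $\tau$ (i.e.\ $t^{2/J}$ for $|\det|\leq t$) is strictly better than the Carbery--Wright exponent $t^{1/J}$, and this margin is what makes $\erw[|F_h^{\delta}(w)|^{-1/J}]$ finite; with only the $t^{1/J}$ bound you would be at the borderline of convergence for the exponent $c=1/J$ and would have to retreat to a smaller $c$ (your Jensen reduction tolerates this, but the moment criterion as stated in the paper would not directly). More importantly, no Berry--Esseen or total-variation comparison is constructed or needed: the weak convergence alone, through Portmanteau, controls the small-ball probability of $F_h^\delta(w)$ for $h\leq h_0$. So the ``delicate point'' you identify is not merely delicate but unnecessary---the soft argument you already have from Section~\ref{s:weak_convergence} closes the gap.

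One further caution: a quantitative multivariate Berry--Esseen bound controls Kolmogorov-type distances over half-spaces or convex sets, and transferring it to small-ball events $\{|\det(\cdot)|<t\}$ (preimages of balls under a polynomial map) is itself nontrivial. If you did insist on a quantitative route, you would need an anti-concentration transfer lemma adapted to polynomial images, not a plain Berry--Esseen statement; this is substantially more technology than the problem requires.
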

\begin{proof} 
From  \cite[Theorem 3.2.1]{HoKrPeVi09}, to prove the first part of the Lemma it suffices to show that, for some $c>0,\ b>0$, the random analytic function $F_h^{\delta}$ satisfies the bounds
\begin{equation}\label{e:criterion}
\erw[|F_h^{\delta}(w)|^{\pm c}]  \leq b,\qquad \text{uniformly for $w\in O$ and $h\leq h_0$.}\,,
\end{equation}
and the second part requires a similar estimate for the random function $G_{z_0}(w)$.

Fix $w\in O$. Recall the bounds \eqref{eqNL1} on the matrices $\widetilde{R}_1(w)$, $\widetilde{R}_2(w)$. We start with the following uniform bound on the random variable $F^\delta_h(w)$:
\begin{equation}\label{eq7.30.1}
\begin{split}
	\erw[|F_h^{\delta}(w)|^{2/J}] 
	&\leq C_1 \erw[|\det \big((f_{i,j}^{\delta,h}(w))_{i,j}+\widetilde{R}_2(w)\big)|^{2/J}] \\
	& \leq C_1\erw[ \| (f_{i,j}^{\delta,h}(w))_{i,j} + \widetilde{R}_2(w)\|_{HS}^{2} ]  \\ 
	& \leq 2C_1\erw[ \| (f_{i,j}^{\delta,h}(w))_{i,j}\|_{HS}^{2} + \|\widetilde{R}_2(w)\|_{HS}^{2} ]  \\ 
	& \leq 2C_1 \sum_{i,j=1}^J \big[K^{i,j}(w,\bar{w})(1+\mO(\sqrt{h})\e^{2\Rea F_{i,j}(w;h)}  
		+ \mO(h^2)%
		\big] + C_2\delta^2 h^{-5}\\
	& \leq \mO(1),
\end{split}
	\end{equation}
where on the second line we used the inequality \eqref{e:det-HS}, and in the two last inequalities we used Proposition \ref{prop7.1}. The implied constants are uniform in $w\in O$ and $h\leq h_0$.%
\par
We also need to check that the inverse function $F_h^{\delta}(w)^{-1}$ is not too large on average: 
\begin{equation}\label{eq7.30}
\begin{split}
	\erw[|F_h^{\delta}(w)|^{-1/J}]  
	&= 
	\int_0^{\infty} \prob[ |F_h^{\delta}(w)|^{-1/J} \geq t] dt \\
	&= \int_0^{\infty} \prob[ |F_h^{\delta}(w)|^{1/J} \leq \tau] \tau^{-2} dt.
\end{split}
\end{equation}
From the convergence in distribution \eqref{eq7.19}, the continuous mapping theorem \cite[Theorem 3.27]{Kal97} 
and the Portmanteau Theorem \cite[Theorem 3.25]{Kal97},
\begin{equation}\label{eq7.31}
	\limsup_{h\to 0} \prob\big[ |F_h^{\delta}(w)|^{1/J} \leq \tau\big]  \leq 
	 \prob\big[ |\det (f_{i,j}^{GAF}(w))_{i,j}|^{1/J}  \leq \tau\big],
\end{equation}
for $h>0$ small enough. 
Recalling the definition of the functions $F_{ij}(w;0)$ appearing in Proposition \ref{lem6.3.3}, the $f_{i,j}^{GAF}(z)$ are independent 
centred complex Gaussian random variables with variances
\begin{equation*}
\begin{split}
	K^{i,j}(w,\bar{w}) \e^{2\Rea F_{i,j}(w;h)} &= 
	\left(\e^{\frac12\sigma^i_-(z_0)|w|^2 + 2\Rea \phi_-^i(w;0)}\right)
	\left(\e^{\frac12\sigma^j_+(z_0)|w|^2 + 2\Rea \phi_+^j(w;0)}\right) \\ 
	&\defeq \beta^i_-(w)\beta^j_+(w) \neq 0.
\end{split}
\end{equation*}
Using this factorization of the variances, we  set $\Lambda_{\pm}(w)=\mathrm{diag} (\beta_{\pm}^i(w); 1\leq i \leq J)$, which allows to factorize the limiting random determinant as follows:
\begin{equation}\label{eq7.31.0}
 \det (f_{i,j}^{GAF}(w)) = \det\Lambda_+(w) \det\Lambda_-(w)\, \det (v_1(w), \dots, v_J(w)),
\end{equation}
where, for $ j=1,\dots, J$, the column vector
\begin{equation}\label{eq7.31.1}
 v_j(w) = (v_{1j}(w),\dots, v_{Jj}(w))^{T},
\end{equation}
and $\{v_{ij}(w),\ 1\leq i,j\leq J\}$ are i.i.d.
complex Gaussian random variables with distribution $\mathcal{N}_{\C}(0,1)$. 
Until further notice we suppress $w$ in the notation. We view 
$v_j$ as a Gaussian random vector in $\C^J$ with the identity $\mathbf{1}$ as 
covariance matrix, i.e with distribution $\mathcal{N}_{\C}(0,\mathbf{1})$. Hence, the real variable $r=|v_{ij}|^2$ has the exponential distribution $f(r)dr$ with 
\begin{equation*}
	f(r) = \e^{-r}H(r), \quad H(r) = \mathds{1}_{[0,\infty[}(r).
\end{equation*}
The Fourier transform $\hat{f}(\rho)=\frac{1}{1+i\rho}$. Since the components $(v_{ij})_{i\leq J}$ are independent, 
the squared norm $|v_j|^2$ is distributed according to $f\ast \dots \ast f(r)dr=f^{\ast J}dr$ 
where $\ast$ is the convolution. A direct computation shows that 
\begin{equation*}
	f^{\ast J}(r) = \frac{r^{J-1}\e^{-r}}{(J-1)!}H(r), 
\end{equation*}
which is the $\chi^2_{2J}$ distribution in the variable $2r$. We now compute the law of $|\det (v_1, \dots, v_J)|$.
For this, we perform $J-1$ linear operations on the matrix $(v_1,\cdots,v_J)$, setting $\widetilde{v}_1=v_1$, and for any $j\geq 2$,
taking for $\widetilde{v}_j$ the orthogonal 
projection of $\widetilde{v}_j$ onto the orthogonal complement of the space 
$V_j \defeq \mathrm{span}_{\C}\{v_1,\dots,v_{j-1}\}$. Elementary linear algebra allows to write
\begin{equation*} 
	\begin{split}
	|\det (v_1, \dots, v_J) |&= |\det (\widetilde{v}_1, \dots, \widetilde{v}_J)| \\ 
		&= \prod_{j=1}^J|\widetilde{v}_j|,
	\end{split}
\end{equation*}
Once $v_1,\cdots,v_j$, hence $V_j$, are chosen, the vector
$\widetilde{v}_j$ is distributed as a complex Gaussian random vector in $V_j^\perp\equiv \C^{J-j+1}$, with 
distribution $\mathcal{N}_{\C}(0,\mathbf{1})$ (this follows from the fact that the 
initial distribution $\mathcal{N}_{\C}(0,\mathbf{1})$ is invariant under unitary transformations on $\C^J$). 
As a result $\{|\widetilde{v}_j|^2,\ 1\leq j\leq J\}$ are independent random variables with 
$\chi^2_{2(J-j+1)}$ distributions.
\par
Setting $\eta=\eta(w)\defeq (\det\Lambda_+(w) \Lambda_-(w))^{1/J}$ and using that the $|\widetilde{v}_j|^2$ 
are independent, we get by \eqref{eq7.31.0} and a straightforward computation that 
\begin{equation*}
\begin{split}
	\prob[ |\det (f_{i,j}^{GAF}(w))|^{1/J}  \leq \tau] 
	&= \prob[|\det (v_1, \dots, v_J) |^{1/J}  \leq \tau/\eta] \\
	& = \prob \Big[ \big(\prod_{j=1}^J|\widetilde{v}_j|^2\big)^{1/J} \leq ( \tau/\eta)^2 \Big] \\ 
	& = 1 -  \prob \Big[ \big(\prod_{j=1}^J|\widetilde{v}_j|^2)\big)^{1/J} > ( \tau/\eta)^2\Big] \\ 
	& \leq 1 - \prod_{j=1}^J
	 \prob \Big[ |\widetilde{v}_j|^2> ( \tau/\eta)^2\Big] \\
	 &= 1 -\e^{-\frac{\tau^2}{\eta^2}}\prod_{j=1}^{J-1}
	  \e^{-\frac{\tau^2}{\eta^2}}\sum_{k=0}^{J-j}\frac{(\tau/\eta)^{2(J-j-k)}}{(J-j-k)!}.
\end{split}
\end{equation*}
The sum on the right hand side is larger or equal to $1$, so we finally get:
\begin{equation*}
	\prob[ |\det (f_{i,j}^{GAF}(w))|^{1/J}  \leq \tau] 
	 \leq 1 -\e^{-\frac{\tau^2}{\eta^2}J} 
	\leq \frac{\tau^2}{\eta(w)^2}J .
\end{equation*}
Combining this with  \eqref{eq7.30}, \eqref{eq7.31} and splitting the integral into $[0,1]\cup  [1,\infty[$, we obtain
\begin{equation}\label{eq7.30.2}
\begin{split}
	\erw[|F_h^{\delta}(w)|^{-1/J}]  = \mO(1), \quad\text{uniformly in $w\in O$ and $h\leq h_0$.}
\end{split}
\end{equation}
Together with \eqref{eq7.30.1}, this proves the bounds \eqref{e:criterion} with $c=1/J$, hence the first part of the Lemma. An easy adaptation of the above computations shows that the function $G_{z_0}(w)$ satisfies similar bounds, and hence the second part of the Lemma.
\end{proof}
Using Lemma \ref{lem7.5}, we see that for all $\varphi\in\mathcal{C}_c(O,\R_+)$ and any 
$p>0$
\begin{equation*}
\begin{split}
	\erw[ |\langle \cZ_{F_h^{\delta}},\varphi \rangle|^p] 
	&\leq \|\varphi\|_{\infty}^p\, \erw[ (n_F^h(\supp\varphi))^p] \\ 
	&\leq \int_0^{+\infty} \prob [ (n_F^h(\supp\varphi))^p \geq t ] \, dt \\
	&\leq \, C_1 \int_0^{1} \e^{-C_2t^{1/p}} \, dt + 
		p\, C_1 \int_1^{+\infty} \e^{-C_2\tau} \tau^{p-1} \, d\tau < +\infty\,.
\end{split}
\end{equation*}
Hence, for all $\varphi_l\in\mathcal{C}_c(O,\R_+)$, $l=1,\dots, k$, the positive random variable
$\langle \cZ_{F_h^{\delta}},\varphi_1 \rangle \cdots \langle \cZ_{F_h^{\delta}},\varphi_k \rangle$ 
is integrable, uniformly w.r.t. $h\leq h_0$. By Proposition \ref{prop6.6} and \eqref{eq7.19.1} it then follows that 
\begin{equation*}
	\erw[ \langle \cZ_{F_h^{\delta}},\varphi_1 \rangle \cdots 
	\langle \cZ_{F_h^{\delta}},\varphi_k \rangle ] 
	\longrightarrow 
	\erw[ \langle \cZ_{G},\varphi_1 \rangle \cdots 
	\langle \cZ_{G},\varphi_k \rangle ],\quad \text{as }
	h\to 0.
\end{equation*}
Since $\bigotimes_{j=1}^k \mathcal{C}_c(O,\R_+)$ is dense in  $\mathcal{C}_c(O^k,\R_+)$, 
we have obtained the following
\begin{thm}\label{thm:kptmes1}
	Let $\mu_k^h$ (resp. $\mu_k$) be the $k$-point density measure (see \eqref{eq2.11}) of the point process $\mathcal{Z}_{h,z_0}^M$ (resp. $\mathcal{Z}_{G_{z_0}}$) defined in Theorem \ref{thm_m2}. Then, 
	for any $O\Subset\C$ open, relatively compact connected domain and 
	for any $\varphi\in\mathcal{C}_c(O^k\backslash\Delta,\R_+)$, 
	\begin{equation*}
		\int \varphi \,d\mu_k^h \longrightarrow \int \varphi \,d\mu_k, 
		\quad h\to 0.
	\end{equation*}
\end{thm}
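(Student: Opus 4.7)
The proof is essentially already set up by the discussion immediately preceding the statement, so the plan is to turn that chain of observations into a clean argument.

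First, I would reduce the claim to test functions of product form. Since $\bigotimes_{j=1}^k \mathcal{C}_c(O,\R_+)$ is dense in $\mathcal{C}_c(O^k\setminus\Delta,\R_+)$ with respect to the sup-norm on any compact subset of $O^k\setminus\Delta$, and the measures $\mu_k^h$ and $\mu_k$ are locally finite, it is enough to prove convergence for $\varphi(w_1,\dots,w_k)=\varphi_1(w_1)\cdots\varphi_k(w_k)$ with each $\varphi_l\in\mathcal{C}_c(O,\R_+)$. For such a $\varphi$, the defining identity \eqref{eq2.11} reads
\begin{equation*}
\int\varphi\,d\mu_k^h = \erw\big[\langle\mathcal{Z}_{F_h^\delta},\varphi_1\rangle\cdots\langle\mathcal{Z}_{F_h^\delta},\varphi_k\rangle\big],
\end{equation*}
and similarly for $\mu_k$ with $\mathcal{Z}_G$ in place of $\mathcal{Z}_{F_h^\delta}$.

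Next, I would invoke the convergence in distribution of products of linear statistics established in Proposition \ref{prop6.6}: combined with \eqref{eq7.19.1}, this gives
\begin{equation*}
\langle\mathcal{Z}_{F_h^\delta},\varphi_1\rangle\cdots\langle\mathcal{Z}_{F_h^\delta},\varphi_k\rangle \stackrel{d}{\longrightarrow}\langle\mathcal{Z}_{G},\varphi_1\rangle\cdots\langle\mathcal{Z}_{G},\varphi_k\rangle
\end{equation*}
as $h\to 0$. To upgrade this convergence in distribution to convergence of expectations, I need uniform integrability of the prelimit random variables. Here is where Lemma \ref{lem7.5} enters: bounding $\langle\mathcal{Z}_{F_h^\delta},\varphi_l\rangle\leq\|\varphi_l\|_\infty\,n_F^h(\supp\varphi_l)$ and using the exponential tail $\prob[n_F^h(K)>\lambda]\leq C_1 e^{-C_2\lambda}$ (uniform in $h\leq h_0$), I would estimate any $p$-th moment $\erw\big[|\langle\mathcal{Z}_{F_h^\delta},\varphi_l\rangle|^p\big]$ by a finite constant depending only on $p$ and $\supp\varphi_l$; by the H\"older inequality, the product $\prod_l\langle\mathcal{Z}_{F_h^\delta},\varphi_l\rangle$ has all $p$-moments bounded uniformly in $h$. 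In particular it is uniformly integrable, so convergence in distribution implies convergence of expectations.

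There is no real obstacle here beyond carefully citing the two ingredients (Proposition \ref{prop6.6} and Lemma \ref{lem7.5}); the mildly delicate point is the density/approximation step on $O^k\setminus\Delta$, which one handles by first restricting to a compact $K\Subset O^k\setminus\Delta$ (so that $\mu_k^h(K)$ is controlled via the exponential tail applied to the closure of the projection of $K$ onto each factor), approximating the given test function uniformly on $K$ by linear combinations of tensor products supported in a slightly larger compact set, and passing to the limit; the uniform moment bound ensures the approximation error is controlled uniformly in $h$.
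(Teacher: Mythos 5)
Your proposal follows essentially the same route as the paper: reduce to tensor-product test functions, invoke Proposition~\ref{prop6.6} together with \eqref{eq7.19.1} for the distributional convergence of the product of linear statistics, and use the uniform exponential tail from Lemma~\ref{lem7.5} to obtain uniform $p$-th moment bounds (hence uniform integrability) that upgrade this to convergence of expectations. The density/approximation step you flag as delicate is stated just as tersely in the paper and handled in the same way.
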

To end this section on the matrix perturbations, let us check the formula \eqref{eq:MatDens} for the $1$-point density. The Lelong formula states that, in the sense of distributions, 
	\begin{equation*}
		\mathcal{Z}_{G_{z_0}} = \frac{1}{\pi} \partial_{\bar{w}}\partial_w \log|\det G_{z_0}|^2.
	\end{equation*}
Hence, the 1-point measure is given by
	\begin{equation}\label{e:Lelong}
		\mu_1(dw) = \frac{1}{\pi} \partial_{\bar{w}}\partial_w \erw [\log|\det G_{z_0}(w)|^2] L(dw). 
	\end{equation}
Recall from Theorem \ref{thm_m3} that for any fixed $w\in O$, the matrix elements $g_{i,j}(w)$ of $G_{z_0}(w)$ are complex Gaussian 
variables with variance $\e^{\frac{1}{2}(\sigma_+^j(z_0)+\sigma_-^i(z_0))|w|^2}$. 
If we consider the diagonal matrices
\begin{equation*}
	\tLambda_{\pm}(w) =
	\mathrm{diag}( \e^{\frac{1}{4}\sigma_{\pm}^i(z_0)|w|^2}; 1\leq i\leq J ).
\end{equation*}
then the elements of the matrix $\widetilde{G}(w)\defeq  \tLambda_-(w)^{-1}G_{z_0}(w) \tLambda_+(w)^{-1}$
are  i.i.d. $\sim \mathcal{N}_{\C}(0,1)$. As a consequence, $\erw [\log|\det \widetilde{G}(w)|^2]$ 
is independent of $w$. As a result, when applying the Lelong formula \eqref{e:Lelong}, the derivatives will only act on 
$\log|\det \tLambda_-(w)|^2|+\log |\det\tLambda_+(w)|^2$, and yield the 1-point density
	\begin{equation*}
		d^1(z) = \frac{1}{2\pi} \sum_{i=1}^J (\sigma_+^j(z_0)+\sigma_-^i(z_0)).
	\end{equation*}
%
\section{Local Statistics of the eigenvalues of $P_h$ perturbed by a random potential $V_{\omega}$}
\label{sec:LS_V} 
In this section where are interested in the local statistics of the eigenvalues of the 
operator
\begin{equation}\label{eq8.1}
	P^{\delta}=P_h + \delta V_{\omega}, 
\end{equation}
where the random potential $V_{\omega}$ is defined in \eqref{eq1.13}. We
impose that the symbol $p(x,\xi;h)$ of the operator $P_h$ satisfies 
the symmetry \eqref{eq.15},
\begin{equation*}
	p(x,\xi;h) = p(x,-\xi;h).
\end{equation*}
As explained in \eqref{eq1.15.2}, 
we restricted the random variables used in the construction of $V_{\omega}$ to large polydiscs:
\begin{equation}\label{eq8.2}
v=(v_{k})_{k<N(h)}\in \mathrm{PD}_{N(h)}(0,C/h),
\end{equation}
which implies the estimate \eqref{eq5.2.1.6}. Like in section \eqref{sec:LS_M}, we denote by 
$(\mathcal{M}_h,\mathcal{F}_h, \prob_h)$ the probability space 
$(\mathcal{M},\mathcal{F}, \prob)$ restricted to $v_{j}^{-1}(D(0,C/h))$, $j,k<N(h)$. 
The restricted random variables will be denoted by 
\begin{equation}\label{eq8.2b}
	(v_{k}^h)_{k<N(h)}.
\end{equation}
The $v_{k}^h$ are distributed independently and identically. Similarly to \eqref{eq7.3}, \eqref{e:E(q2)}, \eqref{eq7.4}, 
these restricted variables satisfy
\begin{equation}\label{eq8.3}
	\big|\erw[ v_{k}^h ] \big|= \mO(h^{3+\varepsilon_0}), \quad 
	\big | \erw[ (v_{k}^h)^2 ] \big | = \mO(h^{2+\vareps_0}),
\end{equation}
and 
\begin{equation}\label{eq8.4}
	\erw[ |v_{k}^h|^2 ] = 1+ \mO(h^{2+\varepsilon_0}).
\end{equation}
Like in section \ref{sec:LS_M}, we pick 
a $z_0\in\mathring{\Sigma}$; the Grushin problem constructed in 
Proposition \ref{prop5.2.1}, leading to \eqref{eq5.2.11a}, show that that the eigenvalues of the perturbed operator $P^{\delta}$ in 
$W(z_0)$, a relatively compact neighbourhood of $z_0$, are given by the 
zeros of the holomorphic function 
\begin{equation}\label{n2}
   	G^{\delta}(z;h)=\big(1+ R_1(z;h)\big)
	\det \left[ (V_{\omega} h^{-\frac{1}{4}} e_+^{j,hol}(z) |h^{-\frac{1}{4}}e_-^{i,hol} (z))_{i,j\leq J}
	+ R_2(z;h) \right] ,
\end{equation}
with the error terms $R_1$, $R_2$ satisfying the same bounds as in \eqref{eq5.2.12}:
\begin{equation}\label{n2c}
   	\begin{split}
 R_1(z;h)&= \mO( |z-z_0|^{\infty}+ \delta h^{-5/2}) \\ 
 (R_2(z;h))_{i,j} &= \e^{\frac{1}{h}(\Phi_{+,0}^i(z)+ \Phi_{-,0}^j(z) +\mO(|z-z_0|^\infty))} \mO(\delta h^{-3}).
\end{split}
\end{equation}
uniformly in $z\in W(z_0)$ and $v\in PD(0,C/h)$. 
Recall from \eqref{eq4.3.5}, \eqref{eq4.3.4}, that the assumption \eqref{eq.15} implies that the $\pm$ quasimodes are symmetric
	\begin{equation}\label{eq8.5.a}
		e_-^{i,hol}(z;h) = \overline{ e_+^{i,hol}(z;h)}, 
	\end{equation}
The assumptions \eqref{eq1.8.1a}, \eqref{eq4.17.1b} directly show that
	\begin{equation}\label{eq8.5.b}
	\forall i\neq j,\ \forall z\in W(z_0),\qquad	\big( V_{\omega}e_+^{i,hol}(z) \big|  e_-^{j,hol}(z)\big)  =0\,.
	\end{equation}
Since the spectrum of $P^{\delta}$ in $W(z_0)$ is discrete,
$G^{\delta}(\bullet;h)\not\equiv 0$ in $W(z_0)$. 

As in the previous section, to compute the local statistics of the 
eigenvalues we rescale the spectral parameter around $z_0$ by a factor $h^{1/2}$, namely we set
\begin{equation}\label{nota1}
z=z_w=z_0+h^{1/2}w\,,\quad w\in\C. 
\end{equation}
Thus, for any open, simply connected and relatively compact set 
$ O\Subset\C$, we have that, for $h>0$ small enough, the rescaled eigenvalues in $O$
are precisely the zeros of the holomorphic function
\begin{equation}\label{eq8.8}
\begin{split}
F^{\delta}_h(w)&\stackrel{\mathrm{def}}{=} h^{J/4} G^{\delta}(z_w;h) \\
&=(1+ \widetilde{R}_1(w;h))\det\Big[ \mathrm{diag}\big(f^{\delta,h}_{j}(z_w);j=1,\dots,J\big) + \widetilde{R}_2(w;h)\Big],
\end{split}
\end{equation}
with 
\begin{equation}\label{eq8.7}
\begin{split}
&f^{\delta,h}_{j}(w)\defeq \big(V_{\omega}\big| 
  h^{-1/4}\big(e_-^{j,hol}(z_w)\big)^2 \big), 
	\quad 1\leq j\leq J, \\ 
& \widetilde{R}_1(w;h)\stackrel{\mathrm{def}}{=} R_1(z_w;h), \\
& \widetilde{R}_2(w;h)\stackrel{\mathrm{def}}{=}h^{1/4} \, R_2(z_w;h). 
\end{split}
\end{equation}
The need for the normalisation by the factor of $h^{J/4}$ will become apparent later on. 
From now on we let 
\begin{equation}\label{eq.n1a}
	O\Subset\C \text{ be an open, simply connected and relatively compact set } 
\end{equation}
and $h_0>0$ be small enough such that \eqref{eq8.8} is well defined for all $h\leq h_0$. 
From the above discussion, $F^{\delta}_h\not\equiv 0$ 
is a holomorphic function in $O$, so according to section \ref{sec:Conv}
\begin{equation}\label{eq8.9}
	\cZ_h \defeq \sum_{\lambda\in (F^{\delta}_h)^{-1}(0)}\delta_\lambda
\end{equation}
is a well-defined point process on $O$. Our aim is to study the statistical properties of this process, in the limit $h\to 0$.
\subsection{Covariance}\label{sec:Cov2}
In this section we study the covariance of the random functions $f^{\delta,h}_{j}(\bullet)$. 
\begin{prop}\label{prop8.1}
Let $\sigma_+(z)$ be as in \eqref{eq4.4.4}. Let $O$ be as in \eqref{eq.n1a} 
and let $f^{\delta,h}_{i}(w)$, for $w\in O$ and $ 1 \leq i \leq J$, be the rescaled function 
as in \eqref{eq8.7}. Then, for all $v,w\in O$
\begin{equation*}
\begin{split}
\erw\big[f^{\delta,h}_{j}(u) \overline{f^{\delta,h}_{k}(w)}\big] 
\e^{-2 \phi_{s}^j(u;h)-2 \overline{ \phi_{s}^k(w;h)}} 
 = \delta_{j,k} K^{j}(u,\overline{w})(1 + \mO(\sqrt{h})) 
 +\mO(h^2),
 \end{split}
\end{equation*}
where the error terms are uniform in $u,w\in O$,
\begin{equation}\label{eq8.10}
K^{j}(u,\overline{w}) = 
 \e^{\sigma_{+}^j(z_0)u\bar{w}},
\end{equation}
and
\begin{equation}\label{e:phi_s}
	\phi_{s}^j(u;h) = \frac{1}{2}\big[ \log A_{s}^j(z_0;h) + 
				2(\partial^2_{zz}\Phi_{{+},0}^j)(z_0)u^2
	\big],
\end{equation}
with $A_{s}^j(z_0;h)\sim A_0^{j,s}(z_0)+hA_1^{j,s}(z_0)+\dots $ as 
defined in section~\ref{s:symmetric-symbols}.
\end{prop}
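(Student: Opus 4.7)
The proof parallels that of Proposition~\ref{prop7.1}, with the squared quasimodes $(e_-^{j,hol})^2$ playing the role of the tensor states $e_+^j\otimes(e_-^i)^*$ used in the matrix case.

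First, substituting $V_\omega=N(h)^{-1}\sum_{k<N(h)} v_k^h e_k$ into the definition of $f_j^{\delta,h}$, I compute
\begin{equation*}
\erw\big[f_j^{\delta,h}(u)\overline{f_k^{\delta,h}(w)}\big] = \frac{h^{-1/2}}{N(h)^2}\sum_{n,m<N(h)} \erw\big[v_n^h\overline{v_m^h}\big]\,\eta_n^j(u)\,\overline{\eta_m^k(w)},
\end{equation*}
where $\eta_n^j(u)=(e_n\mid (e_-^{j,hol}(z_u))^2)$. Independence of the $v_k^h$ together with the moment estimates \eqref{eq8.3}--\eqref{eq8.4} reduces this, modulo $\mO(h^2)$, to the diagonal contribution $(1+\mO(h^{2+\varepsilon_0}))\sum_n \eta_n^j(u)\overline{\eta_n^k(w)}$, which is precisely an inner product against the spectral projector $\Pi_{N(h)}$.

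Next, Lemma~\ref{lem7.1} applied to squared quasimodes replaces $\Pi_{N(h)}$ by the identity modulo $\mO(h^\infty)$, so the diagonal sum equals $\big((e_-^{k,hol}(z_w))^2 \mid (e_-^{j,hol}(z_u))^2\big) + \mO(h^\infty)$. For $j\neq k$ this vanishes by \eqref{eq4.21.0}, which follows from the disjoint $x$-supports \eqref{eq4.17.1b} guaranteed by hypothesis \eqref{eq1.8.1a}. For $j=k$, Proposition~\ref{prop4.4} yields
\begin{equation*}
\big((e_-^{j,hol}(z_w))^2 \mid (e_-^{j,hol}(z_u))^2\big) = e^{\frac{2}{h}\Psi_s^j(z_w,z_u;h)} + \mO(h^\infty)\,e^{\frac{1}{h}(\Phi_{s,0}^j(z_w)+\Phi_{s,0}^j(z_u))}.
\end{equation*}

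Finally, after the microscopic rescaling $z_u=z_0+h^{1/2}u$, $z_w=z_0+h^{1/2}w$, I Taylor expand $\Psi_{s,0}^j$ at $(z_0,z_0)$ using its almost anti-/holomorphic structure (last bullet of Prop.~\ref{prop4.4}), together with $\Phi_{+,0}^j(z_0)=0$, the vanishing of its first derivatives, and the identity $\sigma_+^j(z_0)=4\,\partial^2_{z\bar{z}}\Phi_{+,0}^j(z_0)$ from \eqref{eq4.4.4}. Combined with $b_s^j(z_0,z_0;h)=A_s^j(z_0;h)^2$ to absorb the $\log A_s^j$ residues, this produces
\begin{equation*}
\tfrac{2}{h}\Psi_s^j(z_w,z_u;h)=\tfrac{1}{2}\log h+\sigma_+^j(z_0)\,u\bar{w}+2\phi_s^j(u;h)+2\overline{\phi_s^j(w;h)}+\mO(\sqrt{h}),
\end{equation*}
after which exponentiation and assembly give the stated covariance formula.

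The chief technical obstacle is this last Taylor expansion: one must carefully match the quadratic contributions from $\partial^2_{zz}$, $\partial^2_{\bar{z}\bar{z}}$ and $\partial^2_{z\bar{z}}$ of $\Phi_{+,0}^j$ at $z_0$ with the definition of $\phi_s^j$, so that only the $u\bar{w}$ cross-term survives in the universal kernel $K^j$. The bookkeeping of $h$-powers (the $h^{-1/2}$ from the $h^{-1/4}$ normalization of $f_j^{\delta,h}$, the $h^{1/2}$ prefactor in Prop.~\ref{prop4.4}, and the $N(h)^{-2}$ from the normalization of $V_\omega$) governs both the $\mO(\sqrt{h})$ remainder in the diagonal term and the $\mO(h^2)$ error that dominates the off-diagonal $n\neq m$ contributions.
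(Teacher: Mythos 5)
Your proof follows essentially the same route as the paper's: one expands $f_j^{\delta,h}$ in the basis $\{e_n\}$, uses independence of the $v_n^h$ together with \eqref{eq8.3}--\eqref{eq8.4} to single out the diagonal $n=m$ contribution, replaces the truncated projector $\Pi_{N(h)}$ by the identity via Lemma~\ref{lem7.1}, kills the off-diagonal $j\neq k$ terms via \eqref{eq4.21.0} and \eqref{eq4.17.1b}, invokes Proposition~\ref{prop4.4} to express the $j=k$ inner product as $\exp(\tfrac{2}{h}\Psi_s^j)$ up to a negligible remainder, and finally Taylor expands $\Psi_s^j$ at $(z_0,z_0)$ under the microscopic rescaling $z_u=z_0+h^{1/2}u$. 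Your identification of the covariance kernel via the quadratic terms of $\Phi_{+,0}^j$ and the absorption of $\log A_s^j$ through $b_s^j(z_0,z_0;h)=A_s^j(z_0;h)^2$ is exactly how the paper's \eqref{eq8.16}--\eqref{eq8.18} obtain \eqref{eq8.10} and \eqref{e:phi_s}. This is the same proof with the same ingredients.

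One small caveat on presentation: in your first display you correctly write the prefactor $h^{-1/2}/N(h)^2$ coming from the $h^{-1/4}$ normalization and the definition of $V_\omega$, but you then state that this ``reduces to the diagonal contribution $(1+\mO(h^{2+\varepsilon_0}))\sum_n\eta_n^j(u)\overline{\eta_n^k(w)}$'' without that prefactor, and never explicitly close the loop on the bookkeeping of those $N(h)^{-2}$ factors. To be airtight you should carry the prefactor along until the very end; in fact the paper's own display \eqref{eq8.12} silently absorbs the $N(h)^{-2}$ in passing from the first to the second line, so it would be a worthwhile exercise to spell out exactly how the normalizations cancel. But the architecture of your argument is the right one, and it matches the paper's step by step.
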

Before we continue, recall from \eqref{eq4.21.1b} that $A_0^{j,\pm}(z_0)>0$, so this proposition implies that
\begin{equation}\label{eq8.11}
\erw\big[f^{\delta,h}_{j}(u) \overline{f^{\delta,h}_{k}(w)}\big]
\e^{-2 \phi_{s}^j(u;h)-2 \overline{ \phi_{s}^k(w;h)}}
 \longrightarrow 
 \delta_{j,k} \,K^{j}(u,\overline{w})
\end{equation}
uniformly in $v,w\in O$ as $h\to 0$. 
 \begin{proof}[Proof of Proposition \ref{prop8.1}]
 The proof parallels that of Prop.~\ref{prop7.1}.
We define the following function on $O\times O$:
\begin{equation}\label{eq8.12}
\begin{split}
	h^{1/2}K^{j,k}_h(u,\overline{w})&\defeq h^{1/2}
	\erw \left[f^{\delta,h}_{j}(u)\overline{f^{\delta,h}_{k}(w)}~ \right] \\
	&= 
	\sum_{n,m} \erw\left[v_{n}^h\,\overline{v_{m}^h} \right] 
	\zeta_{n}^{j}(u)\,\overline{\zeta_{m}^{k}(w)},
\end{split}
\end{equation}
where we sum over the index range $1\leq n,m < N(h)$ and where  
\begin{equation}\label{eq8.13}
	\zeta_{n}^{j}(u) = \big(e_n \big| 
(e_-^{j,hol}(z_u))^2 \big)
\end{equation}
with $z_u$ as in \eqref{nota1}. 
Using \eqref{eq8.3}, \eqref{eq8.4}, we see that \eqref{eq8.12} is equal to 
\begin{equation}\label{eq8.13a}
\begin{split}
	&(1+\mO(h^{2+\varepsilon_0}))\sum_{m}
	\big(( e_-^{k,hol}(z_w)^2\big| e_m \big)
\big(e_m \big| (e_-^{j,hol}(z_u))^2 \big)\\
	& + \mO(h^{6+\varepsilon_0}) \sum_{n,m}
	\Big |\big((e_-^{j,hol}(z_u))^2\big| e_n \big)\Big|\,
		\Big | \big(( e_-^{k,hol}(z_w))^2\big| e_m \big)\Big|. 
	\end{split}
\end{equation}
By Lemma \ref{lem7.1},
\begin{equation*}
	 \sum_{n<N(h)}\Big |\big(( e_-^{j,hol}(z_u))^2\big| e_n \big)\Big| \leq 
	 N(h)^{1/2}\big\| ( e_-^{j,hol}
	 	(z_u))^2\big\| (1+ \mO(h^{\infty})).
\end{equation*}
Here, we used as well that $ h^{-1/4}\|(e_-^{j,hol}(z_u))^2\| \asymp 1$, which 
follows directly from \eqref{eq4.21.1} and \eqref{eq8.16} below. Using this to estimate 
the term of order $h^6$ and applying Lemma \ref{lem7.1} to the first term in \eqref{eq8.13a}, one 
gets 
\begin{equation}\label{eq8.14}
\begin{split}
	h^{1/2}K^{j,k}_h(u,\overline{w})= 
	&
	\big(( e_-^{k,hol}(z_w))^2
	 \big| (e_-^{j,hol}(z_u))^2 \big) \\
	& + \mO(h^2) \big\|( e_-^{k,hol}(z_w))^2\big\|\,
	\big \|(e_-^{j,hol}(z_u))^2\big\|.
\end{split}
\end{equation}
Applying Proposition \ref{prop4.4} and \eqref{eq4.21.0} to 
\eqref{eq8.14}, we get that 
\begin{equation}\label{eq8.15}
\begin{split}
h^{1/2}K^{j,k}_h(u,\overline{w}) = 
&\delta_{j,k} \,
 \exp\Big(\frac{2}{h}\Psi^j_s(\overline{z_w},z_u;h)\Big) \\
&+\mO(h^2) \exp\Big(\frac{1}{h}\big(\Phi^k_s(z_w;h) + 
\Phi^j_s(z_u;h)\big)\Big).
 \end{split}
\end{equation}
Next, recall \eqref{eq4.21.1} and write similar to \eqref{eq4.4.1} 
\begin{equation*}
\Phi_{s}^j(z) = 2\Phi_{+,0}^j(z) + h\log A_s^j(z;h), \quad  \Phi_{+,0}^j(z_0) = -\Ima \varphi_+^j(x_+^j(z),z).
\end{equation*}
Notice that by \eqref{eq4.2.0} and \eqref{eq4.4.5}, we have that 
\begin{equation*}
\Phi_{+,0}^j(z_0) = (\partial_z\Phi_{+,0}^j)(z_0) =
(\partial_{\bar{z}}\Phi_{+,0}^j)(z_0) = 0.
\end{equation*}
Moreover, by the discussion after \eqref{eq4.21.1} we see that 
$\partial^{\alpha}h\log \big( h^{1/4} A_s^j(z;h)\big)=\mO(h)$, 
 for all $\alpha\in\N^2$, uniformly in $z\in W(z_0)$. Thus, by Taylor expansion around $z_0$, 
 we have for $h>0$ small enough and $u \in O$ that 
\begin{equation}\label{eq8.16}
	\frac{1}{h}\Phi_{s}^j(z_u;h) = \log h^{1/4} + 
	2(\partial^2_{z\bar{z}}\Phi_{+,0}^j)(z_0)u\overline{u} 
	+ {\phi}_{s}^j(u;h) + {{\phi}_{s}^j(u;h)}
	+\mO(h^{1/2}),
\end{equation}
where the error term is uniform in $u\in O$ and ${\phi}_{s}^j(u;h)$ is as in the statement of the Proposition.
Similarly, by Proposition \eqref{prop4.4} and \eqref{eq4.4.4}, we have for $h>0$ small 
enough and $u,w \in O$ that 
\begin{equation}\label{eq8.18}
	\begin{split}
	\frac{1}{h}\Psi_{s}^j(z_u,\overline{z_w};h) 
	&= \log h^{1/4} + 2(\partial^2_{z\bar{z}}\Phi_{+,0}^j)(z_0)u\overline{w} 
	+{\phi}_{s}^j(u;h) + \overline{{\phi}_{s}^j(w;h)}
	+\mO(h^{1/2})\\
	&= \log h^{1/4} + \frac{1}{2}\sigma_+^j(z_0)u\overline{w} 
	+{\phi}_{s}^j(u;h) + \overline{{\phi}_{s}^j(w;h)}
	+\mO(h^{1/2}),
	\end{split}
\end{equation}
where the error term is uniform in $u,w \in O$. 
Thus, combining \eqref{eq8.15} with (\ref{eq8.16} - \ref{eq8.18}), and using the fact that ${\phi}_{s}^j(u;h)$ is uniformly bounded for $u\in O$, we obtain that 
\begin{equation*}
\erw\big[f^{\delta,h}_{j}(u) \overline{f^{\delta,h}_{k}(w)}\big] \e^{-2\phi_{s}^j(u;h)-2\overline{\phi_{s}^k(w;h)}} 
 =\delta_{j,k} \, \e^{ \sigma_+^j(z_0)u\overline{w}}(1 + \mO(h^{1/2})) 
 	+\mO(h^2).
\end{equation*}
\end{proof}

\subsection{Tightness}\label{sec:Tightness2}
We will follow the same arguments as in section \ref{sec:Tightness1}, 
to show the tightness of the sequence of random analytic functions
 $f_{j}^{h,\delta}$ on $O$ defined in \eqref{eq8.7}. 
\\
\par
Recall \eqref{eq5.2.12} and that the error estimates are uniform in $z\in W(z_0)$ and in 
$v\in\mathrm{PD}_{N(h)}(0,C/h)$, see \eqref{eq6.3.0}, \eqref{eq5.2.0}. 
For $j=1,\dots, J$, let $\Phi_{\pm}^j$ be as in \eqref{eq4.2.1}. Similar as in 
\eqref{eq7.7},  it follows by Proposition \ref{prop4.2} and 
Taylor expansion that 
%
  $\e^{\frac{1}{h}(\Phi_{+,0}^j(z_w)+ \Phi_{-,0}^j(z_w))}= \mO(1)$,
%
uniformly in $w\in O$. By \eqref{eq5.2.12}, \eqref{eq8.7} we have 
  \begin{equation}\label{eqNL2}
  	\begin{split}
	&\widetilde{R}_1(w;h)= \mO(\delta h^{-3/2}), \\
  	&\widetilde{R}_2(w;h) = \mO(\delta h^{-11/4}),
	\end{split}
  \end{equation}
  uniformly in $w\in O$ and in $v\in\mathrm{PD}_{N(h)}(0,C/h)$. %
\\
\par
Let $K\Subset O$ be some compact subset. For $\varepsilon>0$ and let 
$K_{\varepsilon} = K + \overline{D(0,\varepsilon)}$ be the closure of an 
$\varepsilon$-neighbohood of $K$. Pick $\varepsilon>0$ small enough 
so that $K_{\varepsilon}\Subset O$.Thus, by Proposition \ref{prop8.1}, we 
have for $h_0>0$ small enough that 
\begin{equation}\label{eq8.19}
\sup\limits_{0<h<h_0} \erw \left[ \| f_{j}^{h,\delta} \|^2_{L^2(K_{\varepsilon})} \right]  
< C(K_{\varepsilon}) < +\infty.
\end{equation}
Since $F_h^{\delta}$ is holomorphic, Hardy's convexity theorem \cite[Lemma 2.6]{Sh12} 
implies that for any $p>0$ there exists a positive constant $C_{K_{\varepsilon}}>0$ depending only on
 $K_{\varepsilon}$ so that 
\begin{equation}\label{eq8.20}
\| F_h^{\delta} \|^p_{L^{\infty}(K)}   \leq   
C_{K_{\varepsilon}}  \int_{K_{\varepsilon}}  |F_h^{\delta}(w) |^p L(dw). 
\end{equation}
Using Markov's inequality, \eqref{eqNL2} and \eqref{eq8.8} in combination with 
\eqref{eq8.20} for $p=2/J$, one obtains that for $h_0>$ small enough 
\begin{equation}\label{eq8.21}
\begin{split}
 \sup\limits_{0<h<h_0} \prob &\big[\| F_h^{\delta} \|^2_{L^{\infty}(K)} >r \big]  \\
 &\leq r^{-1/J} C_{K_{\varepsilon}} \sup\limits_{0<h<h_0}  
\erw \big[\int_{K_{\varepsilon}}  |F_h^{\delta}(w) |^{2/J} L(dw) \big] \\
&\leq r^{-1/J} \Big( C_1 C_{K_{\varepsilon}} \sup\limits_{0<h<h_0}  
  \sum_{j=1}^J \erw \big[ | f_{j}^{h,\delta}(w)|^{2} \big ] L(dw) + C_2\Big)\\
&\leq \mO(r^{-1/J}), 
\end{split}
\end{equation}
where all the constants are independent of $r>0$. In the above we also 
used that 
\begin{equation*} 
\begin{split}
\Big|\det\Big[ \mathrm{diag}(f^{\delta,h}_{j}(w);j=1,\dots,J) &+ \widetilde{R}_2(w;h)\Big]\Big|^{2/J} \\
 &\leq 2 \| \mathrm{diag}(f^{\delta,h}_{j}(w);j=1,\dots,J)\|_{HS}^2 + 2\|\widetilde{R}_2(w;h)\|_{HS}^2 \\ 
 & \leq 2 \sum_{j=1}^J |f_{j}^{h,\delta}(w)|^2 + 2 \|\widetilde{R}_2(w;h)\|_{HS}^2. 
\end{split}
\end{equation*}
Hence, in view of Proposition 
\ref{prop6.1}, we conclude that $F_h^{\delta}$ is a tight sequence 
of random analytic functions.
\par
For future use, let us remark that similarly to the above, we obtain that 
for any $w_1, \dots, w_n \in O$ and for any $\varepsilon >0$ 
\begin{equation}\label{eq8.21b}
\begin{split}
 \prob \Big[ \sum_{r=1}^n|\det  (
 \mathrm{diag}(f^{\delta,h}_{j}(w_r);j=1,\dots,J) &+\widetilde{R}_2(w_r;h))|^2 >t \Big]  \\
 &\leq \mO(t^{-1/J}n^{1/J})  \erw \big[\| F_h^{\delta} \|^{2/J}_{L^{\infty}(O)}  \big] \\
&\leq \mO(t^{-1/J}n^{1/J}) .
\end{split}
\end{equation}
%
\subsection{Weak convergence to a Gaussian analytic function}
In this section we will show that the random analytic functions $F_h^{\delta}$, cf. \eqref{eq8.8}, converges in  
distribution, when $h\to 0$, to a product of independent Gaussian analytic functions. 
We start by the following result, analogue of Prop.~ \ref{lem6.3.3}:
\begin{prop}\label{prop8.2}
Let $f_{j}^{h,\delta}$ be as in \eqref{eq8.7}, and let 
$K^{j}(u,\overline{w})$ be as in \eqref{eq8.10}. Then 
\begin{equation}\label{eq8.22}
	(f_{j}^{h,\delta}; 1\leq j \leq J) \stackrel{fd}{\longrightarrow} 
	(f_{j}^{GAF}; 1\leq j \leq J), \quad h\to 0,
\end{equation}
where $f_{j}^{GAF}$ are independent Gaussian analytic functions on $O$ 
with covariance kernel 
\begin{equation}\label{eq8.23}
	K^{j}(u,\overline{w})\,\e^{2\phi_s^{j}(u;0)+2\overline{\phi_s^{j}(w;0)}}, \quad u,w\in O, 
\end{equation}
where $\phi_s^{j}(u;0)$ is the quadratic polynomial described in \eqref{e:phi_s}.
\end{prop}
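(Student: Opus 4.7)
The plan is to mirror the proof of Proposition~\ref{lem6.3.3}, adapted to the potential perturbation. First I would apply the Cramér–Wold device to reduce the claim to scalar convergence: for any $M\geq 1$, points $w_1,\ldots,w_M\in O$, and coefficients $\lambda=(\lambda^j_l)\in\C^{JM}$, I must show that
\[
S(\lambda)=\sum_{j,l}\lambda^j_l f^{\delta,h}_j(w_l)=\sum_{n<N(h)} v_n^h\, G_n,\qquad G_n\defeq \sum_{j,l}\lambda^j_l \bigl(e_n\,\bigm|\, h^{-1/4}(e_-^{j,hol}(z_{w_l}))^2\bigr),
\]
converges in distribution to $S^{GAF}(\lambda)\defeq\sum_{j,l}\lambda^j_l f^{GAF}_j(w_l)$. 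Since the $v_n^h$ are i.i.d., this realises $S(\lambda)$ as a sum of independent complex random variables, and the natural tool is the central limit theorem, Theorem~\ref{thmCLT}.

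Next I would check the four hypotheses of Theorem~\ref{thmCLT}. Conditions $(i)$ and $(ii)$ follow from the moment estimates \eqref{eq8.3} combined with $\sum_n |G_n|\leq N(h)^{1/2}\|G\|_{\ell^2(\N)}$ (Cauchy–Schwarz) and Parseval, together with Lemma~\ref{lem7.1}: the resulting contributions are $\mO(h^{1+\vareps_0})$ and $\mO(h^{2+\vareps_0})$ respectively and vanish as $h\to 0$. For condition $(iii)$, the variance reads, using \eqref{eq8.4} and Lemma~\ref{lem7.1},
\[
\sum_n \erw\bigl[|v_n^h G_n|^2\bigr]=(1+\mO(h^{2+\vareps_0}))\sum_{j,l,k,t}\lambda^j_l\overline{\lambda^k_t}\,h^{-1/2}\bigl((e_-^{j,hol}(z_{w_l}))^2\,\bigm|\,(e_-^{k,hol}(z_{w_t}))^2\bigr)+\mO(h^\infty).
\]
The cross terms with $j\neq k$ vanish to leading order by the quasi-orthogonality \eqref{eq4.21.0}, while the diagonal ones are handled by Proposition~\ref{prop4.4}; combined with the Taylor expansions \eqref{eq8.16}, \eqref{eq8.18}, they yield the target variance
\[
\sigma(\lambda,w)=\sum_{j,l,t}\lambda^j_l\overline{\lambda^j_t}\,K^j(w_l,\overline{w_t})\,\e^{2\phi^j_s(w_l;0)+2\overline{\phi^j_s(w_t;0)}},
\]
which, by the independence of the $f^{GAF}_j$ and the covariance kernel \eqref{eq8.23}, coincides with $\erw[|S^{GAF}(\lambda)|^2]$.

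Condition $(iv)$, the Lindeberg tail bound, is the main obstacle, and it is where the nonsemiclassical Hermite basis $(e_n)$ is crucial. I would control it via a $(4+\vareps_0)$-moment estimate
\[
\sum_n\erw\bigl[|v_n^hG_n|^{4+\vareps_0}\bigr]\leq C_\lambda\sum_{j,l}\sum_n\bigl|h^{-1/4}\bigl((e_-^{j,hol}(z_{w_l}))^2\,\bigm|\,e_n\bigr)\bigr|^{4+\vareps_0},
\]
and split $|\cdot|^{4+\vareps_0}=|\cdot|^{2}\,|\cdot|^{2+\vareps_0}$. The squared-quasimode bound in Lemma~\ref{lem7.2} gives $|h^{-1/4}((e_-^{j,hol}(z))^2\,|\,e_n)|=\mO(h^{1/4})\lambda_n^{-1/6}$, uniformly in $n\in\N$, $h\in(0,h_0]$ and $z\in W(z_0)$; combined with Parseval for the remaining $|\cdot|^2$ factor, this yields a total bound of order $h^{1/2+\vareps_0/4}$, which vanishes as $h\to 0$ and implies the Lindeberg condition by a Chebyshev-type inequality. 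Once the four conditions are verified, Theorem~\ref{thmCLT} produces $S(\lambda)\xrightarrow{d}\mathcal{N}_\C(0,\sigma(\lambda,w))$, which has the same distribution as $S^{GAF}(\lambda)$; Cramér–Wold then completes the finite-dimensional convergence.
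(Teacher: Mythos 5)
Your proposal follows essentially the same route as the paper's own proof: Cram\'er--Wold reduction to a scalar sum $S(\lambda)=\sum_n v_n^h G_n$, then verification of the four hypotheses of Theorem~\ref{thmCLT} via the restricted moment bounds \eqref{eq8.3}--\eqref{eq8.4}, the overlap estimates of Lemmas~\ref{lem7.1}--\ref{lem7.2} for the squared quasimodes, and the variance computation using \eqref{eq4.21.0} and Proposition~\ref{prop4.4}. The only small imprecision is in your reading of Lemma~\ref{lem7.2}: the bound there carries the extra factor $\e^{\frac{2}{h}\Phi_{+,0}^j(z)}$, which is not $\mO(1)$ uniformly on $W(z_0)$ but is $\mO(1)$ for the rescaled points $z=z_w$, $w\in O$, which is exactly the regime in which you use it, so the conclusion stands.
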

We have the following immediate consequence. 
\begin{cor}\label{cor8.1}
Under the assumptions of Proposition \ref{prop8.2}, we have that 
\begin{equation}\label{eq8.24}
	\det  \big( \mathrm{diag}(f^{\delta,h}_{j};j=1,\dots,J)+\widetilde{R}_2\big)
	\stackrel{fd}{\longrightarrow} 
	\prod_{j=1}^J f_{j}^{GAF} , \quad h\to 0.
\end{equation}
\end{cor}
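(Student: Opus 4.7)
The proof will closely parallel that of Corollary \ref{cor7.3}, exploiting two main simplifications compared to the matrix perturbation case: first, the principal matrix $\mathrm{diag}(f^{\delta,h}_j)$ is diagonal, so its determinant is literally the product $\prod_{j=1}^J f^{\delta,h}_j$; second, the perturbation $\widetilde{R}_2(w;h)$ is deterministically small, of size $\mO(\delta h^{-11/4})$ by \eqref{eqNL2}, which tends to $0$ since $\delta\leq h^\kappa$ with $\kappa>3$ (giving $\delta h^{-11/4}=\mO(h^{\kappa-11/4})=o(1)$).

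The first step is to upgrade Proposition \ref{prop8.2} from the vector $(f^{\delta,h}_j)_j$ to the scalar product $\prod_{j=1}^J f^{\delta,h}_j$. Since the product map $\C^J\to\C$, $(x_1,\ldots,x_J)\mapsto\prod_j x_j$, is continuous, the continuous mapping theorem applied jointly at points $w_1,\ldots,w_n\in O$ gives
\begin{equation*}
\Big(\prod_{j=1}^J f^{\delta,h}_j(w_l)\Big)_{1\leq l\leq n}\stackrel{d}{\longrightarrow}\Big(\prod_{j=1}^J f^{GAF}_j(w_l)\Big)_{1\leq l\leq n}, \quad h\to 0.
\end{equation*}

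The second step handles the correction $\widetilde{R}_2$. Multilinearity of the determinant, combined with the fact that $\mathrm{diag}(f^{\delta,h}_j)$ has only diagonal nonzero entries, yields the expansion
\begin{equation*}
\det\big(\mathrm{diag}(f^{\delta,h}_j)+\widetilde{R}_2\big)=\prod_{j=1}^J f^{\delta,h}_j+\sum_{\emptyset\neq S\subseteq\{1,\ldots,J\}}\Big(\prod_{j\notin S}f^{\delta,h}_j\Big)\,\det\big((\widetilde{R}_2)_{S,S}\big),
\end{equation*}
where $(\widetilde{R}_2)_{S,S}$ is the principal submatrix on $S$. Each nontrivial term carries at least one factor of an entry of $\widetilde{R}_2$, hence is bounded by $\mO(\delta h^{-11/4})^{|S|}$ times a product of $J-|S|$ values $f^{\delta,h}_j(w_l)$. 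Since the family $(f^{\delta,h}_j(w_l))_{h}$ is tight (as a consequence of the convergence in distribution provided by Proposition~\ref{prop8.2}), these products of finitely many such variables are also tight, and multiplying by the deterministic $o(1)$ factor $(\delta h^{-11/4})^{|S|}$ shows that the entire correction tends to $0$ in probability as $h\to 0$.

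Combining the two steps via the perturbation lemma used in the proof of Corollary \ref{cor7.3} (if $X_h\stackrel{d}{\to} X$ and $R_h\to 0$ in probability, then $X_h+R_h\stackrel{d}{\to} X$) yields the announced finite-dimensional convergence \eqref{eq8.24}. No substantial obstacle appears: the main analytic content is already contained in Proposition \ref{prop8.2}, and the present statement follows from the diagonal structure together with the decay of the Grushin remainder $\widetilde{R}_2$.
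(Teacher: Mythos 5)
Your proof is correct, and takes a slightly different route from the paper's, which follows the pattern of its proof of Corollary~\ref{cor7.3}: there the perturbation lemma is applied \emph{at the level of $J\times J$ matrix entries}, showing $\big(\delta_{ij}f_j^{\delta,h}(w_l)+(\widetilde{R}_2)_{ij}(w_l)\big)_{i,j,l}\stackrel{d}{\to}\big(\delta_{ij}f_j^{GAF}(w_l)\big)_{i,j,l}$, after which the determinant (a continuous map) is applied once to conclude. You instead apply the continuous mapping theorem to the unperturbed diagonal part first, getting $\prod_j f_j^{\delta,h}\stackrel{fd}{\to}\prod_j f_j^{GAF}$, and then account for $\widetilde{R}_2$ explicitly via the multilinear expansion $\det\big(\mathrm{diag}(f_j)+R\big)=\prod_jf_j+\sum_{\emptyset\neq S}\big(\prod_{j\notin S}f_j\big)\det(R_{S,S})$ together with a tightness argument for the products $\prod_{j\notin S}f_j^{\delta,h}(w_l)$. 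Both are valid; the paper's ordering is marginally more economical since it avoids the determinant expansion altogether, whereas yours isolates the error term in a more hands-on way. One small point worth flagging: $\widetilde{R}_2$ is not deterministic --- the bound $\widetilde{R}_2=\mO(\delta h^{-11/4})$ from \eqref{eqNL2} holds \emph{uniformly} over the truncated probability space --- but since the bound is uniform your conclusion (the correction goes to zero in probability because $\delta h^{-11/4}=\mO(h^{\kappa-11/4})\to 0$ for $\kappa>3$) is unaffected.
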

\begin{proof}
The proof is similar to that of Corollary \ref{cor7.3}.
\end{proof}
\begin{proof}[Proof of Proposition \ref{prop8.2}]
The proof is similar to that of Proposition \ref{lem6.3.3}. For $M\in\N^*$ let  
$w_1,\dots,w_M \in  O$ and $\lambda=(\lambda^{j}_l; 1\leq j \leq J, 1\leq l \leq M) \in \C^{J\cdot M}$. 
Consider the  following complex valued random variable 
\begin{equation*}
 S(\lambda) \defeq \sum_{j,l} \lambda^{j}_lf_{j}^{h,\delta}(w_l) 
 		   = \sum_{n<N(h)} v^h_{n} G_{n}, 
		   \quad 
		   G_{n} = \sum_{j,l}\lambda^{j}_l h^{-1/4} \zeta_{n}^{j}(z_{w_l}),
\end{equation*}
where $\zeta_n^j$ is as in \eqref{eq8.13}. We are going to show that 
$S(\lambda)$ converges in distribution to the complex valued random 
variable 
 \begin{equation*}
 S^{GAF}(\lambda) \defeq \sum_{j,l} \lambda^{j}_l f_{j}^{GAF}(w_l),
\end{equation*}
for all $M\in\N$, let $w_1,\dots,w_M \in  O$ and all 
$\lambda=(\lambda^{i,j}_l; 1\leq i,j \leq J, 1\leq l \leq M) \in \C^{J^2\cdot M}$. 
By the Cram\'er-Wold Theorem this implies \eqref{eq8.22}.
To prove the limit $S(\lambda)\stackrel{d}{\to} S^{GAF}(\lambda)$, we will use the 
central limit theorem expressed in Theorem \ref{thmCLT}. We thus need to check that the family of random variables $(S(\lambda))_{0<h\leq h_0}$ satisfies the four conditions stated in the theorem.
\par
We begin by estimating the average of $S(\lambda)$: from \eqref{eq8.3}, we get
\begin{equation}\label{eclt1}
\begin{split}
\sum_{n<N(h)} |\erw [v^h_{n} G_{n}]| 
&\leq \mO(h^{3+\vareps_0}) \sum_{n<N(h)} |G_{n}|\\
&\leq \mO(h^{3+\vareps_0}) \sum_{j,l}\sum_{n<N(h)}
\left|\left(h^{-1/4}(e_-^{j,hol}(z_{w_l})^2|e_n\right)	\right|\\
&\leq  \mO(h^{3+\vareps_0}) N(h) \sum_{j,l}\|h^{-1/4}(e_-^{j,hol}(z_{w_l}))^2 \|\\
&\leq  \mO(h^{1+\vareps_0})\,.
\end{split}
\end{equation}
In the third line we used two Cauchy-Schwarz inequalities, and in the last one the fact that the states 
$\| h^{-1/4} (e_-^{j,hol}(z_w))^2\|=\mO(1)$ uniformly when $w\in O$. This proves the point $(i)$ 
in Theorem~\ref{thmCLT}.
Let us now check condition $(ii)$ of Theorem \ref{thmCLT}. 
Using \eqref{eq8.3}, we draw the bound
\begin{equation}\label{eclt2}
\begin{split}
\Big| \sum_{n<N(h) }\erw[(v_{n}^h)^2G_{n}^2] \Big|
&=\sum_{n<N(h)}  \Big|\erw[(v_{n}^h)^2]G_{n}^2\Big|\\
&\leq \mO(h^{2+\vareps_0}) \sum_{n} |G_{n}|^2 \\
& \leq  \mO(h^{2+\vareps_0}) \sum_{j,l}\sum_{n<N(h)} 
\Big|\big(h^{-1/4}(e_-^{j,hol}(z_{w_l})^2|e_n\big)	\Big|^2\\
& \leq \mO(h^{2+\vareps_0}) \,.
\end{split}
\end{equation}
Next, we compute the variance of $S(\lambda)$. By \eqref{eq8.3}, \eqref{eq8.4} 
\begin{equation*}
	\erw \Big[\sum_{n<N(h)} |v^h_{n}G_{n}|^2 \Big]
	= (1+\mO(h^{2+\varepsilon_0}))\sum_{j,l,s,t }\lambda^{j}_l\overline{\lambda^{s}_t}
	\sum_{n<N(h)} h^{-1}\zeta_{n}^{j}(z_{w_l})\overline{\zeta_{n}^{s}(z_{w_t})},
\end{equation*}
which, by \eqref{eq8.13} and Lemma \ref{lem7.1} is equal to 
\begin{equation}\label{eq8.25}
	(1+\mO(h^{2+\varepsilon_0}))\sum_{j,l,s,t }\lambda^{j}_l\overline{\lambda^{s}_t}
	h^{-1/2}\big(( e_-^{j,hol}(z_{w_t}))^2|
			 ( e_-^{s,hol}(z_{w_l}))^2  \big)+ \mO(h^\infty).
\end{equation}
Here, we used as well $ h^{-1/4}\|  e_{\pm}^{k,hol}(z_{w_l})\|= \mO(1)$, 
as $h\to 0$, which follows by \eqref{eq4.21.1} and \eqref{eq8.16}. 
\par
By \eqref{eq4.21.0} we see that the terms in \eqref{eq8.25} with $j\neq s$ 
are $\mO(h^\infty)$. Similar to the proof of Proposition \ref{prop8.1}, we then 
obtain that 
\begin{equation}\label{eq8.26}
	\erw \Big[\sum_{n<N(h)} |v^h_{n}G_{n}|^2 \Big]
	\longrightarrow \sum_{j,l,t }\lambda^{j}_l\overline{\lambda^{j}_t} 
	K^{j}(w_l,\overline{w}_t)\,\e^{2\phi_s^{j}(w_l;0)+\overline{2\phi_s^{j}(w_t;0)}}
	\defeq \sigma(\lambda,w)
\end{equation}
as $h\to 0$. 
\par
Continuing, we obtain by \eqref{eq1.9} and the H\"older inequality that 
\begin{equation*}
\begin{split}
	\sum_{n<N(h)} \erw \Big[|v^h_{n}G_{n}|^{4+\varepsilon_0} \Big]
	&\leq C_{\lambda}  \sum_{j,l}
	\sum_{n<N(h)} | h^{-1/4} \zeta_{n}^{j}(z_{w_l})|^{4+\varepsilon_0} \\ 
	&= C_{\lambda}  \sum_{j,l}
	\sum_{m<N(h)}\big|\big(h^{-1/4}(e_-^{j,hol}(z_{w_l}))^2|e_m\big)
		\big|^{4+\varepsilon_0}.
\end{split}
\end{equation*}
Using that $\{e_m\}$ is an orthonormal basis of $L^2(\R)$, as well as the uniform estimates $\mO(h^{1/4})$ for the overlaps (see
Lemma \ref{lem7.2}), one obtains that 
\begin{equation*}
\begin{split}
	\sum_{n<N(h)} \erw \big[|v^h_{n}G_{n}|^{4+\varepsilon_0} \big] 
	&= \mO(h^{(2+\varepsilon_0)/4}) \sum_{j,l} 
		\big\|(h^{-1/4} (e_-^{j,hol}(z_{w_l}))^2\big\|^2
	\e^{\frac{4}{h}\Phi_{+,0}^j(z_{w_l})}\\
	&= \mO(h^{(2+\varepsilon_0)/4}). 
\end{split}
\end{equation*}
In the last line we used as well Taylor expansion around $z_0$ and the 
fact that $O$ is relatively compact, as in \eqref{eq8.16}, to see that 
the exponentials are of order $\mO(1)$. 
We observe that for any $\varepsilon >0$
\begin{equation}\label{eq8.27}
 \sum_{n<N(h)} \erw \big[ |v^h_{n}G_{n}|^2 
 	\mathds{1}_{\{|v^h_{n}G_{n}|>\varepsilon\}} \big]
 < \varepsilon^{-(2+\varepsilon_0)}\sum_{n<N(h)} \erw \big[ |v^h_{n}G_{n}|^{4+\varepsilon_0} \big] 
 \longrightarrow 0,
\end{equation}
as $h\to 0$. In view of \eqref{eclt1}, \eqref{eclt2}, \eqref{eq8.25}, \eqref{eq8.26}, 
\eqref{eq8.27}, we may apply the central limit theorem given in Theorem \ref{thmCLT}, 
to show that $S(\lambda)$ converges in distribution to the random Gaussian variable $\sim\mathcal{N}_{\C}(0,\sigma(\lambda,w))$, with variance given in \eqref{eq8.26}. 
\\
\par
On the other side, since $(f_{j}^{GAF})_{j\leq J}$ are independent Gaussian analytic functions 
with covariance kernel \eqref{eq8.23}, it follows that 
\begin{equation}
	 \sum_{j,l} \lambda^{j}_l f_{j}^{GAF}(w_l) \sim \mathcal{N}_{\C}(0,\widetilde{\sigma}(\lambda,w))\,,
\end{equation}
with
\begin{equation}
\begin{split}
	\widetilde{\sigma}(\lambda,w)  &=\erw\Big[\big| \sum_{j,l} \lambda^{j}_l f_{j}^{GAF}(w_l)\big|^2\Big] = 
	\sum_{j,l,s,t} \lambda^{j}_l \overline{\lambda^{s}_t}\,\erw\left[ f_{j}^{GAF}(w_l) 
	\overline{f_{s}^{GAF}(w_t)} \right] \\
	& = \sum_{j,l,t }\lambda^{j}_l\overline{\lambda^{j}_t} 
	K^{j}(w_l,\overline{w}_t)\,\e^{2\phi_s^{j}(w_l;0)+\overline{2\phi_s^{j}(w_t;0)}} \\
	& = \sigma(\lambda,w),
\end{split}
\end{equation}
where in the last line we also used \eqref{eq8.26}. Since a complex Gaussian random variable 
is uniquely determined by its expectation and its variance, we conclude that $S(\lambda)\stackrel{d}{\to} S^{GAF}(\lambda)$ and, equivalently, \eqref{eq8.22}.
\end{proof}
Next, we show that the finite dimensional distributions of $F^{\delta}_h$ 
and $\det  \big( \mathrm{diag}(f^{\delta,h}_{j})+\widetilde{R}_2\big)$ 
converge in distribution to the same limit.
By \eqref{eq8.20}, \eqref{eq8.21}, it follows that the sequence of random vectors
\begin{equation*}
	\widetilde{F}^{\delta}_h(w)\stackrel{\mathrm{def}}{=}(F^{\delta}_h(w_1),\dots,F^{\delta}_h(w_n)), 
	\quad w = (w_1,\dots,w_n) \in O^n,
\end{equation*}
is tight for any $n\in\N$. 
Let $n\in\N$ and $\phi\in\mathcal{C}_c(\C^n,\R)$, let $w = (w_1,\dots,w_n) \in O^n$,
and set:
\begin{equation*}
	\widetilde{g}^{\delta}_h(w)\stackrel{\mathrm{def}}{=}
	\big(g^{\delta}_h(w_1),\cdots, g^{\delta}_h(w_n)\big), 
	\quad 
	g^{\delta}_h(w_l)= \det  \big( \mathrm{diag}(f^{\delta,h}_{j}(w_l))+\widetilde{R}_2(w_l)\big).
\end{equation*}
Similar to \eqref{eq7.18}, one can then show that for any $\varepsilon>0$ 
there exists $h_0>0$ such that for all $0<h<h_0$, 
\begin{equation}\label{eq8.28.0}
	\big|\erw [ \phi(\widetilde{F}^{\delta}_h(w))] - \erw [ \phi(\widetilde{g}^{\delta}_h(w))] \big| \leq \varepsilon.
\end{equation}
Moreover, since $\widetilde{F}^{\delta}_h(w)$ is a tight sequence of random vectors, it follows from 
Remark \ref{Rem6.1} and Corollary \ref{cor8.1} that 
\begin{equation}\label{eq8.28a}
	F^{\delta}_h \stackrel{fd}{\longrightarrow} \prod_{j=1}^Jf_{j}^{GAF} , 
	\quad 
	\text{as }h\to 0.
\end{equation}
Since $F^{\delta}_h(\bullet)$ is a tight sequence of random analytic functions, see Section \ref{sec:Tightness2}, 
it then follows by Theorem \ref{thm:Pro} and \eqref{eq8.28a} that 
\begin{equation}\label{eq8.28}
	F^{\delta}_h \stackrel{d}{\longrightarrow} \prod_{j=1}^Jf_{j}^{GAF} , 
	\quad 
	\text{as }h\to 0,
\end{equation}
where $ f_{j}^{GAF} $ are as in Proposition \ref{prop8.2}, with covariance kernel 
given by $K^{j}(u,\bar{w})\,\e^{2\phi_s^{j}(u;0)}\e^{2\overline{\phi_{s}^j(w;0)}}$.
Since $\e^{2\phi_s^{j}(u;0)}$ is a nonvanishing deterministic holomorphic function on $O$, 
 \begin{equation*}
 	g_{j}(w)\defeq f_{j}^{GAF}(w)\e^{-2\phi_s^{j}(w;0)}
 \end{equation*}
is a Gaussian analytic function on $O$, with covariance 
kernel $K^{j}(u,\bar w)$. Moreover, setting 
 \begin{equation*}
 	\widetilde{G}(w)\prod_{j=1}^J \e^{2\phi_s^{j}(w;0)}
	\defeq \prod_{j=1}^J g_j(w) \prod_{j=1}^J \e^{2\phi_s^{j}(w;0)}= 
	\prod_{j=1}^J f_{j}^{GAF}(w)\defeq G(w), 
 \end{equation*}
we see that $\widetilde{G}(w)\prod_{j=1}^J \e^{2\phi_s^{j}(w;0)}$ and 
$G(w)$, as random analytic functions, have the same distribution. Since, 
$ \prod_{j=1}^J \e^{2\phi_s^{j}(w;0)}$ never vanishes, it follows by the continuous 
mapping theorem \cite{Kal97} and the discussion after \eqref{eq:PpP} that for all $\phi\in\mathcal{C}_c(O,\R)$,
 \begin{equation*}
 	\sum_{\lambda\in G^{-1}(0)}  \phi(\lambda)  \stackrel{d}{=}
	\sum_{\lambda\in \widetilde{G}^{-1}(0)}  \phi(\lambda).
 \end{equation*}
Hence, by \eqref{eq8.28} and Proposition \ref{prop6.6}, we see 
that for all $\phi\in\mathcal{C}_c(O,\R)$
 \begin{equation}
  	\sum_{\lambda\in (F^{\delta}_h)^{-1}(0)} \phi(\lambda)
	\stackrel{d}{\longrightarrow} 
	\sum_{\lambda\in G^{-1}(0)}  \phi(\lambda) 
	\stackrel{d}{=}\sum_{\lambda\in \widetilde{G}^{-1}(0)}  \phi(\lambda), \quad 
	\text{ as } h\to 0.
 \end{equation}
We notice that the random function $\widetilde{G}$ is identical with $G_{z_0}$ in the Theorem~\ref{thm_m2}. Together with the discussion at the beginning of Section \ref{sec:LS_M}
and with Proposition \ref{prop7.1}, this ends the proof of that Theorem. 
 %
%
\subsection{Correlation functions}
Let $\mu_k^h$ be the $k$-point density measure of $\mathcal{Z}_h^M$, 
defined in Theorem \ref{thm_m3}, and let $\mu_k$ be the $k$-point density measure of 
the point process $\mathcal{Z}_{G_{z_0}}$, given in Theorem \ref{thm_m3}. 
Following the exact same arguments as in Section \ref{sec:kptmes}, we obtain the first part of Theorem~\ref{thm_m3}: for any $O\Subset\C$ open, 
relatively compact connected domain and for all $\varphi\in\mathcal{C}_c(O^k\backslash\Delta,\R_+)$, %
\begin{equation*}
	\int \varphi \,d\mu_k^h \longrightarrow \int \varphi \,d\mu_k
	\quad \text{as }h\to 0.
\end{equation*}
Recall from Theorem \ref{thm_m2} that $G_{z_0}(z)=\prod_{j=1}^Jg_j(z)$ where the 
$g_j$ are independent Gaussian analytic functions. To complete the proof of 
Theorem \ref{thm_m3}, we will show, using the simple product structure of 
$G_{z_0}(z)$, that the $k$-point measure $\mu_k$ of $G_{z_0}$ has a
continuous Lebesgue density $d_G^k$, called the 
$k$-point density, which can be explicitly computed, using the expressions of the $k'$-point densities of the GAFs $g_j$. This is not surprising, remembering that the process $\cZ_{G_{z_0}}$ is the superposition of the $J$ independent processes $\cZ_{g_j}$. This contrasts with the case of the 
point process associated with random matrix perturbations of $P_h$ (see Thm~\ref{thm_m1}), for which the computation of the limiting $k$-densities remains an open problem. 
\par
Denoting $\Delta_{w}=\partial_{\bar w}\partial_{w}$ and $w=(w_1,\ldots,w_k)$, the generalization of \eqref{e:Lelong} reads:
\begin{equation}\label{eq8.30.1}
 \begin{split}
		\mu_k(dw) &= (2\pi)^{-k}\big(\prod_{i=1}^k \Delta_{w_i}\big) \erw[ \log |G_{z_0}(w_1)|\cdots \log |G_{z_0}(w_k)|] 
		L(dw)\\ 
		&\stackrel{\mathrm{def}}{=}d^k(w)\,L(dw),
\end{split} 
\end{equation}
in the sense of distributions. This follows from the Poincar\'e Lelong formula 
and Fubini's theorem, see e.g. \cite{HoKrPeVi09}. Applying the product structure of $G_{z_0}$,  we obtain
 \begin{equation}\label{eq8.30}
		d^k(w)
		=\sum_{\beta_1,\ldots,\beta_k=1}^J
		\big(\prod_{i=1}^k \Delta_{w_i}\big) \erw[ \log |g_{\beta_1}(w_1)|\cdots \log |g_{\beta_k}(w_k)|].
\end{equation}
Since the $g_j$ are mutually independent, for each term $\beta=(\beta_1,\ldots,\beta_k)$ the expectation in \eqref{eq8.30} 
factorizes, each factor grouping together the identical GAF $g_{\beta_i}=g_j$. Upon applying the relevant derivatives $\Delta_{w_i}$, each such factor yields a certain density associated with the GAF $g_j$.

According to  \cite[Cor.3.4.2]{HoKrPeVi09}, for a Gaussian analytic function $f$ on an open set $U\subset\C$ 
with covariance kernel 
$K_f(u,\overline{w})$, if $\det K_f(w_i,\overline{w}_j)_{1\leq i,j\leq k}$ does not vanish 
anywhere on $U^k\setminus \Delta$,  then the $k$-point measure of $\cZ_f$  has a continuous Lebesgue density $d_f(w)$ 
given by 
 \begin{equation}\label{eq8.31}
 \begin{split}
 	d_f^k(w) = \frac{1}{(2\pi)^k} \big( \prod_{i=1}^k \Delta_{w_i} \big) \erw[ \log |f(w_1)|\cdots\log |f(w_k)|] \\ 
		  = \frac{\mathrm{perm}\big(C(w)- B(w)A^{-1}(w)B^*(w) \big) }{\pi^k\det  A(w)},
\end{split} 
 \end{equation}
 where $\mathrm{perm}(\cdot)$ denotes the Permanent of a matrix and $A,B,C$ are 
 complex $k\times k$-matrices defined by 
  \begin{equation*}
  	A_{i,j}(w) = K(w_i,\overline{w_j}), \quad 
	B_{i,j}(w) = (\partial_w K)(w_i,\overline{w_j}), \quad 
	C_{i,j}(w)= (\partial_{w \overline{w}}K)(w_i,\overline{w_j}).
  \end{equation*}
In our case, Theorem \ref{thm_m2} shows that for any of $j=1,\ldots,J$ and any $k\geq 1$,
 \begin{equation*}
  	\det \big(K^j(w_n,\overline{w}_m)\big)_{1\leq n,m\leq k} 
	= \det (\e^{\sigma_+^j(z_0)w_n\overline{w}_m} )_{1\leq n,m\leq k}.
\end{equation*}
This determinant vanishes if and only if $w_i=w_j$ for some $i\neq j$, that is if $w$ is in the diagonal $\Delta$.
Therefore, the $k$-point density functions $d^k_{g_j}$ of the $g_j$ exist, 
and can be expressed in the form \eqref{eq8.31}. Let $\mathfrak{S}_k$ be the permutation  
group of $k$ elements. Any $k$-density is symmetric with respect to permutations, hence 
   \begin{equation}\label{eq8.30.3} 
  	d^k_f(w) = \frac{1}{k!} \sum_{\tau\in \mathfrak{S}_k} d^k_f(w_{\tau(1)},\dots, w_{\tau(k)}).
  \end{equation}
Simple combinatorics shows that we can decompose the index set in 
the following disjoint way
 \begin{equation}\label{eq8.30.2}
 \begin{split}
  	&\{1\leq \beta_1,\ldots,\beta_k \leq J\}
	= 
	\bigsqcup_{\alpha \in \N^J, |\alpha|=k}  A_{\alpha},\\
  A_{\alpha} &= \{\beta\in \N^k; \text{for each $j=1,\ldots,J$, there are $\alpha_j$ components $\beta_i$ equal to $j$}  \}.
  \end{split}
 \end{equation}
 Here we used the notation $|\alpha|=\alpha_1+\cdots+\alpha_J$ for a multiindex $\alpha\in\N^J$.
Using \eqref{eq8.30}, \eqref{eq8.30.2}, \eqref{eq8.30.3} we write 
   \begin{equation*}
  	d^k(w) = 
	\sum_{\alpha \in \N^J, |\alpha|=k} 
	\sum_{\beta\in A_{\alpha}}
	\frac{1}{k!} \sum_{\tau\in \mathfrak{S}_k} 
	\big(\prod_{i=1}^k \Delta_{w_i}\big) \erw[ \log |g_{\beta_1}(w_{\tau(1)})|\cdots\log |g_{\beta_k}(w_{\tau(k)})|].
  \end{equation*}
Since we sum over all possible permutations $\tau$ and since the sum over $\beta\in A_{\alpha}$ 
is just a sum over all possible orderings for a fixed configuration $\alpha$, it follows together 
with \eqref{eq8.31} that 
   \begin{equation*}%
  	d^k(w) = 
	\sum_{\alpha \in \N^J, |\alpha|=k} 
	\frac{1}{k!}\binom{k}{\alpha} \sum_{\tau\in \mathfrak{S}_k} 
	\prod_{j=1}^J d_{g_j}^{\alpha_j}(w_{\tau(\alpha_1+\cdots+\alpha_{j-1}+1)},\dots, w_{\tau(\alpha_1+\cdots+\alpha_{j})}),
  \end{equation*}
where $\binom{k}{\alpha}=\frac{k!}{\alpha!}\defeq \frac{k!}{\alpha_1!\alpha_2!\cdots\alpha_J!}$ is the multinomial coefficient. 
This is the expression \eqref{eq2.11.4} in Theorem \ref{thm_m3}.
\medskip

To end this presentation, we provide the explicit formula for the $2$-point correlation function 
of the limiting point process $\cZ_{G_{z_0}}$. Notice first that 
\begin{equation*}
	 d^1(w) = \sum_{j=1}^Jd^1_{g_{j}}(w).
\end{equation*}
From \eqref{eq8.31} one calculates directly that 
\begin{equation*}
	d^1_{g_j}(w) = \frac{1}{\pi}\partial_w\partial_{\bar{w}}\log K^j(w,\overline{w}) = \frac{\sigma_+^j(z_0)}{\pi}.
\end{equation*}
Hence, 
\begin{equation*}
	 d^1(w) =\frac{1}{\pi} \sum_{j=1}^J \sigma_+^j(z_0),\quad\text{for all $w\in \C$}.
\end{equation*}
Next, we have that 
\begin{equation*}
	 d^2(z_1,z_2) = \sum_{j=1}^Jd^2_{g_{j}}(z_1,z_2) + 
	 \sum_{\substack{i,j=1 \\ i\neq j} }^Jd^1_{g_{i}}(z_1)d^1_{g_{j}}(z_2).
\end{equation*}
A cumbersome but straightforward calculation, using \eqref{eq8.31}, shows that for any $w_1\neq w_2$,
\begin{equation*}
	d^2_{g_{j}}(w_1,w_2) = \big(\frac{\sigma_+^j(z_0)}{\pi}\big)^2
					   \kappa \big( \frac{\sigma^j_+(z_0)}{2}|w_1-w_2|^2 \big),
\end{equation*}
where 
\begin{equation*}
	\kappa (t)  = \frac{(\sinh^2 t + t)\cosh t - 2t \sinh t}{\sinh^3 t}, \quad t\geq 0.
\end{equation*}
A Taylor expansion shows that $\kappa(t)=t(1+\mO(t^2))$, as $t\to 0^+$, and 
$\kappa(t)=1+\mO(t^2\e^{-2t})$, as $t\to +\infty$. This expression has been obtained before
by \cite{Ha96,BlShiZe00}. We then obtain the following formula for the $2$-point correlation function of $\cZ_{G_{z_0}}$:
\begin{equation*}
\begin{split}
	K^2(w_1,w_2) &\defeq \frac{ d^2(w_1,w_2)}{d^1(w_1)d^1(w_1)}\\
			     &=1
			     +\sum_{j=1}^J\frac{(\sigma_+^j(z_0))^2}
			     {\big(\sum_{j=1}^J\sigma_+^j(z_0)\big)^2}
			      \Big[\kappa\big( \frac{\sigma^j_+(z_0)}{2}|w_1-w_2|^2 \big)
			      -1\Big].
\end{split}
\end{equation*}
For $|w_1-w_2| \gg 1$, we have the asymptotics
\begin{equation*}
	K^2(w_1,w_2) = 1+
	\mO\Big(\big( \sigma^j_+(z_0)|w_1-w_2|^2 \big)^2\e^{-\min\limits_{j}
	\sigma^j_+(z_0)|w_1-w_2|^2}\Big),
\end{equation*}
while for $|w_1-w_2| \ll1 $,
\begin{equation*}
	K^2(w_1,w_2) =1 - \sum_{j=1}^J\frac{(\sigma_+^j(z_0))^2}
			     {\big(\sum_{j=1}^J\sigma_+^j(z_0)\big)^2}
			      \Big[1 -  \frac{\sigma^j_+(z_0)}{2}|z_1-z_2|^2 \big(1+ 
			      \mO\big(|z_1-z_2|^4)
			      \big)
			      \Big].
\end{equation*}
In particular, this 2-point correlation does not vanish when $|w_1-w_2|\to 0$.
\bigskip

Let us finally prove Proposition \ref{thm_Tran}, which expresses the invariance of our limiting processes with respect to the direct isometries of $\C$.
\begin{proof}[Proof of Proposition \ref{thm_Tran}]
	We will show the result in case of $G_{z_0}$ as in Theorem \ref{thm_m2}, since 
	the case of Theorem \ref{thm_m1} is similar. 
	Let $\alpha,\beta\in\C$ with $|\alpha|=1$ and let $\varphi(w)=\alpha w + \beta$. 
	By the continuous mapping theorem \cite{Kal97} and the discussion after 
	\eqref{eq:PpP}, it is sufficient to show that there exists 
	a deterministic function holomorphic function $\Phi(w)$ such that 
	\begin{equation}\label{e:multipl}
		G_{z_0}(\varphi(w)) \stackrel{d}{=} G_{z_0}(w) \e^{\Phi(w)}, \quad w\in\C.
	\end{equation}
	Recall that  
	 \begin{equation*}
 		G_{z_0}(w)=\det (g_j(w))
	 \end{equation*}
where $g_j$, for $1\leq j\leq J$, are independent Gaussian analytic functions on 
 $\C$ with covariance kernel 
 \begin{equation}
K^{j}(v,\overline{w}) = 
 \e^{\sigma_+^j(z_0) v\overline{w}}.
\end{equation}
Then, $g_j\circ \varphi$ is also an independent Gaussian analytic function on $\C$ with covariance  
kernel 
 \begin{equation}
K^{j}_{\varphi}(v,\overline{w}) = 
 \e^{\sigma_+^j(z_0) v\overline{w}} 
 \e^{\phi_+^i(v)+\overline{\phi_+^i(w)}}
\end{equation}
with 
 \begin{equation*}
 	\phi_{+}^j(w) = \sigma_{\pm}^i(z_0)\big(\alpha \overline{\beta} w + 
		\frac{1}{2}|\beta|^2\big).
 \end{equation*}
Hence, the GAFs $g_j\circ \varphi$ and $g_j\e^{\phi_+^j}$ are equal in distribution, 
since they have the same covariance kernel. By the continuous mapping theorem, 
the random analytic functions $G_{z_0}\circ\varphi$ and 
 $G_{z_0}\e^{\Phi}$ are equal in distribution, provided we take
$\Phi(w) = \sum_{j=1}^J\phi_{+}^j(w)$. 
\end{proof}

\bigskip
\noindent\textbf{Acknowledgements.}
Both authors acknowledge the support of the grant Gerasic-ANR-13-BS01-0007-02 awarded by the Agence Nationale de la Recherche. The second author was also supported by the Erwin Schr{\"o}dinger Fellowship J4039-N35 and by the National Science Foundation grant DMS-1500852 and he is grateful to the Laboratoire de Math\'ematiques d'Orsay for providing 
a stimulating environment in which most of this paper has been written.  
\providecommand{\bysame}{\leavevmode\hbox to3em{\hrulefill}\thinspace}
\providecommand{\MR}{\relax\ifhmode\unskip\space\fi MR }
\providecommand{\MRhref}[2]{%
  \href{http://www.ams.org/mathscinet-getitem?mr=#1}{#2}
}
\providecommand{\href}[2]{#2}

\end{document}